\documentclass[11pt]{article}
\usepackage[english]{babel}
\usepackage{titlesec}
\usepackage{geometry}
\usepackage{enumitem}
\usepackage[utf8]{inputenc}
\usepackage{amsmath}
\usepackage{amssymb}
\usepackage{amsthm}
\usepackage[mathcal]{euscript}
\usepackage{color}
\usepackage{tikz-cd}
\usepackage{mathabx}
\usepackage{physics}
\usepackage{MnSymbol}
\usepackage{csquotes}
\usepackage{url}
\usepackage{caption}
\usepackage{hyperref}
\usepackage{mathtools}
\mathtoolsset{showonlyrefs=true}
\allowdisplaybreaks

\usepackage{xparse}
\let\realItem\item %
\makeatletter
\NewDocumentCommand\myItem{ o }{%
   \IfNoValueTF{#1}%
      {\realItem}
      {\realItem[#1]\def\@currentlabel{#1}}
}
\makeatother

\usepackage{enumitem}    
\setlist[enumerate]{
    before=\let\item\myItem, %
    label=\textnormal{(\arabic*)}, %
}

\numberwithin{equation}{section}

\title{Long-Time Large Deviation Asymptotics for Mean-Field Granular Media Equations}

\author{Amarjit Budhiraja\textsuperscript{*}
\quad and \quad
Jack Sullivan\textsuperscript{**}\\[0.5em]
\small Department of Statistics and Operations Research,\\
\small University of North Carolina at Chapel Hill, Chapel Hill, NC 27599, USA}
\date{}

\newtheorem{theorem}{Theorem}[section]
\newtheorem{lemma}[theorem]{Lemma}
\newtheorem{corollary}[theorem]{Corollary}
\newtheorem{proposition}[theorem]{Proposition}
\newtheorem{remark}[theorem]{Remark}
\newtheorem{definition}[theorem]{Definition}
\newtheorem{assumption}{Assumption}[section]

\newcommand{\R}{\mathbb R}
\newcommand{\NN}{\mathbb N}
\newcommand{\N}{\mathbb N}
\newcommand{\E}{\mathbb E}
\newcommand{\PP}{\mathbb P}
\newcommand{\cP}{\mathcal P}
\newcommand{\cF}{\mathcal F}
\newcommand{\cE}{\mathcal E}
\newcommand{\cR}{\mathcal R}
\newcommand{\cH}{\mathcal H}
\newcommand{\cX}{\mathcal X}
\newcommand{\cI}{\mathcal I}
\newcommand{\cV}{\mathcal V}
\newcommand{\cD}{\mathcal D}
\newcommand{\cW}{\mathcal W}
\newcommand{\cln}{\mathcal N}

\newcommand{\cS}{\mathcal S}

\newcommand{\cJ}{\mathcal J}
\newcommand{\cL}{\mathcal L}
\newcommand{\cA}{\mathcal A}

\newcommand{\clp}{\mathcal P}
\newcommand{\cls}{\mathcal S}

\newcommand{\clx}{\mathcal X}

\newcommand{\Law}{\operatorname{Law}}
\newcommand{\Ent}{\operatorname{Ent}}

\begin{document}
\maketitle
\begingroup
\renewcommand{\thefootnote}{*}
\footnotetext{Email: \texttt{budhiraj@email.unc.edu}}
\renewcommand{\thefootnote}{**}
\footnotetext{Email: \texttt{jsully@unc.edu}}
\endgroup
\vspace{-1.25em}

\begin{abstract}
\small
We study long-time mean-field large deviations for granular-media diffusions. Under a  dissipativity assumption, our main result shows that for
arbitrary $T_N\to\infty$, the occupation measure of the $N$-particle empirical
measure satisfies a large deviation principle (LDP) on $\cP(\cP(\R^d))$ with speed $NT_N$ and rate function
$\cI(\Gamma)=\int_{\cP(\R^d)}\mathfrak i(\mu)\,\Gamma(d\mu)$
where $\mathfrak i(\mu)$ is one quarter of the free-energy dissipation, where the latter is defined as the squared $L^2$-norm of the Wasserstein gradient of the free energy $\cF$ associated with the model, namely
$$\mathfrak i(\mu) = \frac14 \left\|
    \nabla\frac{\delta\cF}{\delta\mu}(\mu)
  \right\|_{L^2(\mu)}^2.$$
In the noninteracting case, the local cost $\mathfrak i$ reduces to the classical Donsker--Varadhan rate function, while $\cI$ is its $\Gamma$-average. For interacting systems, $\mathfrak i(\mu)$ can be interpreted as the per-unit-time Dawson--G\"artner action of the constant path at $\mu$, so that $\cI(\Gamma)$ averages these holding costs according to $\Gamma$.
The main difficulty in the upper bound is that empirical measures are atomic, so the free energy is infinite on them. A key ingredient in overcoming this difficulty is an analysis of the evolution of a suitably regularized free-energy functional evaluated along the empirical-measure flow of controlled granular-media diffusions. The lower bound uses approximation by finitely supported occupation measures whose atoms have smooth densities with Gaussian tails.
As an application of the main result, we discuss the case of quadratic potentials and show that
the variational problem associated with an atypically large overall
space-time variance exhibits a transition: below a critical
value the minimizer is $\Gamma = \delta_{\nu}$, where $\nu$ is Gaussian, whereas above it the
minimizers are non-Dirac occupation measures $\Gamma$ supported on Gaussian
profiles with different centers.
\end{abstract}

\smallskip
{\small
\noindent\textbf{Keywords:} large deviations; occupation measure of empirical measures;
weakly interacting diffusions; large-time, mean-field asymptotics; McKean--Vlasov
equations; granular-media equations; Wasserstein gradient flows; free-energy
dissipation; relative Fisher information; Dirichlet energy.\par}

\smallskip
{\small
\noindent\textbf{Mathematics Subject Classification (2020):}
60F10 (primary); 60K35, 35Q84, 49Q22 (secondary).\par}

\section{Introduction}
\label{sec:intro}

Mean-field systems of weakly interacting diffusions arise naturally in
statistical mechanics, probability, and the study of nonlinear
Fokker--Planck equations. In this work we consider a class of granular
media models in which the particles evolve according to
\begin{equation}\label{eq:1}
  dX_i^N(t)
  =
  -\nabla V(X_i^N(t))\,dt
  -\frac1N\sum_{j=1}^N
    \nabla W(X_i^N(t)-X_j^N(t))\,dt
  +\sqrt{2}\,dB_i(t),
\end{equation}
$i=1,\ldots, N$, where $B_1,\dots,B_N$ are independent standard $d$-dimensional Brownian motions.
Here $V:\R^d \to \R$ is a confining potential and $W:\R^d \to \R$ describes the mean-field
interaction. Writing
\begin{equation} \label{eq:2}
  \mu_t^N=\frac1N\sum_{i=1}^N\delta_{X_i^N(t)}, \; t \ge 0
\end{equation}
for the empirical measure of the particles, the classical mean-field
law of large numbers says that, under conditions, over every fixed time interval,
$\mu^N$ converges in probability, as $N\to\infty$, to the solution of the
McKean--Vlasov equation
\begin{equation} \label{eq:3}
  \partial_t\mu_t
  =
  \Delta\mu_t+
  \nabla\cdot\bigl(\mu_t\nabla(V+W*\mu_t)\bigr).
\end{equation}
For the class of potentials considered here this equation has the
Wasserstein gradient-flow structure associated with the free energy
\[
  \cF(\mu)
  =
  \int V\,d\mu
  +\frac12\iint W(x-y)\,\mu(dx)\mu(dy)
  +\Ent(\mu),
\]
where $\Ent(\mu)$ is the entropy of measure $\mu$; see Section \ref{sec:1} for precise definitions.
Under our dissipativity assumption (see Assumption \ref{ass:3}) the solution to 
\eqref{eq:3} has a unique equilibrium
$\mu_*$ and the solution $\mu_t$ converges exponentially fast to $\mu_*$ as $t\to \infty$. 
Uniform-in-time propagation of chaos and convergence to
equilibrium for granular-media systems, under closely related assumptions, have been studied in
\cite{Malrieu2003,CattiauxGuillinMalrieu2008}.

The question studied in this paper concerns a large deviation asymptotic
regime, in which the number of particles and the observation time both
tend to infinity. For $T>0$ we consider the occupation measure
\begin{equation}\label{eq:4}
  \Gamma^{N,T}
  =
  \frac1T\int_0^T\delta_{\mu_t^N}\,dt,
\end{equation}
which is a random element of
$\cP(\cP(\R^d))$, where for a polish space $S$, $\cP(S)$ is the space of probability measures on $S$ equipped with the weak convergence topology. Given an arbitrary sequence $T_N\to\infty$, our
main goal is to determine the large deviation behavior of
$\Gamma^{N,T_N}$ as $N\to\infty$. Thus the object of interest records
the fractions of a long time interval during which the empirical
distribution of the $N$ particles is close to different probability
measures on $\R^d$. We note that, under our conditions, the typical behavior of this object is given by the Dirac measure on the equilibrium measure $\mu_*$, namely
$\Gamma^{N,T_N} \to \delta_{\mu_*}$, in probability, as $N\to\infty$.

Large deviations for weakly interacting diffusions over a fixed time
horizon have a long history. Dawson and G\"artner
\cite{DawsonGartner1987} established a path-space large deviation
principle (LDP) for the empirical-measure process, $\{\mu^N_t: 0 \le t \le T\}$, for fixed $T<\infty$, as $N\to\infty$, with
speed $N$, and characterized the associated dynamical action
functional. The paper
\cite{BuDuFi} subsequently developed a
weak-convergence approach to this problem based on stochastic control
representations. This approach applies under quite general conditions
and avoids several of the discretization arguments and exponential estimates used in earlier
proofs. Related developments concerning the structure of the rate
function and more general interaction mechanisms include
\cite{Fischer2014,HoeksemaHoldingMaurelliTse2024}.

There is an important distinction between these finite-time results
and the problem considered here. A fixed-horizon large deviation
principle describes atypical trajectories
$\{\mu_t^N:0\leq t\leq T\}$ on a prescribed interval and has speed
$N$. Here the horizon $T_N$ itself diverges, with no restriction on
its rate of growth relative to $N$, and the relevant speed becomes
$NT_N$. A fixed-time large deviation principle cannot be directly 
applied when the time horizon also grows with $N$, since standard estimates used in its proof do not provide the
uniformity in the time horizon needed for this simultaneous limit. More significantly,
our observable is the occupation measure of the empirical-measure
process rather than its time-ordered trajectory. Thus, since $\{\mu_t^N:t\ge 0\}$ is a Markov process on $\cP(\R^d)$, our result is closer in spirit to large deviation results for occupation measures of Markov processes \cite{DonskerVaradhanI,DonskerVaradhanII,DonskerVaradhanIII} rather than Friedlin-Wentzell path space large deviations. However, note that, unlike these works, we do not have the occupation measure of a fixed Markov process, since both the time horizons $T_N$ and the Markov processes $\{\mu^N_t\}$ vary with $N$. There is also a useful analogy with a large deviation principle for occupation measures of small noise diffusions in which the time horizon and the inverse temperature parameter simultaneously become large. This connection is discussed later in the introduction.

A notable feature of the setting we consider is that the rate function associated with the LDP has a novel, interpretable, and particularly simple
structure.
Specifically, for $\mu(dx)=\rho(x)\,dx$ with a square-integrable weak
logarithmic gradient (see Section \ref{sec:1}), define
\begin{equation} \label{eq:5}
  \mathfrak i(\mu)
  =
  \frac14\int_{\R^d}
  \left|
    \nabla\log\rho(x)
    +\nabla V(x)
    +\nabla W*\mu(x)
  \right|^2\,\mu(dx),
\end{equation}
with $\mathfrak i(\mu)=\infty$ otherwise. Our main result gives an
LDP for $\{\Gamma^{N,T_N}\}$ with speed $NT_N$ and rate function
\begin{equation}\label{eq:6}
  \cI(\Gamma)
  =
  \int_{\cP(\R^d)}
  \mathfrak i(\mu)\,\Gamma(d\mu).
\end{equation}
The function $\mathfrak i(\mu)$ can be written in terms of the $L^2$-norm of the Wasserstein gradient of the free energy (see Section \ref{sec:1}), as
\begin{equation} \label{eq:7}
  4\mathfrak i(\mu)
  =
  \left\|
    \nabla\frac{\delta\cF}{\delta\mu}(\mu)
  \right\|_{L^2(\mu)}^2.
\end{equation}
Thus, roughly, the cost associated with deviation from $\delta_{\mu_*}$ to an atypical behavior of the form $\Gamma$ is given by the squared 
gradient of the free energy, associated with measures $\mu$ in the support of $\Gamma$, weighted according to the likelihood of $\mu$ under $\Gamma$.
 The functional $\mathfrak i$ is closely related to quantities that have
appeared previously in the analysis of granular-media equations. In fact,
$4\mathfrak i(\mu)$
  is the entropy-dissipation functional associated with the free energy
$\cF$ in \eqref{eq:11}; see, for example,
\cite[Proposition~2.1]{carmccvil}. This quantity can also be interpreted as the mean field relative Fisher information
for the McKean--Vlasov diffusion (cf. 
\cite[(3.30)]{TschidererYeung2023}).  
For related level-three Fisher-information functionals on
mixtures, see \cite{HaurayMischler2014}.

The formula for the rate function also has an interesting connection with classical,
Donsker--Varadhan theory,
\cite{DonskerVaradhanI,DonskerVaradhanII,DonskerVaradhanIII}. When
$W=0$, the particles are independent and, under conditions on $V$, $\mathfrak i$ is exactly the
Donsker--Varadhan rate function for the occupation measure of the
reversible diffusion
\[
  dX(t)=-\nabla V(X(t))\,dt+\sqrt2\,dB(t).
\]
Indeed, by the Dirichlet-form characterization of Donsker--Varadhan
\cite[Theorem~5, p.~7]{DonskerVaradhanI}
(cf.\ Guillin--L\'eonard--Wu--Yao \cite[Section~2.1]{GuillinLeonardWuYao2009}),
the rate function governing the occupation measure LDP for the above diffusion is
$I_{\rm DV}(\mu)
  =
  \int_{\R^d}|\nabla\sqrt f|^2\,d\pi$,
where 
$f=\frac{d\mu}{d\pi}$, and $\pi$ is the stationary distribution of the diffusion.
Consequently,
\begin{align*}
  I_{\rm DV}(\mu)
  &=
  \frac14\int_{\R^d}
  |\nabla\log f|^2\,d\mu =
  \frac14\int_{\R^d}
  |\nabla\log\rho+\nabla V|^2\,d\mu
  =
  \mathfrak i(\mu),
\end{align*}
where $\mu(dx)=\rho(x)\,dx$. However, note that this latter rate function gives large deviation behavior of random objects in $\cP(\R^d)$, whereas the result in the current work studies a LDP on $\cP(\cP(\R^d))$.

A related interpretation holds in the interacting setting through
the Dawson--G\"artner large deviation principle for weakly interacting
diffusions \cite{DawsonGartner1987}. For $\nu\in\cP(\R^d)$, let
\[
  \cL_\nu f
  =
  \Delta f
  -
  \bigl(\nabla V+\nabla W*\nu\bigr)\cdot\nabla f .
\]
Then the Dawson--G\"artner action for a path
$\{\nu_t\}_{0\leq t\leq T}$ (see \cite[(1.5)]{DawsonGartner1987}) can be written as
\[
  S_T(\nu)
  =
  \int_0^T
  \bigl\|
    \dot\nu_t-\cL_{\nu_t}^*\nu_t
  \bigr\|_{*,\nu_t}^2\,dt,
\]
where, 
\[
  \|\theta\|_{*,\nu}^2
  =
  \frac14
  \sup_{\varphi}
  \frac{\left(\int \varphi d\theta\right)^2}
       {\int_{\R^d}|\nabla\varphi|^2\,d\nu};
\]
and the above supremum is over all $\varphi \in C_c^{\infty}(\R^d)$ for which
$\int_{\R^d}|\nabla\varphi|^2\,d\nu>0$.
  Taking the constant path
$\nu_t\equiv\mu$, with $\mu(dx)=\rho(x)\,dx$, gives
\[
  -\cL_\mu^*\mu
  =
  -\nabla\cdot
  \left[
    \rho\bigl(
      \nabla\log\rho+\nabla V+\nabla W*\mu
    \bigr)
  \right]\,dx.
\]
Hence
\[
  S_T(\nu_t\equiv\mu) = T \bigl\|-\cL_\mu^*\mu\bigr\|_{*,\mu}^2
  =
  T \frac14
  \int_{\R^d}
  \left|
    \nabla\log\rho+\nabla V+\nabla W*\mu
  \right|^2\,d\mu
  =
  T \mathfrak i(\mu).
\]
 Hence $\mathfrak i(\mu)$ is the cost per unit
time of maintaining the empirical measure near $\mu$, and
$\cI(\Gamma)$ can be interpreted as the average of these holding costs when the fractions
of time spent near different profiles are described by $\Gamma$.

Thus the form of the rate function in \eqref{eq:6} is a natural conjecture for this problem; however, as will be seen, our proofs do not go through the Dawson--G\"artner machinery or the later results in \cite{BuDuFi} on this topic.
In a paper subsequent to \cite{DawsonGartner1987}, Dawson and G\"artner in
\cite{DawsonGartner1989} studied a related but different large deviation problem for this
class of models. For reversible mean-field interacting
diffusions they investigated the large deviation behavior of invariant
distributions as $N\to\infty$ and developed the 
dynamical quasipotential for this mean field setting and studied its relation with the free-energy functional, with
tunnelling and metastability among the motivating questions. Related long-time fluctuation questions for weakly interacting
diffusions were also considered in the earlier works
\cite{DawsonGartnerLongTime1987,DawsonGartnerLongTime1988}. The
object and scale considered in the current work are different. Rather than the
large deviations of the stationary distribution, or the cost of
transition between macroscopic states, we study the long-time
occupation measure itself. Correspondingly, the quantity appearing
in our rate function is not the free energy or quasipotential, but its
squared Wasserstein gradient, measuring the instantaneous cost of
holding the system away from the McKean--Vlasov equilibrium.

More recently, Gvalani and Schlichting \cite{GvalaniSchlichting2020}
studied free-energy barriers and metastability for weakly interacting
diffusions. In a specific model on the torus, they used such a barrier
to obtain an exponential-in-$N$ upper bound on the probability of
transition between competing macroscopic states. In a related direction, equilibrium large deviations for related mean-field systems have 
been studied by Reygner \cite{Reygner2018}, who treats
translation-invariant systems seen from their center of mass, including McKean--Vlasov particle
systems without an external confining potential, with a free-energy
type equilibrium rate function. However, all of these results are described in terms of the free energy and not its squared gradient, and to the best of our knowledge, the current work is the first to observe the role of the averaged entropy-dissipation functional in 
\eqref{eq:6} in large deviation analysis of weakly interacting diffusions over a long time horizon.
We note, however, that
entropy-dissipation methods, involving the functional above or the closely related Fisher-information
functionals, have previously been used to establish quantitative long-time
properties, including convergence rates to equilibrium for granular-media
equations \cite{carmccvil} and uniform-in-time propagation of chaos for
interacting diffusions \cite{LackerLeFlem2023}.

As mentioned previously, there is also an analogy with Freidlin--Wentzell small-noise
theory. At a heuristic level, the fluctuations of the empirical
measure around its McKean--Vlasov limit are of order $N^{-1/2}$, so
that $N^{-1}$ plays a role analogous to a small-noise variance. The
joint limit considered here is therefore analogous to the simultaneous
small-noise and long-time asymptotics studied for finite-dimensional diffusions in
\cite{BudhirajaZoubouloglou2024}. In that work the occupation measure
of a diffusion with noise amplitude $s(\varepsilon)$ over a time
horizon of order $\varepsilon^{-1}$ satisfies an LDP with speed
$(\varepsilon s^2(\varepsilon))^{-1}$, and the speed $NT_N$ for the LDP in the current work is consistent with the above analogy. A related
long-time/small-noise problem for stochastic evolution equations in
infinite dimensions, allowing the driving noise to become
asymptotically rough, is studied in
\cite{BudhirajaCerrai2026}. The present work differs from both of
these settings in that the effective state variable is itself a
probability measure and its deterministic limiting dynamics is the
nonlinear McKean--Vlasov flow.

We briefly describe the main ideas of the proof. The starting point
is the Bou\'e--Dupuis variational representation\cite{BoDu}, after rescaling the
time interval $[0,T_N]$ to $[0,1]$. This converts the Laplace
asymptotics needed for the large deviation analysis into a sequence of stochastic control problems. For the
large deviation upper bound, bounded control cost together with the dissipativity
condition (Assumption \ref{ass:3}) yields tightness of the controlled empirical measures.
The central difficulty is identifying the limiting control cost.
To understand this difficulty, it is useful to recall the analogous simultaneous small-noise and long-time problem for finite-dimensional diffusions studied in \cite{BudhirajaZoubouloglou2024}.
There, the limiting control cost is identified by applying It\^{o}'s formula to the potential generating the gradient drift, yielding a key orthogonality relation which furnishes an asymptotic lower bound on the cost (which is then seen to be optimal through the proof of the large deviation lower bound).
In the present setting, the natural analogue of this potential is the free energy $\cF$, since the McKean--Vlasov equation is its Wasserstein gradient flow.
However, the empirical measure $\mu_t^N$ is atomic and therefore has infinite
entropy, so that It\^{o}'s formula cannot be applied directly to
$\cF(\mu_t^N)$. We introduce a Gaussian regularization of the
empirical measure and apply It\^{o}'s formula to a corresponding
regularized free energy. One then sees an analogue of the orthogonality calculation 
in \cite{BudhirajaZoubouloglou2024} which
separates the control energy from the free-energy dissipation and
produces the desired lower bound. This is the most demanding part of the upper bound proof which appears in Section \ref{sec:5}, as 
Proposition \ref{prop:1}.
Controlling the diagonal terms created by the atomic empirical
measure, and showing that the regularization and integration-by-parts
remainders vanish, are important technical parts of this argument. See a more detailed discussion of the proof idea below the statement of Proposition \ref{prop:1}.

The proof that $\cI$ is a rate function once again uses
dissipativity (Assumption \ref{ass:3}). Under this assumption, the free energy is strongly geodesically convex in
Wasserstein space, which yields the coercive estimate
$\mathfrak i(\mu)
  \geq
  c\,\cW_2^2(\mu,\mu_*)$, where $\cW_2$ is the $2$-Wasserstein distance.
Together with a dual characterization of $\mathfrak i$ 
(see Lemma \ref{lem:8}), this gives
lower semicontinuity and compactness of the level sets of $\cI$.

The lower bound, notably, does not require the dissipativity
condition. We first approximate a near-optimal
$\Gamma\in\cP(\cP(\R^d))$ by a finite mixture of sufficiently regular
target measures. For such a regular target measure $\mu(dx)=\rho(x)\,dx$, the feedback
control is chosen to cancel the original McKean--Vlasov drift and
replace it by the score $\nabla\log\rho$. Under this control the
particles become independent copies of the score diffusion
\[
  dZ(t)=\nabla\log\rho(Z(t))\,dt+\sqrt2\,dB(t),
\]
whose invariant distribution is precisely $\mu$. Short transition
periods, whose length vanishes on the $T_N$ scale but tends to infinity
on the original time scale, allow the system to equilibrate near
successive target measures; the non-vanishing hold periods produce the
desired occupation fractions. This construction simultaneously
recovers the prescribed occupation measure and the desired cost.

 In Section~\ref{sec:15}, we discuss an explicit quadratic example
that illustrates several features of the result. For
this setting the local cost $ \mathfrak i(\mu)$
 separates into a center-of-mass contribution and a
centered relative Fisher-information contribution (see Proposition \ref{prop:9}).
This permits a number of constrained variational problems to be solved
explicitly. Specifically, it shows that, for some such variational problems, Gaussian measures are the unique optimizers
under prescribed mean and covariance constraints. As another striking consequence,
the variational problem associated with an atypically large overall
space-time variance exhibits a transition, in that, below a critical
value the optimum is a single Gaussian profile, whereas above it the
optimum is a nontrivial occupation measure supported on Gaussian
profiles with different centers. Thus excess dispersion is achieved
more efficiently by spending time near several displaced clouds than
by broadening a single cloud.  

Finally, we remark on the role of dissipativity in our proofs. Such a condition is needed due to the
noncompactness of $\R^d$. On a compact state space, such as
$\mathbb T^d$, with smooth periodic $V,W$ and even $W$, we expect an
analogous result to hold without convexity or dissipativity. In the nonconvex setting, however, the
McKean--Vlasov equation may have several stationary distributions,
and the zero set of $\cI$ should then consist of occupation measures
supported on this collection rather than a single
$\delta_{\mu_*}$.  
This possibility of multiple stationary states in the periodic setting
is well understood; see, for example,
Carrillo, Gvalani, Pavliotis and Schlichting
\cite{CarrilloGvalaniPavliotisSchlichting2020},
who analyze bifurcations and phase transitions for the
McKean--Vlasov equation on the torus.
This compact state space setting also leads to interesting
metastability questions related to transitions between distinct stable stationary states.
Related particle-level metastability results include
Bashiri and Menz \cite{BashiriMenz2021}, who establish
Eyring--Kramers-type asymptotics for transitions between macroscopic
wells in a continuous mean-field model.
We remark that these transitions are governed by finer, $N$-dependent quasipotential scales which are
not resolved by the $NT_N$-scale LDP considered here.  
We leave the
compact nonconvex setting and its connection with metastable behavior
for future work.

\subsection{Notation}
The following notation will be used. For $n\in\mathbb{N}$, write
$[n]\doteq\{1,\ldots,n\}$. For $m\in\mathbb{N}$, let $C^m(\mathbb{R}^d)$ denote the space of functions
$f:\mathbb{R}^d\to\mathbb{R}$ whose partial derivatives $\partial^\alpha f$ exist and are continuous for every multi-index $\alpha$ with $|\alpha|\le m$. We write
$
C^\infty(\mathbb{R}^d)
\doteq
\bigcap_{m=1}^\infty C^m(\mathbb{R}^d)$
for the space of infinitely differentiable functions on $\mathbb{R}^d$.
We write $C_c^\infty(\mathbb{R}^d)$ for those with compact support and similarly use
$C_c^\infty(\mathbb{R}^d;\mathbb{R}^q)$ for vector-valued functions.
For a Polish space $S$, let $C_b(S)$ be the space of continuous and bounded real functions on $S$ and let $\mathcal{P}(S)$ denote the space of Borel probability measures on $S$, equipped with the topology of weak convergence, metrized by the bounded Lipschitz distance
\[
d_{\mbox{\tiny{BL}}}(\mu,\nu)
\doteq
\sup_{f \in \mbox{\tiny{BL}}_1}
\left|
\langle f,\, \mu \rangle- \langle f,\, \nu \rangle
\right|.
\]
Here, $\langle f,\, \gamma \rangle = \int_S f d\gamma$ for $\gamma \in \clp(S)$,
$
\mbox{\small{BL}}_1\doteq \{f:S\to\mathbb{R}: \|f\|_\infty\le 1,\ \operatorname{Lip}(f)\le 1\}$,
where
$$\|f\|_\infty \doteq \sup_{x\in S}|f(x)|,
\qquad
\operatorname{Lip}(f)
\doteq
\sup_{\substack{x,y\in S\\ x\neq y}}
\frac{|f(x)-f(y)|}{d_S(x,y)},
$$
and $d_S$ is a metric generating the topology of $S$.
For an interval $J\subset[0,\infty)$, we denote by $C(J;S)$ the space of continuous functions from $J$ to $S$, equipped with the topology of uniform convergence on compact time intervals.
For a measurable map $T:S\to S'$ and $\mu\in\cP(S)$, $T_\#\mu$ denotes the pushforward of $\mu$ under $T$.
Let
\[
\mathcal{P}_2(\mathbb{R}^d)
\doteq
\left\{
\mu\in\mathcal{P}(\mathbb{R}^d):
\int_{\mathbb{R}^d}|x|^2\,\mu(dx)<\infty
\right\}.
\]
We equip $\mathcal{P}_2(\mathbb{R}^d)$ with the $2$-Wasserstein distance
\[
\mathcal{W}_2(\mu,\nu)
\doteq
\left(
\inf_{\pi\in\Pi(\mu,\nu)}
\int_{\mathbb{R}^d\times\mathbb{R}^d}
|x-y|^2\,\pi(dx,dy)
\right)^{1/2},
\]
where $\Pi(\mu,\nu)$ denotes the set of couplings of $\mu$ and $\nu$, namely the collection of $\Gamma \in \clp(\R^d\times \R^d)$ such that
$[\Gamma]_1(\cdot) \doteq \Gamma(\cdot \times \R^d) = \mu$ and $[\Gamma]_2(\cdot) \doteq \Gamma(\R^d \times \cdot) = \nu$. We write $\clx\doteq\cP(\cP(\R^d))$.

For a finite Borel measure $\mu$ on $\mathbb{R}^d$ and a measurable function
$f:\mathbb{R}^d\to\mathbb{R}$, their convolution is defined by
$(f*\mu)(x)
\doteq
\int_{\mathbb{R}^d} f(x-y)\,\mu(dy)$,
$x\in\mathbb{R}^d$,
whenever the integral is well defined.
For $k \in \NN$, Polish spaces $S_i$, $i \in [k]$, and $\Gamma \in \clp(S_1 \times S_2 \times \cdots S_k)$, $[\Gamma]_i \in \clp(S_i)$ denotes the $i$-th marginal distribution of $\Gamma$. For $1\le i < j \le k$, $[\Gamma]_{ij} \in \clp(S_i\times S_j)$ is defined similarly. For a $d$-dimensional vector $m$ and a $d\times d$ non-negative definite matrix $\Sigma$, we denote by $\cln(m, \Sigma)$  the $d$-dimensional Normal distribution with mean $m$ and covariance matrix $\Sigma$.

\section{Setting and Main Result}
\label{sec:1}

Let $V$ and $W$ be as in Section \ref{sec:intro}. Consider the weakly interacting $N$ particle system, defined by \eqref{eq:1}
with $X^N_i(0) = x^N_i$,
$i=1,\dots,N$,
 where $x^N_i \in \R^d$ for all $N \in \NN$ and $i \in [N]$.  
We will make the following basic assumption on the functions $V,W$.

\begin{assumption}
\label{ass:1}
The functions $V,W\in C^1(\R^d)$ and their gradients
$\nabla V$ and $\nabla W$ are globally Lipschitz.
\end{assumption}

Assumption~\ref{ass:1} 
ensures that, for every $N\in\N$, the particle system
\eqref{eq:1}, as well as its controlled version appearing
in the variational representation, has a unique pathwise solution. This assumption will be taken to hold throughout and therefore will not be noted explicitly in statements of various results.

We will also need one of the two conditions in the following assumption. The first condition will be used in the proof of the upper bound, while the second stronger condition will be needed for the lower bound proof.
\begin{assumption}
\label{ass:2}
\begin{enumerate}[label={(\alph*)}] 
\item We have
$
 \sup_{N\ge1}
 \frac1N\sum_{i=1}^N|x^N_i|^2
 <\infty.$
\item There exists $\alpha>0$ such that
$
\sup_{N\ge1}
\frac1N\sum_{i=1}^N
|x^N_i|^{2+\alpha}
<\infty$.
\end{enumerate}
\end{assumption}

The lower bound in the LDP will be proved under Assumption~\ref{ass:1} and
Assumption~\ref{ass:2}(b).

For the upper bound and the compactness of the level sets of the rate
function, we strengthen Assumption~\ref{ass:1} as follows.

\begin{assumption}
\label{ass:3}
The functions $V,W\in C^2(\R^d)$, $W$ is even, and there exist
constants $\lambda\in(0,\infty)$ and $C\in[0,\infty)$ with $\lambda>C$
such that, for all 
$x,y,z,z'\in\R^d$,
\begin{align}
 &(x-y)\cdot\bigl(\nabla V(x)-\nabla V(y)\bigr)
 +(x-y)\cdot
 \bigl(\nabla W(x-z)-\nabla W(y-z')\bigr)
 \ge
 \lambda|x-y|^2-C|z-z'|^2.
\end{align}
\end{assumption}

\begin{remark}
Assumption~\ref{ass:3} describes a strongly confined
regime in which the interaction need not be convex, provided its
destabilizing effects are dominated by the confinement. Natural
examples include uniformly convex $V$ together with a convex
interaction having controlled curvature, or a sufficiently small
nonconvex perturbation. This is closely related to
\cite[Theorem~2.1]{carmccvil}, where $V$ is uniformly convex and $W$
is convex; part~(iv) there allows nonconvex $W$ when its negative curvature
is sufficiently small relative to that of $V$. For example, when
$V(x)=\frac m2|x|^2$ and $W(x)=-\frac\beta2|x|^2$, our condition holds
precisely when $\beta<m/2$, which is the condition that appears there. In
contrast, the  setting of
\cite[Section~10.4.7 and Example~11.2.7]{AGS} assumes separate
convexity of $V$ and $W$, whereas our condition directly yields strong
convexity of the combined free energy and permits some nonconvex
interactions.

Assumption~\ref{ass:3} necessarily makes $V$ uniformly
convex. Indeed, taking $z=x$ and $z'=y$ in
the inequality in the assumption gives uniform convexity of $V$ with modulus
$\lambda-C$. If $V=0$, translation invariance of both the entropy and
interaction terms prevents the free energy $\cF$ from being strongly
geodesically convex on $\cP_2(\R^d)$, and the interaction controls only
moments about the center of mass. Consequently, the absolute moment
estimate in Lemma~\ref{lem:2} and the compactness
argument in Proposition~\ref{prop:8} no longer apply.
A centered formulation based on
$\widetilde\mu_t^N=(x\mapsto x-\bar x(\mu_t^N))_\#\mu_t^N$, where
$\bar x(\mu)=\int x\,\mu(dx)$, which takes values in the zero-barycenter
subspace of $\cP_2(\R^d)$,  may permit treatment of this degenerate
setting. This will be explored in future work.

Finally, the global Lipschitz assumptions on $\nabla V$ and $\nabla W$
provide well-posedness, linear-growth and moment bounds, and the
regularization and approximation estimates used, for example, in
Lemma~\ref{lem:6} and
Proposition~\ref{prop:3}. They are not intrinsic
to the dissipativity mechanism and could be replaced by suitable
growth, moment, continuity, and well-posedness hypotheses.
\end{remark}

Define the empirical measure process $\{\mu_t^N, t \ge 0\}$ by \eqref{eq:2}.
Then for each $t \ge 0$ and $N \in \NN$, $\mu_t^N$ is a $\cP(\R^d)$ valued random variable and $t\mapsto \mu_t^N$ has sample paths in 
$C([0,\infty);\cP(\R^d))$.

For $T>0$, define the occupation measure of the empirical measure process by \eqref{eq:4}
Then $\Gamma^{N,T}$ is a random element of $\cP(\cP(\R^d))$.
Consider the
McKean--Vlasov equation \eqref{eq:3}, associated with the weakly interacting particle system,
with initial condition 
 $\mu_0 \in \mathcal P_2(\mathbb R^d)$. Under Assumption~\ref{ass:1}, there is a unique weak solution
 $\mu\in C([0,\infty);\mathcal P(\mathbb R^d))$ of \eqref{eq:3} satisfying
 \[
 \sup_{t\leq T}\int_{\mathbb R^d}|x|^2\,\mu_t(dx)<\infty
 \qquad\text{for every }T<\infty;
 \]
 see \cite[Theorem~2.9 and Proposition~5.1]{gart}. Standard mean-square estimates for the associated
 McKean--Vlasov SDE further give $\mu\in C([0,\infty);\mathcal P_2(\mathbb R^d))$.
We will write $\mu_t = \Phi_t(\mu_0)$ and note that $\{\Phi_t\}$ defines a semigroup on $\cP_2(\R^d)$, namely
$\Phi_t(\Phi_s(\mu)) = \Phi_{t+s}(\mu)$ for all $t,s \ge 0$ and $\mu \in \cP_2(\R^d)$.

In terms of the particle system, under Assumption~\ref{ass:2}(a), if 
$\mu^N_0 = \frac1N\sum_{i=1}^N \delta_{x^N_i}$ converges in $\cP(\R^d)$ to $\mu_0$, it 
follows from the classical law of large numbers for weakly
interacting diffusions \cite[Theorem~3.1]{gart} that 
the empirical measure process $\mu^N$ converges in probability in
$C([0,\infty);\cP(\R^d))$ to the unique solution $\{\mu_t\}_{t\ge 0}$ of 
\eqref{eq:3}.

Furthermore, for every
$\mu_0\in\mathcal P_2(\mathbb R^d)$ and every $t>0$, the measure
$\mu_t$ is absolutely continuous with respect to Lebesgue measure.
This follows, for example, from the parabolic regularity result
\cite[Corollary~3.9]{bogkryroc}. Indeed, for a fixed
McKean--Vlasov solution $(\mu_t)$ the equation is a linear
Fokker--Planck equation with uniformly elliptic diffusion matrix and
locally bounded drift
$b(t,x)=-\nabla V(x)-(\nabla W*\mu_t)(x)$ and results of \cite{bogkryroc} can be applied.
Writing
$\mu_t(dx)=\rho_t(x)\,dx$,
the family $(\rho_t)_{t>0}$ satisfies
\begin{equation}\label{eq:10}
  \partial_t\rho_t
  =
  \Delta\rho_t+
  \nabla\cdot\bigl(\rho_t\nabla(V+W*\rho_t)\bigr)
\end{equation}
on $(0,\infty)\times\mathbb R^d$ in the sense of distributions, with
$\rho_t(x)\,dx\to\mu_0$ in $\mathcal P_2(\mathbb R^d)$ as $t\downarrow0$.

Suppose now that Assumption \ref{ass:3} holds.
Define the {\em free energy} $\cF: \cP_2(\R^d) \to (-\infty,\infty]$ as
\begin{equation}\label{eq:11}
  \cF(\mu)
  = \int_{\R^d} V(x)\,\mu(dx)
    +\frac12\iint_{\R^d\times\R^d} W(x-y)\,\mu(dx)\mu(dy)
    + \int_{\R^d}\rho^{\mu}(x)\log\rho^{\mu}(x)\,dx,
\end{equation}
if $\mu$ admits a density $\rho^{\mu}$ and set $\cF(\mu)=\infty$ otherwise.
Let $\mbox{Dom}(\cF) = \{\mu \in \cP_2(\R^d): \cF(\mu)<\infty\}$.
For $\mu(dx)=\rho(x)\,dx\in \mbox{Dom}(\cF)$, 
if $\rho$ admits a weak logarithmic gradient
$\nabla\log\rho\in L^2(\mu;\R^d)$ in the sense discussed below
\eqref{eq:14}, then the Wasserstein gradient of $\cF$ at $\mu$ is
given by (see Lemma~\ref{lem:8} and its proof)
\begin{equation}\label{eq:13}
  \nabla\frac{\delta\cF}{\delta\mu}(\mu)
  =
  \nabla V+\nabla W*\mu+\nabla\log\rho
  \qquad \mu\text{-a.e.}
\end{equation}
Using \eqref{eq:13}, the McKean--Vlasov equation
\eqref{eq:10} can be written formally as
\[
\partial_t\rho_t
=
\nabla\cdot
\left(
\rho_t
\nabla\frac{\delta\cF}{\delta\mu}(\mu_t)
\right),
\]
which is the Wasserstein gradient-flow equation for $\cF$.
Indeed, by Lemma~\ref{lem:24} and the standard
lower-semicontinuity and coercivity properties of the entropy,
potential, and interaction terms, $\cF$ satisfies the hypotheses of
\cite[Theorem~11.2.1]{AGS}, and the first-variation argument of
\cite[Theorem~10.4.13]{AGS}, with the combined convexity supplied by
Lemma~\ref{lem:24}, shows that the
 Wasserstein gradient flow for $\cF$ solves
\eqref{eq:10}. By uniqueness of the weak solution noted
above, the semigroup of this gradient flow coincides with $(\Phi_t)_{t\ge0}$. See 
\cite[Example~11.2.7 and Theorem~11.2.8]{AGS} for a result of this type under slightly different assumptions on $V$ and $W$.

Under Assumption~\ref{ass:3}, the flow is globally
exponentially stable. Indeed, as noted above,
Lemma~\ref{lem:24} shows that the free
energy $\cF$ is $\kappa$-convex along generalized geodesics, where
$\kappa=\lambda-C>0$. Hence
\cite[Theorem~11.2.1, (11.2.2) and (11.2.5)]{AGS} shows that $\cF$ has a
unique minimizer $\mu_*$, which is the unique fixed point of
$(\Phi_t)_{t\ge0}$, and
\[
  \cW_2(\Phi_t(\mu),\mu_*)
  \le e^{-\kappa t}\cW_2(\mu,\mu_*),
  \qquad \mu\in\cP_2(\R^d),\quad t\ge0.
\]

\subsection{Main Result}
Fix a positive sequence $\{T_N\}$ such that $T_N \to \infty$ as $N\to \infty$.
In this work, we will establish a large deviation principle (LDP) for the collection $\{\Gamma^{N,T_N}\}_{N \in \N}$ of $\cP(\cP(\R^d))$-valued random variables.
We recall the following definition.
\begin{definition}\label{def:1}
Let $S$ be a Polish space. and let $I:S\to[0,\infty]$ be a given function.
A sequence of $S$-valued random variables $\{X^N\}$ is said to satisfy a large deviation lower bound on $S$ with speed $a_N\to\infty$ and function $I$ if, for every open set $G\subset S$
$$\liminf_{N\to\infty}\frac{1}{a_N}\log\mathbb{P}(X^N\in G) \ge -\inf_{x\in G} I(x),$$
and it is said to satisfy the corresponding large deviation upper bound if for every closed $F \subset S$
$$\limsup_{N\to\infty}\frac{1}{a_N}\log\mathbb{P}(X^N\in F)
\le
-\inf_{x\in F} I(x).$$
The function $I$ is called a rate function if its level sets
$\{x\in S:I(x)\le M\}$
are compact for every $M<\infty$.  
\end{definition}

We now introduce the function that will characterize the large deviation behavior of $\{\Gamma^{N,T_N}\}_{N \in \N}$.

We recall that, for $\mu(dx)=\rho(x)\,dx\in\cP_2(\R^d)$, the
\emph{weak logarithmic gradient} of $\rho$ is said to exist if there
is a vector field $\xi\in L^2(\mu;\R^d)$ such that
\begin{equation}\label{eq:14}
  \int_{\R^d}\partial_i\varphi(x)\,\mu(dx)
  =
  -\int_{\R^d}\varphi(x)\xi_i(x)\,\mu(dx),
  \qquad
  \varphi\in C_c^\infty(\R^d),\quad i=1,\ldots,d.
\end{equation}
In this case the distributional derivative of $\rho$ is given by
$\nabla\rho=\rho\,\xi$.
Since $\xi\in L^2(\mu;\R^d)$, we have
$\rho\xi\in L^1_{\rm loc}(\R^d;\R^d)$, and hence
$\rho\in W^{1,1}_{\rm loc}(\R^d)$. We write
$\nabla\log\rho \doteq \xi$
   $\mu$-a.e.
and call this vector field the weak logarithmic gradient of $\rho$.
See \cite[Lemma~3.5]{HaurayMischler2014} and also
 \cite[Theorem~10.4.6]{AGS}, specialized to $F(r)=r\log r$ and
$p=q=2$.

For $\mu\in\cP_2(\R^d)$ with density $\rho$, such that its weak logarithmic gradient $\nabla \log \rho$ exists, define
$\mathfrak i(\mu)$ by \eqref{eq:5}.
Note that this quantity is finite due to linear growth of $\nabla V$ and $\nabla W$.
We set $\mathfrak i(\mu)=\infty$ for all other $\mu \in \cP(\R^d)$. Equivalently, from \eqref{eq:13}, when it is finite, $\mathfrak i(\mu)$ is given by
\eqref{eq:7}.

For $\Gamma\in\cP(\cP(\R^d))$, define $\cI(\Gamma)$ by \eqref{eq:6}, namely
\begin{equation}\label{eq:15}
  \cI(\Gamma)
  = \int_{\cP(\R^d)} \mathfrak i(\mu)\,\Gamma(d\mu) = \frac14\int_{\cP(\R^d)} \left\|
      \nabla\frac{\delta\cF}{\delta\mu}(\mu)
    \right\|_{L^2(\mu)}^2\,\Gamma(d\mu).
\end{equation}

It can be seen that $\cI$ vanishes at $\delta_{\mu_*}$, namely $\cI(\delta_{\mu_*})=0$. Indeed, 
the invariant measure $\mu_*$ has a strictly positive density $\rho_*$
given by the formula
\begin{equation}\label{eq:16}
  \rho_*(x)
  =
  \frac{1}{Z_*}
  \exp\bigl\{-V(x)-(W*\mu_*)(x)\bigr\},
  \qquad
  Z_*
  =
  \int_{\mathbb R^d}
  \exp\bigl\{-V(x)-(W*\mu_*)(x)\bigr\}\,dx.
\end{equation}
Consequently,
\begin{equation}\label{eq:17}
  \nabla\log\rho_*
  =
  -\nabla V-\nabla W*\mu_*,
\end{equation}
and hence, using \eqref{eq:5},
\[
  \mathfrak i(\mu_*)
  =
  \frac14\int_{\mathbb R^d}
  \left|
    \nabla V+\nabla W*\mu_*+\nabla\log\rho_*
  \right|^2\,d\mu_*
  =0.
\]

Since $\mathfrak i\geq0$, the preceding calculation shows that
$\cI(\delta_{\mu_*})=0$. For the converse, suppose that
Assumption~\ref{ass:3} holds.  Then the coercivity
estimate in Lemma~\ref{lem:25} gives
$\mathfrak i(\mu)=0$ if and only if $\mu=\mu_*$. Consequently, under
Assumption~\ref{ass:3}, for $\Gamma\in\cP(\cP(\R^d))$,
$\cI(\Gamma)=0$
  if and only if
  $\Gamma=\delta_{\mu_*}$.

The following is the main result. It uses an equivalent formulation of a LDP in terms of Laplace asymptotics (cf. \cite[Theorem 1.8]{buddupbook}).

\begin{theorem}\label{thm:1}
Suppose that Assumption \ref{ass:2}(b) holds.
Then the family
$
  \{\Gamma^{N,T_N}\}_{N\ge1}$
satisfies a large deviation lower bound on $\clx$ with speed $NT_N$ and function $\cI$ given by \eqref{eq:15}, equivalently, for every 
$F\in C_b(\clx)$,
\begin{equation}\label{eq:18}
  \limsup_{N\to\infty}
  -\frac1{NT_N}\log
  \mathbb E\exp\{-NT_N F(\Gamma^{N,T_N})\}
  \le
  \inf_{\Gamma\in\clx}
  \{F(\Gamma)+\cI(\Gamma)\}.
\end{equation}

Suppose that Assumption \ref{ass:2}(a) and Assumption 
\ref{ass:3} hold. Then
$\cI: \clx \to [0, \infty]$ is a rate function and the family
$
  \{\Gamma^{N,T_N}\}_{N\ge1}$
satisfies a large deviation upper bound on $\clx$ with speed $NT_N$ and rate function $\cI$ given by \eqref{eq:15}, namely, for every 
$F\in C_b(\clx)$,
\begin{equation}\label{eq:19}
  \liminf_{N\to\infty}
  -\frac1{NT_N}\log
  \mathbb E\exp\{-NT_N F(\Gamma^{N,T_N})\}
  \ge
  \inf_{\Gamma\in\clx}
  \{F(\Gamma)+\cI(\Gamma)\}.
\end{equation}

Consequently under Assumptions \ref{ass:2}(b) and 
\ref{ass:3}, 
$\{\Gamma^{N,T_N}\}_{N\ge1}$
satisfies a LDP with speed $NT_N$ and rate function $\cI$.
\end{theorem}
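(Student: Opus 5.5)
We will prove both bounds through the Boué--Dupuis variational representation, after rescaling time by $T_N$. Writing $s=t/T_N$, the process $\mu^N_{T_N s}$ on $[0,1]$ is driven by the $Nd$-dimensional Brownian motion $\tilde B_i(s)=T_N^{-1/2}B_i(T_N s)$. The representation then gives
\[
-\frac1{NT_N}\log \E e^{-NT_N F(\Gamma^{N,T_N})}
= \inf_{u}\E\Bigl[F(\bar\Gamma^{N})+\frac{1}{4N}\sum_{i=1}^N\int_0^{T_N}|u_i(t)|^2dt\cdot\frac1{T_N}\Bigr].
\]
Here, in the original time scale, the controlled particles satisfy
\[
d\bar X_i=\bigl(-\nabla V-\tfrac1N\textstyle\sum_j\nabla W(\bar X_i-\bar X_j)+u_i\bigr)dt+\sqrt2\,dB_i,
\]
and $\bar\Gamma^N$ is the occupation measure of the controlled empirical measures $\bar\mu^N_t$. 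The normalization $1/4$ matches the $\sqrt2$ noise.

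\textbf{Lower bound \eqref{eq:18}.} First we reduce to nice $\Gamma$. Given $\Gamma$ with $\cI(\Gamma)<\infty$ and $\varepsilon>0$, we approximate $\Gamma$ by $\sum_{k=1}^K p_k\delta_{\nu_k}$. Each $\nu_k$ should have a smooth positive density with Gaussian tails and $\nabla\log\rho_k$ Lipschitz, and the approximation should satisfy $F(\sum p_k\delta_{\nu_k})\le F(\Gamma)+\varepsilon$ and $\sum p_k\mathfrak i(\nu_k)\le\cI(\Gamma)+\varepsilon$. To build it, we partition $\cP(\R^d)$ into small sets and use convexity-type arguments to replace $\Gamma$ restricted to each cell by a single measure, then apply Gaussian mollification plus mixing with a small Gaussian. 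The key facts are that $\mathfrak i$ is upper-controlled along heat-flow regularization and that $\nabla W*\mu$ depends continuously on $\mu$ in $\cW_1$.

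Next we design the controls. Choose times $0=s_0<s_1<\dots<s_K=1$ with $s_k-s_{k-1}=p_k$. On the $k$-th block, use the feedback
\[
u_i=\nabla V(\bar X_i)+\tfrac1N\textstyle\sum_j\nabla W(\bar X_i-\bar X_j)+\nabla\log\rho_k(\bar X_i).
\]
The particles are then i.i.d.\ score diffusions with invariant law $\nu_k$, and they are exponentially ergodic since $\rho_k$ has Gaussian tails. We leave a burn-in of length $\tau_N\to\infty$ with $\tau_N/T_N\to0$, during which the control cost is $O(\tau_N)$ per particle and hence negligible. We then show that the controlled $\bar\mu^N_t$ is close to $\nu_k$ for most $t$ in the block, using the LLN for i.i.d.\ particles together with ergodicity, uniformly in $t$ after burn-in; Assumption \ref{ass:2}(b) gives the uniform integrability needed for the moment bounds. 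The averaged cost $\frac1{4NT_N}\sum_i\int|u_i - (\text{original drift cancellation})|^2$ converges to $\frac14\int|\nabla\log\rho_k+\nabla V+\nabla W*\nu_k|^2d\nu_k=\mathfrak i(\nu_k)$ by the law of large numbers, since $\frac1N\sum_j\nabla W(\cdot-\bar X_j)\to\nabla W*\nu_k$. Summing over blocks gives \eqref{eq:18}.

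\textbf{Rate function property.} Under Assumption \ref{ass:3}, we use Lemma \ref{lem:25} ($\mathfrak i(\mu)\ge c\cW_2^2(\mu,\mu_*)$) and the dual formula of Lemma \ref{lem:8}, which expresses $\mathfrak i$ as a supremum of continuous functionals on suitable sets. Together these give lower semicontinuity of $\mathfrak i$, hence of $\cI$ by Portmanteau. Moreover, $\{\cI\le M\}$ is tight in $\clx$ because $\int\cW_2^2(\mu,\mu_*)\Gamma(d\mu)\le M/c$ and second-moment balls are compact in $\cP(\R^d)$.

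\textbf{Upper bound \eqref{eq:19}.} Take near-optimal controls with $\sup_N\E\frac1{NT_N}\sum_i\int_0^{T_N}|u_i|^2\le M$. By Itô on $\frac1N\sum|\bar X_i|^2$ and dissipativity, Lemma \ref{lem:2} gives $\E\frac1{T_N}\int_0^{T_N}\int|x|^2d\bar\mu^N_t\,dt\le C(M)$, hence tightness of $\bar\Gamma^N$. The main obstacle is the cost lower bound
\[
\liminf_N \E\frac1{4NT_N}\sum_i\int_0^{T_N}|u_i|^2dt\ge\E\,\cI(\bar\Gamma),
\]
which is Proposition \ref{prop:1}. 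The plan is to regularize $\bar\mu^N_t$ by $\bar\mu^{N,\sigma}_t=\bar\mu^N_t*\cln(0,\sigma I)$ and apply Itô to the regularized free energy $\cF_\sigma(\bar\mu^N_t)$. The drift of $\cF_\sigma$ splits into $-\|\nabla\frac{\delta\cF}{\delta\mu}\|^2$ evaluated at the regularization plus a cross term $\langle u,\nabla\frac{\delta\cF}{\delta\mu}\rangle$, up to error terms. Dividing by $T_N$ makes the boundary terms $\cF_\sigma(\bar\mu^N_{T_N})-\cF_\sigma(\bar\mu^N_0)$ vanish (they are bounded by moments and $\sigma$), and the martingale vanishes at rate $1/(NT_N)$. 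Completing the square, $\frac14|u|^2\ge\langle u,g\rangle-|g|^2$, then yields $\frac14\int|u|^2\gtrsim\frac14\int\|g_\sigma\|^2$ asymptotically; this is the orthogonality step. The delicate points are the diagonal self-interaction terms $i=j$ in the Itô correction for the entropy of a smoothed atomic measure, which are of order $\sigma^{-d/2}/N$ and must be sent to zero as $N\to\infty$ before $\sigma\to0$, and the passage $\sigma\to0$ via lower semicontinuity of $\mathfrak i$. Finally, Fatou and lower semicontinuity yield \eqref{eq:19}, and combining the two bounds gives the LDP.
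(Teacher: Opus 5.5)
Your overall architecture matches the paper's: Bou\'e--Dupuis after rescaling time; drift-cancelling score feedback with burn-in and hold periods for the lower bound; It\^o's formula for a Gaussian-regularized free energy and the completing-the-square step for the upper bound; coercivity plus the dual formula for the rate function. However, one step of the upper bound does not hold as written. You say the boundary term $T_N^{-1}[\cF_\sigma(\bar\mu^N_{T_N})-\cF_\sigma(\bar\mu^N_0)]$ vanishes because it is ``bounded by moments.'' Dissipativity only controls the \emph{time-averaged} second moment. A control satisfying just the cost bound can spend energy of order $T_N$ per particle in the last $O(1)$ units of original time. This makes $\E\int|x|^2\,\bar\mu^N_{T_N}(dx)$ of order $T_N$, so $T_N^{-1}\E|\cF_\sigma(\bar\mu^N_{T_N})|$ need not go to zero. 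The paper handles this with Lemma~\ref{lem:1}: switch the control off on $[1-\varepsilon_N,1]$ (rescaled time), with $\varepsilon_N\to0$ and $T_N\varepsilon_N\to\infty$. This costs at most $2L_F\varepsilon_N$ in the $F$-term (after reducing to Lipschitz $F$) and yields \eqref{eq:36}, whence $T_N^{-1}\E\int|x|^2\,\bar\mu^N_1(dx)\to0$ in Lemma~\ref{lem:10}. A one-sided fix also works: the terminal term enters the cost inequality only as $-T_N^{-1}\cF_\sigma(\bar\mu^N_{T_N})$, and under Assumption~\ref{ass:3} $\cF_\sigma$ is bounded below (by coercivity of the potential--interaction energy and a Gaussian lower bound on the entropy).

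Relatedly, the self-interaction term (the paper's $D_{N,\delta}$ in \eqref{eq:62}, with $\delta$ the mollifier's standard deviation) is not $O(N^{-1})$ uniformly in the configuration, as your $\sigma^{-d/2}/N$ suggests. For pairwise well-separated particles it is about $d/\delta^2$ for every $N$. Lemma~\ref{lem:11} needs a Jensen lower bound on the mollified density generated by the particles in a ball $B_R$, which gives $C_{\delta,R}/N$. It then uses the time-averaged moment bound to control the mass $\bar\mu^N_t(B_R^c)$ of the remaining particles.

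In the lower bound, Gaussian mollification plus density-level mixing with a small Gaussian does not produce the Gaussian tails you use afterwards. If $\rho$ has polynomial tails, so do $\rho*\kappa_\delta$ and any such mixture. The score is then not confining ($x\cdot\nabla\log\rho_k(x)$ stays bounded), and the uniform exponential ergodicity and uniform $(2+\alpha)$-moment bounds needed on the burn-in and hold intervals (Lemmas~\ref{lem:15} and~\ref{lem:18}) are lost. You need a truncation step first. The paper approximates $\sqrt\rho$ in $H^1\cap L^2((1+|x|^2)dx)$ by nonnegative $C_c^\infty$ functions and adds $\varepsilon_n h$ at the square-root level, so the score equals $-x$ off a compact set (Proposition~\ref{prop:3}). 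After truncation, Gaussian mollification alone would already give a Lipschitz, confining score.

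Also, the controlled particles are independent but not identically distributed, since their initial points differ. The law of large numbers you need is therefore a Glivenko--Cantelli bound for independent non-identical arrays (Lemma~\ref{lem:20}), not the i.i.d.\ one. Finally, your partition-based reduction to finitely supported $\Gamma$ is a legitimate alternative to the paper's device of sampling i.i.d.\ $M_j\sim\Gamma_*$ and invoking the SLLN (Lemma~\ref{lem:17}). It needs no convexity: every cell of small $d_{\mathrm{BL}}$-diameter contains a measure whose cost does not exceed the cell average. Using barycenters instead would require freezing $b_\mu$, hence cells small in $\cW_1$.
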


\begin{remark}
\label{rem:1}
The strict inequality in Assumption~\ref{ass:3} is important
for the rate-function property as can be seen from the following simple example.
Let
$V(x)=0$,
$W(x)=\frac12|x|^2$.
Then Assumption~\ref{ass:3} fails, since the inequality in this assumption becomes an equality with $\lambda=C=1/2$.

For $m\in\R^d$, let
$\mu_m=\mathcal N(m,I_d)
$.
Its density $\rho_m$ satisfies
$
  \nabla\log\rho_m(x)=-(x-m)
$
and
$
  \nabla W*\mu_m(x)
  =
  \int_{\R^d}(x-y)\,\mu_m(dy)
  =
  x-m.
$
Consequently,
$
  \mathfrak i(\mu_m)=0$,
  for all $m\in\R^d$.
In particular,
$
  \cI(\delta_{\mu_m})=0$,
for all $m\in\R^d$.
Since the set $\{\delta_{\mu_{m}}, m \in \R^d\}$ is not relatively compact in $\cP(\cP(\R^d))$, we see that the 
zero level set of $\cI$ is not compact. 

This example also explains the role of the coercivity estimate in
Lemma~\ref{lem:25}. When
$\kappa=\lambda-C>0$, that lemma gives
$\mathfrak i(\mu)
  \ge
  \frac{\kappa^2}{4}
  \cW_2^2(\mu,\mu_*),
$
which rules out the behavior exhibited above.
\end{remark}

In the remainder of the paper, we complete the proof of Theorem~\ref{thm:1}. The argument is organized into four sections. We first establish in 
Section \ref{sec:2} a variational representation for the Laplace functional on the left side of the inequalities in Theorem~\ref{thm:1}. We then prove the large deviation upper bound in 
Section \ref{sec:3} and the large deviation lower bound in Section \ref{sec:8}. Finally, in Section \ref{sec:14}, we show that the functional $\cI$ defined in \eqref{eq:15} is a rate function on $\cP(\cP(\R^d))$.

\section{Variational Representation}
\label{sec:2}

It will be convenient to rescale time so that the occupation measure is written on a fixed
time interval. For $t\in[0,1]$, define
\begin{equation}\label{eq:20}
  \widetilde X_i^N(t) \doteq X_i^N(T_N t),\; \widetilde B_i(t) \doteq T_N^{-1/2} B_i(T_N t),
  \qquad i=1,\ldots,N.
\end{equation}
Then $\widetilde B_1,\ldots,\widetilde B_N$ are independent standard
$d$-dimensional Brownian motions. The time-scaled particles solve
\begin{equation}\label{eq:21}
\begin{aligned}
  d\widetilde X_i^N(t)
  &=
  -T_N \nabla V(\widetilde X_i^N(t))\,dt
  -T_N \nabla W*\widetilde\mu_t^N(\widetilde X_i^N(t))\,dt
  +\sqrt{2T_N}\,d\widetilde B_i(t),
  \\
  \widetilde X_i^N(0)&=x^N_i,
  \qquad i\in [N],
\end{aligned}
\end{equation}
where
\begin{equation}\label{eq:22}
  \widetilde\mu_t^N
  \doteq
  \frac1N\sum_{i=1}^N\delta_{\widetilde X_i^N(t)}
  =
  \mu_{T_Nt}^N .
\end{equation}
Consequently,
\begin{equation}\label{eq:23}
  \Gamma^{N,T_N}
  =
  \int_0^1 \delta_{\widetilde\mu_t^N}\,dt.
\end{equation}

Let $\cA_N$ denote the collection of progressively measurable (with respect to the standard augmentation of the filtration generated by $\{\tilde B_i, 1 \le i \le N\}$) processes
\[
  v^N=(v_1^N,\ldots,v_N^N):[0,1]\times\Omega\to(\R^d)^N
\]
such that
\[
  \mathbb E\left[
    \sum_{i=1}^N\int_0^1 |v_i^N(t)|^2\,dt
  \right]<\infty .
\]
For $v^N\in\cA_N$, define the controlled time-scaled system
\begin{equation}\label{eq:24}
\begin{aligned}
  d\bar X_i^N(t)
  &=
  -T_N \nabla V(\bar X_i^N(t))\,dt
  -T_N \nabla W*\bar\mu_t^N(\bar X_i^N(t))\,dt \\
  &\qquad
  +\sqrt{2T_N}\,v_i^N(t)\,dt
  +\sqrt{2T_N}\,d\widetilde B_i(t),
  \qquad i=1,\ldots,N,
\end{aligned}
\end{equation}
with
\begin{equation}\label{eq:25}
  \bar\mu_t^N
  =
  \frac1N\sum_{i=1}^N\delta_{\bar X_i^N(t)}
\end{equation}
and controlled occupation measure
\begin{equation}\label{eq:26}
  \bar\Gamma^{N,T_N}
  =
  \int_0^1 \delta_{\bar\mu_t^N}\,dt .
\end{equation}

By the Bou\'e--Dupuis variational representation \cite{BoDu} 
(see also \cite[Theorem 8.3]{buddupbook}) applied to the
$Nd$-dimensional Brownian motion
$(\widetilde B_1,\ldots,\widetilde B_N)$, for every
$F\in C_b(\clx)$,
\begin{equation}\label{eq:27}
\begin{aligned}
  -\frac1{NT_N}\log
  \mathbb E\exp\{-NT_N F(\Gamma^{N,T_N})\}
  =
  \inf_{v^N\in\cA_N}
  \mathbb E\Bigg[
    \frac1{2NT_N}
    \sum_{i=1}^N\int_0^1 |v_i^N(t)|^2\,dt
    +
    F(\bar\Gamma^{N,T_N})
  \Bigg].
\end{aligned}
\end{equation}

Replacing each control $v_i^N$ in
\eqref{eq:24} by
$v_i^N=\sqrt{\frac{T_N}{2}}\,u_i^N$, 
for $i\in [N]$, where $u^N \in \cA_N$,
\begin{equation}\label{eq:28}
\begin{aligned}
  d\bar X_i^N(t)
  &=
  -T_N \nabla V(\bar X_i^N(t))\,dt
  -T_N \nabla W*\bar\mu_t^N(\bar X_i^N(t))\,dt
  +T_Nu_i^N(t)\,dt
  +\sqrt{2T_N}\,d\widetilde B_i(t).
  \end{aligned}
\end{equation}
With this normalization, the variational representation takes the form
\begin{equation}\label{eq:29}
\begin{aligned}
  -\frac1{NT_N}\log
  \mathbb E\exp\{-NT_NF(\Gamma^{N,T_N})\}
  =
  \inf_{u^N\in\cA_N}
  \mathbb E\Bigg[
    \frac1{4N}
    \sum_{i=1}^N\int_0^1|u_i^N(t)|^2\,dt
    +
    F(\bar\Gamma^{N,T_N})
  \Bigg].
\end{aligned}
\end{equation}
\section{Upper Bound} \label{sec:3}
Throughout this section Assumption \ref{ass:2}(a) and Assumption 
\ref{ass:3} will be taken to hold.
The goal of this section is to prove the large deviation upper bound under these assumptions. In Section \ref{sec:14} we will prove that under these assumptions $\cI$ is a rate function. Thus, in view of \cite[Theorem 1.8]{buddupbook}, it suffices to
 show that for every $F\in C_b(\clx)$,{}
\begin{equation}\label{eq:30}
  \liminf_{N\to\infty}
  -\frac1{NT_N}\log
  \mathbb E\exp\{-NT_N F(\Gamma^{N,T_N})\}
  \ge
  \inf_{\Gamma\in\clx}
  \{F(\Gamma)+\cI(\Gamma)\}.
\end{equation}

One can assume without loss of generality (see \cite[Corollary 1.10]{buddupbook}) that $F$ is Lipschitz
in the $d_{\mathrm{BL}}$ (bounded-Lipschitz) metric generating the weak topology
on $\clx=\cP(\cP(\R^d))$, with Lipschitz
constant $L_F<\infty$.

Now fix such a $F$ and, using \eqref{eq:29}, for each $N$, select $u^N\in\cA_N$ such that
\begin{equation}\label{eq:31}
  -\frac1{NT_N}\log
 \mathbb E\exp\{-NT_N F(\Gamma^{N,T_N})\}+\frac{1}{N}
  \ge
  \mathbb E\Bigg[
    \frac1{4N}
    \sum_{i=1}^N\int_0^1|u_i^N(t)|^2\,dt
    +
    F(\bar\Gamma^{N,T_N})\Bigg].
\end{equation}
As an immediate consequence of the above inequality we have that 
\begin{equation}\label{eq:32}
\E\left[
     \int_0^1
     \frac1N\sum_{i=1}^N |u_i^N(t)|^2\,dt
  \right]
  \le 4(2\|F\|_{\infty} + 1) \doteq C_F.
  \end{equation}
We now modify the controls a little, which will be useful later (see proof of Lemma \ref{lem:10}).

\begin{lemma}
\label{lem:1}
Let
$u^N=(u_1^N,\ldots,u_N^N)\in\cA_N$ be as above.
Let $\varepsilon_N>0$ satisfy
  $\varepsilon_N\to0$,
  $T_N\varepsilon_N\to\infty$,
and define
\begin{equation}\label{eq:33}
  \hat u_i^N(t)
  \doteq
  u_i^N(t)\mathbf 1_{[0,1-\varepsilon_N]}(t),
  \qquad i=1,\ldots,N.
\end{equation}
Let $\hat X_i^N$, $\hat\mu_t^N$, and
$\hat\Gamma^{N,T_N}$ denote the corresponding controlled particles,
empirical measures, and occupation measure. Then
\begin{equation}\label{eq:34}
\begin{aligned}
  &-\frac1{NT_N}
  \log \E\exp\{-NT_NF(\Gamma^{N,T_N})\}
  +\frac1N+2L_F\varepsilon_N\\
  &\qquad\ge
  \E\left[
    \frac1{4N}\sum_{i=1}^N\int_0^1|\hat u_i^N(t)|^2\,dt
    +
    F(\hat\Gamma^{N,T_N})
  \right].
\end{aligned}
\end{equation}

Moreover,
\begin{equation}\label{eq:35}
  \E\left[
    \frac1N\sum_{i=1}^N\int_0^1|\hat u_i^N(t)|^2\,dt
  \right]
  \le
  \E\left[
    \frac1N\sum_{i=1}^N\int_0^1|u_i^N(t)|^2\,dt
  \right],
\end{equation}
and, for every $a>0$,
\begin{equation}\label{eq:36}
  \E\int_0^1
    e^{-aT_N(1-t)}
    \frac1N\sum_{i=1}^N|\hat u_i^N(t)|^2\,dt
  \to0.
\end{equation}
\end{lemma}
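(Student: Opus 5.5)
The plan is to compare the modified system with the original controlled system pathwise, using the same Brownian motions and initial conditions. The key observation is that $\hat u^N$ and $u^N$ coincide on $[0,1-\varepsilon_N]$. By pathwise uniqueness under Assumption~\ref{ass:1}, this gives $\hat X_i^N(t)=\bar X_i^N(t)$ for all $t\le 1-\varepsilon_N$, and hence $\hat\mu_t^N=\bar\mu_t^N$ on that interval.

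\emph{Proof of \eqref{eq:34}.} The two occupation measures differ only through the time interval $(1-\varepsilon_N,1]$:
\[
\hat\Gamma^{N,T_N}-\bar\Gamma^{N,T_N}=\int_{1-\varepsilon_N}^1\bigl(\delta_{\hat\mu_t^N}-\delta_{\bar\mu_t^N}\bigr)\,dt .
\]
For $f\in\mathrm{BL}_1(\cP(\R^d))$ the integrand in $f$ is bounded by $2$ in absolute value, so $d_{\mathrm{BL}}(\hat\Gamma^{N,T_N},\bar\Gamma^{N,T_N})\le 2\varepsilon_N$. Since $F$ is $L_F$-Lipschitz, this gives $F(\hat\Gamma^{N,T_N})\le F(\bar\Gamma^{N,T_N})+2L_F\varepsilon_N$. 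Since $|\hat u_i^N|\le|u_i^N|$ pointwise, the cost term can only decrease. Adding these two facts to \eqref{eq:31} yields \eqref{eq:34}, and the same pointwise inequality gives \eqref{eq:35} directly.

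\emph{Proof of \eqref{eq:36}.} Because $\hat u^N$ vanishes after time $1-\varepsilon_N$, and $1-t\ge\varepsilon_N$ on $[0,1-\varepsilon_N]$, we have
\[
\E\int_0^1 e^{-aT_N(1-t)}\frac1N\sum_i|\hat u_i^N(t)|^2dt\le e^{-aT_N\varepsilon_N}\,\E\int_0^1\frac1N\sum_i|u_i^N(t)|^2dt\le e^{-aT_N\varepsilon_N}C_F .
\]
This uses \eqref{eq:32}, and the right-hand side tends to $0$ because $T_N\varepsilon_N\to\infty$.

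The only point needing care is that $\hat u^N\in\cA_N$ with the same filtration, so that the representation and pathwise coupling are legitimate. This holds because truncating by a deterministic indicator preserves progressive measurability and square integrability. I do not expect any genuine obstacle; the key step is the $d_{\mathrm{BL}}$ estimate for the occupation measures.
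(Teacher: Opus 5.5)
Your proposal is correct and follows the paper's proof essentially step for step. Both arguments couple the two controlled systems pathwise on $[0,1-\varepsilon_N]$, bound $d_{\mathrm{BL}}(\hat\Gamma^{N,T_N},\bar\Gamma^{N,T_N})\le 2\varepsilon_N$ using the $L_F$-Lipschitz property of $F$, dominate the truncated cost pointwise, and obtain \eqref{eq:36} from the bound $e^{-aT_N\varepsilon_N}C_F$ via \eqref{eq:32}.
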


\begin{proof}
Set
\[
  U_N(t)
  \doteq
  \frac1N\sum_{i=1}^N|u_i^N(t)|^2,
  \qquad
  \hat U_N(t)
  \doteq
  \frac1N\sum_{i=1}^N|\hat u_i^N(t)|^2.
\]
The inequality in \eqref{eq:35} is immediate by construction,
since
\[
  \hat U_N(t)
  =
  U_N(t)\mathbf 1_{[0,1-\varepsilon_N]}(t),\;\;
  \int_0^1\hat U_N(t)\,dt
  \le
  \int_0^1U_N(t)\,dt.
\]
Construct the controlled systems as in \eqref{eq:28}, associated with $u^N$ and
$\hat u^N$ using the same initial conditions and the same Brownian
motions. Since the controls agree on $[0,1-\varepsilon_N]$,
pathwise uniqueness of a solution to \eqref{eq:24} gives
\[
  \hat X_i^N(t)=\bar X_i^N(t),
  \qquad
  \hat\mu_t^N=\bar\mu_t^N,
  \qquad
  0\le t\le1-\varepsilon_N.
\]
Thus, since $F$ is $L_F$-Lipschitz,
\[
\begin{aligned}
\left|
    F(\bar\Gamma^{N,T_N})
    -
    F(\hat\Gamma^{N,T_N})
  \right|
  \le
  L_F
  d_{\mathrm{BL}}
  \bigl(
    \bar\Gamma^{N,T_N},
    \hat\Gamma^{N,T_N}
  \bigr)
  &\le
  L_F\int_{1-\varepsilon_N}^1
  d_{\mathrm{BL}}
  \bigl(
    \delta_{\bar\mu_t^N},
    \delta_{\hat\mu_t^N}
  \bigr)\,dt\le
  2L_F\varepsilon_N.
\end{aligned}
\]
Consequently,
\[
\begin{aligned}
  \E\left[
    \frac14\int_0^1\hat U_N(t)\,dt
    +
    F(\hat\Gamma^{N,T_N})
  \right]\le
  \E\left[
    \frac14\int_0^1U_N(t)\,dt
    +
    F(\bar\Gamma^{N,T_N})
  \right]
  +
  2L_F\varepsilon_N.
\end{aligned}
\]
Combining this inequality with \eqref{eq:31} proves
\eqref{eq:34}. 

Finally, $\hat U_N(t)=0$ for
$t\in(1-\varepsilon_N,1]$, and hence
\[
\begin{aligned}
  \E\int_0^1
    e^{-aT_N(1-t)}\hat U_N(t)\,dt
  =
  \E\int_0^{1-\varepsilon_N}
    e^{-aT_N(1-t)}U_N(t)\,dt\le
  e^{-aT_N\varepsilon_N}
  \E\int_0^1U_N(t)\,dt.
\end{aligned}
\]
The right-hand side converges to zero in view of \eqref{eq:32} and since
$T_N\varepsilon_N\to\infty$. This proves
\eqref{eq:36}.
\end{proof}

Henceforth, to keep notation simple, we will fix a sequence $\{\varepsilon_N\}$ as in the above lemma and will continue to denote the associated $\hat u^N, \hat X_i^N, \hat\mu_t^N, 
\hat\Gamma^{N,T_N}$ as $u^N, \bar X_i^N, \bar\mu_t^N, 
\bar\Gamma^{N,T_N}$.

 Define a sequence of random maps from $[0,1]$ to $\clp(\R^d\times \R^d)$ 
as 
$$
\eta^N_t(dx\, dy) \doteq \frac1N\sum_{i=1}^N
  \delta_{\bar X^N_i(t)}(dx) \delta_{u^N_i(t)}(dy) , \; t \in [0,1].$$
	Let,
	for $t \in [0,1]$, 
	$$\cls_t \doteq \cP(\R^d) \times \cP(\R^d \times \R^d) \times [0,t].$$
	We will write $\cls_1 \equiv \cls$.

  Define a sequence $\Theta^N$ of $\clp(\clp(\R^d) \times \clp(\R^d\times\R^d) \times [0,1]) = \clp(\cls)$ valued random variables as
\begin{equation}\label{eq:38}
  \Theta^N(d\gamma\, d\eta\, dt)
  \doteq
  \delta_{(\bar \mu^N_t, \eta^N_t)}(d\gamma\, d\eta) dt.
\end{equation}
Then
$$\frac1{4N}
     \sum_{i=1}^N
     \int_0^1 |u^N_i(t)|^2 dt = \frac1{4}\int_{\cls} 
     \int_{\R^d} |y|^2 [\eta]_2(dy) \Theta^N(d\gamma\, d\eta\, dt)$$
and we may write the bound in \eqref{eq:34} as
\begin{equation}\label{eq:39}
\begin{aligned}
  &-\frac1{NT_N}
  \log \E\exp\{-NT_NF(\Gamma^{N,T_N})\}
  +\frac1N +2L_F\varepsilon_N\\
  &\ge
  \E\Bigg[
     \frac14\int_{\cls} 
     \int_{\R^d} |y|^2 [\eta]_2(dy) \Theta^N(d\gamma\, d\eta\, dt)
     +
     F([\Theta^N]_1)
  \Bigg],
\end{aligned}
\end{equation}
where $[\Theta^N]_1$ denotes the first marginal of $\Theta^N$.

\subsection{Tightness of \texorpdfstring{$\Theta^N$}{ThetaN}}
\label{sec:4}

In this section we prove tightness of the sequence
  $\{\Theta^N\}_{N\ge 1}$ of $\cP(\cS)$ valued random variables.
From Lemma \ref{lem:1} and \eqref{eq:32} it follows that
\begin{equation}\label{eq:40}
\sup_{N\ge 1}\E\Bigg[
     \int_{\cls} \int_{\R^d} |y|^2 [\eta]_2(dy) \Theta^N(d\gamma\, d\eta\, dt)\Bigg] \le C_F < \infty.
\end{equation}

\begin{lemma}\label{lem:2}
There exists $C<\infty$, such that, for all $N\in \N$,
\begin{equation}\label{eq:42}
  \E\left[
    \int_0^1
       \int_{\R^d}|x|^2\,\bar\mu_t^N(dx)\,dt
  \right]
  =
  \E\left[
    \int_0^1
       \frac1N\sum_{i=1}^N |\bar X_i^N(t)|^2\,dt
  \right]
  \le C .
\end{equation}
Furthermore, $\{\Theta^N\}$ is tight sequence of $\cP(\cls)$ valued random variables.
\end{lemma}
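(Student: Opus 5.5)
Apply Itô's formula to the mean square $M_N(t)\doteq \frac1N\sum_{i=1}^N|\bar X_i^N(t)|^2$ along the controlled dynamics \eqref{eq:28}. This gives
\[
dM_N(t)=\frac{T_N}{N}\sum_{i=1}^N\Bigl[-2\bar X_i^N\cdot\nabla V(\bar X_i^N)-2\bar X_i^N\cdot \nabla W*\bar\mu_t^N(\bar X_i^N)+2\bar X_i^N\cdot u_i^N+2d\Bigr]dt+dM^{\rm mart}_t .
\]

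To control the drift, apply Assumption~\ref{ass:3} with $y=0$, $z=\bar X_j^N$ and $z'=0$:
\[
x\cdot\nabla V(x)+x\cdot\nabla W(x-z)\ge \lambda|x|^2-C|z|^2+x\cdot(\nabla V(0)+\nabla W(0)).
\]
Here $\nabla W(0)=0$ because $W$ is even. Averaging over $i,j$ then yields
\[
\frac1N\sum_i\bar X_i^N\cdot\bigl(\nabla V+\nabla W*\bar\mu^N_t\bigr)(\bar X_i^N)\ge (\lambda-C)M_N-|\nabla V(0)|\sqrt{M_N}.
\]
With $\kappa=\lambda-C>0$, Young's inequality on both the $\nabla V(0)$ term and the control term gives
\[
\frac{d}{dt}\E M_N\le T_N\Bigl(-\kappa\,\E M_N + c_1 + c_2\,\E U_N(t)\Bigr),
\]
where $U_N(t)=\frac1N\sum_i|u_i^N(t)|^2$. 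The stochastic integral is a true martingale, or can be handled by localization, since the controls are square integrable and the coefficients have linear growth.

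Integrating this inequality over $[0,1]$ directly, rather than through Gronwall, is cleaner:
\[
\kappa T_N\int_0^1\E M_N\,dt\le \E M_N(0)+T_N c_1+T_N c_2\int_0^1\E U_N\,dt .
\]
Here I drop $\E M_N(1)\ge0$. Dividing by $T_N$, using Assumption~\ref{ass:2}(a) for $M_N(0)$, the bound \eqref{eq:32} via \eqref{eq:35}, and $T_N\ge T_{\min}>0$ (or $T_N\ge1$ for large $N$, with finitely many $N$ handled separately), I get \eqref{eq:42}. A careful treatment stops at $\tau_R=\inf\{t: M_N(t)\ge R\}$ and lets $R\to\infty$ using monotone convergence. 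I expect this localization to be the only technical point; the dissipativity inequality is the crux.

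Tightness of $\{\Theta^N\}$ in $\cP(\cS)$ then follows from a tightness function argument. Since $[0,1]$ is compact, it suffices to check tightness of the random measures through their mean measures, i.e.\ to find $g:\cP(\R^d)\times\cP(\R^d\times\R^d)\to[0,\infty]$ with compact sublevel sets and $\sup_N\E\int g\,d\Theta^N<\infty$. I would take
\[
g(\gamma,\eta)=\int|x|^2\gamma(dx)+\int(|x|^2+|y|^2)\eta(dx\,dy).
\]
Its sublevel sets are compact, since bounded second moments give tightness in $\cP(\R^d)$ and these moment functionals are lower semicontinuous. The first marginal of $\eta_t^N$ is $\bar\mu^N_t$, so $\E\int g\,d\Theta^N$ is bounded by $2C$ from \eqref{eq:42} plus $C_F$ from \eqref{eq:40}. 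Finally, the map $\Theta\mapsto\int g\,d\Theta$ is a tightness function on $\cP(\cS)$ (cf.\ \cite[Lemma~2.9]{buddupbook}-type results), and Markov's inequality gives tightness of the laws of $\Theta^N$.
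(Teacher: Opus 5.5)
Your proposal is correct and follows essentially the same route as the paper. Both arguments apply It\^o's formula to the empirical second moment, use the dissipativity inequality with $y=z'=0$ and $\nabla W(0)=0$, apply Young's inequality, integrate over $[0,1]$ dropping the terminal term, and get tightness from the bounds \eqref{eq:40} and \eqref{eq:42} via Markov's inequality on compact second-moment sublevel sets. Your remarks on localization and on $\inf_N T_N>0$ only make explicit points the paper leaves implicit.
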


\begin{proof}
Set
\begin{equation}\label{eq:43}
  S_N(t)\doteq \frac1N\sum_{i=1}^N |\bar X_i^N(t)|^2,
  \qquad
  U_N(t)\doteq \frac1N\sum_{i=1}^N |u_i^N(t)|^2 .
\end{equation}
 Since $W$ is even,
$\nabla W(0)=0$. Consider $(x_1, \ldots, x_N) \in (\R^d)^N$. Taking $x=x_i, y=0$, $z=x_j$, and $z'=0$ in 
Assumption~\ref{ass:3}, we get
\[
\begin{aligned}
  x_i\cdot \nabla V(x_i)
  +x_i\cdot \nabla W(x_i-x_j)
  &\ge
  \lambda |x_i|^2
  -C|x_j|^2
  +x_i\cdot \nabla V(0).
\end{aligned}
\]
Using Young's inequality on the last term and then averaging over $i,j$, and
using $\lambda>C$, there are constants $a>0$ and $c<\infty$ such that
\begin{equation}\label{eq:44}
  \frac1N\sum_{i=1}^N
  x_i\cdot
  \left[
     \nabla V(x_i)
     +\frac1N\sum_{j=1}^N \nabla W(x_i-x_j)
  \right]
  \ge
  a\,\frac1N\sum_{i=1}^N |x_i|^2
  -c .
\end{equation}

Applying It\^o's formula to $S_N(t)$ using
\eqref{eq:28} and taking expectations,
\[
\begin{aligned}
  \frac{d}{dt}\E S_N(t)
  &=
  -2T_N\E\left[
     \frac1N\sum_{i=1}^N
     \bar X_i^N(t)\cdot
     \left(
       \nabla V(\bar X_i^N(t))
       +\nabla W*\bar\mu_t^N(\bar X_i^N(t))
     \right)
  \right] \\
  &\quad
  +2T_N\E\left[
      \frac1N\sum_{i=1}^N
      \bar X_i^N(t)\cdot u_i^N(t)
  \right]
  +2dT_N .
\end{aligned}
\]
By \eqref{eq:44},
\[
  -2T_N\E\left[
     \frac1N\sum_{i=1}^N
     \bar X_i^N(t)\cdot
     \left(
       \nabla V(\bar X_i^N(t))
       +\nabla W*\bar\mu_t^N(\bar X_i^N(t))
     \right)
  \right]
  \le
  -2aT_N\E S_N(t)+2cT_N .
\]
Moreover, by Young's inequality,
\[
  2\frac1N\sum_{i=1}^N
      \bar X_i^N(t)\cdot u_i^N(t)
  \le
  a S_N(t)+a^{-1}U_N(t).
\]
Combining these estimates gives
\begin{equation}\label{eq:45}
  \frac{d}{dt}\E S_N(t)
  \le
  -aT_N\E S_N(t)
  +c_1 T_N\bigl(1+\E U_N(t)\bigr),
\end{equation}
for a constant $c_1<\infty$ independent of $N$.

Integrating \eqref{eq:45} over $[0,1]$, gives
\[
\begin{aligned}
  aT_N \int_0^1 \E S_N(t)\,dt
  &\le
   S_N(0)-\E S_N(1)
  +c_1 T_N
  +c_1 T_N\int_0^1 \E U_N(t)\,dt \\
  &\le
   S_N(0)
  +c_1 T_N
  +c_1 T_N\int_0^1 \E U_N(t)\,dt .
\end{aligned}
\]
The estimate in \eqref{eq:42} now follows on dividing by $aT_N$, and using Assumption \ref{ass:2}(a) and the control estimate \eqref{eq:40}.

We now prove tightness of $\{\Theta^N\}$. Combining
\eqref{eq:40} with \eqref{eq:42}, we get
\begin{equation}\label{eq:46}
  \sup_{N\ge1}
  \E\left[
     \int_{\cls}
     \left(
       \int_{\R^d}|x|^2\,\gamma(dx)
       +
       \int_{\R^d}(|z|^2+|y|^2)\,\eta(dz\, dy)
     \right)
     \Theta^N(d\gamma\,d\eta\,dt)
  \right]
  <\infty .
\end{equation}

For $R>0$, define
\[
  K_R^x
  \doteq
  \left\{
    \gamma\in\cP(\R^d):
    \int_{\R^d}|x|^2\,\gamma(dx)\le R
  \right\},
\]
and
\[
  K_R^u
  \doteq
  \left\{
    \eta\in\cP(\R^d\times \R^d):
    \int_{\R^d}(|z|^2+|y|^2)\,\eta(dz\,dy)\le R
  \right\}.
\]
Then
$K_R^x$ and $K_R^u$ are compact subsets of $\cP(\R^d)$ and $\cP(\R^d\times \R^d)$ respectively.  
The tightness of the laws of $\Theta^N$ as random variables with
values in $\cP(\cls)$
is now immediate on using \eqref{eq:46} by an application of Markov's inequality.
\end{proof}
\subsection{A lower bound on the limit cost.}
\label{sec:5}
In this section, in Proposition \ref{prop:1}, we establish  a key lower bound on the asymptotic cost.
Let $\{\Theta^N\}_{N\ge1}$ be as in the previous section.
Since, from Lemma \ref{lem:2}, this sequence is tight, it converges, in distribution, in the weak convergence topology, along a subsequence, to some limit point $\Theta$. We assume without loss of generality that the convergence holds along the full sequence and holds a.s. (by appealing to the Skorokhod representation theorem), in the weak convergence topology.

Recall that we denote the marginal distributions of $\tilde\Theta \in \cP(S)$ on the $i$-th (resp. $(i,j)$-th) coordinate as $[\tilde\Theta]_i$ (resp. $[\tilde\Theta]_{ij}$), for $1\le i<j\le 3$.

The following proposition is key.

\begin{proposition}\label{prop:1}
Let $\{\Theta^N\}$, $\Theta$ be as introduced above.
Then, 
\begin{equation}\label{eq:47}
 \liminf_{N\to \infty} \E\int_{\clp(\R^d\times \R^d)} \left(\int_{\R^d} |y|^2 \eta(dz\,dy)\right) 
 [\Theta^N]_{2}(d\eta) \ge 4\E \int_{\clp(\R^d)} \mathfrak i(\gamma) [\Theta]_{1}(d\gamma).
\end{equation}
\end{proposition}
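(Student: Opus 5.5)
Following the heuristic in the introduction, we apply It\^o's formula to a Gaussian-regularized free energy along the controlled empirical flow. This yields an orthogonality identity in which the control energy dominates the dissipation of the regularized free energy. We then pass to the limit using lower semicontinuity of a dual form of $\mathfrak i$.

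For $\sigma>0$ let $g_\sigma$ be the centered Gaussian density with covariance $\sigma I_d$, and set $\rho^{N,\sigma}_t \doteq g_\sigma*\bar\mu^N_t$. Define the regularized free energy
\[
\cF_\sigma(\bar\mu_t^N) \doteq \int V\,d\bar\mu^N_t + \frac1{2N^2}\sum_{i\neq j}W(\bar X^N_i-\bar X^N_j) + \int \rho^{N,\sigma}_t\log\rho^{N,\sigma}_t\,dx .
\]
This is a smooth function of $(\bar X_1^N,\dots,\bar X_N^N)$; the entropy part has $x_i$-gradient $\frac1N\nabla(g_\sigma*\log\rho^{N,\sigma}_t)(\bar X_i^N)$. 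Write $b_i$ for the $x_i$-gradient of $\cF_\sigma$ multiplied by $N$. It equals $\nabla V(\bar X_i^N)+\nabla W*\bar\mu_t^N(\bar X_i^N)+\nabla(g_\sigma*\log\rho^{N,\sigma}_t)(\bar X_i^N)$, up to the $O(1/N)$ diagonal term in the interaction. By \eqref{eq:28}, It\^o's formula gives
\[
d\cF_\sigma = T_N\frac1N\sum_i b_i\cdot(-\beta_i+u_i)\,dt + T_N\,\frac1N\sum_i\frac1N\Delta_{x_i}(N\cF_\sigma)\,dt + dM_t ,
\]
where $\beta_i$ is the McKean--Vlasov drift. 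The key algebraic step is that $b_i-\beta_i$ is exactly the smoothed score term. Completing squares in the form $b\cdot(u-b)\le \frac14|u|^2 - \frac{3}{4}|b|^2+\dots$ then yields, after dividing by $T_N$ and integrating over $[0,1]$,
\[
\E\int_0^1 \frac1N\sum_i |u_i^N|^2\,dt \ \ge\ \E\int_0^1 \frac1N\sum_i |b_i|^2\,dt - \frac{C}{T_N}\bigl(\E\cF_\sigma(\bar\mu^N_0)-\E\cF_\sigma(\bar\mu^N_1)\bigr) - R_{N,\sigma} .
\]
The correct split is rather $|u|^2\ge |b|^2+2b\cdot(u-b)$, with the cross term identified as the time derivative of $\cF_\sigma$ plus the It\^o correction. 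Here $R_{N,\sigma}$ collects the second-order It\^o terms and the diagonal self-interaction terms.

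The boundary term is handled as follows. $\cF_\sigma(\bar\mu_0^N)$ is bounded by $C_\sigma(1+S_N(0))$ using Assumption~\ref{ass:2}(a). $\cF_\sigma(\bar\mu_1^N)$ is bounded below by a constant plus a second-moment term, because $V$ is uniformly convex and the entropy is bounded below given second moments. The control vanishes near $t=1$ (Lemma~\ref{lem:1}), so $\E S_N(1)$ is bounded, using \eqref{eq:45} and \eqref{eq:36}. Dividing by $T_N\to\infty$ then kills this term for fixed $\sigma$. The Laplacian term is of order $\frac1N\|D^2(g_\sigma*\log\rho^{N,\sigma})\|_\infty$, which may not vanish. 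I will bound it by $C/(N\sigma)$ using the explicit Gaussian structure; alternatively it partly cancels with the trace term from the noise. The diagonal interaction terms are $O(\|D^2W\|_\infty/N)$. Hence $R_{N,\sigma}\to0$ as $N\to\infty$ for each fixed $\sigma$, and this is where Assumption~\ref{ass:1} enters.

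To pass to the limit, I use a variational form, in the spirit of Lemma~\ref{lem:8}:
\[
\int|b|^2 d\bar\mu^N_t \ge \sup_{\varphi\in C_c^\infty}\Bigl\{2\int \varphi\cdot b\,d\bar\mu^N_t-\int|\varphi|^2d\bar\mu^N_t\Bigr\}.
\]
Integrating by parts against the Gaussian shows that $\int\varphi\cdot\nabla(g_\sigma*\log\rho^\sigma)\,d\bar\mu$ equals $-\int \nabla\cdot(g_\sigma*(\varphi\bar\mu))\,dx$ up to an error. This error should be controlled by writing it as $\int (g_\sigma*\varphi-\varphi\,\cdot)$-type commutators; it is the technical heart. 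The resulting functional is continuous in $\gamma$ in the weak topology with second moments, so the a.s.\ convergence $\Theta^N\to\Theta$ and Fatou give a $\sigma$-dependent lower bound of the form $4\E\int \mathfrak i_\sigma(\gamma)[\Theta]_1(d\gamma)$. Letting $\sigma\to0$, the commutator errors vanish, and lower semicontinuity together with monotone convergence over a countable dense family of $\varphi$ recover $\mathfrak i(\gamma)$.

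The main obstacle is making the regularization error in the entropy part uniform. Specifically, one must show that $\nabla(g_\sigma*\log(g_\sigma*\mu))$ evaluated at atoms, tested against $\varphi$, converges to the weak score pairing $-\int\nabla\!\cdot\varphi\,d\gamma$ as $N\to\infty$ and then $\sigma\to0$.
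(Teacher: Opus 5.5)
\textbf{The second-order It\^o term is mis-scaled, and it is not negligible.} Write $\delta=\sqrt\sigma$ and use the notation of Section~\ref{sec:5}. Then your $b_i$ is $G_\delta(\bar\mu^N_t,\bar X^N_i(t))$ from \eqref{eq:53}, and your $\beta_i$ is $b_{\bar\mu^N_t}(\bar X^N_i(t))$.

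Each of the $N$ particles carries noise $\sqrt{2T_N}\,d\widetilde B_i$. It\^o's formula therefore produces the correction $T_N\sum_{i=1}^N\Delta_{x_i}\cF_\sigma\,dt$, not $T_N\frac1N\sum_i\Delta_{x_i}\cF_\sigma\,dt$. Since $\Delta_{x_i}\cF_\sigma$ is of order $1/N$ per particle, this correction is $O(1)$ after division by $T_N$. By Lemma~\ref{lem:5} it equals $-\cV_\delta(\bar\mu^N_t)+D_{N,\delta}(\bar X^N(t))+A_N(\bar\mu^N_t)$. As a sanity check, take $V(x)=|x|^2/2$, $W=0$, $u=0$: the potential part alone contributes $T_N\int\Delta V\,d\bar\mu^N_t=dT_N$, and dropping it would send the second moment to $0$ rather than to its equilibrium value $d$. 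So $R_{N,\sigma}\to0$ is false.

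Your own split cannot close without these terms. The quantity $\frac1N\sum_i b_i\cdot(u_i-b_i)$ differs from the true drift $\frac1N\sum_ib_i\cdot(u_i-\beta_i)$ by
$-\frac1N\sum_ib_i\cdot a_\delta(\bar\mu_t^N,\bar X_i^N)=-\int|a_\delta|^2\,d\bar\mu^N_t-C_\delta(\bar\mu^N_t)$, which is $O(1)$ and has no sign. Absorbing it is the actual content of the proof, in three steps you have not done:
\begin{enumerate}
\item $\int|a_\delta|^2d\mu\le\cV_\delta(\mu)$, by Jensen.
\item $A_N+C_\delta$ is small only as $\delta\downarrow0$, not as $N\to\infty$. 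It comes from an integration by parts whose error $\int\nabla\log\rho_\delta\cdot(B_\delta-b_\mu)\rho_\delta\,dz$ is a priori only bounded by $\cV_\delta^{1/2}L_b\sqrt d\,\delta\le L_bd$. Making it vanish needs the $\delta$-uniform bootstrap bound of Lemma~\ref{lem:14} and Proposition~\ref{prop:2}.
\item The entropy diagonal term $D_{N,\delta}$ is trivially only $\le d/\delta^2$, and it is of that size when the atoms are well separated. It vanishes as $N\to\infty$ only through the localization argument of Lemma~\ref{lem:11}.
\end{enumerate}

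\textbf{The functional you retain is too weak for the $\sigma\to0$ limit.} Your left side is $\frac1N\sum_i|b_i|^2=\int|a_\delta(\mu,\cdot)+b_\mu|^2\,d\mu$. It evaluates the twice-smoothed score back on $\mu$, and so it does not penalize singular measures. For $\mu=\delta_0$ one has $a_\delta(\delta_0,0)=\int\nabla\kappa_\delta\,dz=0$, so the functional equals $|\nabla V(0)|^2$ for every $\delta$, while $\mathfrak i(\delta_0)=\infty$. Correspondingly, your commutator $\int\varphi\cdot a_\delta\,d\gamma+\int\nabla\cdot\varphi\,\rho^\gamma_\delta\,dz$ tends to $\nabla\cdot\varphi(0)$, not to $0$. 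The same degeneration occurs for empirical measures whose atoms are separated on scale $\delta$.

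Nothing else in your argument prevents $[\Theta]_1$ from charging such measures. Yet the left side of \eqref{eq:47} is finite by \eqref{eq:40}, so \eqref{eq:47} itself forces $[\Theta]_1$ onto measures with $\mathfrak i<\infty$; your $\sigma\to0$ step therefore fails.

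The repair is to keep what the correctly computed It\^o terms give you. Combine your particle-level split $2b_i\cdot u_i\le|u_i|^2+|b_i|^2$ with $-\cV_\delta\le-\int|a_\delta|^2d\mu$. The split itself is legitimate, and it even avoids the paper's commutator $\cR^u_\delta$. The combination leaves $\frac12\int_0^1\bigl(\cV_\delta+2C_\delta+\int|b|^2\,d\bar\mu^N_t\bigr)dt$ on the left, which dominates $\frac12\int_0^1\cJ_\delta(\bar\mu^N_t)\,dt$ by \eqref{eq:72}. The functional $\cJ_\delta$ depends on the smoothed density, is lower semicontinuous for fixed $\delta$, and satisfies $\cD\le\liminf_{\delta\downarrow0}\cJ_\delta$ at every $\gamma$ (Lemmas~\ref{lem:7} and~\ref{lem:9}). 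These properties are what let both limits go through.
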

The rest of this section is devoted to the proof of the above proposition.
Key steps in the proof are Lemma \ref{lem:6} and Corollary
\ref{cor:1}. The first gives an upper bound on a
mollified version of the quantity on the right side of
\eqref{eq:47}, whereas the second relates its limit as the
mollification parameter vanishes to the actual quantity on the right
side of \eqref{eq:47}.
The right side of the inequality in Lemma \ref{lem:6} involves a
term as on the left side of \eqref{eq:47} and several auxiliary
terms. These auxiliary terms vanish in the limit as a consequence of Lemmas 
\ref{lem:10}, \ref{lem:11}, and Proposition \ref{prop:2}, which then establishes the desired inequality in 
\eqref{eq:47}. Lemma  \ref{lem:6}, and the related Lemma \ref{lem:5},  are essentially consequences of 
It\^{o}'s formula applied to the free energy $\cF$ along the stochastic process $\bar \mu^N_t$. However, since the empirical measure $\bar\mu_t^N$ is
atomic and therefore has infinite entropy, we first  introduce a regularized free energy
$\cF_\delta(\bar\mu_t^N)$, by using convolutions  with 
Gaussian kernels, and  use this object for It\^{o} formula calculation. The key calculation that is at the heart of the above inequality appears in the proof of
Lemma \ref{lem:6}, in \eqref{eq:75} which can be viewed as a `completing the
squares' type orthogonality relation.

We now turn to the construction of  mollifications needed to define $\cF_{\delta}$.
Define
\[
  \kappa(x)
  \doteq
  (2\pi)^{-d/2}e^{-|x|^2/2},
  \qquad
  \kappa_\delta(x)
  \doteq
  \delta^{-d}\kappa(x/\delta), \; x \in \R^d, \delta >0.
\]
For $\mu\in\cP_2(\R^d)$, let
  $\rho_\delta^\mu=\kappa_\delta*\mu$.
  In defining the regularized free energy for a measure $\mu$, that may not have a density, we will replace $\rho^{\mu}$ with the mollified quantity $\rho_\delta^\mu$.
We record the following elementary lemma. The proof is omitted.
\begin{lemma}\label{lem:3}
Fix $\delta>0$ and $\mu\in\cP_2(\R^d)$.
The following statements hold.
\begin{enumerate}[label={(\alph*)}]
\item The function $\rho_\delta^\mu$ belongs to
$C^\infty(\R^d)$ and is strictly positive on $\R^d$.

\item For every $z\in\R^d$,
\begin{equation}\label{eq:48}
  \nabla\rho_\delta^\mu(z)
  =
  -\frac1{\delta^2}
  \int_{\R^d}(z-y)\kappa_\delta(z-y)\,\mu(dy).
\end{equation}

\item Defining
\[
  \pi_{\delta,z}^\mu(dy)
  \doteq
  \frac{\kappa_\delta(z-y)}
       {\rho_\delta^\mu(z)}\,\mu(dy),
\]
one has $\pi_{\delta,z}^\mu\in\cP(\R^d)$,
$\int_{\R^d}|y|\,\pi_{\delta,z}^\mu(dy)<\infty$,
and
\begin{equation}\label{eq:49}
  \nabla\log\rho_\delta^\mu(z)
  =
  -\frac1{\delta^2}
  \left(
    z-\int_{\R^d}y\,\pi_{\delta,z}^\mu(dy)
  \right).
\end{equation}

\item There exists a finite constant
$C_{\delta,\mu}$ such that
\begin{equation}\label{eq:50}
  \left|\nabla\log\rho_\delta^\mu(z)\right|
  \le
  C_{\delta,\mu}(1+|z|),
  \qquad z\in\R^d.
\end{equation}
Consequently, for every $x\in\R^d$,
\begin{equation}\label{eq:51}
  a_\delta(\mu,x)
  \doteq
  \int_{\R^d}
  \kappa_\delta(z-x)
  \nabla\log\rho_\delta^\mu(z)\,dz
\end{equation}
is well defined and finite.
\end{enumerate}
\end{lemma}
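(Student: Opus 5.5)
The four statements are elementary properties of the Gaussian mollification, and I will prove them in order, using only that $\kappa_\delta$ is smooth, strictly positive and rapidly decaying, and that $\mu\in\cP_2(\R^d)$.

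\textbf{Part (a).} Write $\rho_\delta^\mu(z)=\int\kappa_\delta(z-y)\,\mu(dy)$. Every derivative $\partial^\alpha\kappa_\delta$ is bounded and continuous, so differentiation under the integral sign is justified by dominated convergence, with dominating function $\sup|\partial^\alpha\kappa_\delta|$ and $\mu$ a probability measure. This gives $\rho_\delta^\mu\in C^\infty(\R^d)$. Strict positivity holds because $\kappa_\delta>0$ everywhere and $\mu$ has total mass one.

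\textbf{Part (b).} Since $\nabla\kappa_\delta(x)=-\delta^{-2}x\,\kappa_\delta(x)$, formula \eqref{eq:48} follows by differentiating under the integral, which part (a) already justified.

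\textbf{Part (c).} By construction $\pi_{\delta,z}^\mu$ is a nonnegative measure, and its total mass is $\rho_\delta^\mu(z)/\rho_\delta^\mu(z)=1$. Its first moment is finite because $\kappa_\delta\le\delta^{-d}(2\pi)^{-d/2}$ and $\mu$ has a finite second moment. Dividing \eqref{eq:48} by $\rho_\delta^\mu(z)$ then gives \eqref{eq:49}.

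\textbf{Part (d).} This is the only step needing an estimate. By \eqref{eq:49},
\[
|\nabla\log\rho_\delta^\mu(z)|\le\delta^{-2}\Bigl(|z|+\int|y|\,\pi_{\delta,z}^\mu(dy)\Bigr),
\]
so it suffices to show $\int|y|\,\pi_{\delta,z}^\mu(dy)\le C(1+|z|)$. The plan is as follows.

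First, fix $R$ with $\mu(B_R)\ge 1/2$, which is possible by tightness. For $|y|\le R$ we have $|z-y|^2\le 2|z|^2+2R^2$, so
\[
\rho_\delta^\mu(z)\ge \tfrac12\,c_\delta\, e^{-(|z|^2+R^2)/\delta^2},
\]
where $c_\delta=(2\pi\delta^2)^{-d/2}$.

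Second, split the numerator $\int|y|\kappa_\delta(z-y)\,\mu(dy)$ into the regions $|y|\le 3|z|+R'$ and its complement, with $R'$ chosen below. On the first region the ratio contributes at most $3|z|+R'$, since $\pi^\mu_{\delta,z}$ is a probability measure. On the complement, $|z-y|\ge|y|-|z|\ge \tfrac23|y|+\tfrac13 R'$, so
\[
\kappa_\delta(z-y)\le c_\delta e^{-|z-y|^2/(2\delta^2)}\le c_\delta e^{-2|y|^2/(9\delta^2)}e^{-R'^2/(18\delta^2)}.
\]
On that region $|y|\ge 3|z|$, hence $|z|^2\le|y|^2/9$, and therefore
\[
e^{(|z|^2+R^2)/\delta^2}\,\kappa_\delta(z-y)\le c_\delta\, e^{-|y|^2/(9\delta^2)}\,e^{R^2/\delta^2-R'^2/(18\delta^2)}.
\]
Choosing $R'=\sqrt{18}\,R$ makes the last factor at most $1$, and $|y|e^{-|y|^2/(9\delta^2)}$ is bounded. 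Hence the tail contribution to $\int|y|\,\pi^\mu_{\delta,z}(dy)$ is bounded by a constant depending only on $\delta$ and $R$, uniformly in $z$.

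Together these give \eqref{eq:50} with $C_{\delta,\mu}$ depending on $\delta$ and on $R$ (through $\mu$).

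\textbf{Conclusion for (d).} By \eqref{eq:50}, the integrand in \eqref{eq:51} is bounded by $C_{\delta,\mu}\kappa_\delta(z-x)(1+|z|)$, which is integrable in $z$ because Gaussians have all moments. So $a_\delta(\mu,x)$ is well defined and finite.

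The main obstacle is the linear-growth bound \eqref{eq:50}: the denominator $\rho_\delta^\mu(z)$ decays in $z$, and the tail splitting above is designed to show the numerator decays at least as fast, without using anything about $\mu$ beyond tightness and a finite first moment.
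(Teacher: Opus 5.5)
Your proof is correct and complete. The paper calls this lemma elementary and omits its proof, so there is no competing argument to compare against.

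The one substantive step, the linear-growth bound \eqref{eq:50}, checks out. You obtain the lower bound $\rho_\delta^\mu(z)\ge \tfrac12(2\pi\delta^2)^{-d/2}e^{-(|z|^2+R^2)/\delta^2}$ from $\mu(B_R)\ge 1/2$. Combined with your split at $|y|=3|z|+\sqrt{18}\,R$, this gives $\int|y|\,\pi^\mu_{\delta,z}(dy)\le 3|z|+\sqrt{18}\,R+2\sup_{y}|y|e^{-|y|^2/(9\delta^2)}$, and the exponent bookkeeping is right.

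A minor remark: neither the finiteness of $\int|y|\,\pi^\mu_{\delta,z}(dy)$ in (c) nor the bound in (d) uses the moment assumption on $\mu$. The map $y\mapsto|y|\kappa_\delta(z-y)$ is bounded, and your tail estimate uses only that $\mu$ is a probability measure.
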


For $\mu\in\cP_2(\R^d)$, let
\begin{equation}\label{eq:52}
  b_\mu(x)
  \doteq
  \nabla V(x)+\nabla W*\mu(x).
\end{equation}
We now set
\begin{equation}\label{eq:53}
  G_\delta(\mu,x)
  \doteq
  a_\delta(\mu,x)+b_\mu(x)
\end{equation}
which can be viewed as the regularized analogue of the formal quantity
\[
  \nabla\frac{\delta\cF}{\delta\mu}(\mu)(x)
  =
  \nabla\log\rho^{\mu}(x)+b_\mu(x).
\]

Although $b_\mu$ is already globally Lipschitz, mollifying the measure
replaces the drift flux $b_\mu\mu$ by
$\kappa_\delta*(b_\mu\mu)$. It is therefore useful to express this
mollified flux relative to the density
$\rho_\delta^\mu=\kappa_\delta*\mu$. Define the effective mollified drift
\begin{equation}\label{eq:54}
  B_\delta[\mu](z)
  \doteq
  \frac{
    \displaystyle
    \int_{\R^d}
      \kappa_\delta(z-x)b_\mu(x)\,\mu(dx)}
  {\rho_\delta^\mu(z)}.
\end{equation}
Thus,
$\rho_\delta^\mu B_\delta[\mu]
  =
  \kappa_\delta*(b_\mu\mu)$.
Recall $\bar \mu^N_t$ from \eqref{eq:25} and define
$
  B_{\delta,t}^N
  =
  B_\delta[\bar\mu_t^N]$.
Also, define
\begin{equation}\label{eq:55}
  \rho_{\delta,t}^N(z)
  \doteq
  \rho_\delta^{\bar\mu_t^N}(z)
  =
  \frac1N\sum_{i=1}^N
  \kappa_\delta(z-\bar X_i^N(t)).
\end{equation}
Let
\begin{equation}\label{eq:56}
  j_{\delta,t}^{N,u}(z)
  \doteq
  \frac1N\sum_{i=1}^N
  \kappa_\delta(z-\bar X_i^N(t))
  u_i^N(t),\;\; 
U_{\delta,t}^N(z)
  \doteq
  \frac{j_{\delta,t}^{N,u}(z)}
       {\rho_{\delta,t}^N(z)}.
\end{equation}
We now apply It\^{o}'s formula to the semimartingale $\bar X^N(t)$ to obtain a semimartingale representation for $\rho_{\delta,t}^N(z)$.
\begin{lemma}\label{lem:4}
For $z \in \R^d$,
\begin{equation}\label{eq:57}
\begin{aligned}
  d\rho_{\delta,t}^N(z)
  &=
  T_N\nabla\cdot
  \left[
    \rho_{\delta,t}^N(z)
    \left(
      \nabla\log\rho_{\delta,t}^N(z)
      +
      B_{\delta,t}^N(z)
      -
      U_{\delta,t}^N(z)
    \right)
  \right]dt
  +
  d\mathfrak M_{\delta,t}^N(z),
\end{aligned}
\end{equation}
where
\begin{equation}\label{eq:58}
  \mathfrak M_{\delta,t}^N(z)
  \doteq
  -
  \frac{\sqrt{2T_N}}{N}
  \sum_{i=1}^N
  \int_0^t
  \nabla\kappa_\delta(z-\bar X_i^N(s))
  \cdot d\widetilde B_i(s).
\end{equation}
\end{lemma}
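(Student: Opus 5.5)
The plan is to apply It\^o's formula to each summand of $\rho_{\delta,t}^N(z)=\frac1N\sum_i\kappa_\delta(z-\bar X_i^N(t))$ for fixed $z$, using the dynamics \eqref{eq:28}, and then rewrite the drift in divergence form in the $z$ variable. Since $\kappa_\delta$ is smooth with all derivatives bounded, and $\bar X_i^N$ is a continuous semimartingale, the formula applies directly. We have $\nabla_x\kappa_\delta(z-x)=-\nabla\kappa_\delta(z-x)$ and $\Delta_x\kappa_\delta(z-x)=\Delta\kappa_\delta(z-x)$. This gives
\[
d\kappa_\delta(z-\bar X_i^N(t))
=-\nabla\kappa_\delta(z-\bar X_i^N(t))\cdot\bigl(-T_N b_{\bar\mu_t^N}(\bar X_i^N(t))+T_Nu_i^N(t)\bigr)dt
+T_N\Delta\kappa_\delta(z-\bar X_i^N(t))\,dt
-\sqrt{2T_N}\,\nabla\kappa_\delta(z-\bar X_i^N(t))\cdot d\widetilde B_i(t).
\]
Averaging over $i$, the stochastic terms give exactly $d\mathfrak M_{\delta,t}^N(z)$ in \eqref{eq:58}. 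This is a genuine martingale, since the integrand is bounded by $\|\nabla\kappa_\delta\|_\infty$.

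Next, I identify each drift term in divergence form. Differentiating under the finite sum in $z$ gives the following.

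\emph{Laplacian term.} $\frac1N\sum_i\Delta\kappa_\delta(z-\bar X_i^N)=\Delta\rho_{\delta,t}^N(z)=\nabla\cdot(\rho_{\delta,t}^N\nabla\log\rho_{\delta,t}^N)(z)$. This uses the strict positivity and smoothness of $\rho_\delta^{\mu}$ from Lemma~\ref{lem:3}(a).

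\emph{Drift term.} $\frac1N\sum_i\nabla\kappa_\delta(z-\bar X_i^N)\cdot b(\bar X_i^N)=\nabla_z\cdot\bigl[\frac1N\sum_i\kappa_\delta(z-\bar X_i^N)b(\bar X_i^N)\bigr]=\nabla\cdot(\kappa_\delta*(b_{\bar\mu_t^N}\bar\mu_t^N))=\nabla\cdot(\rho_{\delta,t}^NB_{\delta,t}^N)$, by \eqref{eq:54}.

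\emph{Control term.} $-\frac1N\sum_i\nabla\kappa_\delta(z-\bar X_i^N)\cdot u_i^N=-\nabla\cdot j_{\delta,t}^{N,u}(z)=-\nabla\cdot(\rho_{\delta,t}^NU_{\delta,t}^N)(z)$, by \eqref{eq:56}.

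Note that $u_i^N(t)$ does not depend on $z$, so it passes through $\nabla_z$. Collecting these terms with the factor $T_N$ yields \eqref{eq:57}.

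The only care needed is bookkeeping of signs, since the argument is $z-x$, and measurability. The controls are only square integrable in $t$, so the $dt$ integrals are to be read as Lebesgue integrals, which are a.s. finite. The main (minor) obstacle is confirming that the drift integrand $T_N|\nabla\kappa_\delta|(|b|+|u_i^N|)$ is a.s. integrable on $[0,1]$. This follows from the boundedness of $\nabla\kappa_\delta$, the linear growth of $b$ under Assumption~\ref{ass:1}, continuity of paths, and $u^N\in\cA_N$.
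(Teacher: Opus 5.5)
Your proposal is correct and follows essentially the same route as the paper: It\^o's formula for $x\mapsto\kappa_\delta(z-x)$ along each $\bar X_i^N$, averaging over $i$, and then identifying $\Delta\rho_{\delta,t}^N=\nabla\cdot(\rho_{\delta,t}^N\nabla\log\rho_{\delta,t}^N)$, $j_{\delta,t}^{N,b}=\rho_{\delta,t}^N B_{\delta,t}^N$ and $j_{\delta,t}^{N,u}=\rho_{\delta,t}^N U_{\delta,t}^N$. Your signs all check out. The extra remarks on integrability of the drift and the martingale property are harmless additions that the paper leaves implicit.
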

\begin{proof}
Fix $z\in\R^d$. Applying It\^o's formula to
$x\mapsto\kappa_\delta(z-x)$
along $\bar X_i^N$, and using
\[
  \nabla_x\kappa_\delta(z-x)
  =
  -\nabla_z\kappa_\delta(z-x),
  \qquad
  \Delta_x\kappa_\delta(z-x)
  =
  \Delta_z\kappa_\delta(z-x),
\]
gives
\[
\begin{aligned}
 d\kappa_\delta(z-\bar X_i^N(t))
 &=
 T_N b_{\bar\mu_t^N}(\bar X_i^N(t))
 \cdot
 \nabla_z\kappa_\delta(z-\bar X_i^N(t))\,dt-
 T_N u_i^N(t)\cdot
 \nabla_z\kappa_\delta(z-\bar X_i^N(t))\,dt\\
 &\quad+
 T_N\Delta_z
 \kappa_\delta(z-\bar X_i^N(t))\,dt-
 \sqrt{2T_N}
 \nabla_z\kappa_\delta(z-\bar X_i^N(t))
 \cdot d\widetilde B_i(t).
\end{aligned}
\]
Summing over $i$ and dividing by $N$, we obtain
\begin{equation}\label{eq:59}
\begin{aligned}
  d\rho_{\delta,t}^N(z)
  &=
  T_N\Delta\rho_{\delta,t}^N(z)\,dt
  +
  T_N\nabla\cdot j_{\delta,t}^{N,b}(z)\,dt-
  T_N\nabla\cdot j_{\delta,t}^{N,u}(z)\,dt
  +
  d\mathfrak M_{\delta,t}^N(z),
\end{aligned}
\end{equation}
where
\begin{equation*}
\begin{aligned}
  j_{\delta,t}^{N,b}(z)
  &\doteq
  \frac1N\sum_{i=1}^N
  \kappa_\delta(z-\bar X_i^N(t))
  b_{\bar\mu_t^N}(\bar X_i^N(t)).
  \end{aligned}
\end{equation*}
The result follows on noting that $\rho_{\delta,t}^N(z)B_{\delta,t}^N(z)
  =j_{\delta,t}^{N,b}(z)$ and $\rho_{\delta,t}^N(z)U_{\delta,t}^N(z)
  =j_{\delta,t}^{N,u}(z)$.

\end{proof}

We now introduce the regularized form of the free energy 
  $\cF(\mu)$ defined in \eqref{eq:11} that can be used in our It\^{o} formula calculations.
 For $\mu \in \cP_2(\R^d)$,
\begin{equation}\label{eq:60}
  \cF_\delta(\mu)
  \doteq
  \int_{\R^d}
     \rho_\delta^\mu(z)\log\rho_\delta^\mu(z)\,dz
  +
  \int_{\R^d}V(x)\,\mu(dx)
  +
  \frac12
  \iint_{\R^d\times\R^d}
     W(x-x')\,\mu(dx)\mu(dx').
\end{equation}
In preparation for a semimartingale evolution equation for the regularized free energy along the process $\bar \mu^N_t$ (namely, for $\cF_\delta(\bar\mu_t^N)$), define
for $\mu \in \clp_2(\R^d)$, 
\begin{equation}\label{eq:61}
  C_\delta(\mu)
  \doteq
  \int_{\R^d}
  \nabla\log\rho_\delta^\mu(z)
  \cdot B_\delta[\mu](z)
  \rho_\delta^\mu(z)\,dz,\;\;
  \cV_\delta(\mu)
  \doteq
  \int_{\R^d}
  \left|
    \nabla\log\rho_\delta^\mu(z)\right|^2
  \rho_\delta^\mu(z)\,dz.
\end{equation}
Also, for $x = (x_1, \ldots,x_N) \in (\R^d)^N$, define $\mu^N_x \doteq \frac1N\sum_{i=1}^N\delta_{x_i}$ and
\[
  \rho_\delta^N(z,x)
  \doteq
  \rho_\delta^{\mu_x^N}(z)
  =
  \frac1N\sum_{j=1}^N\kappa_\delta(z-x_j).
\]
For $\mu\in\clp_2(\R^d)$, let
\[
A_N(\mu)
\doteq
\int_{\R^d}\Delta V(y)\,\mu(dy)
+
\iint_{\R^d\times\R^d}\Delta W(y-z)\,\mu(dy)\mu(dz)
-\frac{\Delta W(0)}{N}.
\]
Thus, for $x=(x_1,\ldots,x_N)\in(\R^d)^N$,
\[
A_N(\mu_x)
=
\frac{1}{N}\sum_{i=1}^N\Delta V(x_i)
+
\frac{1}{N^2}\sum_{\substack{i,j=1\\j\ne i}}^N
\Delta W(x_i-x_j).
\]

Let
			 \begin{equation}\label{eq:62}
			   D_{N,\delta}(x)
		   \doteq
		   \frac1{N^2}
		   \sum_{i=1}^N
		   \int_{\R^d}
		   \frac{
		     |\nabla\kappa_\delta(z-x_i)|^2}
		   {\rho_\delta^N(z,x)}\,dz,
\end{equation}
and 		 
\begin{equation}\label{eq:63}
  \widetilde R_{N,\delta}(\mu)
  \doteq
  A_N(\mu)+C_\delta(\mu),
  \qquad \mu\in\clp_2(\R^d).
\end{equation}

By using the semimartingale representation from the last lemma we now have the following result.
\begin{lemma}\label{lem:5} For $N \in \NN$,
\begin{multline}\label{eq:64}
\frac1{T_N}
 \left[
   \cF_\delta(\bar\mu_1^N)
   -
   \cF_\delta(\bar\mu_0^N)
 \right]\\
 =
 -
 \int_0^1
 \left[
   \cV_\delta(\bar\mu_t^N)
   +
   2C_\delta(\bar\mu_t^N)
   +
   \int_{\R^d}
   |b_{\bar\mu_t^N}(x)|^2
   \bar\mu_t^N(dx)
 \right]dt\\
+
 \int_0^1
 \left[
   \int_{\R^d}
   \nabla\log\rho_{\delta,t}^N(z)
   \cdot U_{\delta,t}^N(z)
   \rho_{\delta,t}^N(z)\,dz
   +
   \frac1N\sum_{i=1}^N
   b_{\bar\mu_t^N}(\bar X_i^N(t))
   \cdot u_i^N(t)
 \right]dt\\
 +
 \int_0^1D_{N,\delta}(\bar X^N(t))\,dt
 +
 \int_0^1
 \widetilde R_{N,\delta}
 (\bar\mu_t^N)\,dt +
 \frac1{T_N}\widetilde M_{N,\delta}(1),
\end{multline}	
	where
	$$\widetilde M_{N,\delta}(t) \doteq \frac{\sqrt{2T_N}}{N}
 \sum_{i=1}^N
 \int_0^t
[ a_\delta(\bar\mu_s^N,\bar X_i^N(s)) + b_{\bar\mu_s^N}(\bar X_i^N(s))]
 \cdot d\widetilde B_i(s).$$
\end{lemma}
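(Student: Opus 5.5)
We will apply It\^o's formula directly to the function $\Phi_\delta(x)\doteq\cF_\delta(\mu^N_x)$ on $(\R^d)^N$ along the controlled system \eqref{eq:28}, integrate over $[0,1]$ and divide by $T_N$. We split $\Phi_\delta$ into the entropy part $E_\delta(x)=\int\rho_\delta^N(z,x)\log\rho_\delta^N(z,x)\,dz$ and the potential/interaction part $P(x)=\frac1N\sum_i V(x_i)+\frac1{2N^2}\sum_{i,j}W(x_i-x_j)$. Both parts are $C^2$ under Assumption~\ref{ass:3} and Lemma~\ref{lem:3}, and their gradients grow at most linearly. Hence the stochastic integrals are true martingales and the expectations are finite, given the moment bound \eqref{eq:42} and a localization argument.

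\textbf{Potential part.} Since $W$ is even, $\partial_{x_i}P=\frac1N b_{\mu_x}(x_i)$. Therefore the drift $T_N(-b+u_i)$ contributes
\[
-T_N\int|b_{\bar\mu^N_t}|^2d\bar\mu^N_t+T_N\frac1N\sum_i b\cdot u_i.
\]
The second-order term $T_N\sum_i\Delta_{x_i}P$ equals $T_N\bigl(\frac1N\sum\Delta V(x_i)+\frac1{N^2}\sum_{j\ne i}\Delta W(x_i-x_j)\bigr)=T_NA_N(\mu_x)$, where the diagonal $j=i$ terms cancel because $\nabla W(0)$ contributes only through the pair structure. The martingale part is $\frac{\sqrt{2T_N}}N\sum_i b\cdot d\widetilde B_i$.

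\textbf{Entropy part.} Differentiating under the integral, and using $\int\nabla\kappa_\delta\,dz=0$, gives
\[
\partial_{x_i}E_\delta=-\frac1N\int\nabla_z\kappa_\delta(z-x_i)\log\rho_\delta^N\,dz .
\]
Integrating by parts in $z$, this equals $\frac1N\int\kappa_\delta(z-x_i)\nabla\log\rho^N_\delta\,dz=\frac1N a_\delta(\mu_x,x_i)$; this yields the martingale term with $a_\delta$. The drift term $-T_N\frac1N\sum_i a_\delta\cdot b$ is rewritten via Fubini as $-T_N\int\nabla\log\rho_\delta\cdot B_\delta\rho_\delta\,dz=-T_NC_\delta$. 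Likewise $T_N\frac1N\sum_i a_\delta\cdot u_i=T_N\int\nabla\log\rho_\delta\cdot U_\delta\rho_\delta\,dz$.

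For the second-order term we compute
\[
\Delta_{x_i}E_\delta=\frac1N\int\Delta_z\kappa_\delta(z-x_i)\log\rho_\delta\,dz+\frac1{N^2}\int\frac{|\nabla\kappa_\delta(z-x_i)|^2}{\rho_\delta}\,dz .
\]
Summing over $i$, the second piece is exactly $D_{N,\delta}(x)$. The first piece gives $\int\Delta\rho_\delta\log\rho_\delta=-\cV_\delta$ after one integration by parts.

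\textbf{Bookkeeping.} Collecting terms, the $T_N$-drift consists of
\[
-\cV_\delta-C_\delta-\int|b|^2d\mu+A_N+D_{N,\delta}+(\text{control cross terms}).
\]
Writing $-C_\delta=-2C_\delta+C_\delta$ and absorbing $A_N+C_\delta$ into $\widetilde R_{N,\delta}$ gives exactly the form \eqref{eq:64}. Dividing by $T_N$ then yields the stated identity.

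\textbf{Main obstacle.} The delicate step is justifying the integrations by parts in $z$ and the differentiation under the integral sign. This requires the boundary terms to vanish and $\log\rho^N_\delta$, $\nabla\log\rho_\delta^N$ to be integrable against the Gaussian weights. I would handle it using \eqref{eq:50}, which gives linear growth of $\nabla\log\rho_\delta$, together with the lower Gaussian bound $\rho_\delta^N(z)\ge\frac1N\kappa_\delta(z-x_1)$, which gives at most quadratic growth of $|\log\rho_\delta|$. These bounds make all integrands dominated by polynomials times Gaussians, locally uniformly in $x$.
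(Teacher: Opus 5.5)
Your argument is correct. It differs from the paper's only in how It\^o's formula is applied to the entropy part; the potential part and the final regrouping $A_N=-C_\delta+\widetilde R_{N,\delta}$ are exactly as in the paper.

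\emph{The paper's route.} It goes through Lemma~\ref{lem:4}. For each fixed $z$, it treats $\rho^N_{\delta,t}(z)$ as a semimartingale solving a noisy Fokker--Planck equation in conservative form. It then applies It\^o's formula to $r\mapsto r\log r$ and integrates over $z$. The drift tested against $\log\rho^N_{\delta,t}+1$ gives $-\cV_\delta-C_\delta+\int\nabla\log\rho^N_{\delta,t}\cdot U^N_{\delta,t}\rho^N_{\delta,t}\,dz$ after one integration by parts. The term $D_{N,\delta}$ comes from the quadratic variation of $\mathfrak M^N_{\delta,t}(z)$.

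\emph{Your route.} You use finite-dimensional It\^o for $x\mapsto\int\rho^N_\delta(z,x)\log\rho^N_\delta(z,x)\,dz$ on $(\R^d)^N$. Here $D_{N,\delta}$ appears as the Hessian contribution $\sum_i\int|\partial_{x_i}\rho^N_\delta|^2/\rho^N_\delta\,dz$, and the martingale term comes out directly in the form $\widetilde M_{N,\delta}$.

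\emph{What each buys.} The paper's route displays the gradient-flow structure (the flux $\rho(\nabla\log\rho+B-U)$) and reuses Lemma~\ref{lem:4}. Made rigorous, however, it needs an interchange of $dz$ with the stochastic integral, a stochastic Fubini step the paper does not spell out. Yours reduces all justification to deterministic differentiation under the integral sign, which your polynomial-times-Gaussian bounds handle. For local uniformity in $x$, \eqref{eq:49} is more convenient than \eqref{eq:50}: it gives $|\nabla\log\rho^N_\delta(z,x)|\le\delta^{-2}(|z|+\max_j|x_j|)$, since $\pi_{\delta,z}$ is supported on $\{x_j\}$.

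Two minor points. First, \eqref{eq:64} is a pathwise identity, so neither the true-martingale property nor \eqref{eq:42} is needed at this stage. Second, your explanation of the diagonal terms is muddled. They are absent from $\sum_i\Delta_{x_i}P$ simply because $W(x_i-x_i)=W(0)$ is constant, which is what the $-\Delta W(0)/N$ in $A_N$ accounts for. The fact $\nabla W(0)=0$ is used only in the first-order term, to write $\frac1{N^2}\sum_{j\ne i}\nabla W(x_i-x_j)=\frac1N\nabla W*\mu_x(x_i)$.
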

\begin{proof}
	For $\mu \in \clp_2(\R^d)$, let $
  \cH_\delta(\mu)
  \doteq
  \int_{\R^d}
  \rho_\delta^\mu(z)\log\rho_\delta^\mu(z)\,dz.
$
Then, from Lemma \ref{lem:4}, an application of It\^o's formula, and integrating by parts, we have 
\begin{multline}\label{eq:65}
 \frac1{T_N}
 \left[
   \cH_\delta(\bar\mu_1^N)
   -
   \cH_\delta(\bar\mu_0^N)
 \right]
 =
 -
 \int_0^1
 \int_{\R^d}
 |\nabla\log\rho_{\delta,t}^N(z)|^2
 \rho_{\delta,t}^N(z)\,dz\,dt\\
-
 \int_0^1
 \int_{\R^d}
 \nabla\log\rho_{\delta,t}^N(z)
 \cdot B_{\delta,t}^N(z)
 \rho_{\delta,t}^N(z)\,dz\,dt+
 \int_0^1
 \int_{\R^d}
 \nabla\log\rho_{\delta,t}^N(z)
 \cdot U_{\delta,t}^N(z)
 \rho_{\delta,t}^N(z)\,dz\,dt\\
+
 \int_0^1D_{N,\delta}(\bar X^N(t))\,dt
 +
 \frac1{T_N}M_{N,\delta}^{\mathrm{ent}}(1),
\end{multline}
where 
\begin{equation}\label{eq:66}
\begin{aligned}
 M_{N,\delta}^{\mathrm{ent}}(t)
 &=
 \frac{\sqrt{2T_N}}{N}
 \sum_{i=1}^N
 \int_0^t
 a_\delta(\bar\mu_s^N,\bar X_i^N(s))
 \cdot d\widetilde B_i(s).
\end{aligned}
\end{equation}
Next,
define, for $\mu \in \clp_2(\R^d)$,
\[
  \cE(\mu)
  \doteq
  \int_{\R^d}V(x)\,\mu(dx)
  +
  \frac12
  \iint_{\R^d\times\R^d}
  W(x-y)\,\mu(dx)\mu(dy), \; \mu \in \clp_2(\R^d).
\]
Note that, for $x \in (\R^d)^N$,
\[
  \nabla_{x_i}\cE(\mu_x^N)
  =
  \frac1N b_{\mu^N}(x_i),
\]
Applying It\^o's formula to $x \mapsto \cE(\mu_x^N)$ gives
\begin{multline}\label{eq:67}
 \frac1{T_N}
 \left[
   \cE(\bar\mu_1^N)
   -
   \cE(\bar\mu_0^N)
 \right]=
 -
 \int_0^1
 \int_{\R^d}
 |b_{\bar\mu_t^N}(x)|^2
 \bar\mu_t^N(dx)\,dt\\
 \quad +
 \int_0^1
 \frac1N\sum_{i=1}^N
 b_{\bar\mu_t^N}(\bar X_i^N(t))
 \cdot u_i^N(t)\,dt +
 \int_0^1A_N(\bar\mu_t^N)\,dt
 +
 \frac1{T_N}M_N^{\mathrm{pot}}(1),
\end{multline}
where
\[
  M_N^{\mathrm{pot}}(t)
  =
  \frac{\sqrt{2T_N}}{N}
  \sum_{i=1}^N
  \int_0^t
  b_{\bar\mu_s^N}(\bar X_i^N(s))
  \cdot d\widetilde B_i(s).
	\]
Adding \eqref{eq:65} and
\eqref{eq:67}, and using the identities
$\cF_{\delta}(\mu) = \cH_{\delta}(\mu) + \cE(\mu)$, 
$A_N(\mu)=-C_\delta(\mu)+\widetilde R_{N,\delta}(\mu)$,
completes the proof of the lemma.
\end{proof}

We now introduce a regularized form of $\mathfrak i(\mu)$ defined as
\begin{equation}\label{eq:68}
\begin{aligned}
  \cJ_\delta(\mu)
  &\doteq
  \int_{\R^d}
  \left|
    \nabla\log\rho_\delta^\mu(z)
    +
    B_\delta[\mu](z)
  \right|^2
  \rho_\delta^\mu(z)\,dz, \; \mu \in \clp_2(\R^d).
\end{aligned}
\end{equation}
Since $\nabla V$ and $\nabla W$ are globally Lipschitz, there is
$L_b<\infty$, such that
\begin{equation}\label{eq:69}
  |b_\mu(x)-b_\mu(x')|
  \le L_b|x-x'|,
  \qquad x,x'\in\R^d, \; \mu \in \clp_2(\R^d).
\end{equation}
Recall the random measures $\Theta^N$ from \eqref{eq:38}. For the regularized energy dissipation functional $\cJ_\delta(\mu)$, we now have the following result.
\begin{lemma}\label{lem:6}
For $N\in \NN$,
\begin{multline}\label{eq:70}
 \frac12
 \int_{\cP(\R^d)}
 \cJ_\delta(\gamma)\,[\Theta^N]_1(d\gamma)
 \le
 \frac12
 \int_{\cP(\R^d\times\R^d)}
 \int_{\R^d\times\R^d}
 |y|^2\,\eta(dx\,dy)\,[\Theta^N]_2(d\eta)\\
 +
 L_b\sqrt d\,\delta
 \left[
 \int_{\cP(\R^d\times\R^d)}
 \int_{\R^d\times\R^d}
 |y|^2\,\eta(dx\,dy)\,[\Theta^N]_2(d\eta)
 \right]^{1/2}\\
 -
 \frac1{T_N}
 \left[
   \cF_\delta(\bar\mu_1^N)
   -
   \cF_\delta(\bar\mu_0^N)
 \right] +
 \int_0^1D_{N,\delta}(\bar X^N(t))\,dt\\
 +
 \int_0^1
 \widetilde R_{N,\delta}
 (\bar\mu_t^N)\,dt
 +
 \frac1{T_N}\widetilde M_{N,\delta}(1).
\end{multline}
	
\end{lemma}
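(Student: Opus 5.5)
The plan is to start from the exact identity of Lemma~\ref{lem:5} and bound its deterministic drift terms pointwise in $t$. After that we integrate over $[0,1]$ and move the free-energy increment and the martingale term to the other side. The remainder terms $D_{N,\delta}$, $\widetilde R_{N,\delta}$ and $\widetilde M_{N,\delta}(1)$ are simply carried along, so everything reduces to a single-time inequality.

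Fix $t$ and write $\mu=\bar\mu_t^N$, $x_i=\bar X_i^N(t)$, $u_i=u_i^N(t)$, $\kappa_i(z)=\kappa_\delta(z-x_i)$, $\rho=\rho^\mu_\delta$, $B=B_\delta[\mu]$, $U=U^N_{\delta,t}$. We aim to show
\begin{equation*}
-\Bigl[\cV_\delta(\mu)+2C_\delta(\mu)+\int|b_\mu|^2d\mu\Bigr]+X_t
\le -\tfrac12\cJ_\delta(\mu)+\tfrac12\cdot\tfrac1N\sum_i|u_i|^2+L_b\sqrt d\,\delta\Bigl(\tfrac1N\sum_i|u_i|^2\Bigr)^{1/2},
\end{equation*}
where $X_t$ denotes the control cross terms in \eqref{eq:64}.

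\textbf{Step 1 (Jensen for the drift).} Expanding the square in \eqref{eq:68} gives
$\cJ_\delta(\mu)=\cV_\delta(\mu)+2C_\delta(\mu)+\int|B|^2\rho\,dz$.
Since $B(z)=\int b_\mu(y)\,\pi^\mu_{\delta,z}(dy)$, Jensen and Fubini give $\int|B|^2\rho\,dz\le\int|b_\mu|^2d\mu$. Hence the first bracket is at least $\cJ_\delta(\mu)$, and the drift term is bounded above by $-\cJ_\delta(\mu)$.

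\textbf{Step 2 (rewriting the cross term).} By the definitions \eqref{eq:51} and \eqref{eq:56},
$\int\nabla\log\rho\cdot U\rho\,dz=\frac1N\sum_i u_i\cdot a_\delta(\mu,x_i)$.
Using $\rho U=j^{N,u}_{\delta,t}$, we write
\begin{equation*}
X_t=\int(\nabla\log\rho+B)\cdot U\rho\,dz+E_t,\qquad
E_t=\int\frac1N\sum_i\kappa_i(z)\,u_i\cdot\bigl(b_\mu(x_i)-B(z)\bigr)\,dz .
\end{equation*}
For the first piece, Young's inequality gives at most $\frac12\cJ_\delta(\mu)+\frac12\int|U|^2\rho\,dz$.

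\textbf{Step 3 (the error $E_t$).} This is the main obstacle, because a crude bound via $|x_i-x_j|$ loses constants. The key observation is that $\sum_i\kappa_i(z)(b_\mu(x_i)-B(z))=0$ for every $z$, by the definition of $B$. So we may replace $u_i$ by $u_i-U(z)$ at no cost, and Cauchy--Schwarz over the measure $\frac1N\sum_i\kappa_i(z)\delta_i\,dz$ gives
\begin{equation*}
|E_t|\le\Bigl(\int\tfrac1N\sum_i\kappa_i|u_i-U|^2dz\Bigr)^{1/2}\Bigl(\int\tfrac1N\sum_i\kappa_i|b_\mu(x_i)-B|^2dz\Bigr)^{1/2}.
\end{equation*}
The first factor equals $\bigl(\frac1N\sum|u_i|^2-\int|U|^2\rho\bigr)^{1/2}$. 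The second factor is, at each $z$, a variance of $b_\mu(Y)$ with $Y\sim\pi^\mu_{\delta,z}$. That variance is at most $\int|b_\mu(y)-b_\mu(z)|^2\pi_{\delta,z}(dy)\le L_b^2\int|y-z|^2\pi_{\delta,z}(dy)$ by \eqref{eq:69}. Integrating against $\rho(z)\,dz$ gives $L_b^2\int|w|^2\kappa_\delta(w)\,dw=L_b^2d\delta^2$. Bounding the first factor crudely by $(\frac1N\sum|u_i|^2)^{1/2}$, we get $|E_t|\le L_b\sqrt d\,\delta\,(\frac1N\sum_i|u_i|^2)^{1/2}$.

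Since $\int|U|^2\rho\le\frac1N\sum|u_i|^2$ (Jensen again), Steps 1--3 combine to give the single-time inequality displayed above.

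\textbf{Step 4 (integrate and rearrange).} Integrating the single-time inequality over $t\in[0,1]$ and applying Jensen (concavity of the square root) to $\int_0^1(\cdot)^{1/2}dt$ gives the $L_b\sqrt d\,\delta[\cdots]^{1/2}$ term. We then express the time integrals through $[\Theta^N]_1$ and $[\Theta^N]_2$ via \eqref{eq:38}. Finally, substituting into \eqref{eq:64} and solving for $\frac12\int\cJ_\delta\,d[\Theta^N]_1$ yields \eqref{eq:70}.
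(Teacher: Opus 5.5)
Your proposal is correct and follows essentially the same route as the paper. You use Jensen to get $\cV_\delta+2C_\delta+\int|b_\mu|^2d\mu\ge\cJ_\delta$, Young plus Jensen on the main cross term, and Cauchy--Schwarz on the remainder $\cR^u_\delta$ (your $E_t$) via orthogonality of $b_\mu(x_i)-B_\delta[\mu](z)$ to functions of $z$, then Cauchy--Schwarz in time; your weighted measure $\frac1N\sum_i\kappa_\delta(z-x_i)\delta_i\,dz$ is exactly the joint law of $(I,Z)$ in the paper's conditional-expectation formulation, so the difference is only notational.
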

\begin{proof}
We will use the identity from \eqref{eq:64}.
For $x,u \in (\R^d)^N$, define
\begin{equation}\label{eq:73}
\begin{aligned}
  \cR_{\delta}^{u}(x,u)
  &\doteq
  \frac1N\sum_{i=1}^N b_{\mu^N_x}(x_i)\cdot u_i
 -
  \int_{\R^d}
  B_\delta[\mu^N_x](z)\cdot U_\delta(x,z)
  \rho_\delta^{\mu^N_x}(z)\,dz,
\end{aligned}
\end{equation}
where
\[
  U_\delta(x,z)
  =
  \frac{
    \frac1N\sum_{i=1}^N
      \kappa_\delta(z-x_i)u_i}
  {\rho_\delta^{\mu^N_x}(z)}.
\]
Let
\[
\begin{aligned}
 \Psi(x,u) \doteq &\int_{\R^d}
 \left[
   \nabla\log\rho_\delta^{\mu^N_x}(z)
   +
   B_\delta[\mu^N_x](z)
 \right]
 \cdot U_\delta(x,z)
 \rho_\delta^{\mu^N_x}(z)\,dz
 +
 \cR_\delta^u(x,u).
\end{aligned}
\]
Then the control term in
\eqref{eq:64} (namely, the sum of the third and fourth terms) can be written as $\int_0^1 \Psi(\bar X^N(t),u^N(t)) dt$.
By Young's inequality and Jensen's inequality,
\begin{equation}\label{eq:74}
\begin{aligned}
 \int
 \left(
   \nabla\log\rho_\delta^{\mu^N_x}(z)+B_\delta[\mu^N_x](z)
 \right)\cdot U_\delta(x,z)\,\rho_\delta^{\mu^N_x}(z)\,dz &\le
 \frac12\cJ_\delta(\mu^N_x)
 +
 \frac12\int|U_\delta(x,z)|^2\rho_\delta^{\mu^N_x}(z) dz\\
 & \le \frac12\cJ_\delta(\mu^N_x)
 +\frac1{2N}\sum_{i=1}^N|u_i|^2.
\end{aligned}
\end{equation}
Thus
\begin{equation}
\begin{aligned}
\int_0^1 \Psi(\bar X^N(t),u^N(t))\,dt
&\le \frac12 \int_0^1 \cJ_\delta(\bar\mu^N_t)\,dt
 +\frac1{2N}\sum_{i=1}^N\int_0^1 |u^N_i(t)|^2\,dt\\
&\quad + \int_0^1 \cR_\delta^u(\bar X^N(t),u^N(t))\,dt.
\end{aligned}
\end{equation}
We now estimate the first integral on the right side of \eqref{eq:64}.
From Jensen's inequality,
\begin{equation}\label{eq:71}
\int|B_\delta[\mu](z)|^2\rho_\delta^\mu(z) dz
\le
\int|b_\mu(x)|^2\,\mu(dx).
\end{equation}
Consequently, recalling the definition of $\cV_{\delta}$ and $C_{\delta}$,
\begin{equation}\label{eq:72}
\begin{aligned}
  \cV_\delta(\mu)
  +
  2C_\delta(\mu)
  +
  \int|b_\mu(x)|^2\,\mu(dx)\ge 
  \cV_\delta(\mu)
  +
  2C_\delta(\mu)
  +
  \int|B_\delta[\mu](z)|^2\rho_\delta^\mu(z) dz &=
  \cJ_\delta(\mu).
\end{aligned}
\end{equation}

Combining
Lemma \ref{lem:5}, \eqref{eq:72},
\eqref{eq:74}, we now obtain
\begin{multline}\label{eq:75}
\frac12\int_0^1
 \cJ_\delta(\bar\mu_t^N)\,dt
 \le
 \frac1{2N}
 \sum_{i=1}^N\int_0^1|u_i^N(t)|^2\,dt
 -
 \frac1{T_N}
 \left[
   \cF_\delta(\bar\mu_1^N)
   -
   \cF_\delta(\bar\mu_0^N)
 \right]\\
 \quad+
 \int_0^1D_{N,\delta}(\bar X^N(t))\,dt
 +
 \int_0^1
 \widetilde R_{N,\delta}
 (\bar\mu_t^N)\,dt\\
 \quad+
 \int_0^1
 \cR_{\delta}^{u}
 (\bar X^N(t),u^N(t))\,dt
 +
 \frac1{T_N}\widetilde M_{N,\delta}(1).
\end{multline}
Finally, we now estimate the contribution from $\cR_{\delta}^{u}$.

The following observation will be used frequently. Let $\mu\in\cP_2(\R^d)$ and let
$X\sim\mu$, $G\sim N(0,I_d)$ which is independent of $X$, and
$Z=X+\delta G$. Then
$B_\delta[\mu](Z)=\E[b_\mu(X)\mid Z]$ and consequently, properties of 
conditional expectation and \eqref{eq:69} give
\begin{equation}\label{eq:76}
 \int_{\R^d}
 |B_\delta[\mu](z)-b_\mu(z)|^2
 \rho_\delta^\mu(z)\,dz
 \le
 \E|b_\mu(X)-b_\mu(Z)|^2
 \le L_b^2d\delta^2.
\end{equation}
Now let $I$ be a 
uniform random variable on $\{1,\ldots,N\}$. Let $X=x_I$ and $U=u_I$, where $x = (x_1, \ldots , x_N)$ and $u = (u_1, \ldots , u_N)$ are in $(\R^d)^N$, and let
$Z=X+\delta G$, where $G\sim N(0,I_d)$ is independent of $I$. Then, from the above observation,
$B_\delta[\mu_x^N](Z)=\E[b_{\mu_x^N}(X)\mid Z]$ and, similarly,
$U_\delta(x,Z)=\E[U\mid Z]$. Consequently,
\[
  \cR_\delta^u(x,u)
  =
  \E\left[
    \bigl(b_{\mu_x^N}(X)-B_\delta[\mu_x^N](Z)\bigr)
    \cdot\bigl(U-U_\delta(x,Z)\bigr)
  \right].
\]
Furthermore, \eqref{eq:76} gives 
$\E|b_{\mu_x^N}(X)-B_\delta[\mu_x^N](Z)|^2
  \le L_b^2d\delta^2$,
while $\E|U-U_\delta(x,Z)|^2\le\E|U|^2$. Cauchy--Schwarz inequality therefore
gives
\begin{equation}\label{eq:77}
 |\cR_\delta^u(x,u)|
 \le
 L_b\sqrt d\,\delta
 \left(
   \frac1N\sum_{i=1}^N|u_i|^2
 \right)^{1/2}.
\end{equation}

Using \eqref{eq:77} in \eqref{eq:75}, 
Rewriting in terms of $\Theta^N$ and using the Cauchy--Schwarz inequality again gives \eqref{eq:70} and completes the proof of the lemma.
\end{proof}

We extend $\cJ_\delta$ to all of $\cP(\R^d)$ by setting
$\cJ_\delta(\mu)=+\infty$ when $\mu\notin\cP_2(\R^d)$. For
$\varphi\in C_c^\infty(\R^d;\R^d)$ and $\nu\in\cP_2(\R^d)$, set
\begin{equation}\label{eq:79}
\begin{aligned}
  \cH_{\delta,\varphi}(\nu)
  &\doteq
  -2\int_{\R^d}\nabla\cdot\varphi(z)\rho_\delta^\nu(z)\,dz
  +2\int_{\R^d}\varphi(z)\cdot B_\delta[\nu](z)
       \rho_\delta^\nu(z)\,dz\\
  &\qquad-
  \int_{\R^d}|\varphi(z)|^2\rho_\delta^\nu(z)\,dz.
\end{aligned}
\end{equation}
Hilbert-space duality, together with
$\nabla\rho_\delta^\nu
=\rho_\delta^\nu\nabla\log\rho_\delta^\nu$, gives
\begin{equation}\label{eq:80}
  \cJ_\delta(\nu)
  =
  \sup_{\varphi\in C_c^\infty(\R^d;\R^d)}
  \cH_{\delta,\varphi}(\nu).
\end{equation}

\begin{lemma}\label{lem:7}
For every $\delta>0$, the function
$\cJ_\delta:\cP(\R^d)\to[0,\infty]$ is lower semicontinuous for the
weak topology. Consequently, the map
$\Theta\mapsto
  \int_{\cP(\R^d)}\cJ_\delta(\gamma)\,[\Theta]_1(d\gamma)$
is lower semicontinuous on $\cP(\cls)$.
\end{lemma}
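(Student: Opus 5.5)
The plan is to use the dual representation \eqref{eq:80}: $\cJ_\delta$ is a supremum of the family $\{\cH_{\delta,\varphi}\}_{\varphi\in C_c^\infty(\R^d;\R^d)}$. A pointwise supremum of lower semicontinuous functions is lower semicontinuous. So it suffices to show that, for each fixed $\varphi$, the map $\nu\mapsto\cH_{\delta,\varphi}(\nu)$ is lower semicontinuous (in fact continuous) on $\cP(\R^d)$. The main point is to handle the extension $\cJ_\delta=+\infty$ off $\cP_2(\R^d)$. The integrand $\mathcal{H}_{\delta,\varphi}(\nu)$ is defined for all $\nu$ with $\int|x|\,\nu(dx)<\infty$, since $b_\nu$ has linear growth. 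Lower semicontinuity at points $\nu\notin\cP_2$ then has to be checked separately.

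\textbf{Continuity of each $\cH_{\delta,\varphi}$.} Use Fubini to rewrite each term as an integral against $\nu$ of a function of $x$:
\[
\int \nabla\!\cdot\!\varphi\,\rho^\nu_\delta\,dz=\int(\kappa_\delta*\nabla\!\cdot\!\varphi)\,d\nu,\qquad \int|\varphi|^2\rho_\delta^\nu\,dz=\int(\kappa_\delta*|\varphi|^2)\,d\nu,
\]
\[
\int\varphi\cdot B_\delta[\nu]\rho_\delta^\nu\,dz=\int (\kappa_\delta*\varphi)(x)\cdot b_\nu(x)\,\nu(dx).
\]
The last identity uses $\rho^\nu_\delta B_\delta[\nu]=\kappa_\delta*(b_\nu\nu)$ and the evenness of $\kappa_\delta$.

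The first two integrands are bounded and continuous, so those terms are weakly continuous. In the third, $\psi:=\kappa_\delta*\varphi$ is bounded and Lipschitz but not compactly supported. Its Gaussian tails, however, give $|\psi(x)|\le C_\varphi e^{-c|x|^2}$ for $|x|$ large. Expanding $b_\nu$, the third term equals
\[
\int\psi\cdot\nabla V\,d\nu+\iint\psi(x)\cdot\nabla W(x-y)\,\nu(dx)\nu(dy).
\]
The first piece has a bounded continuous integrand, since $\psi\,\nabla V$ is bounded by Gaussian decay times linear growth. The second piece has integrand growing linearly in $y$, uniformly in $x$. This means weak continuity holds along sequences with uniformly integrable first moments, but may fail in general.

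\textbf{Handling non-integrable tails.} This is the main obstacle. I would first try to prove lower semicontinuity directly. Let $\nu_n\to\nu$ weakly with $\liminf\cJ_\delta(\nu_n)<\infty$, so that along a subsequence $\nu_n\in\cP_2$ with $\cJ_\delta(\nu_n)\le M$. Truncate $W$: set $\nabla W_R=\nabla W\,\chi_R$ with a cutoff $\chi_R$, and split the $W$-term into a weakly continuous part and a remainder. Bound the remainder by $\|\psi\|_\infty L\iint_{|x-y|>R}|x-y|\,\nu_n(dx)\nu_n(dy)$ (up to $\nabla W(0)=0$).

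Second moments are not controlled a priori, so an alternative is to exploit that the supremum is unchanged if it is restricted to a countable dense family and combined with a modified functional. Concretely, I would look for a uniform first-moment bound from $\cJ_\delta(\nu_n)\le M$. Testing \eqref{eq:80} with $\varphi(z)=\epsilon z\chi_R(z)$ and using the dissipativity of Assumption~\ref{ass:3} should give $\int|x|^2d\nu_n\le C(M,\delta)$, in the spirit of Lemma~\ref{lem:25}. Here $\cJ_\delta$ behaves like a regularized $\mathfrak i$, with $B_\delta\approx b$ up to $O(\delta)$ by \eqref{eq:76}. Such a bound yields uniform integrability of $|x|$ and hence convergence of every $\cH_{\delta,\varphi}(\nu_n)$. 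It also shows $\nu\in\cP_2$ by Fatou, which settles the case where the limit lies outside $\cP_2$. Then
\[
\cH_{\delta,\varphi}(\nu)=\lim_n\cH_{\delta,\varphi}(\nu_n)\le\liminf_n\cJ_\delta(\nu_n),
\]
and taking the supremum over $\varphi$ gives the claim.

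\textbf{Consequence for $\Theta$.} Since $\cJ_\delta\ge0$ is lower semicontinuous on $\cP(\R^d)$, write $\cJ_\delta=\sup_k g_k$ as an increasing limit of bounded continuous functions, for example $g_k(\mu)=\inf_\nu\{(\cJ_\delta(\nu)\wedge k)+k\,d_{\mathrm{BL}}(\mu,\nu)\}$. Each map $\Theta\mapsto\int g_k(\gamma)\,[\Theta]_1(d\gamma)$ is continuous on $\cP(\cls)$, because marginalization is continuous. By monotone convergence, $\int\cJ_\delta\,d[\Theta]_1=\sup_k\int g_k\,d[\Theta]_1$ is a supremum of continuous functions, hence lower semicontinuous.
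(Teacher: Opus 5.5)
Your proposal is correct and follows the paper's architecture. Both arguments use the dual formula \eqref{eq:80}, and both extract a uniform second-moment bound from $\sup_n\cJ_\delta(\nu_n)<\infty$ via the coercivity $\int x\cdot b_\nu\,d\nu\ge c_0\int|x|^2\,d\nu-C_0$ from Assumption~\ref{ass:3}. Both then pass to the limit in each $\cH_{\delta,\varphi}(\nu_n)$ using \eqref{eq:82} and uniform integrability, take the supremum over $\varphi$, and finish with an integral-of-a-nonnegative-l.s.c.-function argument for $\Theta$. The genuine difference is how the moment bound is obtained.

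\textbf{The paper's route.} It uses the Tweedie identity to get $\int|\nabla\log\rho_\delta^{\gamma_n}|^2\rho_\delta^{\gamma_n}\,dz\le d/\delta^2$. From this it bounds $\int|B_\delta[\gamma_n]|^2\rho_\delta^{\gamma_n}\,dz$ and transfers the bound to $\int|b_{\gamma_n}|^2\,d\gamma_n$ through \eqref{eq:76}. Only then does it apply coercivity together with Young's inequality.

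\textbf{Your route.} You test the duality directly with the linear field $\epsilon z$. To make this rigorous, remove the cutoff for each fixed $n$. Since $\nu_n\in\cP_2(\R^d)$ and $\int|B_\delta[\nu_n]|^2\rho_\delta^{\nu_n}\,dz\le\int|b_{\nu_n}|^2\,d\nu_n<\infty$ by \eqref{eq:71}, dominated convergence lets $R\to\infty$ in $\cH_{\delta,\epsilon z\chi_R}(\nu_n)\le\cJ_\delta(\nu_n)$. Because $\kappa_\delta$ is centered, $\kappa_\delta*(\epsilon z)=\epsilon x$. The Fubini computation behind \eqref{eq:82} then makes the drift term exactly $2\epsilon\int x\cdot b_\nu\,d\nu$, so no $O(\delta)$ comparison via \eqref{eq:76} is needed (your remark that $B_\delta\approx b$ is superfluous). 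Choosing $\epsilon=c_0$ gives $\cJ_\delta(\nu)\ge c_0^2\int|x|^2\,d\nu-2c_0(C_0+d)-c_0^2d\delta^2$ for every $\nu\in\cP_2(\R^d)$.

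\textbf{What each buys.} Your route is shorter and avoids Tweedie and the conditional-expectation estimate. It also gives an explicit coercivity bound for $\cJ_\delta$ that is uniform in $\delta\le1$, whereas the paper's bound degrades like $d/\delta^2$. The same test in \eqref{eq:83} gives an elementary second-moment bound on sublevel sets of $\mathfrak i$. That is all Lemma~\ref{lem:26} and the tightness part of Proposition~\ref{prop:8} actually use from Lemma~\ref{lem:25}. The paper's route, in exchange, yields the bound \eqref{eq:81} on $\int|b_{\gamma_n}|^2\,d\gamma_n$, which it uses directly in its truncation step. Your inf-convolution approximation for the $\Theta$ statement is a self-contained proof of what the paper calls Fatou's lemma.

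Two minor points on presentation. The digressions about countable dense families and truncating $\nabla W$ can be dropped once the moment bound is in hand. You should also present that bound as proved, since it is a two-line computation, rather than as something that \emph{should} hold.
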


\begin{proof}
Let $\gamma_n\Rightarrow\gamma$ and suppose, after passing to a
subsequence, that $\sup_n\cJ_\delta(\gamma_n)<\infty$. Let
$X_n\sim\gamma_n$, let $G\sim N(0,I_d)$ be independent of $X_n$, and
set $Z_n=X_n+\delta G$. Write
$s_n=\nabla\log\rho_\delta^{\gamma_n}$ and
$B_n=B_\delta[\gamma_n]$. The Tweedie--Robbins--Esposito identity
\cite[(3)]{DytsoPoorShamai2023}, applied with Gaussian noise covariance
$\delta^2I_d$, gives
$\E[X_n\mid Z_n=z]=z+\delta^2s_n(z)$, and hence
$s_n(Z_n)=-\delta^{-1}\E[G\mid Z_n]$. Therefore, Jensen's inequality gives
\[
  \int|s_n(z)|^2\rho_\delta^{\gamma_n}(z) dz\le \frac d{\delta^2}.
\]
Since $B_n=(s_n+B_n)-s_n$,
we have from our assumption, that $\sup_n\int|B_n(z)|^2 \rho_\delta^{\gamma_n}(z) dz<\infty$.
Furthermore, from \eqref{eq:76}, we have that
$\E|b_{\gamma_n}(X_n)-B_n(Z_n)|^2
  \le L_b^2d\delta^2$.
Combining the above facts, it follows that
\begin{equation}\label{eq:81}
\sup_n\int|b_{\gamma_n}|^2\,d\gamma_n<\infty.
\end{equation} 
We will use the following
coercivity estimate. For some $c_0, C_0>0$, and all $\nu \in \cP_2(\R^d)$,
\[
  \int x\cdot b_\nu(x)\,\nu(dx)
  \ge c_0\int|x|^2\,\nu(dx)-C_0.
\]
Indeed, this follows by taking $y=z'=0$ in Assumption~\ref{ass:3}, integrating $(x,z)$ against $\nu\otimes\nu$, using $\nabla W(0)=0$, and applying Young's inequality.
Applying this estimate and another use of Young's inequality, together with the uniform bound in \eqref{eq:81}, we have that
$\sup_n\int|x|^2\,\gamma_n(dx)<\infty$.
In particular, $\gamma\in\cP_2(\R^d)$.

Fix $\varphi\in C_c^\infty(\R^d;\R^d)$ and set
$\psi_\delta=\kappa_\delta*\varphi$. Since
$B_\delta[\nu]\rho_\delta^\nu=\kappa_\delta*(b_\nu\nu)$, Fubini's
theorem gives
\begin{equation}\label{eq:82}
  \int\varphi(z)\cdot B_\delta[\nu](z)\rho_\delta^\nu(z)\,dz
  =
  \int\psi_\delta(x)\cdot b_\nu(x)\,\nu(dx).
\end{equation}
The function $\psi_\delta$ is bounded, continuous, and has Gaussian
decay. Therefore weak convergence, the uniform moment bound
\eqref{eq:81}, and a standard truncation
argument using the linear growth and global Lipschitz continuity of
$\nabla V$ and $\nabla W$ yield
\[
  \int\psi_\delta\cdot b_{\gamma_n}\,d\gamma_n
  \to
  \int\psi_\delta\cdot b_\gamma\,d\gamma.
\]
Furthermore, the first and the third terms in \eqref{eq:79} (with $\nu$ replaced with $\gamma_n$) converge
because $\gamma_n*\kappa_\delta\Rightarrow
\gamma*\kappa_\delta$. Thus
$\cH_{\delta,\varphi}(\gamma_n)\to
\cH_{\delta,\varphi}(\gamma)$. Taking the supremum over $\varphi$,
and recalling \eqref{eq:80}, proves
$\cJ_\delta(\gamma)
  \le\liminf_{n\to\infty}\cJ_\delta(\gamma_n)$.
The assertion for $\Theta$ follows from Fatou's lemma.
\end{proof}

We next pass to the limit as $\delta\downarrow0$.
We begin with a convenient variational formula for the function $\mathfrak i(\cdot)$ in \eqref{eq:5}. 
For $\gamma\in\cP_2(\R^d)$, define
\begin{equation}\label{eq:83}
\begin{aligned}
  \cD(\gamma)
  \doteq
  \sup_{\varphi\in C_c^\infty(\R^d;\R^d)}
  \left\{
    -2\int_{\R^d}
      \nabla\cdot\varphi(x)\,\gamma(dx)
    +
    2\int_{\R^d}
      \varphi(x)\cdot b_\gamma(x)\,\gamma(dx)
    -
    \int_{\R^d}
      |\varphi(x)|^2\,\gamma(dx)
  \right\}.
\end{aligned}
\end{equation}
For $\gamma\notin\cP_2(\R^d)$, set
$\cD(\gamma)=+\infty$.

The following standard characterization identifies $\cD$ with the
squared $L^2(\gamma)$-norm of the Wasserstein gradient of $\cF$.
Proof is in the appendix.

\begin{lemma}\label{lem:8}
Let $\gamma\in\cP_2(\R^d)$. Then $\cD(\gamma)<\infty$ if and only if
$\gamma(dx)=\rho(x)\,dx$ for some density $\rho$ for which the weak logarithmic gradient exists.
In this case,
\begin{equation}\label{eq:84}
  \cD(\gamma)
  =
  \int_{\R^d}
  |\nabla\log\rho(x)+b_\gamma(x)|^2\,\gamma(dx)
  =
  \left\|
    \nabla\frac{\delta\cF}{\delta\mu}(\gamma)
  \right\|_{L^2(\gamma)}^2.
\end{equation}
Consequently, 
  $\cD(\gamma)
  = 4 \mathfrak i(\gamma)$
  for every $\gamma\in\cP(\R^d)$.
\end{lemma}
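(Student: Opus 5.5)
The plan is to read \eqref{eq:83} as a Hilbert-space variational problem in $L^2(\gamma;\R^d)$. For $\varphi\in C_c^\infty(\R^d;\R^d)$ define the linear functional $\ell(\varphi)\doteq-\int\nabla\cdot\varphi\,d\gamma+\int\varphi\cdot b_\gamma\,d\gamma$. Then the expression inside the supremum is $2\ell(\varphi)-\|\varphi\|_{L^2(\gamma)}^2$. Since $\gamma\in\cP_2(\R^d)$ and $b_\gamma$ has linear growth (Assumption \ref{ass:1}), we have $b_\gamma\in L^2(\gamma;\R^d)$. The second term of $\ell$ is therefore always $L^2(\gamma)$-continuous, and finiteness of $\cD$ is equivalent to $L^2(\gamma)$-boundedness of $\varphi\mapsto\int\nabla\cdot\varphi\,d\gamma$.

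\emph{Sufficiency and the formula.} Suppose $\gamma=\rho\,dx$ and $\xi=\nabla\log\rho\in L^2(\gamma)$ exists. Applying \eqref{eq:14} componentwise gives $-\int\nabla\cdot\varphi\,d\gamma=\int\varphi\cdot\xi\,d\gamma$. Hence
\[
\ell(\varphi)=\langle\varphi,\xi+b_\gamma\rangle_{L^2(\gamma)},
\qquad
2\ell(\varphi)-\|\varphi\|^2=\|\xi+b_\gamma\|^2-\|\varphi-(\xi+b_\gamma)\|^2 .
\]
Thus $\cD(\gamma)\le\|\xi+b_\gamma\|^2_{L^2(\gamma)}$. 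Equality follows from density of $C_c^\infty(\R^d;\R^d)$ in $L^2(\gamma;\R^d)$, which holds for any finite Borel measure on $\R^d$, by choosing $\varphi_n\to\xi+b_\gamma$ in $L^2(\gamma)$. The second equality in \eqref{eq:84} is then just \eqref{eq:13}, i.e. the definition of the Wasserstein gradient with the identification $\nabla\frac{\delta\cF}{\delta\mu}=\nabla\log\rho+b_\gamma$. Comparing with \eqref{eq:5} gives $\cD=4\mathfrak i$ on such $\gamma$.

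\emph{Necessity.} Suppose $\cD(\gamma)=M<\infty$. Replace $\varphi$ by $t\varphi$ with $t\in\R$ and optimize in $t$. This gives $\ell(\varphi)^2\le M\|\varphi\|^2_{L^2(\gamma)}$, so $\ell$ is bounded on the dense subspace $C_c^\infty$. It therefore extends uniquely to $L^2(\gamma)$, and Riesz gives $h\in L^2(\gamma;\R^d)$ with $\ell(\varphi)=\langle\varphi,h\rangle$. Set $\xi\doteq h-b_\gamma\in L^2(\gamma)$. Then $-\int\nabla\cdot\varphi\,d\gamma=\int\varphi\cdot\xi\,d\gamma$ for all vector test fields. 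Taking $\varphi=\phi e_i$ yields exactly \eqref{eq:14}, now at the level of a measure $\gamma$ not yet known to have a density.

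\textbf{Main obstacle.} The key remaining step is to show that $\gamma$ is absolutely continuous. In distributional terms, the identity above says $\nabla\gamma=\xi\gamma$, a vector measure of finite total variation: indeed $\int|\xi|\,d\gamma\le\|\xi\|_{L^2(\gamma)}<\infty$. So $\gamma$ is a probability measure whose distributional gradient is a finite measure, i.e. $\gamma\in BV(\R^d)$ as a distribution, which by itself does not yet give absolute continuity.

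To get it, I would mollify. Set $\gamma_\epsilon=\kappa_\epsilon*\gamma$. Then $\nabla\gamma_\epsilon=\kappa_\epsilon*(\xi\gamma)$, and the conditional-Jensen argument already used around \eqref{eq:76} gives
\[
\int|\nabla\log\gamma_\epsilon|^2\gamma_\epsilon\,dz\le\int|\xi|^2\,d\gamma .
\]
Hence $\sqrt{\gamma_\epsilon}$ is bounded in $H^1(\R^d)$, since $\|\nabla\sqrt{\gamma_\epsilon}\|_2^2=\frac14\int|\nabla\log\gamma_\epsilon|^2\gamma_\epsilon$ and $\|\sqrt{\gamma_\epsilon}\|_2=1$. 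Extract a weakly convergent subsequence in $H^1$, which converges strongly in $L^2_{\rm loc}$, so that $\gamma_\epsilon\to f$ in $L^1_{\rm loc}$. Tightness from the second moment upgrades this to $L^1$. Since also $\gamma_\epsilon\Rightarrow\gamma$, we get $\gamma=f\,dx$ with $\sqrt f\in H^1$, so $\rho=f$ is a density. This is essentially \cite[Lemma 3.5]{HaurayMischler2014}, which may be cited. With the density in hand, \eqref{eq:14} holds for $\rho$ with $\xi\in L^2(\gamma)$, so the weak logarithmic gradient exists, and the sufficiency step gives \eqref{eq:84}.

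Finally, for $\gamma\notin\cP_2$ both sides of $\cD=4\mathfrak i$ are $+\infty$ by convention. For $\gamma\in\cP_2$ without a weak logarithmic gradient, both sides are $+\infty$ by the necessity step and the definition of $\mathfrak i$.
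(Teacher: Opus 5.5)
Your argument is correct and follows the same route as the paper's proof. Both view the bracket in \eqref{eq:83} as $2\Lambda_\gamma(\varphi)-\|\varphi\|_{L^2(\gamma)}^2$. Both use $b_\gamma\in L^2(\gamma;\R^d)$ to reduce finiteness of $\cD(\gamma)$ to $L^2(\gamma)$-boundedness of $\varphi\mapsto\int\nabla\cdot\varphi\,d\gamma$. Both identify $\cD(\gamma)$ with the squared norm of the Riesz representer $\nabla\log\rho+b_\gamma$.

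The real difference is the absolute-continuity step. The paper simply invokes \cite[Lemma~3.5]{HaurayMischler2014}, while you prove it by hand, using:
\begin{enumerate}
\item the conditional-Jensen bound $\int|\nabla\log(\kappa_\epsilon*\gamma)|^2\,(\kappa_\epsilon*\gamma)\,dz\le\int|\xi|^2\,d\gamma$;
\item the resulting uniform $H^1$ bound on $\sqrt{\kappa_\epsilon*\gamma}$;
\item Rellich compactness.
\end{enumerate}
That argument is sound. Extending the integration-by-parts identity from $C_c^\infty$ to the Gaussian test functions $\kappa_\epsilon(z-\cdot)$ is a routine cutoff using $\xi\in L^1(\gamma)$. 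Your version makes the lemma self-contained and also gives $\sqrt\rho\in H^1(\R^d)$.

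Incidentally, your remark that a finite gradient measure does not by itself give absolute continuity is too pessimistic. For a cutoff $\chi$, $\chi\gamma=\sum_j\partial_jE*\partial_j(\chi\gamma)$, with $E$ a fundamental solution of the Laplacian and $\partial_jE\in L^1_{\mathrm{loc}}$, so $\gamma$ is absolutely continuous at once. Your route still yields more.

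The one step you have not actually carried out is the second equality in \eqref{eq:84}. In the paper, \eqref{eq:13} is not a definition. It is asserted only for $\mu\in\operatorname{Dom}(\cF)$, with the justification ``see Lemma~\ref{lem:8} and its proof'', so citing it inside the proof of this lemma is circular. The paper closes this in two steps:
\begin{enumerate}
\item It notes that finite Fisher information together with $\gamma\in\cP_2(\R^d)$ gives $\gamma\in\operatorname{Dom}(\cF)$. Your $\sqrt\rho\in H^1$ delivers this: Sobolev embedding controls $\int\rho\log_+\rho$, and the second moment controls $\int\rho\log_-\rho$.
\item It adapts the first-variation argument of \cite[Theorem~10.4.13]{AGS}, with Lemma~\ref{lem:24} replacing the separate convexity hypotheses on $V$ and $W$, to identify $\nabla\log\rho+b_\gamma$ as the minimal-norm element of the Wasserstein subdifferential of $\cF$ at $\gamma$.
\end{enumerate}
If you read $\nabla\frac{\delta\cF}{\delta\mu}$ merely as the gradient of the flat first variation $V+W*\gamma+\log\rho+1$, your one-line identification is fine. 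That is not, however, the metric sense in which the paper uses the symbol.

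None of this affects the first equality in \eqref{eq:84} or the identity $\cD=4\mathfrak i$ on all of $\cP(\R^d)$, which your argument fully establishes.
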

The next lemma relates the regularized energy dissipation with the original dissipation functional $\mathfrak i$.
\begin{lemma}\label{lem:9}
For every $\gamma\in\cP(\R^d)$,
$
  \cD(\gamma)
  \le
  \liminf_{\delta\downarrow0}\cJ_\delta(\gamma)$.
\end{lemma}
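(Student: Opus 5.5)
Both $\cD$ and $\cJ_\delta$ are suprema of families of functionals indexed by $\varphi\in C_c^\infty(\R^d;\R^d)$: $\cD$ by \eqref{eq:83} and $\cJ_\delta$ by \eqref{eq:80}. So it suffices to prove, for each fixed $\varphi$, that $\cH_{\delta,\varphi}(\gamma)$ converges as $\delta\downarrow0$ to the $\varphi$-functional inside the supremum in \eqref{eq:83}. Then
\[
\liminf_{\delta\downarrow0}\cJ_\delta(\gamma)\ge\lim_{\delta\downarrow0}\cH_{\delta,\varphi}(\gamma),
\]
and taking the supremum over $\varphi$ gives $\cD(\gamma)\le\liminf_{\delta\downarrow0}\cJ_\delta(\gamma)$. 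If $\gamma\notin\cP_2(\R^d)$, then $\cJ_\delta(\gamma)=+\infty$ by the extension convention and there is nothing to prove. We therefore fix $\gamma\in\cP_2(\R^d)$ and $\varphi\in C_c^\infty(\R^d;\R^d)$.

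The term-by-term convergence goes through Fubini, moving the mollifier onto the test function. Set $\psi_\delta\doteq\kappa_\delta*\varphi$; since $\kappa_\delta$ is even, $\kappa_\delta*\nabla\cdot\varphi=\nabla\cdot\psi_\delta$. The three terms become:
\begin{itemize}
\item $\int\nabla\cdot\varphi\,\rho^\gamma_\delta\,dz=\int\nabla\cdot\psi_\delta\,d\gamma$;
\item by \eqref{eq:82}, $\int\varphi\cdot B_\delta[\gamma]\rho^\gamma_\delta\,dz=\int\psi_\delta\cdot b_\gamma\,d\gamma$;
\item $\int|\varphi|^2\rho^\gamma_\delta\,dz=\int(\kappa_\delta*|\varphi|^2)\,d\gamma$.
\end{itemize}
As $\delta\downarrow0$, $\psi_\delta\to\varphi$, $\nabla\cdot\psi_\delta\to\nabla\cdot\varphi$ and $\kappa_\delta*|\varphi|^2\to|\varphi|^2$ uniformly on $\R^d$, because $\varphi$ is smooth with compact support and hence uniformly continuous together with its derivatives. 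All three functions are bounded uniformly in $\delta\le1$.

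The first and third terms therefore converge by bounded convergence. For the middle term, the uniform convergence of $\psi_\delta$ gives
\[
\Bigl|\int(\psi_\delta-\varphi)\cdot b_\gamma\,d\gamma\Bigr|\le\|\psi_\delta-\varphi\|_\infty\int|b_\gamma|\,d\gamma\to0.
\]
Here $\int|b_\gamma|\,d\gamma<\infty$ because $b_\gamma$ has linear growth (Assumption~\ref{ass:1}) and $\gamma\in\cP_2(\R^d)$. Hence $\cH_{\delta,\varphi}(\gamma)$ converges to the $\varphi$-term in \eqref{eq:83}.

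No step is a real obstacle. The one point needing care is that the inequality must be obtained through the dual formulas rather than by comparing densities directly, since $\gamma$ may lack a density and $\cJ_\delta$ need not converge. Lemma~\ref{lem:8} is not needed for this inequality; it only identifies $\cD$ with $4\mathfrak i$ afterwards.
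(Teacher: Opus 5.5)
Your proof is correct and takes essentially the same route as the paper. Both arguments bound $\cJ_\delta(\gamma)$ below by $\cH_{\delta,\varphi}(\gamma)$ for a fixed $\varphi$ via \eqref{eq:80} and \eqref{eq:82}, pass to the limit term by term, and take the supremum over $\varphi$. The only difference is cosmetic: where the paper cites weak convergence of $\rho^\gamma_\delta\,dz$ to $\gamma$ and dominated convergence, you move the mollifier onto the test function by Fubini and use uniform convergence.
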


\begin{proof}
The assertion is immediate when $\gamma\notin\cP_2(\R^d)$. Suppose
that $\gamma\in\cP_2(\R^d)$ and fix
$\varphi\in C_c^\infty(\R^d;\R^d)$. By
\eqref{eq:80} and
\eqref{eq:82},
\[
\begin{aligned}
  \cJ_\delta(\gamma)
  \ge{}&
  -2\int\nabla\cdot\varphi(z)\rho_\delta^\gamma(z)\,dz
  +2\int(\kappa_\delta*\varphi)(x)\cdot b_\gamma(x)\,\gamma(dx)
  -\int|\varphi(z)|^2\rho_\delta^\gamma(z)\,dz.
\end{aligned}
\]
The first and third terms converge by weak convergence of
$\rho_\delta^\gamma(z)\,dz$ to $\gamma$. The middle term converges by
dominated convergence, since $b_\gamma\in L^1(\gamma)$ and
$\kappa_\delta*\varphi\to\varphi$ pointwise and $\kappa_\delta*\varphi$ is bounded, uniformly in $\delta$.
Taking the supremum over $\varphi$ and recalling the definition of $\cD$ gives
the desired inequality.
\end{proof}

The following is an immediate consequence of the above result and Fatou's lemma.
\begin{corollary}\label{cor:1}
For every $\Theta\in\cP(\cls)$,
\begin{equation}\label{eq:85}
  \int_{\cP(\R^d)}\cD(\gamma)\,[\Theta]_1(d\gamma)
  \le
  \liminf_{\delta\downarrow0}
  \int_{\cP(\R^d)}\cJ_\delta(\gamma)\,[\Theta]_1(d\gamma).
\end{equation}
\end{corollary}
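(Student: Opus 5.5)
The corollary follows by applying Fatou's lemma to the pointwise inequality of Lemma~\ref{lem:9}, integrated against the fixed measure $[\Theta]_1$. The plan is to reduce the continuous parameter $\delta\downarrow0$ to a sequence and then integrate the pointwise bound.

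First, choose a sequence $\delta_k\downarrow0$ that realizes the liminf on the right side of \eqref{eq:85}, so that
\[
\lim_{k\to\infty}\int_{\cP(\R^d)}\cJ_{\delta_k}(\gamma)\,[\Theta]_1(d\gamma)=\liminf_{\delta\downarrow0}\int_{\cP(\R^d)}\cJ_\delta(\gamma)\,[\Theta]_1(d\gamma).
\]
If this liminf is $+\infty$ there is nothing to prove.

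Next, apply Lemma~\ref{lem:9} along this sequence. For each $\gamma$ it gives
\[
\cD(\gamma)\le\liminf_{\delta\downarrow0}\cJ_\delta(\gamma)\le\liminf_{k\to\infty}\cJ_{\delta_k}(\gamma).
\]

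Then integrate against $[\Theta]_1$. Each $\cJ_{\delta_k}$ takes values in $[0,\infty]$ and is measurable on $\cP(\R^d)$, being lower semicontinuous by Lemma~\ref{lem:7}. Fatou's lemma therefore yields
\[
\int\cD\,d[\Theta]_1\le\int\liminf_k\cJ_{\delta_k}\,d[\Theta]_1\le\liminf_k\int\cJ_{\delta_k}\,d[\Theta]_1.
\]
The right side equals the liminf over $\delta\downarrow0$ by the choice of $\delta_k$, which gives \eqref{eq:85}.

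The only point needing care is measurability of $\cD$. This follows from Lemma~\ref{lem:8}, which identifies $\cD$ with $4\mathfrak i$. Alternatively, $\cD$ is the supremum in \eqref{eq:83} over a countable dense family of test fields $\varphi$, each term being measurable on $\cP_2(\R^d)$, and $\cP_2(\R^d)$ is a Borel subset of $\cP(\R^d)$. This step is routine, so I do not expect any real obstacle here.
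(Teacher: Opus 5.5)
Your proposal is correct and matches the paper's argument, which calls the corollary an immediate consequence of Lemma~\ref{lem:9} and Fatou's lemma; your reduction to a sequence $\delta_k\downarrow0$ and your use of Lemma~\ref{lem:7} for measurability of $\cJ_\delta$ just spell out that step. One minor point: the identity $\cD=4\mathfrak i$ of Lemma~\ref{lem:8} does not by itself give measurability of $\cD$ (that would need, e.g., the lower semicontinuity in Lemma~\ref{lem:26}), but your countable-dense-family argument suffices.
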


We now control the last four terms
on the right-hand side of \eqref{eq:70}. We first treat the
boundary and martingale terms.
\begin{lemma}
\label{lem:10}
For each $\delta>0$, 
\[
\lim_{N\to\infty}
\E\left(\left|
\frac1{T_N}
\left[
  \cF_\delta(\bar\mu_1^N)
  -
  \cF_\delta(\bar\mu_0^N)
\right]\right|
+\left|
\frac1{T_N}\widetilde M_{N,\delta}(1)
\right|\right)
=0.
\]
\end{lemma}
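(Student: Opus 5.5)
The plan is to treat the martingale term and the boundary term separately, using the moment bounds of Lemma \ref{lem:2} and the uniform-in-$N$ linear growth of $a_\delta$ and $b_\mu$.

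\emph{Martingale term.} By Itô's isometry,
\[
\E\left|\frac1{T_N}\widetilde M_{N,\delta}(1)\right|^2=\frac{2}{NT_N}\,\E\int_0^1\frac1N\sum_{i=1}^N\bigl|a_\delta(\bar\mu_t^N,\bar X_i^N(t))+b_{\bar\mu_t^N}(\bar X_i^N(t))\bigr|^2dt .
\]
From Lemma \ref{lem:3}(c) and \eqref{eq:49}, $|\nabla\log\rho^\mu_\delta(z)|\le \delta^{-2}(|z|+\int|y|\pi^\mu_{\delta,z}(dy))$. Alternatively, one can use the conditional-expectation representation $s(Z)=-\delta^{-1}\E[G\mid Z]$ from the proof of Lemma \ref{lem:7}. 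Either way, after integrating against $\kappa_\delta(z-x)$, this should give
\[
\frac1N\sum_i|a_\delta(\bar\mu,\bar X_i)|^2\le C_\delta\Bigl(1+\int|x|^2d\bar\mu\Bigr).
\]
The cleanest route is to apply Jensen to $a_\delta(\mu,x)=\E[s(x+\delta G)]$ and then average over $x\sim\mu$, which gives $\int|a_\delta|^2d\mu\le\int|s|^2\rho_\delta^\mu\le d/\delta^2$. The bound for $b_\mu$ follows from its linear growth. By \eqref{eq:42}, the expectation above is then $O(1)$, so the martingale term is $O((NT_N)^{-1/2})\to0$.

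\emph{Boundary term at time $0$.} Here $\bar\mu_0^N$ is deterministic. $\cE(\bar\mu_0^N)$ is bounded by Assumption \ref{ass:2}(a) together with the quadratic growth of $V$ and $W$. For the entropy part, $\cH_\delta(\mu)\le \sup\rho_\delta\cdot$ gives $\cH_\delta\le\log\|\kappa_\delta\|_\infty$. For the lower bound, $\cH_\delta(\mu)\ge -C-c\int|z|^2\rho_\delta^\mu(z)\,dz$ follows from the usual comparison with a Gaussian, and $\int|z|^2\rho_\delta^\mu\le 2\int|x|^2d\mu+2d\delta^2$. Hence $|\cF_\delta(\bar\mu_0^N)|\le C_\delta$ uniformly in $N$, and dividing by $T_N$ sends this term to zero.

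\emph{Boundary term at time $1$.} The same bounds reduce the problem to showing $\E\int|x|^2\bar\mu_1^N(dx)=o(T_N)$. This is the main obstacle, because \eqref{eq:42} controls only the time-integrated second moment, not the moment at the terminal time. This is exactly where the modification of Lemma \ref{lem:1} enters. Integrating the differential inequality \eqref{eq:45} with the factor $e^{aT_Nt}$ gives
\[
\E S_N(1)\le e^{-aT_N}S_N(0)+c_1/a+c_1T_N\int_0^1e^{-aT_N(1-t)}\E U_N(t)\,dt .
\]
The last term is $o(T_N)$ by \eqref{eq:36}, and in fact $\E S_N(1)$ is bounded. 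Dividing by $T_N$ and combining the three estimates gives the lemma.
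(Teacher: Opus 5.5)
Your proposal is correct and follows the paper's proof essentially step for step. The three ingredients match: the bound $|\cF_\delta(\mu)|\le C_\delta(1+\int|x|^2\,d\mu)$ from the entropy upper bound and Gaussian comparison; the estimate $\E S_N(1)=o(T_N)$ from integrating \eqref{eq:45} against $e^{aT_Nt}$ and invoking \eqref{eq:36}; and the martingale bound $C_\delta(NT_N)^{-1/2}$ via $\int|a_\delta(\mu,\cdot)|^2\,d\mu\le\cV_\delta(\mu)\le d/\delta^2$.

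One small overstatement: your aside that $\E S_N(1)$ is actually bounded is not justified. \eqref{eq:36} only makes the last term $T_N\cdot o(1)$, which need not be bounded for an arbitrary $\varepsilon_N$ with $T_N\varepsilon_N\to\infty$; it would need something like $aT_N\varepsilon_N\ge\log T_N-O(1)$. Since your argument only uses $o(T_N)$, nothing breaks.
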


\begin{proof}
Since $\nabla V$ and
$\nabla W$ are globally Lipschitz, there is a constant $C<\infty$
such that
$
  |V(x)|+|W(x)|\le C(1+|x|^2)$, for all $x \in \R^d$.
Consequently, for some $C<\infty$, and all $\mu \in \clp_2(\R^d)$,
\[
\begin{aligned}
  \left|\int_{\R^d}V(x)\,\mu(dx)\right| + \left|
  \frac12\iint_{\R^d\times\R^d}
     W(x-y)\,\mu(dx)\mu(dy)
  \right|
  \le
  C\left(1+\int_{\R^d}|x|^2\,\mu(dx)\right).
\end{aligned}
\]

We next bound the entropy of $\rho_\delta^\mu=\kappa_\delta*\mu$.
By convexity of $r\mapsto r\log r$,
\[
\begin{aligned}
 \int_{\R^d}\rho_\delta^\mu(z)
       \log\rho_\delta^\mu(z)\,dz
 &\le
 \int_{\R^d}\int_{\R^d}
   \kappa_\delta(z-x)\log\kappa_\delta(z-x)
   \,\mu(dx)\,dz\\
 &=
 \int_{\R^d}\kappa_\delta(z)\log\kappa_\delta(z)\,dz
 \doteq C_\delta^{+}.
\end{aligned}
\]
On the other hand, using the inequality $\int \rho_\delta^\mu(z) \log (\rho_\delta^\mu(z)/\kappa_\delta(z)) dz \ge 0$, we see that 
there is a constant $C^-_\delta<\infty$ such that
\[
  -\int_{\R^d}\rho_\delta^\mu(z)
       \log\rho_\delta^\mu(z)\,dz
  \le
  C^-_\delta \left(1+
  \int_{\R^d}|x|^2\,\mu(dx)
  \right).
\]
Combining the last three estimates, we obtain, for some $C_{\delta}>0$,
\begin{equation}\label{eq:86}
  |\cF_\delta(\mu)|
  \le
  C_\delta
  \left(
    1+\int_{\R^d}|x|^2\,\mu(dx)
  \right),
  \qquad \mu\in\cP_2(\R^d).
\end{equation}

We now estimate the second moment under $\bar \mu^N_t$. 
Recall $S_N(t)$ and $U_N(t)$ from \eqref{eq:43}.
With $a>0$ as in \eqref{eq:44}, we have from
\eqref{eq:45},
\[
\begin{aligned}
  \E S_N(1)
  &\le
  e^{-aT_N} S_N(0)
  +
  c_1T_N\int_0^1e^{-aT_N(1-t)}\,dt+
  c_1T_N\int_0^1
     e^{-aT_N(1-t)}\E U_N(t)\,dt\\
  &\le
  e^{-aT_N} S_N(0)
  +
  \frac{c_1}{a}
  +
  c_1T_N\int_0^1
     e^{-aT_N(1-t)}\E U_N(t)\,dt.
\end{aligned}
\]
Using the uniform bound on the initial data in Assumption \ref{ass:2}(a), and \eqref{eq:36} in Lemma \ref{lem:1} (and recalling that we denote $\hat u^N$ as $u^N$), it follows that
  $\frac1{T_N}\E S_N(1)\to0$. Using this in \eqref{eq:86}, and using the uniform bound in Assumption \ref{ass:2}(a) once more, gives
\begin{equation}\label{eq:87}
  \frac1{T_N}
  \E\left|
    \cF_\delta(\bar\mu_1^N)
    -
    \cF_\delta(\bar\mu_0^N)
  \right|
  \to0.
\end{equation}
We now consider the martingale term. Fix $\mu \in \clp_2(\R^d)$. Let $X,G,Z$ be as introduced above \eqref{eq:76}. 
Then, using Jensen's inequality,
\begin{align*}
\int_{\R^d}|a_\delta(\mu,x)|^2\,\mu(dx)
  \le \cV_\delta(\mu) = \frac1{\delta^2}
  \E\left|\E[G\mid Z]\right|^2 \le \frac d{\delta^2},
\end{align*}
where the middle equality uses the fact that $\nabla\log\rho_\delta^\mu(Z)
  =
  -\frac1\delta\E[G\mid Z]$ (cf. proof of Lemma \ref{lem:7}).

Since $\nabla V$ and $\nabla W$ are globally Lipschitz, it follows that, for some $C>0$,
\[
  \int_{\R^d}|b_\mu(x)|^2\,\mu(dx)
  \le
  C\left(
    1+\int_{\R^d}|x|^2\,\mu(dx)
  \right), \; \mu \in \clp_2(\R^d).
\]
Combining the above estimates, and recalling the definition of $G_{\delta}$ from \eqref{eq:53}, we obtain, for some $C_{\delta}>0$,
\begin{equation}\label{eq:88}
  \|G_\delta(\mu,\cdot)\|_{L^2(\mu)}^2
  \le
  C_\delta
  \left(
    1+\int_{\R^d}|x|^2\,\mu(dx)
  \right), \; \mu \in \clp_2(\R^d).
\end{equation}
Lemma~\ref{lem:2} now gives
\begin{equation}\label{eq:89}
  \sup_{N\ge1}
  \E\int_0^1
     \frac1N\sum_{i=1}^N
     |G_\delta(\bar\mu_t^N,\bar X_i^N(t))|^2\,dt
  <\infty.
\end{equation}
Finally,
\[
\begin{aligned}
  \frac1{T_N}\E|\widetilde M_{N,\delta}(1)| \le \frac1{T_N}\left[\E\left\langle \widetilde M_{N,\delta}\right\rangle_1\right]^{1/2}
  &=
  \frac{\sqrt{2}}{N\sqrt{T_N}}
  \left(\sum_{i=1}^N
  \E\int_0^1
  |G_\delta(\bar\mu_t^N,\bar X_i^N(t))|^2\,dt\right)^{1/2} \le \frac{C_\delta}{\sqrt{NT_N}}.
\end{aligned}
\]
Sending $N\to \infty$, and combining this with
\eqref{eq:87} completes the proof. 
\end{proof}

We next treat the fourth term on the right-hand side of
\eqref{eq:70}.
\begin{lemma}
\label{lem:11}
For each $\delta>0$,
$\E\int_0^1
  D_{N,\delta}(\bar X^N(t))\,dt
  \to0$.
\end{lemma}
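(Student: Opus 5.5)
The plan is to prove a deterministic bound on $D_{N,\delta}(x)$, for $x\in(\R^d)^N$, in terms of $S=\frac1N\sum_i|x_i|^2$, and then integrate it in time using Lemma~\ref{lem:2}.

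\emph{The naive bound does not suffice.} Using $\rho_\delta^N(z,x)\ge \frac1N\kappa_\delta(z-x_i)$ in the $i$-th summand of \eqref{eq:62} gives a per-particle contribution of at most
\[
\frac1N\int\frac{|\nabla\kappa_\delta(w)|^2}{\kappa_\delta(w)}\,dw=\frac{d}{N\delta^2},
\]
so $D_{N,\delta}\le d/\delta^2$. This is uniform in $N$ but does not vanish. The $1/N$ gain has to come from the other particles near $x_i$, which also contribute to $\rho_\delta^N(z,x)$.

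\emph{Clustering bound.} Fix $r>0$ (say $r=\delta$), $M>0$ and $k\in\N$. Partition $\R^d$ into cubes of side $r$. The ball $B_M$ meets at most $C_d(M/r+1)^d$ of these cubes. Call particle $i$ \emph{good} if $|x_i|\le M$ and its cube contains at least $k$ particles. For good $i$, keep in $\rho_\delta^N$ only the particles $j$ in the same cube. Since $|x_j-x_i|\le\sqrt d\,r$, completing the square gives
\[
\kappa_\delta(z-x_j)\ge \kappa_\delta(z-x_i)\exp\Bigl\{-\bigl(\sqrt d\,r|z-x_i|+dr^2/2\bigr)/\delta^2\Bigr\}.
\]
Hence $\rho_\delta^N(z,x)\ge \frac kN\kappa_\delta(z-x_i)e^{-(\sqrt d\,r|z-x_i|+dr^2/2)/\delta^2}$, and the $i$-th summand is at most $\frac{1}{Nk}K_{\delta,r}$, where
\[
K_{\delta,r}=\int\frac{|\nabla\kappa_\delta(w)|^2}{\kappa_\delta(w)}e^{(\sqrt d\,r|w|+dr^2/2)/\delta^2}\,dw<\infty .
\]
This integral is finite because the Gaussian factor in $\kappa_\delta$ dominates the exponential-linear weight. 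Bad particles are of two kinds, and each is handled with the naive bound $d/(N\delta^2)$:
\begin{itemize}
\item Those outside $B_M$: their fraction is at most $S/M^2$ by Chebyshev.
\item Those inside $B_M$ lying in sparse cubes: there are at most $k\,C_d(M/r+1)^d$ of them, a fraction at most $kC_d(M/r+1)^d/N$.
\end{itemize}
Altogether,
\[
D_{N,\delta}(x)\le \frac{d}{\delta^2}\Bigl(\frac{S}{M^2}+\frac{kC_d(M/r+1)^d}{N}\Bigr)+\frac{K_{\delta,r}}{k}.
\]

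\emph{Passage to the limit.} Apply this bound with $x=\bar X^N(t)$, integrate over $t\in[0,1]$, and take expectations. Lemma~\ref{lem:2} bounds $\E\int_0^1 S_N(t)\,dt\le C$ uniformly in $N$, so
\[
\limsup_{N\to\infty}\E\int_0^1 D_{N,\delta}(\bar X^N(t))\,dt\le \frac{dC}{\delta^2M^2}+\frac{K_{\delta,r}}{k}.
\]
Letting $k\to\infty$ and then $M\to\infty$, with $\delta$ and $r$ fixed, proves the lemma.

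The main obstacle is the step recognising that the trivial bound is only $O(1)$, and establishing the local lower bound on $\rho_\delta^N$ from nearby particles with a constant that is integrable against $|\nabla\kappa_\delta|^2/\kappa_\delta$. Once this is in place, the rest is bookkeeping with the moment bound of Lemma~\ref{lem:2}.
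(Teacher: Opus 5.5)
Your argument is correct. It shares the paper's skeleton: the naive per-particle bound $d/(N\delta^2)$ together with Chebyshev and Lemma~\ref{lem:2} handles particles outside a large ball, while particles inside the ball gain from their neighbours in $\rho_\delta^N$. The two proofs differ in how the neighbours are used.

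\begin{itemize}
\item \textbf{The paper.} It treats all $m_R$ particles in $B_R$ as one cluster. Jensen's inequality gives $\sum_{j\in I_R}\kappa_\delta(z-x_j)\ge m_R e^{-2R^2/\delta^2}\kappa_\delta(z-\bar x_R)$. Each particle in the ball then contributes $O(1/(Nm_R))$, so the ball contributes $C_{\delta,R}/N$ in total. No density threshold is needed, and only the limits $N\to\infty$, $R\to\infty$ appear.
\item \textbf{Your route.} You localise to cubes of side $r$ and compare Gaussians pairwise. This keeps the exponential weight depending on $r$ alone.
\end{itemize}

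Your dense/sparse split with threshold $k$ is unnecessary, though. If a cube $Q$ contains $m_Q\ge1$ particles, your own estimate $\rho_\delta^N(z,x)\ge\frac{m_Q}{N}\kappa_\delta(z-x_i)e^{-(\sqrt d\,r|z-x_i|+dr^2/2)/\delta^2}$ bounds each of their summands by $K_{\delta,r}/(Nm_Q)$. That gives $K_{\delta,r}/N$ per cube, and hence at most $C_d(M/r+1)^dK_{\delta,r}/N$ from all particles in $B_M$. This is exactly the paper's form $C/N+\frac{d}{\delta^2}\mu_x^N(B_M^c)$, without the extra limit in $k$.

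Your constant is polynomial in $M$, whereas the paper's is $e^{6R^2/\delta^2}\bigl(\frac{d}{\delta^2}+\frac{4R^2}{\delta^4}\bigr)$. So the paper's one-cluster Jensen argument is shorter, while your local version is quantitatively sharper. That would matter only if one wanted to let the radius grow with $N$ to get a rate; it does not matter for this lemma.
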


\begin{proof}
We first prove that, for every fixed $R>0$, there exists
$C_{\delta,R}<\infty$ such that, for every $N\in\mathbb{N}$ and every
$x\in(\R^d)^N$,
\begin{equation}\label{eq:90}
  D_{N,\delta}(x)
  \le
  \frac{C_{\delta,R}}{N}
  +
  \frac d{\delta^2}\mu_{x}^N(B_R^c),
\end{equation}
where $B_R$ is a ball of radius $R$ in $\R^d$.
Let
$I_R
  \doteq
  \{i:x_i\in B_R\}$ and 
 $m_R\doteq |I_R|$.

For $i\notin I_R$, using the inequality
$\rho_\delta^N(z,x)
  \ge
  \frac1N\kappa_\delta(z-x_i)$, we have
\begin{align}
  \frac1{N^2}
  \int_{\R^d}
  \frac{
    |\nabla\kappa_\delta(z-x_i)|^2
  }{
    \rho_\delta^N(z,x)
  }\,dz
  &\le
  \frac1N
  \int_{\R^d}
  \frac{
    |\nabla\kappa_\delta(z-x_i)|^2
  }{
    \kappa_\delta(z-x_i)
  }\,dz=
  \frac d{N\delta^2}.
\end{align}
Summing over $i\notin I_R$ gives
\begin{equation}\label{eq:91}
  \frac1{N^2}
  \sum_{i\notin I_R}
  \int_{\R^d}
  \frac{
    |\nabla\kappa_\delta(z-x_i)|^2
  }{
    \rho_\delta^N(z,x)
  }\,dz
  \le
  \frac d{\delta^2}
  \mu_x^N(B_R^c).
\end{equation}
If $m_R=0$, the contribution from the particles in $B_R$ is zero and thus \eqref{eq:90} is immediate from \eqref{eq:91}.
Suppose therefore that $m_R\ge1$.
Since for every $z\in\R^d$,
$\rho_\delta^N(z,x)
  \ge
  \frac1N\sum_{j\in I_R}\kappa_\delta(z-x_j)$, we have 
\begin{align}
  &\frac1{N^2}
  \sum_{i\in I_R}
  \int_{\R^d}
  \frac{
    |\nabla\kappa_\delta(z-x_i)|^2
  }{
    \rho_\delta^N(z,x)
  }\,dz \le
  \frac1N
  \sum_{i\in I_R}
  \int_{\R^d}
  \frac{
    |\nabla\kappa_\delta(z-x_i)|^2
  }{
    \sum_{j\in I_R}\kappa_\delta(z-x_j)
  }\,dz.
  \label{eq:92}
\end{align}
We now bound the denominator on the right side of the above expression. 
Let 
$\bar x_R
  \doteq
  \frac1{m_R}\sum_{j\in I_R}x_j$. Then
Jensen's inequality gives
\begin{align}
  \sum_{j\in I_R}\kappa_\delta(z-x_j)
  &\ge
  m_R(2\pi\delta^2)^{-d/2}
  \exp\left\{
    -\frac1{2\delta^2m_R}
    \sum_{j\in I_R}|z-x_j|^2
  \right\}\notag\\
  &=
  m_R\kappa_\delta(z-\bar x_R)
  \exp\left\{
    -\frac1{2\delta^2m_R}
    \sum_{j\in I_R}|x_j-\bar x_R|^2
  \right\}\notag\\
  &\ge m_R e^{-2R^2/\delta^2}
  \kappa_\delta(z-\bar x_R),
  \label{eq:93}
\end{align}
where the last inequality uses the fact that 
for every $i\in I_R$,
$|x_i-\bar x_R|\le 2R$.
Substituting the above inequality into \eqref{eq:92}, we obtain
\begin{multline*}
  \frac1{N^2}
  \sum_{i\in I_R}
  \int_{\R^d}
  \frac{
    |\nabla\kappa_\delta(z-x_i)|^2
  }{
    \rho_\delta^N(z,x)
  }\,dz\le
  \frac{e^{2R^2/\delta^2}}{Nm_R}
  \sum_{i\in I_R}
  \int_{\R^d}
  \frac{
    |\nabla\kappa_\delta(z-x_i)|^2
  }{
    \kappa_\delta(z-\bar x_R)
  }\,dz\\
  = \frac{e^{2R^2/\delta^2}}{Nm_R}
  \sum_{i\in I_R} e^{|x_i-\bar x_R|^2/\delta^2}
  \left(
    \frac d{\delta^2}
    +
    \frac{|x_i-\bar x_R|^2}{\delta^4}
  \right)
  \le \frac{e^{2R^2/\delta^2}}{Nm_R} m_Re^{4R^2/\delta^2}
  \left(
    \frac d{\delta^2}
    +
    \frac{4R^2}{\delta^4}
  \right) = \frac{C_{\delta,R}}{N},
\end{multline*}
with $C_{\delta,R}
  =
  e^{6R^2/\delta^2}
  \left(
    \frac d{\delta^2}
    +
    \frac{4R^2}{\delta^4}
  \right)$.

Combining the above estimate with
\eqref{eq:91} proves
\eqref{eq:90}.
Applying this estimate now with $x=\bar X^N(t)$ and integrating over
$t\in[0,1]$, we obtain
\[\begin{aligned}
  \E\int_0^1D_{N,\delta}(\bar X^N(t))\,dt
  &\le
  \frac{C_{\delta,R}}{N}
  +
  \frac d{\delta^2}
  \E\int_0^1\bar\mu_t^N(B_R^c)\,dt \le \frac{C_{\delta,R}}{N}+
  \frac{d}{\delta^2R^2}
  \E\int_0^1\int_{\R^d}|x|^2\,\bar\mu_t^N(dx).
\end{aligned}
\]
Using Lemma \ref{lem:2} and sending $N\to \infty$, $R \to \infty$ completes the proof.
\end{proof}

We now estimate the term $\widetilde R^{N,\delta}(\bar \mu^N_t)$ on the right side
of \eqref{eq:70}.
Define, for $\mu \in \clp_2(\R^d)$,
$h_\mu(x)
  \doteq
  \nabla\cdot b_\mu(x)
  =
  \Delta V(x)+\Delta W*\mu(x)$
and
\begin{equation}\label{eq:94}
  \alpha_\delta(\mu)
  \doteq
  \left|
    \int_{\R^d}h_\mu(x)\,\mu(dx)
    -
    \int_{\R^d}h_\mu(z)\rho_\delta^\mu(z)\,dz
  \right|.
\end{equation}

\begin{lemma}\label{lem:12}
There exists $C<\infty$ such that, for every $x \in (\R^d)^N$,
\begin{equation}\label{eq:95}
\begin{aligned}
  \left|
    \widetilde R_{N,\delta}(\mu^N_x)
  \right|
  &\le
  \alpha_\delta(\mu^N_x)
  +
  L_b\sqrt d\,\delta\,\cJ_\delta(\mu^N_x)^{1/2}
  +
  C\delta
  \left(
    1+\int_{\R^d}|y|^2\,\mu^N_x(dy)
  \right)^{1/2}
  +
  \frac{|\Delta W(0)|}{N}.
\end{aligned}
\end{equation}
\end{lemma}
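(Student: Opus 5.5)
The bound will come from the exact algebraic identity $\widetilde R_{N,\delta}=A_N+C_\delta$ in \eqref{eq:63}. The plan is to integrate by parts in $C_\delta$, so that its leading part cancels the Laplacian term $A_N$ up to the regularization error $\alpha_\delta$. The remainder is then controlled through the conditional-expectation estimate \eqref{eq:76}.

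\emph{Rewriting $A_N$ and integrating by parts.} Fix $x\in(\R^d)^N$ and write $\mu=\mu^N_x$. Since $\iint\Delta W(y-z)\,\mu(dy)\mu(dz)=\int(\Delta W*\mu)\,d\mu$, we have
\[
A_N(\mu)=\int h_\mu\,d\mu-\frac{\Delta W(0)}{N}.
\]
In $C_\delta(\mu)$ we write $B_\delta[\mu]=b_\mu+(B_\delta[\mu]-b_\mu)$ and use $\nabla\log\rho_\delta^\mu\,\rho_\delta^\mu=\nabla\rho_\delta^\mu$. Integrating by parts then gives
\[
\int\nabla\rho_\delta^\mu\cdot b_\mu\,dz=-\int h_\mu\,\rho_\delta^\mu\,dz.
\]
This step is justified because $\mu$ is finitely supported. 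Hence $\rho_\delta^\mu$ and $\nabla\rho_\delta^\mu$ decay like Gaussians, while $b_\mu$ grows at most linearly, so boundary terms on large balls vanish. Moreover $h_\mu$ is bounded, since the Hessians of $V$ and $W$ are bounded by the global Lipschitz constants of $\nabla V$ and $\nabla W$ (using the $C^2$ regularity of Assumption~\ref{ass:3}). Combining,
\[
\widetilde R_{N,\delta}(\mu)=\Big[\int h_\mu\,d\mu-\int h_\mu\rho_\delta^\mu\,dz\Big]+\int\nabla\log\rho_\delta^\mu\cdot(B_\delta[\mu]-b_\mu)\rho_\delta^\mu\,dz-\frac{\Delta W(0)}{N}.
\]
The first bracket is bounded in absolute value by $\alpha_\delta(\mu)$ from \eqref{eq:94}, and the last term is at most $|\Delta W(0)|/N$.

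\emph{The remainder term.} For the middle term we split $\nabla\log\rho_\delta^\mu=(\nabla\log\rho_\delta^\mu+B_\delta[\mu])-B_\delta[\mu]$ and apply Cauchy--Schwarz in $L^2(\rho_\delta^\mu\,dz)$ to each piece. Estimate \eqref{eq:76} gives $\|B_\delta[\mu]-b_\mu\|_{L^2(\rho_\delta^\mu)}\le L_b\sqrt d\,\delta$.
\begin{itemize}
\item The first piece is then bounded by $L_b\sqrt d\,\delta\,\cJ_\delta(\mu)^{1/2}$, by the definition \eqref{eq:68} of $\cJ_\delta$.
\item For the second piece, Jensen's inequality \eqref{eq:71} gives $\|B_\delta[\mu]\|^2_{L^2(\rho_\delta^\mu)}\le\int|b_\mu|^2d\mu$. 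The linear growth of $b_\mu$, uniform in $\mu$ via the Lipschitz bounds, gives $\int|b_\mu|^2d\mu\le C(1+\int|y|^2\mu(dy))$. The second piece is therefore at most $C\delta(1+\int|y|^2\mu(dy))^{1/2}$.
\end{itemize}
Collecting the four bounds yields \eqref{eq:95}.

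\emph{Main obstacle.} The only delicate point is the integration by parts, namely that the boundary terms vanish and that $h_\mu$ is integrable against both $\mu$ and $\rho_\delta^\mu$. Both follow from the Gaussian tails of $\rho_\delta^\mu$ for atomic $\mu$ together with the boundedness of $D^2V$ and $D^2W$. Everything else is Cauchy--Schwarz combined with \eqref{eq:71} and \eqref{eq:76}.
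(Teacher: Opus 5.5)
Your proposal is correct and follows the paper's proof essentially step for step. You rewrite $A_N(\mu^N_x)$ as $\int h_{\mu^N_x}\,d\mu^N_x-\Delta W(0)/N$, integrate by parts in the $b_\mu$-part of $C_\delta$ to produce $-\int h_\mu\rho_\delta^\mu\,dz$, and then split the remainder $\int\nabla\log\rho_\delta^\mu\cdot(B_\delta[\mu]-b_\mu)\rho_\delta^\mu\,dz$ exactly as the paper does via \eqref{eq:68}, \eqref{eq:71}, \eqref{eq:76} and Cauchy--Schwarz; your justification of the integration by parts (Gaussian tails of $\rho_\delta^\mu$, bounded Hessians of $V$ and $W$) simply makes explicit a step the paper leaves implicit.
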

\begin{proof}
Recalling the definition of $A_N$,
\begin{equation}\label{eq:96}
  A_N(\mu^N_x)
  =
  \int_{\R^d}h_{\mu^N_x}(y)\,\mu^N_x(dy)
  -
  \frac{\Delta W(0)}{N}.
\end{equation}

Set, for $\mu \in \clp_2(\R^d)$,
$s_\delta^\mu(z)
  \doteq
  \nabla\log\rho_\delta^\mu(z)$
and let
$q_\delta^\mu(z)
  \doteq
  B_\delta[\mu](z)-b_\mu(z)$.
Then, for $\mu \in \clp_2(\R^d)$,
\[
\begin{aligned}
  C_\delta(\mu)
  &=
  \int_{\R^d}
  s_\delta^\mu(z)\cdot b_\mu(z)
  \rho_\delta^\mu(z)\,dz+
  \int_{\R^d}
  s_\delta^\mu(z)\cdot q_\delta^\mu(z)
  \rho_\delta^\mu(z)\,dz\\
&=-
  \int_{\R^d}
  h_\mu(z)\rho_\delta^\mu(z)\,dz + \int_{\R^d}
  s_\delta^\mu(z)\cdot q_\delta^\mu(z)
  \rho_\delta^\mu(z)\,dz,
\end{aligned}
\]
where the last identity uses $s_\delta^\mu\rho_\delta^\mu
  =
  \nabla\rho_\delta^\mu$ and integration by parts.

Combining \eqref{eq:96} with the above observation, we obtain
\begin{equation}\label{eq:98}
\begin{aligned}
  \widetilde R_{N,\delta}(\mu^N_x)
  &=
  \int_{\R^d}h_{\mu^N_x}(y)\,\mu^N_x(dy)
  -
  \int_{\R^d}h_{\mu^N_x}(z)\rho_\delta^{\mu^N_x}(z)\,dz\\
  &\quad +
  \int_{\R^d}
  s_\delta^{\mu^N_x}(z)\cdot q_\delta^{\mu^N_x}(z)
  \rho_\delta^{\mu^N_x}(z)\,dz
  -
  \frac{\Delta W(0)}{N}.
\end{aligned}
\end{equation}
Now, for $\mu \in \clp_2(\R^d)$, set
$\tilde G_\delta(\mu,z)
  \doteq
  s_\delta^\mu(z)+B_\delta[\mu](z)$.
Then
\[
\begin{aligned}
  \int
  s_\delta^\mu(z)\cdot q_\delta^\mu(z)
  \rho_\delta^\mu(z) dz
  &=
  \int
  \tilde G_\delta(\mu,z)\cdot q_\delta^\mu(z)
  \rho_\delta^\mu(z)\,dz -
  \int
  B_\delta[\mu](z)\cdot q_\delta^\mu(z)
  \rho_\delta^\mu(z)\,dz.
\end{aligned}
\]
By Cauchy--Schwarz inequality, the definition of $\cJ_\delta(\mu)$, and
\eqref{eq:76} (recall the definition of $q^{\mu}_{\delta}(z)$),
\begin{equation}\label{eq:99}
\begin{aligned}
  \left|
  \int
  \tilde G_\delta(\mu, z)\cdot q_\delta^\mu(z)
  \rho_\delta^\mu(z) dz
  \right|
  &\le
  \cJ_\delta(\mu)^{1/2}
  \left(
    \int|q_\delta^\mu(z)|^2\rho_\delta^\mu(z) dz
  \right)^{1/2}
  \le
  L_b\sqrt d\,\delta\,\cJ_\delta(\mu)^{1/2}.
\end{aligned}
\end{equation}
Moreover, using \eqref{eq:71}, \eqref{eq:76}, and linear growth of $b_{\mu}$, 
\begin{equation}\label{eq:100}
\begin{aligned}
  \left|
  \int
  B_\delta[\mu](z)\cdot q_\delta^\mu(z)
  \rho_\delta^\mu(z) dz
  \right|
  &\le
  C\delta
  \left(
    1+\int_{\R^d}|x|^2\,\mu(dx)
  \right)^{1/2}.
\end{aligned}
\end{equation}
The result follows from
\eqref{eq:98},
\eqref{eq:99}, and \eqref{eq:100}.
\end{proof}

The next lemma estimates the error $\alpha_\delta$.
\begin{lemma}\label{lem:13}
We have,
$\lim_{\delta\downarrow0}
  \sup_N
  \E\int_0^1
  \alpha_\delta(\bar\mu_t^N)\,dt
  =0$.
\end{lemma}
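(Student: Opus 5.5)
Since $\rho_\delta^\mu=\kappa_\delta*\mu$ is the law of $Z=X+\delta G$ with $X\sim\mu$ and $G\sim N(0,I_d)$ independent, we can write
\[
  \alpha_\delta(\mu)=\bigl|\E[h_\mu(X)-h_\mu(X+\delta G)]\bigr|,
  \qquad h_\mu=\Delta V+\Delta W*\mu .
\]
The plan is to show that $\alpha_\delta(\mu)$ is small, uniformly over $\mu$ in a set of bounded second moment, and then to control the large-moment part using Lemma \ref{lem:2}. The difficulty is that Assumption \ref{ass:3} only gives $V,W\in C^2$. Since $\nabla V,\nabla W$ are globally Lipschitz, $\Delta V$ and $\Delta W$ are bounded (by $dL$, where $L$ is the Lipschitz constant). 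However, they are merely continuous, not uniformly continuous. We therefore cannot use a Lipschitz bound for $h_\mu$, and instead use a compactness/truncation argument.

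\textbf{Step 1 (bounded, locally uniformly continuous integrand).} Set $M\doteq\|\Delta V\|_\infty+\|\Delta W\|_\infty<\infty$, so $|h_\mu|\le M$ for every $\mu$. Fix $R>0$ and let $\omega_R$ be a joint modulus of continuity of $\Delta V$ and $\Delta W$ on $\bar B_{3R}$. For $x\in B_R$, $|\delta g|\le R$ and $y\in B_R$, we have $|\Delta W(x+\delta g-y)-\Delta W(x-y)|\le\omega_R(\delta|g|)$. Contributions with $|y|>R$ are bounded by $2M\mu(B_R^c)$. This gives
\[
  \alpha_\delta(\mu)\le \E\,\omega_R(\delta|G|)\mathbf 1_{\{\delta|G|\le R\}} + 2M\,\PP(\delta|G|>R)+4M\,\mu(B_R^c)+2\omega_R\text{-type terms},
\]
which simplifies to
\[
  \alpha_\delta(\mu)\le 2\,\E\bigl[\omega_R(\delta|G|)\wedge 2M\bigr]+6M\,\mu(B_R^c)+2M\,\PP(\delta|G|>R).
\]
Here the first term collects both the $\Delta V$ part and the $\Delta W*\mu$ part restricted to $|x|,|y|\le R$. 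The key feature is that everything other than $\mu(B_R^c)$ is independent of $\mu$. Also, $\E[\omega_R(\delta|G|)\wedge 2M]\to0$ as $\delta\downarrow0$ for fixed $R$, by dominated convergence.

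\textbf{Step 2 (apply along $\bar\mu^N_t$).} By Markov's inequality, $\mu(B_R^c)\le R^{-2}\int|x|^2\,d\mu$. Lemma \ref{lem:2} then gives
\[
  \sup_N\E\int_0^1\bar\mu_t^N(B_R^c)\,dt\le C/R^2 .
\]
Hence
\[
  \sup_N\E\int_0^1\alpha_\delta(\bar\mu_t^N)\,dt\le 2\E[\omega_R(\delta|G|)\wedge2M]+6MC/R^2+2M\,\PP(\delta|G|>R).
\]
Letting $\delta\downarrow0$ and then $R\to\infty$ gives the claim.

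The main point to check carefully is the bookkeeping in Step 1 for the interaction part. One should write
\[
  \int h_\mu\,d\mu-\int h_\mu\,\rho_\delta^\mu\,dz=\E\bigl[\Delta V(X)-\Delta V(X+\delta G)\bigr]+\E\bigl[\Delta W(X-Y)-\Delta W(X+\delta G-Y)\bigr],
\]
with $Y\sim\mu$ independent of $(X,G)$. On the event $\{|X|\le R,|Y|\le R,\delta|G|\le R\}$, the arguments stay in $\bar B_{3R}$, so $\omega_R$ applies. The complement has probability at most $2\mu(B_R^c)+\PP(\delta|G|>R)$, and there the integrands are bounded by $2M$. This step is routine once the uniform bound on $\Delta V,\Delta W$ from the Lipschitz gradients is recorded; no other difficulty is expected.
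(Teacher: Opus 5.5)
Your proposal is correct and follows essentially the paper's argument. Both write $\alpha_\delta(\mu)$ with $X,Y\sim\mu$ and $G$ Gaussian, use the local modulus of continuity of $\Delta V,\Delta W$ on a compact event and their boundedness off it, and control $\mu(B_R^c)$ uniformly in $N$ via Markov's inequality and Lemma~\ref{lem:2}. The only cosmetic difference is that you truncate the noise through $\{\delta|G|\le R\}$ rather than a separate level $\{|G|\le M\}$, which saves one limiting step.
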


\begin{proof}
Fix $\mu\in\cP_2(\R^d)$. Let $X,Y$ be independent random variables
with common law $\mu$, and let $G$ be a standard Gaussian random
variable independent of $(X,Y)$. Then
\begin{align}
  \alpha_\delta(\mu)
  &\le
  \E\left|
    \Delta V(X+\delta G)-\Delta V(X)
  \right|+
  \E\left|
    \Delta W(X+\delta G-Y)-\Delta W(X-Y)
  \right|.
  \label{eq:102}
\end{align}
Fix $R,M>0$, and let
$E_{R,M}
  \doteq
  \{|X|\le R,\ |Y|\le R,\ |G|\le M\}$. By uniform continuity of $\Delta V, \Delta W$ on every compact set in $\R^d$, there is a 
  modulus of continuity $\omega_{R,M}: \R_+ \to \R_+$ such that $\omega_{R,M}(r) \downarrow 0$ as $r \downarrow 0$, and
\[
\begin{aligned}
  \left|
    \Delta V(X+\delta G)-\Delta V(X)
  \right| + \left|
    \Delta W(X+\delta G-Y)-\Delta W(X-Y)
  \right|
  \le
  \omega_{R,M}(\delta M)
\end{aligned}
\]
on $E_{R,M}$.
Since $\Delta V$ and $\Delta W$ are bounded, for some $C>0$, and all $\mu \in \clp_2(\R^d)$,
\[
\begin{aligned}
  \alpha_\delta(\mu)
  &\le
  \omega_{R,M}(\delta M)
  +
  C\mu(B_R^c)
  +
  C\PP(|G|>M).
\end{aligned}
\]
By Markov's inequality we obtain
\[
\begin{aligned}
  \sup_N
  \E\int_0^1
  \alpha_\delta(\bar\mu_t^N)\,dt \le
  \omega_{R,M}(\delta M)
  +
  \frac{C}{R^2}
  \sup_N
  \E\int_0^1
  \int_{\R^d}|x|^2\,\bar\mu_t^N(dx)\,dt
  +
  C\PP(|G|>M).
\end{aligned}
\]
The result follows from Lemma \ref{lem:2} on first sending $\delta\downarrow0$, then $R\to\infty$, and finally
 $M\to\infty$. 
\end{proof}

We now derive a uniform bound on $\cJ_\delta$.
\begin{lemma}\label{lem:14}
There exist $\delta_0 \in (0,1)$ and $C<\infty$ such that
\begin{equation}\label{eq:103}
  \sup_{0<\delta\le\delta_0}
  \limsup_{N\to\infty}
  \E\int_0^1
  \cJ_\delta(\bar\mu_t^N)\,dt
  \le C.
\end{equation}
\end{lemma}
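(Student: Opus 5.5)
Take expectations in Lemma~\ref{lem:6} and absorb the $\cJ_\delta$ term that Lemma~\ref{lem:12} produces on the right. Write $\mathbf J_N(\delta)\doteq\E\int_0^1\cJ_\delta(\bar\mu_t^N)\,dt$ and $\mathbf U_N\doteq\E\int_0^1\frac1N\sum_i|u_i^N(t)|^2dt$. By \eqref{eq:40}, $\mathbf U_N\le C_F$. Taking expectations in \eqref{eq:70} (the first marginal integral of $\Theta^N$ is $\int_0^1\cJ_\delta(\bar\mu^N_t)dt$) and using Jensen on the square-root term gives
\[
\tfrac12\mathbf J_N(\delta)\le \tfrac12 C_F+L_b\sqrt d\,\delta\, C_F^{1/2}+\E\Bigl|\tfrac1{T_N}[\cF_\delta(\bar\mu^N_1)-\cF_\delta(\bar\mu^N_0)]\Bigr|+\E\Bigl|\tfrac1{T_N}\widetilde M_{N,\delta}(1)\Bigr|+\E\int_0^1D_{N,\delta}\,dt+\E\int_0^1|\widetilde R_{N,\delta}(\bar\mu_t^N)|\,dt .
\]
For fixed $\delta$, the boundary and martingale terms vanish as $N\to\infty$ by Lemma~\ref{lem:10}, and the $D_{N,\delta}$ term vanishes by Lemma~\ref{lem:11}.

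To bound the remainder, apply Lemma~\ref{lem:12} pointwise in $t$ and integrate. The term $\alpha_\delta$ is bounded uniformly in $N$: it is bounded by $2(\|\Delta V\|_\infty+\|\Delta W\|_\infty)$, which is finite since the Laplacians are bounded by global Lipschitzness of the gradients. Alternatively, Lemma~\ref{lem:13} shows it is small for small $\delta$. The term $C\delta(1+\int|y|^2d\bar\mu_t^N)^{1/2}$ is bounded via Lemma~\ref{lem:2} and Jensen, and $|\Delta W(0)|/N\to0$. The only dangerous term is $L_b\sqrt d\,\delta\,\E\int_0^1\cJ_\delta(\bar\mu^N_t)^{1/2}dt\le L_b\sqrt d\,\delta\,\mathbf J_N(\delta)^{1/2}$ (Cauchy--Schwarz). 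Altogether we get an inequality of the form
\[
\tfrac12\mathbf J_N(\delta)\le K+o_N(1)+L_b\sqrt d\,\delta\,\mathbf J_N(\delta)^{1/2},\qquad 0<\delta\le 1,
\]
with $K$ independent of $N$ and $\delta$.

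The main point is to make this absorption legitimate, which requires $\mathbf J_N(\delta)<\infty$ for each fixed $N,\delta$. This holds: by \eqref{eq:72}, $\cJ_\delta\le 2\cV_\delta+2\int|B_\delta|^2\rho_\delta\le 2d/\delta^2+2\int|b_\mu|^2d\mu$, using $\cV_\delta\le d/\delta^2$ (shown in the proof of Lemma~\ref{lem:10}) and \eqref{eq:71}. The right-hand side has finite expectation by Lemma~\ref{lem:2} and linear growth of $b$. Then Young's inequality $L_b\sqrt d\,\delta\,\mathbf J^{1/2}\le \tfrac14\mathbf J+L_b^2d\delta^2$ gives $\tfrac14\mathbf J_N(\delta)\le K+L_b^2d+o_N(1)$. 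Taking $\limsup_N$ and then the supremum over $\delta\le\delta_0$ (any $\delta_0\in(0,1)$ works) yields \eqref{eq:103}. The only care needed is to check that Lemmas~\ref{lem:10} and~\ref{lem:11} are applied for fixed $\delta$ before taking the supremum, so that the constant $K$ is genuinely uniform in $\delta$.
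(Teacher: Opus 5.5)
Your proposal is correct and follows the paper's proof: you take expectations in \eqref{eq:70}, dispose of the boundary, martingale and $D_{N,\delta}$ terms for fixed $\delta$ via Lemmas~\ref{lem:10} and~\ref{lem:11}, bound $\widetilde R_{N,\delta}$ using Lemma~\ref{lem:12}, Lemma~\ref{lem:2} and Cauchy--Schwarz, and absorb the $L_b\sqrt d\,\delta\,\mathbf J_N(\delta)^{1/2}$ term by Young's inequality. Your two small deviations---bounding $\alpha_\delta$ crudely by $2(\|\Delta V\|_\infty+\|\Delta W\|_\infty)$ instead of invoking Lemma~\ref{lem:13}, and verifying $\mathbf J_N(\delta)<\infty$ before absorbing (note that $\cJ_\delta(\mu)\le 2\cV_\delta(\mu)+2\int|B_\delta[\mu]|^2\rho_\delta^\mu\,dz$ follows from \eqref{eq:68} and $|a+b|^2\le 2|a|^2+2|b|^2$, not from \eqref{eq:72})---are valid, and the finiteness check makes explicit what the paper's absorption step uses only implicitly.
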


\begin{proof}
Define
$J_{N,\delta}
  \doteq
  \E\int_0^1
  \cJ_\delta(\bar\mu_t^N)\,dt$,
and
$J_\delta
  \doteq
  \limsup_{N\to\infty}J_{N,\delta}$.

Taking expectation in \eqref{eq:70}, we have, for some $c_1<\infty$,
\begin{align}
  \frac12J_{N,\delta}
  &\le
  \left(\frac12 +c_1\delta\right)
  \E\left[
    \frac1N\sum_{i=1}^N
    \int_0^1|u_i^N(t)|^2\,dt
  \right] + c_1\delta +
  \E\int_0^1
  \left|
    \widetilde R_{N,\delta}
    (\bar\mu_t^N)
  \right|dt
  +
  r_{N,\delta},
  \label{eq:104}
\end{align}
where, using Lemma \ref{lem:10} and Lemma \ref{lem:11}, for each fixed $\delta>0$,
$\limsup_{N\to\infty}|r_{N,\delta}|=0$.

By Lemma~\ref{lem:12}, Lemma \ref{lem:2}, and
Cauchy--Schwarz inequality, for some $c_2>0$,
\begin{align}
  \E\int_0^1
  \left|
    \widetilde R_{N,\delta}
    (\bar\mu_t^N)
  \right|dt\le
  \E\int_0^1
  \alpha_\delta(\bar\mu_t^N)\,dt
  +
  L_b\sqrt d\,\delta\,J_{N,\delta}^{1/2}
  +
  c_2\delta
  +
  \frac{|\Delta W(0)|}{N}.
  \label{eq:105}
\end{align}

Combining
\eqref{eq:104},
\eqref{eq:105}, the uniform bound \eqref{eq:32} and taking the limit as
$N\to\infty$, gives, for some $c_3>0$,
\begin{equation*}
  \frac12J_\delta
  \le
  c_3
  +
  a_\delta
  +
  L_b\sqrt d\,\delta\,J_\delta^{1/2},
\end{equation*}
where, using Lemma~\ref{lem:13}, $a_\delta
  \doteq
  \sup_N
  \E\int_0^1
  \alpha_\delta(\bar\mu_t^N)\,dt \to 0$
  as $\delta\downarrow0$.
The result now follows by an application of Young's inequality.
\end{proof}

We can now prove the asymptotic negligibility of the fifth term on the right side of
\eqref{eq:70}.
\begin{proposition}\label{prop:2}
We have
\begin{equation}\label{eq:106}
  \lim_{\delta\downarrow0}
  \limsup_{N\to\infty}
  \E\int_0^1
  \left|
    \widetilde R_{N,\delta}
    (\bar\mu_t^N)
  \right|dt
  =0.
\end{equation}
\end{proposition}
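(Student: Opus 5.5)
The estimate \eqref{eq:106} follows by combining the pointwise bound of Lemma~\ref{lem:12} with the uniform control of $\cJ_\delta$ in Lemma~\ref{lem:14}, the moment bound of Lemma~\ref{lem:2}, and the vanishing of $\alpha_\delta$ in Lemma~\ref{lem:13}.

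Apply \eqref{eq:95} with $x=\bar X^N(t)$, integrate over $t\in[0,1]$ and take expectations. This gives
\begin{align*}
  \E\int_0^1\left|\widetilde R_{N,\delta}(\bar\mu_t^N)\right|dt
  &\le
  \E\int_0^1\alpha_\delta(\bar\mu_t^N)\,dt
  +L_b\sqrt d\,\delta\,\E\int_0^1\cJ_\delta(\bar\mu_t^N)^{1/2}\,dt\\
  &\quad
  +C\delta\,\E\int_0^1\Bigl(1+\int_{\R^d}|y|^2\,\bar\mu_t^N(dy)\Bigr)^{1/2}dt
  +\frac{|\Delta W(0)|}{N}.
\end{align*}
We treat the four terms separately.
\begin{itemize}
\item The last term vanishes as $N\to\infty$ for every fixed $\delta$.
\item By Cauchy--Schwarz (Jensen) and Lemma~\ref{lem:2}, the third term is bounded by $C\delta(1+C')^{1/2}$ uniformly in $N$, so it vanishes as $\delta\downarrow0$.
\item The first term is bounded by $\sup_N\E\int_0^1\alpha_\delta(\bar\mu_t^N)\,dt$, which tends to zero as $\delta\downarrow0$ by Lemma~\ref{lem:13}.
\item For the second term, Cauchy--Schwarz gives the bound $L_b\sqrt d\,\delta\,J_{N,\delta}^{1/2}$, with $J_{N,\delta}=\E\int_0^1\cJ_\delta(\bar\mu_t^N)\,dt$ as in the proof of Lemma~\ref{lem:14}. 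Taking $\limsup_{N\to\infty}$ and using \eqref{eq:103}, for $\delta\le\delta_0$ this is at most $L_b\sqrt d\,\delta\,C^{1/2}$.
\end{itemize}

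Hence, for $0<\delta\le\delta_0$,
\[
  \limsup_{N\to\infty}\E\int_0^1\left|\widetilde R_{N,\delta}(\bar\mu_t^N)\right|dt
  \le a_\delta + c\,\delta,
\]
with $a_\delta\to0$. Sending $\delta\downarrow0$ gives \eqref{eq:106}.

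The only potential obstacle is the second term, since $\cJ_\delta(\bar\mu^N_t)$ has no a priori bound in $N$. Lemma~\ref{lem:14} resolves this: it already absorbed this term through Young's inequality, and its bound is uniform in $\delta\le\delta_0$. The order of limits must therefore be $\limsup_N$ first and then $\delta\downarrow0$, exactly as in the statement.
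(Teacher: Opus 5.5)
Your proposal is correct and follows the paper's proof essentially verbatim: you apply Lemma~\ref{lem:12} along $\bar X^N(t)$, use Cauchy--Schwarz on the $\cJ_\delta^{1/2}$ and moment terms, and conclude as $\delta\downarrow0$ via Lemmas~\ref{lem:13}, \ref{lem:14} and \ref{lem:2}, with the limits taken in the same order ($\limsup_N$ first, then $\delta\downarrow0$). The only difference is that you state explicitly that the $|\Delta W(0)|/N$ term vanishes, which the paper drops without comment.
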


\begin{proof}
By Lemma~\ref{lem:12} and the Cauchy--Schwarz inequality,
\[
\begin{aligned}
\limsup_{N\to\infty}
  \E\int_0^1
  \left|
    \widetilde R_{N,\delta}
    (\bar\mu_t^N)
  \right|dt&\le
  \sup_N
  \E\int_0^1
  \alpha_\delta(\bar\mu_t^N)\,dt +
  L_b\sqrt d\,\delta
  \left[
    \limsup_{N\to\infty}
    \E\int_0^1
    \cJ_\delta(\bar\mu_t^N)\,dt
  \right]^{1/2}\\
  &\quad +
  C\delta
  \left[
    1+
    \sup_N
    \E\int_0^1
    \int_{\R^d}|x|^2
    \bar\mu_t^N(dx)\,dt
  \right]^{1/2}.
\end{aligned}
\]
As $\delta \to 0$, the first term converges to zero by
Lemma~\ref{lem:13}, the second converges
to zero by Lemma~\ref{lem:14}, and the third
converges to zero by Lemma \ref{lem:2}.
The result follows.
\end{proof}
\subsubsection{Proof of Proposition \ref{prop:1}.}
\label{sec:6}

Set 
\begin{equation*}
  C_N
  \doteq
  \int_{\cP(\R^d\times\R^d)}
  \int_{\R^d\times\R^d}
  |y|^2\,\eta(dx\,dy)\,[\Theta^N]_2(d\eta).
\end{equation*}
For $\delta>0$, let
\[
  A_{N,\delta}
  \doteq
  \int_{\cP(\R^d)}
  \cJ_\delta(\gamma)\,[\Theta^N]_1(d\gamma).
\]
Lemma~\ref{lem:6} gives
\begin{multline}
  \frac12 A_{N,\delta}
  \le
  \frac12 C_N
  +L_b\sqrt d\,\delta\,C_N^{1/2}
  -\frac1{T_N}
  \left[
    \cF_\delta(\bar\mu_1^N)-\cF_\delta(\bar\mu_0^N)
  \right]\\
  +\int_0^1D_{N,\delta}(\bar X^N(t))\,dt
  +\frac1{T_N}\widetilde M_{N,\delta}(1)
  +\int_0^1\widetilde R_{N,\delta}(\bar\mu_t^N)\,dt.
\end{multline}
Taking expectations and using Jensen's inequality, we obtain
\begin{multline}\label{eq:107}
  \E A_{N,\delta}
  \le
  \E C_N
  +2L_b\sqrt d\,\delta\,(\E C_N)^{1/2}
  +2\E\left|
    \frac1{T_N}
    \left[
      \cF_\delta(\bar\mu_1^N)-\cF_\delta(\bar\mu_0^N)
    \right]
  \right|\\
  +2\E\int_0^1D_{N,\delta}(\bar X^N(t))\,dt
  +2\E\left|\frac1{T_N}\widetilde M_{N,\delta}(1)\right|
  +2\E\int_0^1
    |\widetilde R_{N,\delta}(\bar\mu_t^N)|\,dt.
\end{multline}
Applying Lemma~\ref{lem:10} and
Lemma~\ref{lem:11}, and then sending
$N\to\infty$, gives
\begin{align}
  \liminf_{N\to\infty}\E A_{N,\delta}
  &\le
  \liminf_{N\to\infty}\E C_N
  +2L_b\sqrt d\,\delta
  \left(\sup_{N\ge1}\E C_N\right)^{1/2}
  +
  2\limsup_{N\to\infty}
  \E\int_0^1
  |\widetilde R_{N,\delta}(\bar\mu_t^N)|\,dt.
\end{align}
By Lemma~\ref{lem:7} and the convergence
$\Theta^N\Rightarrow\Theta$,
\begin{equation}\label{eq:108}
  \liminf_{N\to\infty}\E A_{N,\delta}
  \ge
  \E\int_{\cP(\R^d)}
  \cJ_\delta(\gamma)\,[\Theta]_1(d\gamma).
\end{equation}
Combining the last two displays, we find
\begin{align}
  \liminf_{N\to\infty}\E C_N
  &\ge
  \E\int_{\cP(\R^d)}
  \cJ_\delta(\gamma)\,[\Theta]_1(d\gamma)
  -2L_b\sqrt d\,\delta
  \left(\sup_{N\ge1}\E C_N\right)^{1/2}
  \notag\\
  &\quad
  -
  2\limsup_{N\to\infty}
  \E\int_0^1
  |\widetilde R_{N,\delta}(\bar\mu_t^N)|\,dt.
  \label{eq:109}
\end{align}
Sending $\delta\downarrow0$ and using
Proposition~\ref{prop:2} and
\eqref{eq:40},
\begin{equation}\label{eq:110}
  \liminf_{N\to\infty}\E C_N
  \ge
  \liminf_{\delta\downarrow0}
  \E\int_{\cP(\R^d)}
  \cJ_\delta(\gamma)\,[\Theta]_1(d\gamma).
\end{equation}
Finally, 
Corollary~\ref{cor:1} and Lemma 
\ref{lem:8} yield
\begin{align}
  \liminf_{\delta\downarrow0}
  \E\int_{\cP(\R^d)}
  \cJ_\delta(\gamma)\,[\Theta]_1(d\gamma)
  &\ge
  \E\int_{\cP(\R^d)}
  \cD(\gamma)\,[\Theta]_1(d\gamma)
  =
  4\E\int_{\cP(\R^d)}
  \mathfrak i(\gamma)\,[\Theta]_1(d\gamma),
  \label{eq:111}
\end{align}
This
completes the proof of the proposition. \hfill \qed

\subsection{Completing the upper bound proof.}
\label{sec:7}
We now prove the Laplace upper bound. 
Recall the inequality in \eqref{eq:39}. Also recall that, in order to prove \eqref{eq:30}, by a standard subsequential argument and by appealing to the Skorokhod representation theorem, we assume without loss of generality that $\Theta^N \to \Theta$, a.s. in the weak convergence topology.

\noindent {\bf Proof of the upper bound \eqref{eq:30}.} 
From \eqref{eq:39}, with $C_N$ as in Section \ref{sec:6},
\begin{equation}\label{eq:112}
  -\frac1{NT_N}
  \log
  \E\exp\{-NT_NF(\Gamma^{N,T_N})\}+\frac1N+2L_F\varepsilon_N
  \ge
  \frac14\E C_N
  +
  \E F([\Theta^N]_1).
\end{equation}
Using Proposition~\ref{prop:1}, the continuity of $F$ and the definition of $\cI$, we obtain
\begin{multline}
  \liminf_{N\to\infty}
  \left[
    \frac14\E C_N
    +
    \E F([\Theta^N]_1)
  \right]
  \ge
  \E F([\Theta]_1)
  +
  \E\int_{\cP(\R^d)}
  \mathfrak i(\gamma)\,[\Theta]_1(d\gamma)
  \\
  =
  \E\left[
    F([\Theta]_1)
    +
    \cI([\Theta]_1)
  \right] \ge
  \inf_{\Gamma\in\clx}
  \{F(\Gamma)+\cI(\Gamma)\}.
  \label{eq:113}
\end{multline}
Since
$\varepsilon_N\to0$, letting $N\to\infty$ in
\eqref{eq:112} and using
\eqref{eq:113} 
completes the proof of the Laplace upper bound. \hfill \qed

\section{Lower Bound}
\label{sec:8}
Assumption \ref{ass:2}(b) will be in force throughout this section.
The dissipativity condition, Assumption \ref{ass:3}, will not be used in this section.
We prove in this section that the large deviation lower bound holds. This (cf. \cite[Theorem 1.8]{buddupbook}) is equivalent to showing that, for 
every $F\in C_b(\clx)$
\begin{equation}\label{eq:114}
  \limsup_{N\to\infty}
  -\frac1{NT_N}\log
  \E\exp\{-NT_NF(\Gamma^{N,T_N})\}
  \le
  \inf_{\Gamma\in\clx}
  \left\{
    F(\Gamma)+\cI(\Gamma)
  \right\}.
\end{equation}
We remark that, although \cite[Theorem~1.8]{buddupbook} assumes that $\cI$ has compact
sublevel sets, this property is used only in proving the large-deviation
upper bound. The proof that the Laplace lower bound implies the
large-deviation lower bound does not require this assumption; see
\cite[pp.~12--13]{buddupbook}.

\subsection{Regularization of near minimizers}

In this section and the next we show that one can find near-minimizers for the infimum on the right side \eqref{eq:114} that are supported on finitely many measures in $\cP(\R^d)$, each of which have nice regularity properties. We now introduce this family of regular probability measures in $\cP(\R^d)$.

\begin{definition}\label{def:2}
Let $\cP_{\mathrm{reg}}(\R^d)$ denote the collection of all
$\nu\in\cP_2(\R^d)$ of the form
$\nu(dx)=\varrho(x)\,dx$,
where $\varrho\in C^\infty(\R^d)$ is strictly positive and the
following conditions hold.
\begin{enumerate}[label=(\alph*)]
\item
The score function
$x\mapsto s_\nu(x)\doteq\nabla\log\varrho(x)$
is globally Lipschitz.
\item
There exist $R_\nu<\infty$, such that $s_\nu(x)=-x$,
  for all $|x|\ge R_\nu$.
\end{enumerate}
\end{definition}
We remark that the second property of $\nu$ implies that, for some $\kappa_\nu, C_\nu>0$,
\begin{equation}\label{eq:115}
  x\cdot s_\nu(x)
  \le
  -\kappa_\nu|x|^2+C_\nu,
  \qquad x\in\R^d.
\end{equation}

For $\mu\in\cP_{\mathrm{reg}}(\R^d)$, let
$Z^{\mu,x}$ solve
\begin{equation}\label{eq:116}
  dZ^{\mu,x}(s)
  =
  s_\mu(Z^{\mu,x}(s))\,ds
  +
  \sqrt{2}\,dB(s),
  \qquad
  Z^{\mu,x}(0)=x,
\end{equation}
where $B$ is a standard $d$-dimensional Brownian motion.
Note that, since $s_\mu$ is Lipschitz, the equation has a unique pathwise solution.

We now establish a key approximation statement that we will use to pass from an
arbitrary measure $\mu$ with finite cost $\mathfrak i(\mu)$ to a measure that is in the  class $\cP_{\mathrm{reg}}(\R^d)$.
The proposition is based on a standard denseness, truncation and mollification
arguments in Sobolev spaces; see \cite{meyser}.
We include the proof because the weighted approximation, which ensures that the score
equals $-x$ outside a compact set, and the dependence of
$b_\mu$ on $\mu$ require some modifications.

\begin{proposition}\label{prop:3}
Let $\mu\in\cP_2(\R^d)$ satisfy $\mathfrak i(\mu)<\infty$.
Then there exists a sequence
$\{\mu_n\}_{n\ge1}\subset\cP_{\mathrm{reg}}(\R^d)$, such that
$\cW_2(\mu_n,\mu)\to 0$
and
$\mathfrak i(\mu_n)
  \to
  \mathfrak i(\mu)$.
\end{proposition}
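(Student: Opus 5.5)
Since $\mathfrak i(\mu)<\infty$ and $b_\mu$ has linear growth, $\nabla\log\rho\in L^2(\mu)$. The statement is therefore equivalent to approximating $\mu$ in $\cW_2$ by $\mu_n\in\cP_{\mathrm{reg}}(\R^d)$ with Fisher information $\int|\nabla\log\rho_n|^2\,d\mu_n\to\int|\nabla\log\rho|^2\,d\mu$. Once that holds, $\mathfrak i(\mu_n)\to\mathfrak i(\mu)$ follows. Indeed, writing $f_n=\sqrt{\rho_n}$ and $f=\sqrt\rho$, Fisher convergence plus $\cW_2$ convergence should give $f_n\to f$ in $H^1(\R^d)$, i.e.\ $\nabla\log\rho_n\,\sqrt{\rho_n}\to\nabla\log\rho\,\sqrt\rho$ in $L^2(dx)$. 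Also $b_{\mu_n}\sqrt{\rho_n}\to b_\mu\sqrt\rho$ in $L^2(dx)$: $\nabla V,\nabla W$ are Lipschitz, so $|b_{\mu_n}-b_\mu|\le L\,\cW_1(\mu_n,\mu)$ uniformly, and linear growth together with uniform integrability of second moments (from $\cW_2$ convergence) handles the remaining part. Expanding the square in \eqref{eq:5} then gives the claim. So I reduce to a statement about $g\doteq\sqrt\rho\in H^1$ with $\int|x|^2g^2<\infty$.

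The construction has three steps, each with $\cW_2$ and $H^1$ convergence. (i) \emph{Mollification and positivity:} replace $\rho$ by $\rho^\epsilon=(1-\epsilon)\,\kappa_\epsilon*\rho+\epsilon\,\gamma$, where $\gamma=\cln(0,I_d)$. Then $\rho^\epsilon$ is smooth and strictly positive. Convexity of the Fisher information $(\rho,\nabla\rho)\mapsto|\nabla\rho|^2/\rho$ gives $I(\kappa_\epsilon*\rho)\le I(\rho)$ and $I(\rho^\epsilon)\le(1-\epsilon)I(\kappa_\epsilon*\rho)+\epsilon I(\gamma)$. Lower semicontinuity of $I$ under weak convergence, which follows from the dual formula as in \eqref{eq:83}, then gives $I(\rho^\epsilon)\to I(\rho)$. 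Second moments converge, so $\cW_2$ convergence holds. (ii) \emph{Making the score $-x$ at infinity:} take a cutoff $\chi_R\in C_c^\infty$ with $\chi_R=1$ on $B_R$, $0$ off $B_{2R}$, and $|\nabla\chi_R|\le 2/R$. Interpolate at the level of log-densities:
\[
\log\tilde\rho_R=\chi_R\log\rho^\epsilon+(1-\chi_R)\log\phi-\log Z_R,
\]
with $\phi$ the standard Gaussian density. The score is then $\chi_R s^\epsilon+(1-\chi_R)(-x)+\nabla\chi_R\,(\log\rho^\epsilon-\log\phi)$. It is smooth, equals $-x$ for $|x|\ge2R$, and hence is globally Lipschitz, since it is smooth inside $B_{2R}$. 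So $\tilde\rho_R\in\cP_{\mathrm{reg}}(\R^d)$.

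(iii) \emph{Convergence as $R\to\infty$:} I need $Z_R\to1$, $\cW_2(\tilde\rho_R,\rho^\epsilon)\to0$, and Fisher convergence. The main obstacle is the cross term $\nabla\chi_R\log(\rho^\epsilon/\phi)$, which is supported on the annulus $R\le|x|\le2R$ and has size at most $R^{-1}|\log(\rho^\epsilon/\phi)|$. The mixture with $\epsilon\gamma$ gives the lower bound $\rho^\epsilon\ge\epsilon\phi$, so $\log(\rho^\epsilon/\phi)\ge\log\epsilon$. For the upper side, $\kappa_\epsilon*\rho\le\|\kappa_\epsilon\|_\infty$ gives $\log(\rho^\epsilon/\phi)\le C_\epsilon+|x|^2/2$, which is of order $R^2$ on the annulus. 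That would be too large, so I would first try replacing $\phi$ by $\rho^\epsilon$ itself near infinity in the interpolation. Specifically, I interpolate between $\rho^\epsilon$ and a Gaussian whose mass matches $\rho^\epsilon$'s, and the weight in the integrand is $\tilde\rho_R$, which is bounded above by $\max(\rho^\epsilon,\phi)/Z_R$. Then
\[
\int_{R\le|x|\le2R}R^{-2}|x|^4\,\max(\rho^\epsilon,\phi)\,dx\le C\int_{|x|\ge R}|x|^2(\rho^\epsilon+\phi)\,dx\to0,
\]
by finiteness of second moments. Here I use $|x|\sim R$ on the annulus, so $R^{-2}|x|^4\le 4|x|^2$. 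So the cross term is controlled. The other terms converge by dominated convergence. Finally, a diagonal argument in $(\epsilon,R)$ yields $\mu_n$ with $\cW_2(\mu_n,\mu)\to0$ and $\mathfrak i(\mu_n)\to\mathfrak i(\mu)$.
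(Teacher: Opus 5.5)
Your proof is correct, but it takes a different route from the paper's.

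\textbf{The paper's route.} The paper never leaves the level of $f=\sqrt\rho\in H^1(\R^d)\cap L^2(|x|^2dx)$. It takes nonnegative $\phi_n\in C_c^\infty(\R^d)$ with $\phi_n\to f$ in $H^1$ and in $L^2(|x|^2dx)$, obtained by standard truncation and mollification. It then sets $f_n=a_n(\phi_n+\varepsilon_n h)$, where $h$ is the square root of the standard Gaussian density, and uses $\rho_n=f_n^2$. Because the construction is linear in $f$, the $H^1$ and weighted $L^2$ convergence are immediate. Outside $\mbox{supp}(\phi_n)$ the density is an exact multiple of the Gaussian, so the score is exactly $-x$ there and no gluing is needed. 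Finally, $\mathfrak i(\mu_n)\to\mathfrak i(\mu)$ follows from $2\nabla f_n+b_{\mu_n}f_n\to 2\nabla f+b_\mu f$ in $L^2$, which is the same computation as your reduction step.

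\textbf{Your route.} You mix at the level of densities, which costs you three extra steps:
\begin{enumerate}
\item joint convexity of $|\nabla\rho|^2/\rho$ plus lower semicontinuity, to get Fisher convergence in step (i);
\item a separate argument that Fisher convergence plus $\cW_2$ convergence upgrades to $H^1$ convergence of $\sqrt{\rho_n}$. This is true, via weak $H^1$ compactness, identification of the limit through $L^2_{\rm loc}$ convergence, and convergence of norms;
\item a log-space gluing, which produces the cross term $\nabla\chi_R\log(\rho^\epsilon/\phi)$.
\end{enumerate}
In exchange you get the explicit lower bound $\rho^\epsilon\ge\epsilon\phi$ and the quantitative estimate $I(\rho^\epsilon)\le(1-\epsilon)I(\rho)+\epsilon d$. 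The paper's construction is shorter and has no cross term at all.

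\textbf{Clean-up in step (iii).} Your worry that the cross term is too large is unfounded, and no modification of the interpolation is needed. Literally replacing $\phi$ by $\rho^\epsilon$ near infinity would actually destroy property (b) in the definition of $\cP_{\mathrm{reg}}(\R^d)$. The estimate you display already closes the argument for your original construction:
\begin{itemize}
\item On the annulus $R\le|x|\le 2R$, pointwise $|\nabla\chi_R|^2|\log(\rho^\epsilon/\phi)|^2\lesssim R^{-2}(1+|x|^2)^2\lesssim 1+|x|^2$, with $\epsilon$ fixed. This is integrated against $\tilde\rho_R$, whose tail mass against $1+|x|^2$ vanishes.
\item The lower bound $\rho^\epsilon\ge\epsilon\phi$ gives $\tilde\rho_R=(\rho^\epsilon)^{\chi_R}\phi^{1-\chi_R}/Z_R\le\rho^\epsilon/(\epsilon Z_R)$. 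This is the dominating bound that makes your ``dominated convergence'' for the remaining terms precise. In particular it yields $\int_{R\le|x|\le 2R}|s^\epsilon|^2\tilde\rho_R\,dx\le(\epsilon Z_R)^{-1}\int_{|x|\ge R}|s^\epsilon|^2\rho^\epsilon\,dx\to0$. The bound by $\max(\rho^\epsilon,\phi)$ alone does not give this without that extra observation.
\item The same bound gives $Z_R\to1$ and convergence of second moments, hence the $\cW_2$ convergence you need.
\end{itemize}
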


\begin{proof}
Since $\mathfrak i(\mu)<\infty$, $\mu$ is absolutely continuous,
say $\mu(dx)=\rho(x)\,dx$, and $\rho$ is weakly differentiable with
$\int_{\R^d}\frac{|\nabla\rho(x)|^2}{\rho(x)}\,dx<\infty$
by the characterization discussed below Definition \ref{def:1}. Consequently, with $f=\sqrt{\rho}$,
the Sobolev chain rule gives
$f\in H^1(\R^d)$
 and
   $2\nabla f=f\nabla\log\rho$
   a.e,
and
\[
   4\int_{\R^d}|\nabla f|^2\,dx
   =
   \int_{\R^d}|\nabla\log\rho|^2\,d\mu.
\]
Moreover, since $\mu\in\cP_2(\R^d)$,
we in fact have that
$f\in H^1(\R^d)
   \cap L^2\bigl(\R^d;(1+|x|^2)\,dx\bigr)$.

 By a standard truncation, mollification, and approximation argument, there exist
$\phi_n\in C_c^\infty(\R^d)$,
 $\phi_n\ge0$,
such that
\begin{equation}\label{eq:118}
  \|\phi_n-f\|_{H^1(\R^d)}
  +
  \bigl\||x|(\phi_n-f)\bigr\|_{L^2(\R^d)}
  \to 0.
\end{equation}
In particular,
$\|\phi_n\|_{L^2}\to \|f\|_{L^2}=1$.
Let
$h(x)
  \doteq
  (2\pi)^{-d/4}e^{-|x|^2/4}$.
Then, $h>0$,
  $\|h\|_{L^2}=1$, and
 $2\nabla\log h(x)=-x$.
Choose any sequence $\varepsilon_n\downarrow0$, and define
\[
  \widetilde f_n
  \doteq
  \phi_n+\varepsilon_n h,
  \qquad
  a_n
  \doteq
  \|\widetilde f_n\|_{L^2}^{-1},
  \qquad
  f_n
  \doteq
  a_n\widetilde f_n.
\]
Since $\|\phi_n\|_{L^2}\to1$ and $\varepsilon_n\to0$, we have
$a_n\to 1$.
Moreover, $f_n\in C^\infty(\R^d)$,
$f_n>0$,
$\|f_n\|_{L^2}=1$.
Define $\rho_n\doteq f_n^2$,
and
  $\mu_n(dx)\doteq\rho_n(x)\,dx$. It is easily seen that
\begin{equation}\label{eq:119}
  \|f_n-f\|_{H^1(\R^d)}
  +
  \bigl\||x|(f_n-f)\bigr\|_{L^2(\R^d)}
  \to 0.
\end{equation}

We next verify convergence of the measures $\mu_n \to \mu$ in $\cW_2$. Since
$\rho_n-\rho
  =
  (f_n-f)(f_n+f)$,
Cauchy--Schwarz and \eqref{eq:119} give
$\|\rho_n-\rho\|_{L^1}
  \to 0$.
Also,
\[
\begin{aligned}
  \int_{\R^d}|x|^2|\rho_n(x)-\rho(x)|\,dx
  &\le
  \bigl\||x|(f_n-f)\bigr\|_{L^2}
  \bigl\||x|(f_n+f)\bigr\|_{L^2}
  \to 0.
\end{aligned}
\]
It follows that
$\cW_2(\mu_n,\mu)\to0$.

We now verify that $\mu_n \in \cP_{\mathrm{reg}}(\R^d)$. Clearly, $\mu_n$ has a $C^{\infty}$ strictly positive density $\rho_n$. Next, let
$R_n<\infty$ be such that
$\mbox{supp}(\phi_n)\subset B_{R_n}$.
Then, for $|x|>R_n$,
$f_n(x)=a_n\varepsilon_n h(x)$,
and hence, for such $x$,
$\nabla\log\rho_n(x)
  =
  2\nabla\log f_n(x)
  =
  2\nabla\log h(x)
  =
  -x$.
Thus
  $s_{\mu_n}(x)
  =
  -x$ for
 $|x|>R_n$, proving (b) in Definition \ref{def:2}.

Furthermore, on the compact set $\overline B_{R_n+1}$, the matrix
$D s_{\mu_n}
  =
  2D^2\log f_n$
is bounded and outside $B_{R_n}$, it equals $-I_d$. Therefore,
$\sup_{x\in\R^d}|D s_{\mu_n}(x)|<\infty$, which implies that
$s_{\mu_n}$ is globally Lipschitz, and thus (a) in Definition \ref{def:2} holds. We thus have that $\mu_n \in \cP_{\mathrm{reg}}(\R^d)$.

Finally, we prove the second convergence stated in the proposition. Since
$\nabla\log\rho_n\,f_n
  =
  2\nabla f_n$ and $2\nabla f
  =
  f\nabla\log\rho$,
\begin{equation}\label{eq:120}
  4\mathfrak i(\mu_n)
  =
  \int_{\R^d}
  \left|
    2\nabla f_n+b_{\mu_n}f_n
  \right|^2dx, \;\;\; 4\mathfrak i(\mu)
  =
  \int_{\R^d}
  \left|
    2\nabla f+b_\mu f
  \right|^2dx.
\end{equation}

  Also, the linear growth of $b_\nu$ and
\eqref{eq:119} imply
\begin{equation}\label{eq:123}
  \|b_\mu(f_n-f)\|_{L^2}
  \le
  C\left(
    \|f_n-f\|_{L^2}
    +
    \bigl\||x|(f_n-f)\bigr\|_{L^2}
  \right)
  \to 0.
\end{equation}
Also, letting $L_W$ be a Lipschitz constant for $\nabla W$, for any coupling
$\pi_n$ of $\mu_n$ and $\mu$,
\[
\begin{aligned}
  |\nabla W*\mu_n(x)-\nabla W*\mu(x)|
  &=
  \left|
    \int_{\R^d\times\R^d}
    \bigl[
      \nabla W(x-y)-\nabla W(x-z)
    \bigr]
    \,\pi_n(dy,dz)
  \right|
  \\
  &\le
  L_W
  \int_{\R^d\times\R^d}|y-z|\,\pi_n(dy,dz).
\end{aligned}
\]
Taking the infimum over couplings gives
$\sup_{x\in\R^d}
  |b_{\mu_n}(x)-b_\mu(x)|
  \le
  L_W \cW_2(\mu_n,\mu)$.
Since $\|f_n\|_{L^2}=1$, we now have,
$\|(b_{\mu_n}-b_\mu)f_n\|_{L^2}
  \le
  L_W \cW_2(\mu_n,\mu)
  \to 0$.
Combining this with \eqref{eq:123}, we see that
$b_{\mu_n}f_n
  \to
  b_\mu f$
  in $L^2(\R^d)$.
Together with \eqref{eq:119}, this says
$2\nabla f_n+b_{\mu_n}f_n
  \to
  2\nabla f+b_\mu f$
  in $L^2(\R^d)$.
Finally, using
\eqref{eq:120}, we conclude that
$\mathfrak i(\mu_n)
  \to
  \mathfrak i(\mu)$, and the result follows.
\end{proof}

The following is a standard consequence of the Foster--Lyapunov
criterion for geometric ergodicity of continuous-time Markov processes;
see \cite[\text{Theorem~5.2(c)},\text{ p.~1681}]{dowmeytwe}.
In the lemma below $\|\gamma\|_{\mathrm{TV}}$ denotes the total variation norm of a finite signed measure $\gamma$.

\begin{lemma}\label{lem:15}
Let $\nu\in\cP_{\mathrm{reg}}(\R^d)$.
 Let $P_t^\nu$ denote the transition semigroup
of the diffusion in \eqref{eq:116} (with $\mu$ there replaced by $\nu$).
Then $\nu$ is its unique invariant probability measure. Moreover,
there exist constants $c_\nu,C_\nu>0$ such that
\begin{equation}\label{eq:124}
  \|P_t^\nu(x,\cdot)-\nu\|_{\mathrm{TV}}
  \le
  C_\nu(1+|x|^2)e^{-c_\nu t},
  \qquad t\ge0.
\end{equation}
Consequently, for every $M<\infty$,
\begin{equation}\label{eq:125}
  \sup_{\eta\in\cP_2(\R^d):
    \int|x|^2\,\eta(dx)\le M}
  \|\int P_t^{\nu}(x, \cdot) \eta(dx)-\nu\|_{\mathrm{TV}} \le C_\nu(1+ M) e^{-c_\nu t}
  \to 0,
  \qquad t\to\infty.
\end{equation}
\end{lemma}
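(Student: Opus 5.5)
The proof rests on the Foster--Lyapunov criterion of \cite{dowmeytwe}, applied to the Lyapunov function $\Lambda(x)=1+|x|^2$, together with a direct check that $\nu$ is invariant.

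\textbf{Invariance.} Let $\cL^\nu f=\Delta f+s_\nu\cdot\nabla f$ be the generator of \eqref{eq:116}, and write $\nu(dx)=\varrho(x)\,dx$ with $s_\nu=\nabla\log\varrho$. For $f\in C_c^\infty(\R^d)$, integration by parts gives
\[
\int \cL^\nu f\,\varrho\,dx=\int\bigl(\Delta f+\nabla\log\varrho\cdot\nabla f\bigr)\varrho\,dx=-\int\nabla f\cdot\nabla\varrho\,dx+\int\nabla f\cdot\nabla\varrho\,dx=0 .
\]
Hence $\nu$ is infinitesimally invariant. Since $s_\nu$ is globally Lipschitz, the martingale problem is well posed and the process is nonexplosive (this also follows from the drift bound below). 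A standard argument then upgrades infinitesimal invariance to $\nu P_t^\nu=\nu$: for instance, approximate $P_t^\nu f$ by compactly supported functions using moment bounds, or note that the diffusion is reversible with respect to $\nu$ because the drift is a gradient.

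\textbf{Drift condition and ergodicity.} By \eqref{eq:115},
\[
\cL^\nu\Lambda(x)=2d+2x\cdot s_\nu(x)\le -2\kappa_\nu|x|^2+2C_\nu+2d\le -c\,\Lambda(x)+b\,\mathbf 1_{B_r}(x)
\]
for suitable constants $c,b>0$ and radius $r$. The process is a nondegenerate diffusion with additive noise $\sqrt2\,dB$ and locally Lipschitz drift, so its transition kernels have strictly positive continuous densities (via Girsanov on bounded time intervals, or the support theorem). Consequently every compact set is petite and the process is irreducible and aperiodic, for instance because the skeleton chains are $\psi$-irreducible with Lebesgue measure as the irreducibility measure. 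Theorem~5.2(c) of \cite{dowmeytwe} then yields $\Lambda$-uniform ergodicity: there exist $C_\nu,c_\nu>0$ with
\[
\|P_t^\nu(x,\cdot)-\pi\|_{\mathrm{TV}}\le C_\nu\Lambda(x)e^{-c_\nu t}
\]
for a unique invariant probability $\pi$. By the invariance step, $\pi=\nu$, which gives both uniqueness and \eqref{eq:124}.

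\textbf{Averaged bound.} For \eqref{eq:125}, the triangle inequality for the total variation norm (convexity) gives
\[
\Bigl\|\int P_t^\nu(x,\cdot)\,\eta(dx)-\nu\Bigr\|_{\mathrm{TV}}\le\int\|P_t^\nu(x,\cdot)-\nu\|_{\mathrm{TV}}\,\eta(dx)\le C_\nu(1+M)e^{-c_\nu t},
\]
which is uniform over $\eta$ with second moment at most $M$.

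The main obstacle is verifying the structural hypotheses of \cite{dowmeytwe}, namely the petiteness of compact sets and aperiodicity, and identifying the invariant measure rigorously. Both are routine for uniformly elliptic diffusions with Lipschitz drift, and I would cite standard results for them.
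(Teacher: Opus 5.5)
Your proposal is correct and takes the same route as the paper, which gives no proof beyond citing the Foster--Lyapunov criterion of \cite[Theorem~5.2(c)]{dowmeytwe}. You spell out the hypotheses the paper leaves implicit: the drift inequality for $1+|x|^2$ coming from \eqref{eq:115}, invariance of $\nu$ via integration by parts and reversibility, and irreducibility, aperiodicity and petiteness of compact sets from nondegeneracy. Your convexity step for \eqref{eq:125} is exactly the ``consequently'' in the statement.
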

One can also easily obtain the following uniform second moment bound.
\begin{lemma}\label{lem:16}
Let $\mu\in\cP_{\mathrm{reg}}(\R^d)$ and let $Z^{\mu,\xi}$ solve
\eqref{eq:116} with $Z^{\mu,\xi}(0) = \xi$ where $\xi$ is a $\R^d$ valued square integrable random variable that is independent of the driving noise. Then, for some $C_{\mu} >0$
\begin{equation}\label{eq:126}
  \E|Z^{\mu,\xi}(s)|^2
  \le
  e^{-2\kappa_\mu s}\E|\xi|^2
  +
  C_\mu,
  \qquad s\ge0.
\end{equation}
\end{lemma}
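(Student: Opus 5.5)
I will prove \eqref{eq:126} by applying It\^o's formula to $|Z^{\mu,\xi}(s)|^2$ and using the dissipativity bound \eqref{eq:115} for the score, which follows from Definition~\ref{def:2}(b). Write $Z(s)=Z^{\mu,\xi}(s)$. Since $s_\mu$ is globally Lipschitz and $\xi$ is square integrable and independent of $B$, the solution exists pathwise and satisfies $\E\sup_{s\le t}|Z(s)|^2<\infty$ for every $t$. This is the standard Gronwall estimate for SDEs with Lipschitz coefficients, and it lets us take expectations freely.

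By It\^o's formula,
\[
d|Z(s)|^2 = \bigl(2Z(s)\cdot s_\mu(Z(s)) + 2d\bigr)\,ds + 2\sqrt2\,Z(s)\cdot dB(s).
\]
The stochastic integral is a true martingale, because its integrand is square integrable on bounded intervals. Taking expectations and setting $m(s)\doteq\E|Z(s)|^2$, we get that $m$ is absolutely continuous. By \eqref{eq:115} it satisfies
\[
m'(s) \le -2\kappa_\mu\, m(s) + 2C_\mu + 2d .
\]
Here $\kappa_\mu, C_\mu$ are the constants of \eqref{eq:115}, renamed if necessary.

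Multiplying by $e^{2\kappa_\mu s}$ and integrating gives
\[
m(s)\le e^{-2\kappa_\mu s}m(0) + \frac{C_\mu+d}{\kappa_\mu}\bigl(1-e^{-2\kappa_\mu s}\bigr),
\]
and $m(0)=\E|\xi|^2$. This yields \eqref{eq:126} with $C_\mu$ replaced by $(C_\mu+d)/\kappa_\mu$.

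I do not expect a real obstacle. The only points to check are the following.
\begin{enumerate}
\item The martingale property of the stochastic integral. If one prefers to avoid the a priori moment bound, localize with stopping times $\tau_n=\inf\{s:|Z(s)|\ge n\}$, apply the differential inequality up to $s\wedge\tau_n$, and pass to the limit by Fatou's lemma.
\item The derivation of \eqref{eq:115} itself. On $\{|x|\ge R_\mu\}$ we have $x\cdot s_\mu(x)=-|x|^2$. On the ball $\{|x|\le R_\mu\}$, the continuous function $x\cdot s_\mu(x)+|x|^2$ is bounded. Hence \eqref{eq:115} holds with $\kappa_\mu=1$.
\end{enumerate}
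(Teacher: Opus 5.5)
Your proof is correct and is essentially the paper's argument: It\^o's formula for $|Z|^2$, the dissipativity bound \eqref{eq:115} coming from Definition~\ref{def:2}(b), and a Gronwall step. Your version is more explicit than the paper's about the constants (correctly noting that $\kappa_\mu=1$ works) and about why the martingale term vanishes in expectation. One small caveat concerns only your optional localization remark: the stopped second moment $\E|Z(s\wedge\tau_n)|^2$ does not directly satisfy the closed differential inequality, so instead apply It\^o's formula to $e^{2\kappa_\mu s}|Z(s)|^2$ stopped at $\tau_n$ and then pass to the limit with Fatou.
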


\begin{proof}
By It\^o's formula applied to \eqref{eq:116} and recalling property (b) in Definition \ref{def:2}, we have, for some 
$\tilde \kappa_{\mu},D_{\mu} >0$,
\[
\begin{aligned}
  \frac{d}{ds}\E|Z^{\mu,\xi}(s)|^2
  &=
  2\E\left[
    Z^{\mu,\xi}(s)\cdot
    s_\mu(Z^{\mu,\xi}(s))
  \right]
  +2d
  \le
  -2\tilde \kappa_\mu\E|Z^{\mu,\xi}(s)|^2
  +2D_\mu.
\end{aligned}
\]
The result is now immediate.
\end{proof}

\subsection{Reduction to finitely many regular target measures}

The following lemma will allow us to work with a finite collection of regular target
measures in proving the lower bound.
\begin{lemma}\label{lem:17}
  Fix
$F\in C_b(\clx)$ and $\delta>0$. Then there exist
$k\in\N$, numbers $p_1,\ldots,p_k>0$ with
$\sum_{\ell=1}^kp_\ell=1$, and measures
$\mu_1,\ldots,\mu_k\in\cP_{\mathrm{reg}}(\R^d)$ such that, with
$\Gamma^\delta
  \doteq
  \sum_{\ell=1}^kp_\ell\delta_{\mu_\ell}$,
we have
\begin{equation}\label{eq:127}
  F(\Gamma^\delta)
  +
  \sum_{\ell=1}^kp_\ell\mathfrak i(\mu_\ell)
  \le
  \inf_{\Gamma\in\clx}
  \left\{
    F(\Gamma)+\cI(\Gamma)
  \right\}
  +4\delta.
\end{equation}
\end{lemma}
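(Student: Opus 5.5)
Fix $F\in C_b(\clx)$ and $\delta>0$. If the infimum on the right of \eqref{eq:127} is $+\infty$ there is nothing to prove: one takes $k=1$ and any $\mu_1\in\cP_{\mathrm{reg}}(\R^d)$. Otherwise we choose $\Gamma_0$ with $F(\Gamma_0)+\cI(\Gamma_0)\le\inf\{F+\cI\}+\delta$. Then $\cI(\Gamma_0)<\infty$, so $\Gamma_0$ is concentrated on $\{\mathfrak i<\infty\}\subset\cP_2(\R^d)$. The plan is to reach $\Gamma^\delta$ in three steps, each costing at most $\delta$:
\begin{enumerate}
\item discretize $\Gamma_0$ to a finite mixture of its own atoms-to-be, without increasing the cost;
\item replace each atom by a regular measure using Proposition~\ref{prop:3};
\item absorb the error in $F$ via continuity.
\end{enumerate}
Since $F$ is continuous at $\Gamma_0$, we fix $\eta>0$ such that $d_{\mathrm{BL}}(\Gamma,\Gamma_0)\le 2\eta$ implies $|F(\Gamma)-F(\Gamma_0)|\le\delta$.

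For the discretization, we use that $\cP(\R^d)$ is Polish and $\Gamma_0$ is tight. Choose a compact $K\subset\cP(\R^d)$ with $\Gamma_0(K^c)$ small, and partition $\{\mathfrak i<\infty\}$ into finitely many Borel cells $A_1,\dots,A_k$ of positive $\Gamma_0$-mass: cells of $d_{\mathrm{BL}}$-diameter at most $\eta$ covering $K\cap\{\mathfrak i<\infty\}$, and one leftover cell $A_0=K^c\cap\{\mathfrak i<\infty\}$. The representative of each cell must be chosen so that the cost does not go up. Concretely, in each cell $A_\ell$ we pick $\nu_\ell\in A_\ell$ with
\[
\mathfrak i(\nu_\ell)\le\frac1{\Gamma_0(A_\ell)}\int_{A_\ell}\mathfrak i\,d\Gamma_0,
\]
which is possible because a function cannot exceed its average everywhere on a set of positive mass. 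We set $p_\ell=\Gamma_0(A_\ell)$ and $\Gamma_1=\sum_\ell p_\ell\delta_{\nu_\ell}$. Then $\sum_\ell p_\ell\mathfrak i(\nu_\ell)\le\cI(\Gamma_0)$. Moreover, coupling each $\mu\in A_\ell$ to $\nu_\ell$ shows
\[
d_{\mathrm{BL}}(\Gamma_1,\Gamma_0)\le\int d_{\mathrm{BL}}(\delta_\mu,\delta_{\nu(\mu)})\,\Gamma_0(d\mu)\le\eta+2\Gamma_0(K^c)\le\eta,
\]
using that for Dirac masses $d_{\mathrm{BL}}(\delta_\mu,\delta_\nu)\le\min(2,d_{\mathrm{BL}}(\mu,\nu))$, and choosing $K$ with $\Gamma_0(K^c)$ small. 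The leftover cell $A_0$ is harmless because its representative also costs no more than its average, even though that cell has large diameter. This step is the main subtlety: a naive representative, e.g. the cell's center, could have a much larger $\mathfrak i$, and the averaging choice is exactly what avoids it.

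For the regularization, each $\nu_\ell$ satisfies $\mathfrak i(\nu_\ell)<\infty$, so Proposition~\ref{prop:3} gives $\mu_\ell\in\cP_{\mathrm{reg}}(\R^d)$ with $\cW_2(\mu_\ell,\nu_\ell)$ small and $\mathfrak i(\mu_\ell)\le\mathfrak i(\nu_\ell)+\delta$. Since $\cW_2$-convergence implies weak convergence, we may also require $d_{\mathrm{BL}}(\mu_\ell,\nu_\ell)\le\eta$. Setting $\Gamma^\delta=\sum_\ell p_\ell\delta_{\mu_\ell}$, we get $d_{\mathrm{BL}}(\Gamma^\delta,\Gamma_1)\le\eta$, hence $d_{\mathrm{BL}}(\Gamma^\delta,\Gamma_0)\le2\eta$, and therefore $F(\Gamma^\delta)\le F(\Gamma_0)+\delta$. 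Combining,
\[
F(\Gamma^\delta)+\sum_{\ell}p_\ell\mathfrak i(\mu_\ell)\le F(\Gamma_0)+\cI(\Gamma_0)+2\delta\le\inf\{F+\cI\}+3\delta,
\]
which is within the $4\delta$ allowed by \eqref{eq:127}. Finally, any cells with $p_\ell=0$ are simply discarded, so all weights are positive and sum to one.
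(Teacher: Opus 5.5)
Your argument is correct, but it discretizes differently from the paper. The paper draws i.i.d.\ samples $M_1,M_2,\dots$ from the near-minimizer $\Gamma_*$. It then uses the strong law of large numbers for $\mathfrak i(M_j)$, together with almost sure weak convergence of empirical measures on the Polish space $\cP(\R^d)$, and fixes one good realization. Next it regularizes the atoms one at a time via Proposition~\ref{prop:3}, controlling $F$ by a telescoping argument; the resulting weights all equal $1/m$.

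You discretize deterministically instead. Tightness gives a compact $K$, you partition it into finitely many small-diameter cells plus a leftover cell, and in each cell you pick a representative whose cost is at most the cell average. This makes the discretized cost at most $\cI(\Gamma_0)$ exactly, with no limit theorem. Your single $d_{\mathrm{BL}}$ coupling bound, together with continuity of $F$ at the one point $\Gamma_0$, replaces the telescoping. The paper's route is quicker to write and gives equal weights. Yours is elementary, gives general positive weights (which the statement allows), and yields the slightly better constant $3\delta$.

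There is one small slip. You wrote $\eta+2\Gamma_0(K^c)\le\eta$, which is false unless $\Gamma_0(K^c)=0$. Take cells of $d_{\mathrm{BL}}$-diameter at most $\eta/2$ and choose $K$ with $\Gamma_0(K^c)\le\eta/4$ instead.

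Also, the case of an infinite infimum cannot actually occur, since $\delta_\nu$ has finite cost for any Gaussian $\nu$. Handling it separately is harmless.
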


\begin{proof}
Choose $\Gamma_*\in\clx$ such that
\begin{equation}\label{eq:128}
  F(\Gamma_*)+\cI(\Gamma_*)
  \le
  \inf_{\Gamma\in\clx}
  \left\{
    F(\Gamma)+\cI(\Gamma)
  \right\}
  +\delta.
\end{equation}
We must have $\cI(\Gamma_*)<\infty$. Let
$M_1,M_2,\ldots$ be i.i.d.\ $\cP(\R^d)$-valued random variables
with common law $\Gamma_*$. Since
$\int\mathfrak i(\mu)\,\Gamma_*(d\mu)<\infty$,
the strong law of large numbers and the almost sure convergence of
empirical measures give
\[
  \frac1m\sum_{j=1}^m\delta_{M_j}
  \Rightarrow
  \Gamma_*, \quad \mbox{ and } \quad
  \frac1m\sum_{j=1}^m\mathfrak i(M_j)
  \to 
  \cI(\Gamma_*)
\]
almost surely. Fix a realization $\omega$ for which both convergences hold and choose sufficiently large $m$ such that,
\[
  \left|
    F\left(
      \frac1m\sum_{j=1}^m\delta_{M_j}(\omega)
    \right)
    -
    F(\Gamma_*)
  \right|
  \le
  \frac{\delta}{2}, \quad \mbox{ and }
  \frac1m\sum_{j=1}^m\mathfrak i(M_j(\omega))
  \le
  \cI(\Gamma_*)+\frac{\delta}{2}.
\]
We suppress $\omega$ hereafter.
For each $j=1,\ldots,m$, apply
Proposition \ref{prop:3} to choose
$\mu_j\in\cP_{\mathrm{reg}}(\R^d)$ sufficiently close to $M_j$
such that
$\mathfrak i(\mu_j)
  \le
  \mathfrak i(M_j)+\delta$
and, by appealing to the continuity of $F$,
\[
  \left|
    F\left(
      \frac1m\sum_{r=1}^{j-1}\delta_{\mu_r}
      +
      \frac1m\delta_{\mu_j}
      +
      \frac1m\sum_{r=j+1}^m\delta_{M_r}
    \right)
    -
    F\left(
      \frac1m\sum_{r=1}^{j-1}\delta_{\mu_r}
      +
      \frac1m\sum_{r=j}^m\delta_{M_r}
    \right)
  \right|
  \le
  \frac{\delta}{m}.
\]
  A telescoping argument gives
\[
  \left|
    F\left(
      \frac1m\sum_{j=1}^m\delta_{\mu_j}
    \right)
    -
    F\left(
      \frac1m\sum_{j=1}^m\delta_{M_j}
    \right)
  \right|
  \le
  \delta,
\]
whereas
\[
  \frac1m\sum_{j=1}^m\mathfrak i(\mu_j)
  \le
  \frac1m\sum_{j=1}^m\mathfrak i(M_j)+\delta.
\]
Thus, with $k=m$ and $p_j=1/m$,
\[
\begin{aligned}
  F(\Gamma^\delta)
  +
  \sum_{j=1}^kp_j\mathfrak i(\mu_j)
  &\le
  F(\Gamma_*)+\cI(\Gamma_*)+3\delta
  \le
  \inf_{\Gamma\in\clx}
  \left\{
    F(\Gamma)+\cI(\Gamma)
  \right\}
  +4\delta.
\end{aligned}
\]
\end{proof}

\subsection{Construction of the stabilizing controls}
Let 
$\Gamma^\delta$ be as in
   Lemma~\ref{lem:17}. Write, for $\ell=1,\ldots,k$,
$\mu_\ell(dx)=\rho_\ell(x)\,dx$,
 $s_\ell(x)
  \doteq
  \nabla\log\rho_\ell(x)$,
and 
\[
  P_0=0,
  \qquad
  P_\ell=\sum_{r=1}^\ell p_r.
\]

On the interval $(P_{\ell-1},P_\ell]$, define the feedback control
\begin{equation}\label{eq:129}
\begin{aligned}
  u_i^N(t)
  \doteq
  \nabla V(\bar X_i^N(t))
  +
  \nabla W*\bar\mu_t^N(\bar X_i^N(t))
  +
  s_\ell(\bar X_i^N(t)),\;
  t\in(P_{\ell-1},P_\ell].
\end{aligned}
\end{equation}
Since $s_\ell$ is globally Lipschitz,
the corresponding controlled system \eqref{eq:28} has a unique strong solution.
Note that,
under the control \eqref{eq:129}, on the interval
$(P_{\ell-1},P_\ell]$ the controlled particles satisfy
\begin{equation}\label{eq:130}
  d\bar X_i^N(t)
  =
  T_Ns_\ell(\bar X_i^N(t))\,dt
  +
  \sqrt{2T_N}\,d\widetilde B_i(t),
  \qquad i=1,\ldots,N.
\end{equation}
Consequently, the controlled particle processes are independent on
$[0,1]$.

\subsection{Uniform moment estimates}
\label{sec:9}

Recall that we assume that Assumption \ref{ass:2}(b) holds.
Also, recall that, since
$s_\ell(x)=-x$ outside a compact set, for each $\ell = 1, \ldots k$, there exist constants
$c_0>0$ and $C_0<\infty$, such that, 
\begin{equation}
\label{eq:131}
  x\cdot s_\ell(x)
  \leq
  -c_0|x|^2+C_0,
  \qquad
  x\in\R^d,\quad \ell=1,\dots,k.
\end{equation}
With $\alpha$ as in Assumption \ref{ass:2}(b), let $p=2+\alpha$.
The next lemma follows by using a suitable Lyapunov function; we provide a proof for completeness.

\begin{lemma}
\label{lem:18}
 There exists $C_p<\infty$, such that
\begin{equation}
\label{eq:132}
  \sup_{N\geq1}\sup_{t\in[0,1]}
  \frac1N\sum_{i=1}^N
  \E|\bar X_i^N(t)|^p
  \leq C_p.
\end{equation}
\end{lemma}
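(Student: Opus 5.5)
The plan is to apply It\^o's formula to the Lyapunov function $\phi(x)=(1+|x|^2)^{p/2}$ along each controlled particle, using the fact from \eqref{eq:130} that on every interval $(P_{\ell-1},P_\ell]$ the particle $\bar X_i^N$ solves a time-changed score diffusion with drift $T_N s_\ell$. The interaction drops out entirely, so particles decouple, and we can bound $\E\phi(\bar X_i^N(t))$ particle by particle. The bound will then be averaged over $i$.

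First, compute $\nabla\phi(x)=p(1+|x|^2)^{p/2-1}x$ and
\[
\Delta\phi(x)=p(1+|x|^2)^{p/2-1}d+p(p-2)(1+|x|^2)^{p/2-2}|x|^2\le p(d+p-2)(1+|x|^2)^{p/2-1}.
\]
On $(P_{\ell-1},P_\ell]$ the generator acting on $\phi$ is $T_N\bigl(s_\ell\cdot\nabla\phi+\Delta\phi\bigr)$. Using \eqref{eq:131} gives
\[
s_\ell(x)\cdot\nabla\phi(x)\le p(1+|x|^2)^{p/2-1}\bigl(-c_0|x|^2+C_0\bigr).
\]
Writing $|x|^2=(1+|x|^2)-1$ then shows that
\[
T_N\bigl(s_\ell\cdot\nabla\phi+\Delta\phi\bigr)\le T_N\bigl(-pc_0\phi+C'(1+|x|^2)^{p/2-1}\bigr).
\]
Since $p/2-1<p/2$, Young's inequality bounds $C'(1+|x|^2)^{p/2-1}\le \tfrac{pc_0}{2}\phi+C''$. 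Hence the generator applied to $\phi$ is at most $T_N(-c\phi+C'')$ with $c=pc_0/2$, and these constants are uniform in $\ell\in[k]$, which is finite.

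Second, make this rigorous by localization. The stochastic integral $\int\nabla\phi(\bar X_i^N)\cdot\sqrt{2T_N}\,d\widetilde B_i$ is a local martingale. Stopping at $\tau_R=\inf\{t:|\bar X_i^N(t)|\ge R\}$, taking expectations, and letting $R\to\infty$ via Fatou's lemma gives, for $m_i(t)=\E\phi(\bar X_i^N(t))$, the differential inequality $m_i'\le -cT_N m_i+C''T_N$ in integrated form. Gronwall's lemma across the successive intervals, with continuity at the breakpoints $P_\ell$, yields
\[
m_i(t)\le e^{-cT_Nt}\phi(x_i^N)+C''/c\le \phi(x_i^N)+C''/c
\]
for all $t\in[0,1]$. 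This is uniform in $N$ and $T_N$; the large factor $T_N$ only helps contraction. Alternatively, finite $p$-th moments on $[0,1]$ for fixed $N$ follow from the linear growth of $s_\ell$, so Gronwall can be applied directly to $m_i$.

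Finally, average over $i$. Because $|x|^p\le\phi(x)\le 2^{p/2}(1+|x|^p)$, we obtain
\[
\frac1N\sum_i\E|\bar X_i^N(t)|^p\le 2^{p/2}\Bigl(1+\frac1N\sum_i|x_i^N|^p\Bigr)+C''/c,
\]
which is bounded uniformly in $N$ by Assumption \ref{ass:2}(b). The only mildly delicate step is the Young-inequality absorption producing a strictly negative coefficient uniformly in $\ell$. It is straightforward because only finitely many scores appear and each equals $-x$ outside a compact set.
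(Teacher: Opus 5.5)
Your argument is correct and is essentially the paper's proof: the paper also applies It\^o's formula along the decoupled dynamics \eqref{eq:130} to $\phi_\varepsilon(x)=(\varepsilon+|x|^2)^{p/2}$, uses \eqref{eq:131} and Young's inequality to obtain $s_\ell\cdot\nabla\phi_\varepsilon+\Delta\phi_\varepsilon\le -c_p\phi_\varepsilon+D_p$ uniformly in $\ell$, and closes with a Gronwall-type iteration over the finitely many intervals $(P_{\ell-1},P_\ell]$ together with Assumption~\ref{ass:2}(b). The differences are cosmetic. You fix $\varepsilon=1$ and use $|x|^p\le(1+|x|^2)^{p/2}$ directly, which makes the paper's final passage $\varepsilon\downarrow0$ via Fatou's lemma unnecessary, and you make explicit the localization (or finiteness of moments for fixed $N$) that the paper leaves implicit when it differentiates $\E\phi_\varepsilon(\bar X_i^N(t))$.
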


\begin{proof}
For $\varepsilon\in(0,1]$, and $x \in \R^d$, define
$\phi_\varepsilon(x)
  \doteq
  (\varepsilon+|x|^2)^{p/2}$.
Then
$\nabla\phi_\varepsilon(x)
  =
  p(\varepsilon+|x|^2)^{p/2-1}x$
and
\begin{align*}
  \Delta\phi_\varepsilon(x)
  &=
  pd(\varepsilon+|x|^2)^{p/2-1}
  +
  p(p-2)|x|^2
  (\varepsilon+|x|^2)^{p/2-2}
  \leq
  p(d+p-2)
  (\varepsilon+|x|^2)^{p/2-1}.
\end{align*}
By \eqref{eq:131},
\begin{align*}
  s_\ell(x)\cdot\nabla\phi_\varepsilon(x)
  &=
  p(\varepsilon+|x|^2)^{p/2-1}
  x\cdot s_\ell(x)
  \leq
  -pc_0|x|^2
  (\varepsilon+|x|^2)^{p/2-1}
  +
  pC_0
  (\varepsilon+|x|^2)^{p/2-1}.
\end{align*}
Using
\[
  |x|^2(\varepsilon+|x|^2)^{p/2-1}
  =
  (\varepsilon+|x|^2)^{p/2}
  -
  \varepsilon(\varepsilon+|x|^2)^{p/2-1}
\]
and Young's inequality, there exist constants
$c_p>0$ and $D_p<\infty$, independent of
$\varepsilon\in(0,1]$ and $\ell$, such that
\begin{equation}
\label{eq:135}
  s_\ell(x)\cdot\nabla\phi_\varepsilon(x)
  +
  \Delta\phi_\varepsilon(x)
  \leq
  -c_p\phi_\varepsilon(x)+D_p.
\end{equation}

Fix $\ell$ and $t\in[P_{\ell-1},P_\ell]$. Applying It\^o's formula
to $\phi_\varepsilon(\bar X_i^N(t))$ and using
\eqref{eq:130} and
\eqref{eq:135} gives
\[
  \frac{d}{dt}
  \E\phi_\varepsilon(\bar X_i^N(t))
  \leq
  -c_pT_N
  \E\phi_\varepsilon(\bar X_i^N(t))
  +
  D_pT_N.
\]
Consequently, for some $\tilde C_p$ (independent of $i$ and $N$)
\begin{align}
\label{eq:136}
  \E\phi_\varepsilon(\bar X_i^N(t))
  &\leq
  e^{-c_pT_N(t-P_{\ell-1})}
  \E\phi_\varepsilon(\bar X_i^N(P_{\ell-1}))
  +
  \tilde C_p.
\end{align}
Averaging over $i$ and iterating
\eqref{eq:136} over the finitely many
intervals yields
\[
  \sup_{N\geq1}\sup_{t\in[0,1]}
  \frac1N\sum_{i=1}^N
  \E\phi_\varepsilon(\bar X_i^N(t))
  <\infty,
\]
where the bound is uniform in $\varepsilon\in(0,1]$. Letting
$\varepsilon\downarrow0$ and applying Fatou's lemma proves
\eqref{eq:132}.
\end{proof}

\subsection{Stabilization}
\label{sec:10}
The main result of this section is Proposition \ref{prop:4}, which gives a key convergence property for $\bar \mu^N_t$.  
We first record a standard estimate that upgrades total variation
convergence to Wasserstein convergence when the tails of the second
moments are controlled, cf. \cite[Theorem~6.12]{vill}: There exists a constant $C<\infty$ such that, for
every $R\geq1$ and every $\nu,\mu\in\cP_2(\R^d)$,
\begin{equation}
\label{eq:137}
  \cW_2^2(\nu,\mu)
  \leq
  CR^2\|\nu-\mu\|_{\mathrm{TV}}
  +
  C\int_{\{|x|>R\}}|x|^2\,\nu(dx)
  +
  C\int_{\{|x|>R\}}|x|^2\,\mu(dx).
\end{equation}

Choose a sequence $a_N$ satisfying
$a_N\to\infty$,
$\eta_N\doteq\frac{a_N}{T_N}\to0$.
Since every $p_\ell>0$, for all sufficiently large $N$,
$\eta_N<p_\ell$ for every $\ell$. Hereafter, we will only consider such $N$. Define the transition and hold intervals
\[
  J_{\ell,N}^{\mathrm{tr}}
  \doteq
  (P_{\ell-1},P_{\ell-1}+\eta_N],
\quad
\mbox{ and }
\quad
  J_{\ell,N}^{\mathrm{hold}}
  \doteq
  (P_{\ell-1}+\eta_N,P_\ell].
\]
The next two lemmas concern convergence properties over the hold intervals.
For $t\in[P_{\ell-1},P_\ell]$, let
$\nu_{i,t}^N
  \doteq
  \Law(\bar X_i^N(t))$
and let
$\widehat\nu_t^N
  \doteq
  \frac1N\sum_{i=1}^N\nu_{i,t}^N$.
\begin{lemma}
\label{lem:19}
For every $\ell=1,\dots,k$,
\begin{equation}
\label{eq:138}
  \sup_{t\in J_{\ell,N}^{\mathrm{hold}}}
  \frac1N\sum_{i=1}^N
  \cW_2^2(\nu_{i,t}^N,\mu_\ell)
  \to 0.
\end{equation}
Consequently,
  $\sup_{t\in J_{\ell,N}^{\mathrm{hold}}}
  \cW_2(\widehat\nu_t^N,\mu_\ell)
  \to 0$.
\end{lemma}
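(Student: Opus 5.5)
On each interval $(P_{\ell-1},P_\ell]$ the particles are independent copies of the time-rescaled score diffusion \eqref{eq:130}. So, conditionally on $\bar X_i^N(P_{\ell-1})$ and in law, $\bar X_i^N(t)=Z^{\mu_\ell,\xi_i}(T_N(t-P_{\ell-1}))$ with $\xi_i\doteq\bar X_i^N(P_{\ell-1})$, which is independent of the Brownian increments after $P_{\ell-1}$. Hence
\[
\nu_{i,t}^N=\int P^{\mu_\ell}_{T_N(t-P_{\ell-1})}(x,\cdot)\,\nu^N_{i,P_{\ell-1}}(dx).
\]
On the hold interval $t\in J^{\mathrm{hold}}_{\ell,N}$ the elapsed time is $s=T_N(t-P_{\ell-1})\ge T_N\eta_N=a_N\to\infty$. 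The plan is to combine the geometric ergodicity of Lemma~\ref{lem:15} with the tail control in \eqref{eq:137}. The tail control comes from the uniform $p$-th moment bound of Lemma~\ref{lem:18} with $p=2+\alpha>2$.

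\textbf{Total variation step.} Fix $R\ge1$. By \eqref{eq:124},
\[
\|\nu^N_{i,t}-\mu_\ell\|_{\mathrm{TV}}\le C_{\mu_\ell}\bigl(1+\E|\xi_i|^2\bigr)e^{-c_{\mu_\ell}a_N}.
\]
Averaging over $i$ and using Lemma~\ref{lem:18} (which bounds $\frac1N\sum_i\E|\xi_i|^2\le 1+C_p$), we get
\[
\sup_{t\in J^{\mathrm{hold}}_{\ell,N}}\frac1N\sum_i\|\nu^N_{i,t}-\mu_\ell\|_{\mathrm{TV}}\le C(1+C_p)e^{-c a_N}\to0.
\]

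\textbf{Tail step.} For the tails in \eqref{eq:137} we use
\[
\int_{|x|>R}|x|^2\,d\nu^N_{i,t}\le R^{-\alpha}\,\E|\bar X_i^N(t)|^p.
\]
Averaged over $i$, this is at most $C_pR^{-\alpha}$ uniformly in $N$ and $t$, by Lemma~\ref{lem:18}. The tail of $\mu_\ell$ is a fixed quantity tending to $0$ as $R\to\infty$, since $\mu_\ell\in\cP_2(\R^d)$.

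\textbf{Conclusion of \eqref{eq:138}.} Applying \eqref{eq:137} to each $i$ and averaging gives
\[
\sup_{t\in J^{\mathrm{hold}}_{\ell,N}}\frac1N\sum_i\cW_2^2(\nu^N_{i,t},\mu_\ell)\le CR^2(1+C_p)e^{-ca_N}+CC_pR^{-\alpha}+C\int_{|x|>R}|x|^2\,d\mu_\ell.
\]
Sending $N\to\infty$ and then $R\to\infty$ proves \eqref{eq:138}.

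\textbf{Consequence for $\widehat\nu^N_t$.} This follows from joint convexity of $\cW_2^2$ under mixtures. Gluing optimal couplings of $\nu^N_{i,t}$ and $\mu_\ell$ gives a coupling of $\widehat\nu^N_t$ and $\mu_\ell$, so
\[
\cW_2^2(\widehat\nu^N_t,\mu_\ell)\le\frac1N\sum_i\cW_2^2(\nu^N_{i,t},\mu_\ell).
\]

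\textbf{Main obstacle.} The one delicate point is justifying the representation of $\nu^N_{i,t}$ via $P^{\mu_\ell}$ at the random restart time $P_{\ell-1}$. This uses the strong Markov property of \eqref{eq:130} at the deterministic time $P_{\ell-1}$, together with uniqueness in law of \eqref{eq:116}. I expect this to be routine since the control is a feedback Lipschitz drift on each interval. Beyond that, one only needs the constants $C_{\mu_\ell},c_{\mu_\ell}$ to be uniform over the finitely many $\ell$, which is immediate.
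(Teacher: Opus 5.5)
Your proposal is correct and follows essentially the same route as the paper. Both arguments combine the total-variation bound from Lemma~\ref{lem:15} at elapsed time at least $a_N$, averaged using the moment bound of Lemma~\ref{lem:18}, with the estimate \eqref{eq:137} and the $R^{-\alpha}$ tail control; they then send $N\to\infty$ followed by $R\to\infty$, and use convexity of $\cW_2^2$ to handle $\widehat\nu_t^N$. The only differences are cosmetic: you spell out the Markov-property representation of $\nu_{i,t}^N$ as a mixture of $P^{\mu_\ell}_s(x,\cdot)$, which the paper uses implicitly, and you observe that $\mu_\ell\in\cP_2(\R^d)$ alone already controls its tail term.
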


\begin{proof}
Fix $\ell$. Since, for $t\in J_{\ell,N}^{\mathrm{hold}}$,
$T_N(t-P_{\ell-1})
  \geq
  T_N\eta_N
  =
  a_N$, we have from the second statement in 
  Lemma \ref{lem:15}, that, for some $C_{\ell}, c_{\ell} >0$,
\begin{align}
\label{eq:139}
  \|\nu_{i,t}^N-\mu_\ell\|_{\mathrm{TV}}
  &\leq
  C_\ell e^{-c_\ell a_N}
  \left(
    1+
    \E|\bar X_i^N(P_{\ell-1})|^2
  \right).
\end{align}
Averaging over $i$ and applying
Lemma~\ref{lem:18} gives, for some $\tilde C_{\ell}>0$,
\begin{equation}
\label{eq:140}
  \sup_{t\in J_{\ell,N}^{\mathrm{hold}}}
  \frac1N\sum_{i=1}^N
  \|\nu_{i,t}^N-\mu_\ell\|_{\mathrm{TV}}
  \leq
  \tilde C_\ell e^{-c_\ell a_N}.
\end{equation}
By Lemma~\ref{lem:18}, for some $C_1>0$ and all $R>0$,
\[
  \sup_{N\geq1}\sup_{t\in[0,1]}\frac1N\sum_{i=1}^N
  \int_{\{|x|>R\}}|x|^2\,\nu_{i,t}^N(dx)
  \leq
  \frac{C_1}{R^\alpha},
\]
uniformly in $N$ and $t$. 
Applying \eqref{eq:137} with
$\nu=\nu_{i,t}^N$ and $\mu=\mu_\ell$, and averaging over $i$,
we have on combining the above observations,
\begin{align}\label{eq:141}
  &\sup_{t\in J_{\ell,N}^{\mathrm{hold}}}
  \frac1N\sum_{i=1}^N
  \cW_2^2(\nu_{i,t}^N,\mu_\ell)
  \leq
  C_1\tilde C_\ell R^2e^{-c_\ell a_N}
  +
  \frac{CC_1}{R^\alpha}
  +
  C\int_{\{|x|>R\}}|x|^2\,\mu_\ell(dx),
\end{align}
where $C$ is as in \eqref{eq:137}.
From Definition \ref{def:2} (b) we see that, since $\mu_\ell$ has Gaussian tails outside
a compact set, 
$\int_{\{|x|>R\}}|x|^2\,\mu_\ell(dx)
  \to 0$
as $R\to\infty$.
Now in \eqref{eq:141} sending first $N\to\infty$, using $a_N\to\infty$, and then letting
$R\to\infty$, proves
\eqref{eq:138}.
Finally, convexity of $\cW_2^2$ in its first argument proves the final statement in the lemma.
\end{proof}

The next lemma is a standard Glivenko--Cantelli estimate for independent
(not necessarily identically distributed) random variables; see
\cite[Theorem~1]{Wellner1981}. The argument there applies rowwise to the
present triangular array. Lemma~\ref{lem:18} gives the
 tightness uniformly in $t$ needed for the proof. We omit the details.
\begin{lemma}
\label{lem:20}
For every $\ell=1,\dots,k$,
$\int_{J_{\ell,N}^{\mathrm{hold}}}
  \E d_{\mathrm{BL}}
  (\bar\mu_t^N,\widehat\nu_t^N)\,dt
  \to 0$.
\end{lemma}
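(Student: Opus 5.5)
The plan is to reduce the claim to a uniform-in-time Glivenko--Cantelli statement for independent, non-identically distributed particles, with the tails controlled by Lemma~\ref{lem:18}. Fix $\ell$ and $t\in J_{\ell,N}^{\mathrm{hold}}$. By \eqref{eq:130} the particles $\bar X_1^N(t),\ldots,\bar X_N^N(t)$ are independent, with laws $\nu_{i,t}^N$, so $\bar\mu_t^N$ is the empirical measure of an independent array and $\widehat\nu_t^N=\E\bar\mu_t^N$ is its mean. Since $J_{\ell,N}^{\mathrm{hold}}$ has length at most $1$, it suffices to show
\[
\sup_{t\in[0,1]}\E\, d_{\mathrm{BL}}(\bar\mu_t^N,\widehat\nu_t^N)\to0,
\]
and the integral statement then follows.

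The first step is truncation. Fix $R\ge1$ and choose a cutoff $\chi_R$ that is $1$-Lipschitz, takes values in $[0,1]$, equals $1$ on $B_R$, and vanishes off $B_{R+1}$. For $f\in\mathrm{BL}_1$ we have $|\langle f(1-\chi_R),\bar\mu_t^N\rangle|\le\bar\mu_t^N(B_R^c)$. By Markov's inequality and Lemma~\ref{lem:18}, the expectation of this term, and likewise $\widehat\nu_t^N(B_R^c)$, is bounded by $C_p/R^p$ uniformly in $N$ and $t$.

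The second step handles the compact part. The class $\{f\chi_R:f\in\mathrm{BL}_1\}$ consists of functions supported in $\overline B_{R+1}$ with uniformly bounded sup norm and Lipschitz constant. By Arzel\`a--Ascoli it admits a finite $\epsilon$-net $g_1,\ldots,g_{m}$ in sup norm, with $m=m(R,\epsilon)$ independent of $N$ and $t$. Consequently
\[
\E\sup_{f\in \mathrm{BL}_1}|\langle f\chi_R,\bar\mu_t^N-\widehat\nu_t^N\rangle|\le 2\epsilon+\sum_{j=1}^m\E|\langle g_j,\bar\mu_t^N-\widehat\nu_t^N\rangle|.
\]
Each term in the sum is the deviation of an average of independent centered bounded variables, so by Cauchy--Schwarz and independence it is at most $\|g_j\|_\infty/\sqrt N\le 2/\sqrt N$. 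Combining the two steps gives
\[
\sup_t\E d_{\mathrm{BL}}(\bar\mu_t^N,\widehat\nu_t^N)\le 2C_p R^{-p}+2\epsilon+2m(R,\epsilon)N^{-1/2}.
\]
To conclude, let $N\to\infty$, then $\epsilon\to0$, then $R\to\infty$.

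The only delicate point is making sure nothing depends on $t$ or on the non-identical laws. Two facts deliver this: independence holds for every $t$ in the hold interval, and the uniform-in-$t$ moment bound of Lemma~\ref{lem:18} supplies the tightness needed for the truncation. The finite-net bound itself does not use identical distribution at all. If one prefers to cite \cite[Theorem~1]{Wellner1981} directly, this uniform tightness is exactly the hypothesis to verify row by row.
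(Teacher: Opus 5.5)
Your proposal is correct and follows the same route as the paper, which simply invokes the Glivenko--Cantelli theorem for independent, non-identically distributed arrays \cite[Theorem~1]{Wellner1981} rowwise, uses Lemma~\ref{lem:18} for the uniform-in-$t$ tightness, and omits the details. Your truncation, finite sup-norm net and variance bound $\|g_j\|_\infty/\sqrt N$ supply exactly those omitted details. The explicit estimate $2C_pR^{-p}+2\epsilon+2m(R,\epsilon)N^{-1/2}$ also makes transparent the uniformity in $t$ that the citation leaves implicit.
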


\begin{proposition}
\label{prop:4}
For every $\ell=1,\dots,k$,
$\int_{J_{\ell,N}^{\mathrm{hold}}}
  \E d_{\mathrm{BL}}(\bar\mu_t^N,\mu_\ell)\,dt
  \to 0$.
\end{proposition}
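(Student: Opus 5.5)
The plan is a triangle inequality in $d_{\mathrm{BL}}$, splitting the distance through the averaged law $\widehat\nu_t^N$. For every $t\in J_{\ell,N}^{\mathrm{hold}}$,
\[
  d_{\mathrm{BL}}(\bar\mu_t^N,\mu_\ell)
  \le
  d_{\mathrm{BL}}(\bar\mu_t^N,\widehat\nu_t^N)
  +
  d_{\mathrm{BL}}(\widehat\nu_t^N,\mu_\ell).
\]
The second term is deterministic. Taking expectations and integrating over $J_{\ell,N}^{\mathrm{hold}}$, whose length is at most $p_\ell\le1$, gives
\[
  \int_{J_{\ell,N}^{\mathrm{hold}}}
  \E d_{\mathrm{BL}}(\bar\mu_t^N,\mu_\ell)\,dt
  \le
  \int_{J_{\ell,N}^{\mathrm{hold}}}
  \E d_{\mathrm{BL}}(\bar\mu_t^N,\widehat\nu_t^N)\,dt
  +
  \sup_{t\in J_{\ell,N}^{\mathrm{hold}}}
  d_{\mathrm{BL}}(\widehat\nu_t^N,\mu_\ell).
\]
The first term on the right tends to zero by Lemma~\ref{lem:20}.

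For the second term, compare $d_{\mathrm{BL}}$ with $\cW_2$. Every $f\in\mathrm{BL}_1$ is $1$-Lipschitz, so for any coupling $\pi$ of $\widehat\nu_t^N$ and $\mu_\ell$,
\[
  \Bigl|\int f\,d\widehat\nu_t^N-\int f\,d\mu_\ell\Bigr|
  \le
  \int|x-y|\,\pi(dx\,dy)
  \le
  \Bigl(\int|x-y|^2\,\pi(dx\,dy)\Bigr)^{1/2}.
\]
Hence $d_{\mathrm{BL}}\le\cW_1\le\cW_2$. The second statement of Lemma~\ref{lem:19} gives $\sup_{t\in J_{\ell,N}^{\mathrm{hold}}}\cW_2(\widehat\nu_t^N,\mu_\ell)\to0$, so this term also tends to zero, and the proposition follows.

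Once Lemmas~\ref{lem:19} and \ref{lem:20} are in hand the argument is routine. The substantive input is Lemma~\ref{lem:20}, whose proof the paper omits. It relies on two facts: under the feedback control \eqref{eq:129} the particles $\bar X_i^N(t)$ are independent (see \eqref{eq:130}), and Lemma~\ref{lem:18} gives tightness of the family $\{\nu_{i,t}^N\}$ uniformly in $i$, $N$ and $t$. These feed Wellner's uniform Glivenko--Cantelli theorem for non-identically distributed independent variables. The main point to check there is measurability and integrability of $t\mapsto\E d_{\mathrm{BL}}(\bar\mu_t^N,\widehat\nu_t^N)$. Continuity of paths gives this, and the integrand is bounded by $2$, so the time integral causes no difficulty.
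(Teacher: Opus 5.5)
Your proposal is correct and follows the paper's own proof: the paper also splits through $\widehat\nu_t^N$ with the triangle inequality and invokes Lemma~\ref{lem:20} and Lemma~\ref{lem:19}. You additionally spell out the comparison $d_{\mathrm{BL}}\le\cW_1\le\cW_2$ that the paper leaves implicit.
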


\begin{proof}
This is immediate from Lemmas~\ref{lem:19} and
\ref{lem:20} and an application of triangle inequality.
\end{proof}

\subsection{Convergence of the occupation measure}
\label{sec:11}

\begin{proposition}
\label{prop:5}
Under the controls $\{u^N_i\}$, defined by \eqref{eq:129},
$\bar\Gamma^{N,T_N}
  \to
  \Gamma^\delta$
  in probability in $\cX$.
\end{proposition}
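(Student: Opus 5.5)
We will show that $\E\, d_{\mathrm{BL}}(\bar\Gamma^{N,T_N},\Gamma^\delta)\to0$, which gives convergence in probability. The natural route is to use the bounded-Lipschitz structure on $\cX=\cP(\cP(\R^d))$. Write $\Gamma^\delta=\sum_{\ell=1}^k p_\ell\delta_{\mu_\ell}=\int_0^1\delta_{m(t)}\,dt$, where $m(t)\doteq\mu_\ell$ for $t\in(P_{\ell-1},P_\ell]$; this representation holds because the Lebesgue measure of $(P_{\ell-1},P_\ell]$ is $p_\ell$. By \eqref{eq:26}, $\bar\Gamma^{N,T_N}=\int_0^1\delta_{\bar\mu^N_t}\,dt$.

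Fix $g\in\mathrm{BL}_1(\cP(\R^d))$, with $\cP(\R^d)$ metrized by $d_{\mathrm{BL}}$. Then
\[
\Bigl|\langle g,\bar\Gamma^{N,T_N}\rangle-\langle g,\Gamma^\delta\rangle\Bigr|
\le\int_0^1\bigl|g(\bar\mu^N_t)-g(m(t))\bigr|\,dt
\le\int_0^1\min\bigl\{2,\,d_{\mathrm{BL}}(\bar\mu^N_t,m(t))\bigr\}\,dt .
\]
The right side does not depend on $g$, so taking the supremum over $g$ gives
\[
d_{\mathrm{BL}}(\bar\Gamma^{N,T_N},\Gamma^\delta)\le\sum_{\ell=1}^k\Bigl[\int_{J^{\mathrm{tr}}_{\ell,N}}2\,dt+\int_{J^{\mathrm{hold}}_{\ell,N}}d_{\mathrm{BL}}(\bar\mu^N_t,\mu_\ell)\,dt\Bigr].
\]

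The transition intervals contribute at most $2k\eta_N$, which tends to $0$ since $\eta_N=a_N/T_N\to0$. For the hold intervals, taking expectations and applying Proposition~\ref{prop:4} for each of the finitely many $\ell$ shows that each term $\E\int_{J^{\mathrm{hold}}_{\ell,N}}d_{\mathrm{BL}}(\bar\mu^N_t,\mu_\ell)\,dt$ tends to $0$. Therefore $\E\, d_{\mathrm{BL}}(\bar\Gamma^{N,T_N},\Gamma^\delta)\to0$, and Markov's inequality gives convergence in probability in $\cX$.

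There is essentially no obstacle here, since all the analytic work is contained in Proposition~\ref{prop:4}. The only care needed is measure-theoretic: $t\mapsto\bar\mu^N_t$ is continuous, so the integrands are jointly measurable and Fubini's theorem justifies exchanging the expectation with the $dt$ integral. One should also check that the metric on $\cX$ is the bounded-Lipschitz metric built over $(\cP(\R^d),d_{\mathrm{BL}})$, so that the Lipschitz bound on $g$ is taken with respect to $d_{\mathrm{BL}}$.
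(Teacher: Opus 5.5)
Your proposal is correct and follows essentially the same route as the paper's proof. Both bound $d_{\mathrm{BL}}(\bar\Gamma^{N,T_N},\Gamma^\delta)$ pathwise by the hold-interval integrals of $d_{\mathrm{BL}}(\bar\mu^N_t,\mu_\ell)$ plus $2k\eta_N$, then take expectations and invoke Proposition~\ref{prop:4}.
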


\begin{proof}
Let $d_{\mathrm{BL}}^{(1)}$ denote the bounded-Lipschitz metric on
$\cP(\R^d)$ and let $d_{\mathrm{BL}}^{(2)}$ denote the corresponding
metric on
$\cX=\cP(\cP(\R^d))$.
Fix $\Phi\in\mathrm{BL}_1(\cP(\R^d))$. Then
\begin{align*}
  |\langle \Phi, \bar\Gamma^{N,T_N}\rangle - \langle \Phi, \Gamma^\delta\rangle | &= \left|
    \int_0^1\Phi(\bar\mu_t^N)\,dt
    -
    \sum_{\ell=1}^kp_\ell\Phi(\mu_\ell)
  \right|\\
  &\leq
  \sum_{\ell=1}^k
  \int_{J_{\ell,N}^{\mathrm{hold}}}
  |\Phi(\bar\mu_t^N)-\Phi(\mu_\ell)|\,dt
  +
  \sum_{\ell=1}^k
  \int_{J_{\ell,N}^{\mathrm{tr}}}
  |\Phi(\bar\mu_t^N)-\Phi(\mu_\ell)|\,dt\\
  &\le \sum_{\ell=1}^k
  \int_{J_{\ell,N}^{\mathrm{hold}}} d_{\mathrm{BL}}^{(1)} (\bar\mu_t^N, \mu_\ell) + 2k\eta_N,
  \end{align*}
  where the last inequality uses the facts that $|\Phi|\leq1$ and $\Phi$ is one-Lipschitz.
Taking supremum over $\Phi \in \mathrm{BL}_1(\cP(\R^d))$
$$ d_{\mathrm{BL}}^{(2)}
  (\bar\Gamma^{N,T_N},\Gamma^\delta) \le \sum_{\ell=1}^k
  \int_{J_{\ell,N}^{\mathrm{hold}}} d_{\mathrm{BL}}^{(1)} (\bar\mu_t^N, \mu_\ell) + 2k\eta_N.$$
  Finally, taking
expectations, using
Proposition~\ref{prop:4}, and recalling that
$\eta_N\to0$, we obtain
$\E d_{\mathrm{BL}}^{(2)}
  (\bar\Gamma^{N,T_N},\Gamma^\delta)
  \to 0$, which completes the proof of the proposition.
\end{proof}

\subsection{Convergence of the control costs}
\label{sec:12}

Fix $\ell$ and define
\begin{equation}
\label{eq:142}
  g_\ell(x)
  \doteq
  \nabla V(x)
  +
  \nabla W*\mu_\ell(x)
  +
  s_\ell(x).
\end{equation}
On $(P_{\ell-1},P_\ell]$, write
\begin{equation}
\label{eq:143}
  u_i^N(t)
  =
  g_\ell(\bar X_i^N(t))
  +
  R_{i,\ell}^N(t),
\end{equation}
where
\begin{equation}
\label{eq:144}
  R_{i,\ell}^N(t)
  \doteq
  \nabla W*\bar\mu_t^N(\bar X_i^N(t))
  -
  \nabla W*\mu_\ell(\bar X_i^N(t)).
\end{equation}
The first lemma below estimates the contribution from the second term in the control in 
\eqref{eq:143} while the second lemma estimates the first.
\begin{lemma}
\label{lem:21}
For every $\ell=1,\dots,k$,
$
  \E\int_{J_{\ell,N}^{\mathrm{hold}}}
  \frac1N\sum_{i=1}^N
  |R_{i,\ell}^N(t)|^2\,dt
 \to 0$.
\end{lemma}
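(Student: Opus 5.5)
We bound $|R_{i,\ell}^N(t)|$ uniformly in $x$ by a Wasserstein distance, and then show that this distance is small on the hold intervals. Let $L_W$ be a Lipschitz constant of $\nabla W$. The coupling argument from the proof of Proposition~\ref{prop:3} gives $\sup_{x}|\nabla W*\nu(x)-\nabla W*\mu(x)|\le L_W\cW_1(\nu,\mu)\le L_W\cW_2(\nu,\mu)$ for all $\nu,\mu\in\cP_2(\R^d)$. Hence
\[
  \frac1N\sum_{i=1}^N|R_{i,\ell}^N(t)|^2\le L_W^2\,\cW_2^2(\bar\mu_t^N,\mu_\ell),
\]
and it suffices to prove $\E\int_{J_{\ell,N}^{\mathrm{hold}}}\cW_2^2(\bar\mu_t^N,\mu_\ell)\,dt\to0$.

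To prove this, we split with the triangle inequality:
\[
  \cW_2^2(\bar\mu_t^N,\mu_\ell)\le 2\cW_2^2(\bar\mu_t^N,\widehat\nu_t^N)+2\cW_2^2(\widehat\nu_t^N,\mu_\ell).
\]
The second term tends to zero uniformly on $J_{\ell,N}^{\mathrm{hold}}$ by Lemma~\ref{lem:19}. The first term is an empirical-measure fluctuation for independent, non-identically distributed particles (independence holds by \eqref{eq:130}). Lemma~\ref{lem:20} already gives $\int_{J^{\mathrm{hold}}_{\ell,N}}\E\, d_{\mathrm{BL}}(\bar\mu_t^N,\widehat\nu_t^N)\,dt\to0$, so what remains is to upgrade $d_{\mathrm{BL}}$ convergence to $\cW_2$ convergence using uniform integrability of second moments.

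For this upgrade we use a truncation version of \eqref{eq:137} adapted to bounded-Lipschitz distance. For $R\ge1$ there are a constant $C_R$ and a universal $C$ with
\[
  \cW_2^2(\nu,\mu)\le C_R\, d_{\mathrm{BL}}(\nu,\mu)^{1/2}+C\int_{|x|>R}|x|^2\,d\nu+C\int_{|x|>R}|x|^2\,d\mu .
\]
On the ball $B_{2R}$ the squared distance is bounded and $\cW_1$ is controlled by $d_{\mathrm{BL}}$. Alternatively, one can push both measures through a Lipschitz retraction onto $B_{R}$ and use that on a compact set $\cW_2^2\le 2R\,\cW_1\le C_R\,d_{\mathrm{BL}}$. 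Apply this with $\nu=\bar\mu_t^N$ and $\mu=\widehat\nu_t^N$ and take expectations. The tail terms are handled as follows:
\[
  \E\int_{|x|>R}|x|^2\,\bar\mu_t^N(dx)=\int_{|x|>R}|x|^2\,\widehat\nu_t^N(dx)\le C_pR^{-\alpha}
\]
uniformly in $t$ and $N$, by Lemma~\ref{lem:18}. The main term is handled by Jensen's inequality:
\[
  \int_{J^{\mathrm{hold}}_{\ell,N}}\E\, d_{\mathrm{BL}}^{1/2}\,dt\le\Bigl(\int_{J^{\mathrm{hold}}_{\ell,N}}\E\, d_{\mathrm{BL}}\,dt\Bigr)^{1/2}\to0,
\]
using Lemma~\ref{lem:20}. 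Letting $N\to\infty$ and then $R\to\infty$ concludes the argument.

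The main obstacle is the comparison between $\cW_2$ and $d_{\mathrm{BL}}$ after truncation. The cleanest route is the retraction $\pi_R(x)=x\min(1,R/|x|)$. We then have $\cW_2^2(\nu,\mu)\le 3\cW_2^2(\pi_{R\#}\nu,\pi_{R\#}\mu)+3\int|x-\pi_R x|^2\,d\nu+3\int|x-\pi_R x|^2\,d\mu$. Moreover, $\cW_2^2$ of the retracted measures is at most $2R\,\cW_1$ of them, which is at most $2R\max(1,2R)\,d_{\mathrm{BL}}$, because $\pi_R$ is $1$-Lipschitz. If necessary, the uniformity in $t$ needed here is supplied directly by Lemma~\ref{lem:18}.
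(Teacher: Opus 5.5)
Your argument is correct, but it takes a different route from the paper's.

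\textbf{The paper's route.} The paper works particle by particle. For fixed $t$ it chooses optimal couplings $(\bar X_i^N(t),Y_{i,t}^N)$ of $\nu_{i,t}^N$ and $\mu_\ell$, independent over $i$. It then splits $R_{i,\ell}^N(t)$ into three pieces: a term comparing $\nabla W(\bar X_i^N-\bar X_j^N)$ with $\nabla W(Y_{i,t}^N-Y_{j,t}^N)$; an i.i.d.\ fluctuation term $\frac1N\sum_j\nabla W(Y_{i,t}^N-Y_{j,t}^N)-\nabla W*\mu_\ell(Y_{i,t}^N)$; and a term comparing $\nabla W*\mu_\ell$ at $Y_{i,t}^N$ and at $\bar X_i^N(t)$. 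The first and third pieces are bounded by $\frac CN\sum_i\cW_2^2(\nu_{i,t}^N,\mu_\ell)$, using the first assertion of Lemma~\ref{lem:19}. The middle piece is bounded by $C_\ell/N$ through an explicit conditional variance computation. That proof never uses Lemma~\ref{lem:20} or any comparison between $d_{\mathrm{BL}}$ and Wasserstein distances, and it gives a quantitative $O(1/N)$ bound for the fluctuation part.

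\textbf{Your route.} You use the uniform bound $|R_{i,\ell}^N(t)|\le L_W\cW_1(\bar\mu_t^N,\mu_\ell)$. This reduces the lemma to the stronger statement $\E\int_{J_{\ell,N}^{\mathrm{hold}}}\cW_2^2(\bar\mu_t^N,\mu_\ell)\,dt\to0$. You obtain that from the second assertion of Lemma~\ref{lem:19}, Lemma~\ref{lem:20}, and a truncation controlled by the $(2+\alpha)$-moments of Lemma~\ref{lem:18}.

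\textbf{What each buys.} Your argument is shorter and more structural, and the resulting $\cW_2$ convergence of the empirical measures on the hold intervals is reusable. The cost is that it leans on the Glivenko--Cantelli statement of Lemma~\ref{lem:20}, whose proof the paper only indicates by citation. It also treats the fluctuation part only qualitatively, whereas the paper's coupling uses the independence of the particles directly and explicitly.

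\textbf{A small simplification.} Your retraction argument actually yields
$\cW_2^2(\nu,\mu)\le 6R\max(1,2R)\,d_{\mathrm{BL}}(\nu,\mu)+3\int_{|x|>R}|x|^2\,d\nu+3\int_{|x|>R}|x|^2\,d\mu$.
This is linear in $d_{\mathrm{BL}}$, so the $d_{\mathrm{BL}}^{1/2}$ version and the Jensen step are unnecessary, though harmless.
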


\begin{proof}
Fix $\ell$ and
$t\in J_{\ell,N}^{\mathrm{hold}}$. Since $\{\bar X^N_i, i \in [N]\}$ are independent, for each $i$ we may choose an optimal coupling
$(\bar X_i^N(t),Y_{i,t}^N)$
of $\nu_{i,t}^N$ and $\mu_\ell$ in such a way that the pairs are
independent over $i$. Thus the variables
$Y_{1,t}^N,\dots,Y_{N,t}^N$ are independent with common law
$\mu_\ell$, and
\begin{equation}
\label{eq:145}
  \E|\bar X_i^N(t)-Y_{i,t}^N|^2
  =
  \cW_2^2(\nu_{i,t}^N,\mu_\ell).
\end{equation}
Write
\begin{align*}
  R_{i,\ell}^N(t)
  &=
  \frac1N\sum_{j=1}^N
  \left[
    \nabla W(\bar X_i^N(t)-\bar X_j^N(t))
    -
    \nabla W(Y_{i,t}^N-Y_{j,t}^N)
  \right]
  \\
  &\quad+
  \frac1N\sum_{j=1}^N
  \left[
    \nabla W(Y_{i,t}^N-Y_{j,t}^N)
    -
    \nabla W*\mu_\ell(Y_{i,t}^N)
  \right]
  \\
  &\quad+
  \nabla W*\mu_\ell(Y_{i,t}^N)
  -
  \nabla W*\mu_\ell(\bar X_i^N(t))
  \\
  &\doteq
  R_{i,\ell}^{N,1}(t)
  +
  R_{i,\ell}^{N,2}(t)
  +
  R_{i,\ell}^{N,3}(t).
\end{align*}
Using the Lipschitz property of $\nabla W$, for some $C<\infty$,
\begin{align*}
  |R_{i,\ell}^{N,1}(t)|^2
  &\leq
  C\left(
    |\bar X_i^N(t)-Y_{i,t}^N|^2
    +
    \frac1N\sum_{j=1}^N
    |\bar X_j^N(t)-Y_{j,t}^N|^2
  \right),\;\;\;
  |R_{i,\ell}^{N,3}(t)|^2
  &\leq
  C|\bar X_i^N(t)-Y_{i,t}^N|^2.
\end{align*}
Therefore, by \eqref{eq:145},
\begin{align}
\label{eq:146}
  \frac1N\sum_{i=1}^N
  \E\left[
    |R_{i,\ell}^{N,1}(t)|^2
    +
    |R_{i,\ell}^{N,3}(t)|^2
  \right]
  \leq
  \frac{C}{N}\sum_{i=1}^N
  \cW_2^2(\nu_{i,t}^N,\mu_\ell).
\end{align}

We next estimate $R_{i,\ell}^{N,2}$. Write
\[
  R_{i,\ell}^{N,2}(t)
  =
  \frac1N\sum_{\substack{j=1\\j\neq i}}^N
  \zeta_{i,j}
  +
  \frac1N\zeta_{i,i},
\]
where
$\zeta_{i,j}
  \doteq
  \nabla W(Y_{i,t}^N-Y_{j,t}^N)
  -
  \nabla W*\mu_\ell(Y_{i,t}^N)$.
Since, conditionally on $Y_{i,t}^N$, the variables
$\{\zeta_{i,j}:j\neq i\}$ are independent and centered, we have
\[
  \E\left[
    \left|
      \frac1N\sum_{1\le j \le N, j\neq i}
      \zeta_{i,j}
    \right|^2
    \,\Big|\,
    Y_{i,t}^N
  \right]
  \leq
  \frac{C}{N}
  \left(
    1+|Y_{i,t}^N|^2
    +
    \int_{\R^d}|y|^2\,\mu_\ell(dy)
  \right).
\]
Furthermore, the diagonal term satisfies the same estimate with $N^{-2}$ in
place of $N^{-1}$. Since $Y_{i,t}^N$ has law $\mu_\ell$, combining the above observations, for some $C_{\ell}<\infty$,
\begin{equation}
\label{eq:147}
  \frac1N\sum_{i=1}^N
  \E|R_{i,\ell}^{N,2}(t)|^2
  \leq
  \frac{C_\ell}{N}.
\end{equation}
Combining \eqref{eq:146} and \eqref{eq:147}, integrating over the hold
interval, and applying Lemma~\ref{lem:19}, 
proves the result.
\end{proof}

\begin{lemma}
\label{lem:22}
For every $\ell=1,\dots,k$,
$
  \int_{J_{\ell,N}^{\mathrm{hold}}}
  \frac1N\sum_{i=1}^N
  \E|g_\ell(\bar X_i^N(t))|^2\,dt
  \to
  p_\ell
  \int_{\R^d}|g_\ell(x)|^2\,\mu_\ell(dx)$.
\end{lemma}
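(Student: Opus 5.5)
The plan is to show that, uniformly over the hold interval, the averaged laws $\widehat\nu_t^N$ converge to $\mu_\ell$ in $\cW_2$. We then convert this into convergence of $\int|g_\ell|^2\,d\widehat\nu_t^N$, using that $|g_\ell|^2$ has quadratic growth and the $p$-th moments are uniformly bounded with $p=2+\alpha>2$.

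First note that $\frac1N\sum_i\E|g_\ell(\bar X_i^N(t))|^2=\int|g_\ell|^2\,d\widehat\nu_t^N$. Next, $g_\ell$ is continuous and globally Lipschitz: $\nabla V$ and $\nabla W$ are Lipschitz (Assumption~\ref{ass:1}), $\nabla W*\mu_\ell$ inherits the Lipschitz constant of $\nabla W$, and $s_\ell$ is globally Lipschitz by Definition~\ref{def:2}(a). Hence $|g_\ell(x)|^2\le C_\ell(1+|x|^2)$, and $|g_\ell|^2$ is continuous. By Lemma~\ref{lem:19}, $\sup_{t\in J^{\mathrm{hold}}_{\ell,N}}\cW_2(\widehat\nu_t^N,\mu_\ell)\to0$. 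Since $\cW_2$-convergence implies convergence of integrals of continuous functions with at most quadratic growth, we want
\[
\sup_{t\in J^{\mathrm{hold}}_{\ell,N}}\Bigl|\int|g_\ell|^2\,d\widehat\nu_t^N-\int|g_\ell|^2\,d\mu_\ell\Bigr|\to0.
\]
This requires uniformity in $t$. The cleanest route is a truncation. Fix $R$ and split $|g_\ell|^2=(|g_\ell|^2\wedge R)+(|g_\ell|^2-R)^+$. The truncated part is bounded and Lipschitz, since $|g_\ell|^2\wedge R$ is Lipschitz with constant $2\sqrt R\,\mathrm{Lip}(g_\ell)$. Its integrals are therefore controlled by $C_R\,\cW_1(\widehat\nu_t^N,\mu_\ell)\le C_R\,\cW_2(\widehat\nu_t^N,\mu_\ell)$, uniformly in $t$. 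The tail part is bounded, via the quadratic growth, by $C\int_{\{|x|^2\ge cR\}}(1+|x|^2)\,d\widehat\nu_t^N$. This is at most $C'R^{-\alpha/2}$ uniformly in $N$ and $t$, by Lemma~\ref{lem:18} and Markov's inequality. The corresponding tail under $\mu_\ell$ vanishes as $R\to\infty$ because $\mu_\ell\in\cP_2(\R^d)$ (it even has Gaussian tails). Sending $N\to\infty$ and then $R\to\infty$ gives the uniform convergence displayed above.

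Finally, integrate over the hold interval. The length of $J^{\mathrm{hold}}_{\ell,N}$ is $p_\ell-\eta_N\to p_\ell$, so
\[
\int_{J^{\mathrm{hold}}_{\ell,N}}\int|g_\ell|^2\,d\widehat\nu_t^N\,dt=(p_\ell-\eta_N)\int|g_\ell|^2\,d\mu_\ell+o(1).
\]
This converges to $p_\ell\int|g_\ell|^2\,d\mu_\ell$, which is finite since $\mu_\ell\in\cP_2$.

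The main point needing care is the uniform-in-$t$ passage from $\cW_2$ convergence to convergence of the unbounded integrand. The truncation together with the $(2+\alpha)$-moment bound of Lemma~\ref{lem:18} handles this. Alternatively, one can use the identity that $\cW_2$ convergence gives convergence of second moments, combined with uniform integrability. If one prefers to avoid Lemma~\ref{lem:19}'s averaged form, the per-particle bound \eqref{eq:138} can be used directly: couple each $\bar X_i^N(t)$ with $Y_i\sim\mu_\ell$ and apply $\bigl||g(x)|^2-|g(y)|^2\bigr|\le L|x-y|(|g(x)|+|g(y)|)$ together with Cauchy--Schwarz. This yields the bound
\[
\Bigl(\frac1N\sum_i\cW_2^2(\nu_{i,t}^N,\mu_\ell)\Bigr)^{1/2}\,C\bigl(1+C_p\bigr)^{1/2},
\]
which tends to zero uniformly on the hold interval without truncation.
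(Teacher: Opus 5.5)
Your proof is correct and follows the paper's argument. The ingredients are the same: linear growth of $g_\ell$, the $(2+\alpha)$-moment bound of Lemma~\ref{lem:18} giving uniform integrability of $|g_\ell|^2$ under $\widehat\nu_t^N$, the uniform-in-$t$ $\cW_2$ convergence of Lemma~\ref{lem:19}, and integration over a hold interval of length $p_\ell-\eta_N$; your truncation at level $R$ just spells out the uniform-in-$t$ step that the paper attributes to uniform integrability.
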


\begin{proof}
Because $\nabla V$, $\nabla W$, and $s_\ell$ are globally Lipschitz,
$g_\ell$ has at most linear growth.
Lemma~\ref{lem:18} therefore yields
\begin{equation}
\label{eq:148}
  \sup_{N\geq1}
  \sup_{t\in J_{\ell,N}^{\mathrm{hold}}}
  \int_{\R^d}
  |g_\ell(x)|^{2+\alpha}\,
  \widehat\nu_t^N(dx)
  <\infty.
\end{equation}
In particular, the family $|g_\ell|^2$ is uniformly integrable with
respect to the measures
$\widehat\nu_t^N$, uniformly in $N$ and
$t\in J_{\ell,N}^{\mathrm{hold}}$.
Thus, using Lemma~\ref{lem:19},
together with the above uniform integrability, we obtain
\begin{equation}
\label{eq:149}
  \sup_{t\in J_{\ell,N}^{\mathrm{hold}}}
  \left|
    \int_{\R^d}|g_\ell(x)|^2\,\widehat\nu_t^N(dx)
    -
    \int_{\R^d}|g_\ell(x)|^2\,\mu_\ell(dx)
  \right|
  \to 0.
\end{equation}
Since
$
  |J_{\ell,N}^{\mathrm{hold}}|
  =
  p_\ell-\eta_N
  \to p_\ell$,
the conclusion follows on integrating over the interval $J_{\ell,N}^{\mathrm{hold}}$.
\end{proof}

We can now establish the desired convergence of costs.

\begin{proposition}
\label{prop:6}
For every $\ell=1,\dots,k$,
\begin{equation}
\label{eq:150}
  \lim_{N\to\infty}
  \E\left[
    \frac1{4N}\sum_{i=1}^N
    \int_{J_{\ell,N}^{\mathrm{hold}}}
    |u_i^N(t)|^2\,dt
  \right]
  =
  p_\ell\mathfrak i(\mu_\ell).
\end{equation}
\end{proposition}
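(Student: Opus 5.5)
Fix $\ell$. We combine the decomposition \eqref{eq:143} with Lemmas~\ref{lem:21} and \ref{lem:22}, and then identify $\int|g_\ell|^2\,d\mu_\ell$ with $4\mathfrak i(\mu_\ell)$.

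On $J_{\ell,N}^{\mathrm{hold}}$ we write $u_i^N(t)=g_\ell(\bar X_i^N(t))+R_{i,\ell}^N(t)$. Set
\[
A_N\doteq \E\int_{J_{\ell,N}^{\mathrm{hold}}}\frac1N\sum_{i=1}^N|g_\ell(\bar X_i^N(t))|^2\,dt,
\qquad
B_N\doteq \E\int_{J_{\ell,N}^{\mathrm{hold}}}\frac1N\sum_{i=1}^N|R_{i,\ell}^N(t)|^2\,dt .
\]
Minkowski's inequality in $L^2(dt\otimes\frac1N\sum_i\otimes d\PP)$ gives
\[
\left|\left(\E\int_{J_{\ell,N}^{\mathrm{hold}}}\frac1N\sum_{i=1}^N|u_i^N(t)|^2\,dt\right)^{1/2}-A_N^{1/2}\right|\le B_N^{1/2}.
\]
Lemma~\ref{lem:21} gives $B_N\to0$. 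Lemma~\ref{lem:22} gives $A_N\to p_\ell\int|g_\ell|^2\,d\mu_\ell$, which is finite because $g_\ell$ has linear growth and $\mu_\ell\in\cP_2(\R^d)$. Hence
\[
\E\int_{J_{\ell,N}^{\mathrm{hold}}}\frac1N\sum_{i=1}^N|u_i^N(t)|^2\,dt\;\to\; p_\ell\int_{\R^d}|g_\ell(x)|^2\,\mu_\ell(dx).
\]

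It remains to identify the limit. Since $\mu_\ell\in\cP_{\mathrm{reg}}(\R^d)$, its density $\rho_\ell$ is smooth and strictly positive. Its classical gradient $s_\ell=\nabla\log\rho_\ell$ is globally Lipschitz, hence in $L^2(\mu_\ell)$. It is also the weak logarithmic gradient in the sense of \eqref{eq:14}: for $\varphi\in C_c^\infty$, integrate $\partial_i\varphi\,\rho_\ell$ by parts; there are no boundary terms because of the compact support. By \eqref{eq:5} and \eqref{eq:142},
\[
\int_{\R^d}|g_\ell|^2\,d\mu_\ell=\int_{\R^d}\bigl|\nabla\log\rho_\ell+\nabla V+\nabla W*\mu_\ell\bigr|^2\,d\mu_\ell=4\mathfrak i(\mu_\ell).
\]
Dividing by $4$ gives \eqref{eq:150}.

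There is no serious obstacle here; the substantive estimates are already contained in Lemmas~\ref{lem:21} and \ref{lem:22}. The only points needing care are the cross term between $g_\ell$ and $R^N_{i,\ell}$, which the Minkowski (equivalently Cauchy--Schwarz) argument handles because $\sup_N A_N<\infty$, and checking that the classical score coincides with the weak logarithmic gradient.
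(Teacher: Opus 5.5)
Your proposal is correct and follows the paper's proof essentially verbatim: the decomposition $u_i^N=g_\ell(\bar X_i^N)+R_{i,\ell}^N$, Lemmas~\ref{lem:21} and \ref{lem:22}, a Cauchy--Schwarz bound on the cross term (your Minkowski step is equivalent), and the identification $\int|g_\ell|^2\,d\mu_\ell=4\mathfrak i(\mu_\ell)$. Your extra check that the classical score $s_\ell$ is the weak logarithmic gradient is a harmless detail that the paper leaves implicit.
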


\begin{proof}
Using Cauchy--Schwarz inequality,
\begin{align*}
  &\left|
    \E\int_{J_{\ell,N}^{\mathrm{hold}}}
    \frac1N\sum_{i=1}^N
    g_\ell(\bar X_i^N(t))
    \cdot R_{i,\ell}^N(t)\,dt
  \right|
  \\
  &\quad\leq
  \left(
    \E\int_{J_{\ell,N}^{\mathrm{hold}}}
    \frac1N\sum_{i=1}^N
    |g_\ell(\bar X_i^N(t))|^2\,dt
  \right)^{1/2}
  \left(
    \E\int_{J_{\ell,N}^{\mathrm{hold}}}
    \frac1N\sum_{i=1}^N
    |R_{i,\ell}^N(t)|^2\,dt
  \right)^{1/2}.
\end{align*}
The first factor is uniformly bounded by
Lemma~\ref{lem:22}, while the second from Lemma 
\ref{lem:21} converges to zero.
Thus, recalling \eqref{eq:143} and using Lemma  
\ref{lem:21} and Lemma~\ref{lem:22} again,
\[
  \E\left[
    \frac1{4N}\sum_{i=1}^N
    \int_{J_{\ell,N}^{\mathrm{hold}}}
    |u_i^N(t)|^2\,dt
  \right]
  \to
  \frac{p_\ell}{4}\int_{\R^d}|g_\ell(x)|^2\,\mu_\ell(dx).
\]
Recalling the definition of $\mathfrak i$, we see that
  $\int_{\R^d}|g_\ell(x)|^2\,\mu_\ell(dx)
  =
  4\mathfrak i(\mu_\ell)$.
The result follows.
\end{proof}

We next show that the costs on the transition intervals are
negligible.

\begin{lemma}
\label{lem:23}
We have
\begin{equation}
\label{eq:151}
  \E\left[
    \frac1{4N}\sum_{i=1}^N
    \int_{\cup_{\ell=1}^kJ_{\ell,N}^{\mathrm{tr}}}
    |u_i^N(t)|^2\,dt
  \right]
  \to 0.
\end{equation}
\end{lemma}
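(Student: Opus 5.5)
The plan is to show that, on the transition intervals, the per-particle time-averaged squared control is bounded uniformly in $N$ and $t$. Since the total length of $\cup_{\ell}J_{\ell,N}^{\mathrm{tr}}$ is $k\eta_N\to0$, that bound is enough to conclude.

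For $t\in(P_{\ell-1},P_\ell]$, the control \eqref{eq:129} satisfies, by the global Lipschitz property of $\nabla V$, $\nabla W$ and $s_\ell$ (Assumption~\ref{ass:1} and Definition~\ref{def:2}(a)),
\[
  |u_i^N(t)|^2 \le C\Bigl(1+|\bar X_i^N(t)|^2+\frac1N\sum_{j=1}^N|\bar X_j^N(t)|^2\Bigr).
\]
Here $C$ depends only on the Lipschitz constants and on $|\nabla V(0)|,|\nabla W(0)|,|s_\ell(0)|$, $\ell=1,\dots,k$. The interaction term is handled by $|\nabla W*\bar\mu_t^N(x)|\le |\nabla W(0)|+L_W(|x|+\int|y|\,\bar\mu_t^N(dy))$ followed by Jensen. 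Because $k$ is finite, $C$ can be taken uniform over $\ell$.

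Averaging over $i$ and taking expectations then gives
\[
  \E\frac1N\sum_{i=1}^N|u_i^N(t)|^2\le C\Bigl(1+2\sup_{N}\sup_{s\in[0,1]}\frac1N\sum_{i=1}^N\E|\bar X_i^N(s)|^2\Bigr),
\]
and the right side is finite by Lemma~\ref{lem:18}, since $p=2+\alpha>2$ and $\E|X|^2\le 1+\E|X|^p$. Integrating this over $\cup_\ell J_{\ell,N}^{\mathrm{tr}}$, which has Lebesgue measure $k\eta_N$, bounds the left side of \eqref{eq:151} by $C'k\eta_N/4\to0$.

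There is no genuine obstacle here. The one point to check is that Lemma~\ref{lem:18} covers the transition intervals. It does, because its bound is over all $t\in[0,1]$ and does not require the hold-interval equilibration. The estimate is also valid for every $t$ in the transition period, where the particle laws may still be far from $\mu_\ell$, because the moment bound of Lemma~\ref{lem:18} rests only on the Lyapunov inequality \eqref{eq:135} and the initial moment assumption, Assumption~\ref{ass:2}(b).
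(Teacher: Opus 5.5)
Your proposal is correct and matches the paper's proof: the linear growth of the feedback control \eqref{eq:129} bounds $\frac1N\sum_i|u_i^N(t)|^2$ by $C\bigl(1+\frac1N\sum_i|\bar X_i^N(t)|^2\bigr)$. Lemma~\ref{lem:18} makes the expectation of this bound uniform in $N$ and $t\in[0,1]$, and the transition intervals have total length $k\eta_N\to0$.
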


\begin{proof}
Since $\nabla V$, $\nabla W$, and the functions
$s_\ell$ have at most linear growth, there exists a constant
$C<\infty$, independent of $\ell$, such that
\begin{align}
\label{eq:152}
  \frac1N\sum_{i=1}^N|u_i^N(t)|^2
  &\leq
  C\left(
    1+
    \frac1N\sum_{i=1}^N|\bar X_i^N(t)|^2
  \right).
\end{align}
Taking expectations and applying
Lemma~\ref{lem:18} gives
$\sup_{N\geq1}\sup_{t\in[0,1]}
  \frac1N\sum_{i=1}^N
  \E|u_i^N(t)|^2
  <\infty$.
Since the total length of the transition intervals is $k\eta_N$ and $\eta_N \to 0$, the result follows.
\end{proof}

Combining the hold-interval and transition-interval estimates gives
the asymptotic control cost.

\begin{proposition}
\label{prop:7}
We have,
$
\limsup_{N\to\infty}
  \E\left[
    \frac1{4N}\sum_{i=1}^N
    \int_0^1|u_i^N(t)|^2\,dt
  \right]
  =
  \sum_{\ell=1}^kp_\ell\mathfrak i(\mu_\ell)$.
\end{proposition}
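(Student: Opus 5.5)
The plan is to split the time interval $[0,1]$ into the hold intervals $J_{\ell,N}^{\mathrm{hold}}$ and the transition intervals $J_{\ell,N}^{\mathrm{tr}}$, $\ell=1,\ldots,k$. These intervals are disjoint and their union is $(0,1]$, so
\[
\E\left[\frac1{4N}\sum_{i=1}^N\int_0^1|u_i^N(t)|^2\,dt\right]
=
\sum_{\ell=1}^k\E\left[\frac1{4N}\sum_{i=1}^N\int_{J_{\ell,N}^{\mathrm{hold}}}|u_i^N(t)|^2\,dt\right]
+
\E\left[\frac1{4N}\sum_{i=1}^N\int_{\cup_{\ell=1}^kJ_{\ell,N}^{\mathrm{tr}}}|u_i^N(t)|^2\,dt\right].
\]
The contribution of the single point $t=0$ is null, so it can be ignored.

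For the first sum, Proposition~\ref{prop:6} gives, for each fixed $\ell$, convergence of the $\ell$-th summand to $p_\ell\mathfrak i(\mu_\ell)$. Since $k$ is fixed (it depends only on $\delta$ and $F$ through Lemma~\ref{lem:17}, not on $N$), the finite sum converges to $\sum_{\ell=1}^k p_\ell\mathfrak i(\mu_\ell)$. For the second term, Lemma~\ref{lem:23} states exactly that the transition-interval cost tends to zero.

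Adding the two limits shows that the full expected cost converges, and not merely that its $\limsup$ is bounded. In particular, the $\limsup$ equals $\sum_{\ell}p_\ell\mathfrak i(\mu_\ell)$, which is the claim.

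No real obstacle remains at this stage, since the analytic work was done in Lemmas~\ref{lem:21}--\ref{lem:23} and Proposition~\ref{prop:6}. The only points to check are that the hold and transition intervals partition $(0,1]$ for all large $N$, which holds because $\eta_N<p_\ell$, and that the number of pieces $k$ does not grow with $N$.
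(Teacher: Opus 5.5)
Your proposal is correct and follows the paper's own argument: you partition $(0,1]$ into the hold and transition intervals (valid once $\eta_N<p_\ell$), apply Proposition~\ref{prop:6} on each of the finitely many hold intervals, and apply Lemma~\ref{lem:23} on the union of the transition intervals. Your remark that this gives an actual limit, so the $\limsup$ is in fact a limit, is also correct.
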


\begin{proof}
Decompose $[0,1]$ into the hold intervals and transition intervals.
Apply Proposition~\ref{prop:6} on every hold interval
and Lemma~\ref{lem:23} on the union of the
transition intervals.
\end{proof}

\subsection{Completion of the lower bound}
\label{sec:13}
We now prove the large deviation lower bound.
\begin{theorem}
\label{thm:3}
For every
$F\in C_b(\cX)$,
\begin{equation}
\label{eq:153}
  \limsup_{N\to\infty}
  -\frac1{NT_N}
  \log
  \E\exp\{-NT_NF(\Gamma^{N,T_N})\}
  \leq
  \inf_{\Gamma\in\cX}
  \{F(\Gamma)+\cI(\Gamma)\}.
\end{equation}
\end{theorem}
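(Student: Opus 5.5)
The plan is to combine the variational representation \eqref{eq:29} with the near-minimizer construction of Lemma~\ref{lem:17} and the stabilizing feedback controls \eqref{eq:129}. Fix $F\in C_b(\cX)$ and $\delta>0$. Lemma~\ref{lem:17} provides weights $p_1,\ldots,p_k>0$ and measures $\mu_1,\ldots,\mu_k\in\cP_{\mathrm{reg}}(\R^d)$. Let $\Gamma^\delta=\sum_\ell p_\ell\delta_{\mu_\ell}$; by \eqref{eq:127}, $F(\Gamma^\delta)+\sum_\ell p_\ell\mathfrak i(\mu_\ell)$ is within $4\delta$ of $\inf_\Gamma\{F(\Gamma)+\cI(\Gamma)\}$.

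For each $N$ we use the control $u^N$ defined by \eqref{eq:129}. This control is admissible, i.e.\ $u^N\in\cA_N$: by \eqref{eq:152} and Lemma~\ref{lem:18}, $\E\frac1N\sum_i\int_0^1|u_i^N|^2dt<\infty$, and progressive measurability is clear because $u^N$ is a Lipschitz feedback of the strong solution. Since \eqref{eq:29} is an infimum over $\cA_N$, plugging in this control gives
\[
-\frac1{NT_N}\log\E\exp\{-NT_NF(\Gamma^{N,T_N})\}\le \E\Big[\frac1{4N}\sum_{i=1}^N\int_0^1|u_i^N(t)|^2dt\Big]+\E F(\bar\Gamma^{N,T_N}).
\]

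Now take $\limsup_{N\to\infty}$ and treat the two terms separately. By Proposition~\ref{prop:7}, the limsup of the cost term equals $\sum_\ell p_\ell\mathfrak i(\mu_\ell)$. By Proposition~\ref{prop:5}, $\bar\Gamma^{N,T_N}\to\Gamma^\delta$ in probability. Because $F$ is bounded and continuous, bounded convergence (along subsequences converging a.s.) gives $\E F(\bar\Gamma^{N,T_N})\to F(\Gamma^\delta)$. Combining these with \eqref{eq:127},
\[
\limsup_{N\to\infty}-\frac1{NT_N}\log\E\exp\{-NT_NF(\Gamma^{N,T_N})\}\le \inf_{\Gamma\in\cX}\{F(\Gamma)+\cI(\Gamma)\}+4\delta,
\]
and sending $\delta\downarrow0$ proves \eqref{eq:153}.

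The substantive work has already been done in the earlier propositions, so the only points needing care here are minor. The first is the admissibility check, in particular square integrability of the feedback control, which follows from the uniform moment bound. The second is the trivial case where the infimum is $+\infty$, in which there is nothing to prove.
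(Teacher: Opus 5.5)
Your proposal is correct and follows the paper's proof essentially step for step. You fix $\delta$, take the finitely supported regular near-minimizer $\Gamma^\delta$ from Lemma~\ref{lem:17}, plug the feedback control \eqref{eq:129} into the representation \eqref{eq:29}, invoke Propositions~\ref{prop:5} and~\ref{prop:7}, and let $\delta\downarrow0$. Your explicit admissibility check via \eqref{eq:152} and Lemma~\ref{lem:18} is a small detail that the paper leaves implicit.
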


\begin{proof}
Fix $\delta>0$ and choose
$\Gamma^\delta
  =
  \sum_{\ell=1}^kp_\ell\delta_{\mu_\ell}$
as in Lemma \ref{lem:17}, so that \eqref{eq:127} holds.
Let $u^N$ be the feedback control defined as in \eqref{eq:129}. The variational representation \eqref{eq:29} gives
\begin{align*}
  -\frac1{NT_N}
  \log
  \E\exp\{-NT_NF(\Gamma^{N,T_N})\}
  \leq
  \E\left[
    F(\bar\Gamma^{N,T_N})
    +
    \frac1{4N}\sum_{i=1}^N
    \int_0^1|u_i^N(t)|^2\,dt
  \right].
\end{align*}
By Proposition~\ref{prop:5},
$
  \E F(\bar\Gamma^{N,T_N})
  \to
  F(\Gamma^\delta)$
and by Proposition~\ref{prop:7},
\[\limsup_{N\to\infty}
  \E\left[
    \frac1{4N}\sum_{i=1}^N
    \int_0^1|u_i^N(t)|^2\,dt
  \right]
  =
  \sum_{\ell=1}^kp_\ell\mathfrak i(\mu_\ell).
\]
Consequently,
\begin{align*}
  \limsup_{N\to\infty}
  -\frac1{NT_N}
  \log
  \E\exp\{-NT_NF(\Gamma^{N,T_N})\}
  &\leq
  F(\Gamma^\delta)
  +
  \sum_{\ell=1}^kp_\ell\mathfrak i(\mu_\ell)
  \\
  &\leq
  \inf_{\Gamma\in\cX}
  \{F(\Gamma)+\cI(\Gamma)\}
  +
  4\delta.
\end{align*}
Since $\delta>0$ is arbitrary, the result follows.
\end{proof}

\section{Compactness of the level sets of the rate function}
\label{sec:14}
Throughout this section, Assumption 
\ref{ass:3} will hold.
We prove in this section that the functional $\cI$ defined in
\eqref{eq:15} is a rate function on
$\cP(\cP(\R^d))$, equipped with the topology of weak convergence.
Several arguments in this section follow by suitably adapting standard results (cf. \cite{AGS}).

A key result in this section is the coercivity estimate for the static cost
$\mathfrak i$ in Lemma \ref{lem:25}. In preparation for this we start with the following lemma. 
Recall that, from Assumption \ref{ass:3}, $\kappa \doteq \lambda-C>0$.
The lemma below and its proof will follow the terminology and notation from \cite[Chapter 9]{AGS}.
\begin{lemma}\label{lem:24}
The functional $\cF$  is $\kappa$-convex along generalized geodesics
in $(\cP_2(\R^d),\cW_2)$ in the sense of
\cite[Definition~9.2.4]{AGS}. In particular, $\cF$ is
$\kappa$-geodesically convex.
\end{lemma}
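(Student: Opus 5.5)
We will split $\cF=\cV_V+\cW_W+\Ent$, with $\cV_V(\mu)=\int V\,d\mu$ and $\cW_W(\mu)=\frac12\iint W(x-y)\,\mu(dx)\mu(dy)$, and check convexity along one fixed generalized geodesic. Fix a base point $\mu_1\in\cP_2(\R^d)$ and $\mu_2,\mu_3$. Following \cite[Definition~9.2.2]{AGS}, take $\boldsymbol\mu\in\cP(\R^{3d})$ with $[\boldsymbol\mu]_{12},[\boldsymbol\mu]_{13}$ optimal. Set $\mu_t^{2\to3}=((1-t)\pi^2+t\pi^3)_\#\boldsymbol\mu$ and $\cW^2_{\boldsymbol\mu}(\mu_2,\mu_3)=\int|x_2-x_3|^2\,d\boldsymbol\mu\ge\cW_2^2(\mu_2,\mu_3)$.

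We must show, along $t\mapsto\mu_t$,
\[
\cF(\mu_t)\le(1-t)\cF(\mu_2)+t\cF(\mu_3)-\tfrac{\kappa}{2}t(1-t)\cW^2_{\boldsymbol\mu}(\mu_2,\mu_3).
\]
Two pieces are standard. The entropy term is convex along generalized geodesics by \cite[Proposition~9.3.9]{AGS}, since $r\log r$ satisfies McCann's condition. So it suffices to treat the potential energy $\cE=\cV_V+\cW_W$ and prove that $\cE$ is $\kappa$-convex along the specific curve $\mu_t$. It is enough to use that this curve is a pushforward of the fixed plan $\boldsymbol\mu$ under a map that is linear in $t$.

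\textbf{Key step.} Take two independent copies $(X_2,X_3)$ and $(Z_2,Z_3)$ with law $[\boldsymbol\mu]_{23}$, and set $X_t=(1-t)X_2+tX_3$ and $Z_t=(1-t)Z_2+tZ_3$. Then
\[
e(t)\doteq\cE(\mu_t)=\E\bigl[V(X_t)+\tfrac12W(X_t-Z_t)\bigr].
\]
Since $V,W\in C^1$ with Lipschitz gradients and the moments are finite, $e$ is $C^1$. Writing $\Delta X=X_3-X_2$ and $\Delta Z=Z_3-Z_2$, its derivative is
\[
e'(t)=\E\bigl[\nabla V(X_t)\cdot\Delta X+\tfrac12\nabla W(X_t-Z_t)\cdot(\Delta X-\Delta Z)\bigr].
\]
Because $W$ is even, $\nabla W$ is odd. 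Exchangeability of the two copies then turns the interaction part into $\E[\nabla W(X_t-Z_t)\cdot\Delta X]$.

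For $s<t$, compute $e'(t)-e'(s)$. Apply Assumption~\ref{ass:3} with $x=X_t$, $y=X_s$, $z=Z_t$, $z'=Z_s$, using $X_t-X_s=(t-s)\Delta X$ and $Z_t-Z_s=(t-s)\Delta Z$. This gives
\[
(t-s)\bigl(e'(t)-e'(s)\bigr)\ge(t-s)^2\bigl(\lambda\E|\Delta X|^2-C\E|\Delta Z|^2\bigr)=(t-s)^2\kappa\,\cW^2_{\boldsymbol\mu}(\mu_2,\mu_3).
\]
The last equality holds because $\Delta X$ and $\Delta Z$ have the same law. So $e'$ is $\kappa\cW^2_{\boldsymbol\mu}$-monotone, which means $e(t)-\frac{\kappa}{2}\cW^2_{\boldsymbol\mu}t^2$ is convex. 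This is exactly the required inequality for $\cE$.

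Adding the entropy inequality gives $\kappa$-convexity of $\cF$ along generalized geodesics. Taking $\mu_1=\mu_2$ makes the curve a geodesic, which gives $\kappa$-geodesic convexity.

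\textbf{Main obstacle.} The delicate point is the step using evenness of $W$ to symmetrize. The assumption is stated with $z,z'$ entering through $\nabla W(x-z)$. Since we want only $\lambda-C$ and not $\lambda-2C$, we must apply it once to the symmetrized derivative, not to each argument separately. We also need the independent-copy representation so that the $C|z-z'|^2$ term reproduces the same $\E|\Delta X|^2$. If the factor $\tfrac12$ interplay gives a worse constant, a fallback is to verify directly that $\E[(\nabla W(X_t-Z_t)-\nabla W(X_s-Z_s))\cdot(X_t-X_s)]$ is the full interaction contribution after symmetrization, and recheck the constants. A second, routine point is justifying differentiation under the expectation, which follows from linear growth of the gradients together with second moments.
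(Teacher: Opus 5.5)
Your proposal is correct and follows essentially the same route as the paper: the entropy is handled by \cite[Proposition~9.3.9]{AGS}, and for $\cE$ you symmetrize $e'(t)$ using oddness of $\nabla W$, apply Assumption~\ref{ass:3} to $(x_t,x_s,z_t,z_s)$ integrated against $\boldsymbol\mu\otimes\boldsymbol\mu$, and conclude from the convexity of $e(t)-\frac{\kappa}{2}t^2\cW^2_{\boldsymbol\mu}$. The fallback in your ``main obstacle'' paragraph is unnecessary, since your computation already yields the constant $\lambda-C$, exactly as in the paper.
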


\begin{proof}
Let $\mu_0,\mu_1,\mu_2\in\cP_2(\R^d)$ and let
$\boldsymbol\mu\in\cP((\R^d)^3)$ satisfy
$\pi^{0,i}_\#\boldsymbol\mu\in\Pi_o(\mu_0,\mu_i)$, $i=1,2$,
where $\Pi_o(\mu_0, \mu_i)$ is the space of optimal couplings (in Wasserstein-$2$ distance) between $\mu_0$ and $\mu_i$ and $\pi^{0,i}$ are the projection maps on the corresponding coordinates.
Define the generalized geodesic from $\mu_1$ to $\mu_2$, with base
$\mu_0$, by
$\mu_t
  \doteq
  \bigl((1-t)\pi^1+t\pi^2\bigr)_\#\boldsymbol\mu$,
  $t\in[0,1]$,
cf.\ \cite[Definition~9.2.2]{AGS}, and set
\[
  \cW_{\boldsymbol\mu}^2(\mu_1,\mu_2)
  \doteq
  \int_{(\R^d)^3}|x_1-x_2|^2\,
  \boldsymbol\mu(dx_0,dx_1,dx_2).
\]
We need to show that
\begin{equation}\label{eq:154}
  \cF(\mu_t)
  \le
  (1-t)\cF(\mu_1)+t\cF(\mu_2)
  -
  \frac{\kappa}{2}t(1-t)
  \cW_{\boldsymbol\mu}^2(\mu_1,\mu_2),
  \qquad t\in[0,1].
\end{equation}

Write
$\cF(\nu)=\cH(\nu)+\cE(\nu)$
where
$\cH(\nu)=
  \int_{\R^d}\rho\log\rho\,dx$,
      if $\nu$ has a density $\rho$ and is
    $\infty$ otherwise,
and
\[
  \cE(\nu)
  =
  \int_{\R^d}V(x)\,\nu(dx)
  +
  \frac12\iint_{\R^d\times\R^d}
  W(x-z)\,\nu(dx)\nu(dz).
\]
The entropy $\cH$ is $0$-convex along generalized geodesics in
$\cP_2(\R^d)$; indeed, this follows from
\cite[Proposition~9.3.9 and Remark~9.3.8]{AGS}, applied to
$F(r)=r\log r$, for which
$s^dF(s^{-d})=-d\log s$
is convex and nonincreasing on $(0,\infty)$. Thus
\begin{equation}\label{eq:155}
  \cH(\mu_t)
  \le
  (1-t)\cH(\mu_1)+t\cH(\mu_2).
\end{equation}
We therefore only need to verify the corresponding
$\kappa$-convexity estimate for the combined potential and interaction
energy $\cE$. The standard separate convexity results for potential and
interaction energies are given in
\cite[Proposition~9.3.2, Example~9.3.4 and Proposition~9.3.5]{AGS};
we need a minor modification of these arguments since here we instead use the overall dissipativity condition in
Assumption~\ref{ass:3}.
Fix $(x_0,x_1,x_2)$, $(z_0,z_1,z_2)$ in $(\R^d)^3$ and set
$x_t=(1-t)x_1+tx_2$,
$z_t=(1-t)z_1+tz_2$.
For $0\le s<t\le1$, Assumption~\ref{ass:3}, applied to
$(x_t,x_s,z_t,z_s)$, gives
\begin{multline*}
 (x_t-x_s)\cdot
 \bigl(\nabla V(x_t)-\nabla V(x_s)\bigr)
\\
 +
 (x_t-x_s)\cdot
 \bigl(
   \nabla W(x_t-z_t)-\nabla W(x_s-z_s)
 \bigr)
 \ge
 \lambda|x_t-x_s|^2-C|z_t-z_s|^2.
\end{multline*}
Integrating the first term with respect to
$\boldsymbol\mu(dx_0,dx_1,dx_2)$ and the second with respect to
$\boldsymbol\mu(dx_0,dx_1,dx_2)
 \boldsymbol\mu(dz_0,dz_1,dz_2)$, and using
\[
  x_t-x_s=(t-s)(x_2-x_1),
  \qquad
  z_t-z_s=(t-s)(z_2-z_1),
\]
we obtain
\begin{align*}
 &\int
 (x_t-x_s)\cdot
 \bigl(\nabla V(x_t)-\nabla V(x_s)\bigr)\,\boldsymbol\mu(dx_0,dx_1,dx_2)
\\
 &\quad+
 \iint
 (x_t-x_s)\cdot
 \bigl(
   \nabla W(x_t-z_t)-\nabla W(x_s-z_s)
 \bigr)\,
 \boldsymbol\mu(dx_0,dx_1,dx_2)\,\boldsymbol\mu(dz_0,dz_1,dz_2)
\\
 &\qquad\ge
 (\lambda-C)(t-s)^2
 \int |x_2-x_1|^2\,\mu(dx_0,dx_1,dx_2)
 = \kappa(t-s)^2
 \cW_{\boldsymbol\mu}^2(\mu_1,\mu_2).
\end{align*}

Now set
$e(t)\doteq\cE(\mu_t)$.
Using the linear growth of $\nabla V$ and $\nabla W$ to differentiate
under the integrals, and using that $\nabla W$ is odd, we obtain
\[
  e'(t)
  =
  \int
  \nabla V(x_t)\cdot(x_2-x_1)\,d\boldsymbol\mu
  +
  \iint
  \nabla W(x_t-z_t)\cdot(x_2-x_1)\,
  d\boldsymbol\mu\,d\boldsymbol\mu,
\]
where $d\boldsymbol\mu\,d\boldsymbol\mu$ in the last display is short for
$\boldsymbol\mu(dx_0,dx_1,dx_2)\,\boldsymbol\mu(dz_0,dz_1,dz_2)$.
Therefore the preceding estimate implies
\[
  e'(t)-e'(s)
  \ge
  \kappa(t-s)
  \cW_{\boldsymbol\mu}^2(\mu_1,\mu_2),
  \qquad 0\le s<t\le1.
\]
Hence
$t\mapsto
  e(t)
  -
  \frac{\kappa}{2}t^2
  \cW_{\boldsymbol\mu}^2(\mu_1,\mu_2)$
is convex. It follows that
\begin{equation}\label{eq:156}
  \cE(\mu_t)
  \le
  (1-t)\cE(\mu_1)+t\cE(\mu_2)
  -
  \frac{\kappa}{2}t(1-t)
  \cW_{\boldsymbol\mu}^2(\mu_1,\mu_2).
\end{equation}
Combining
\eqref{eq:155} and
\eqref{eq:156} proves
\eqref{eq:154}.

Finally, convexity along generalized geodesics implies ordinary
$\kappa$-geodesic convexity; see
\cite[Remark~9.2.8]{AGS}.
\end{proof}
The following coercivity property is key.
\begin{lemma}\label{lem:25}
For every $\mu\in\cP(\R^d)$,
$\mathfrak i(\mu)
  \ge
  \frac{\kappa^2}{4}\,
  \cW_2^2(\mu,\mu_*)$.
\end{lemma}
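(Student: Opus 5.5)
The approach is the standard route from strong geodesic convexity to a Talagrand-type/HWI coercivity: combine the $\kappa$-convexity of $\cF$ (Lemma~\ref{lem:24}) with the subdifferential characterization of the Wasserstein gradient. If $\mathfrak i(\mu)=\infty$ there is nothing to prove. So assume $\mathfrak i(\mu)<\infty$. By Lemma~\ref{lem:8}, $\mu=\rho\,dx\in\cP_2(\R^d)$ has a weak logarithmic gradient, and $\xi\doteq\nabla\log\rho+b_\mu\in L^2(\mu)$ with $\|\xi\|_{L^2(\mu)}^2=4\mathfrak i(\mu)$.

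The first step is to show that $\xi$ is a strong subdifferential of $\cF$ at $\mu$. By Lemma~\ref{lem:24} and \cite[Theorem~10.4.13]{AGS}, or the analogous argument in \cite[Theorem~10.4.9]{AGS} combining the entropy part (their Theorem~10.4.6 with $F(r)=r\log r$) with the smooth potential/interaction part whose gradient is $b_\mu$, $\xi\in\partial\cF(\mu)$. The $\kappa$-convexity then yields the subdifferential inequality \cite[Section~10.1.1]{AGS}: for every $\nu\in\cP_2(\R^d)$ and an optimal plan $\gamma\in\Pi_o(\mu,\nu)$,
\[
\cF(\nu)-\cF(\mu)\ge \int \xi(x)\cdot(y-x)\,\gamma(dx\,dy)+\frac{\kappa}{2}\cW_2^2(\mu,\nu).
\]
I expect this to be the main obstacle, because the paper's $\cF$ is only $C^2$ with non-separately-convex $V,W$, and $\mu$ need not lie in $\mbox{Dom}(\cF)$ a priori. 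I would first check finiteness: finite Fisher information together with a finite second moment gives $\cF(\mu)<\infty$ (for instance via the log-Sobolev/Gaussian comparison bound on entropy). Then I would verify the inequality directly. Compute the directional derivative of $\cF$ along the geodesic $\mu_t=((1-t)\pi^1+t\pi^2)_\#\gamma$ at $t=0^+$ as $\int\xi\cdot(y-x)\,d\gamma$, using the weak log-gradient for the entropy piece (via \cite[Theorem~10.4.6]{AGS}) and dominated differentiation for $\cE$. Then combine this with the convexity inequality \eqref{eq:154} along the same geodesic, divided by $t$ as $t\downarrow0$.

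The second step applies the inequality with $\nu=\mu_*$. Since $\mu_*$ minimizes $\cF$, the left side is $\le0$. Hence
\[
\frac{\kappa}{2}\cW_2^2(\mu,\mu_*)\le \int\xi(x)\cdot(x-y)\,\gamma(dx\,dy)\le \|\xi\|_{L^2(\mu)}\cW_2(\mu,\mu_*),
\]
by Cauchy--Schwarz, since $\int|x-y|^2\,d\gamma=\cW_2^2(\mu,\mu_*)$. Dividing gives $\cW_2(\mu,\mu_*)\le \frac{2}{\kappa}\|\xi\|_{L^2(\mu)}$. Squaring yields $\frac{\kappa^2}{4}\cW_2^2(\mu,\mu_*)\le\|\xi\|^2_{L^2(\mu)}=4\mathfrak i(\mu)$.

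This bound is off by a factor of $4$ from the stated one. The sharper constant follows by instead symmetrizing: apply the subdifferential inequality also at $\mu_*$, where the gradient $\nabla\log\rho_*+b_{\mu_*}$ is $0$ by \eqref{eq:17}. This gives $\cF(\mu)-\cF(\mu_*)\ge\frac{\kappa}{2}\cW_2^2$. Adding the two inequalities (monotonicity of the subdifferential) produces $\kappa\cW_2^2(\mu,\mu_*)\le\int\xi\cdot(x-y)\,d\gamma\le\|\xi\|_{L^2(\mu)}\cW_2(\mu,\mu_*)$. Therefore $\kappa^2\cW_2^2\le 4\mathfrak i(\mu)$, which gives the claimed $\mathfrak i(\mu)\ge\frac{\kappa^2}{4}\cW_2^2(\mu,\mu_*)$.
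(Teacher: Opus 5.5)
Your proof is correct and gives the stated constant $\kappa^2/4$, but its final step differs from the paper's.

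\textbf{Common first step.} Both arguments identify $\xi=\nabla\log\rho+b_\mu$ as a subgradient of $\cF$ at $\mu$, and both use $0\in\partial\cF(\mu_*)$ to obtain $\cF(\nu)-\cF(\mu_*)\ge\frac{\kappa}{2}\cW_2^2(\nu,\mu_*)$. The paper justifies $\xi\in\partial\cF(\mu)$ essentially as you do: the entropy subgradient from \cite[Theorem~10.4.6]{AGS}, plus a Taylor expansion of the potential and interaction terms along optimal maps, plus $\kappa$-convexity. Your sketch, which combines these with \eqref{eq:154} along the geodesic and lets $t\downarrow0$, is a valid way to carry this out.

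\textbf{The paper's final step.} The paper passes to the metric slope, using $|\partial\cF|(\mu)\le\|\xi\|_{L^2(\mu)}$. It then invokes the energy--slope inequality \cite[Lemma~2.4.13, (2.4.19)]{AGS}, $\cF(\mu)-\cF(\mu_*)\le\frac{1}{2\kappa}|\partial\cF|^2(\mu)$, which relies on lower semicontinuity of $\cF$ and on Assumption~2.4.5 of \cite{AGS}. Chaining the two bounds on the free-energy gap gives $\kappa^2\cW_2^2(\mu,\mu_*)\le\|\xi\|_{L^2(\mu)}^2$.

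\textbf{Your final step.} You add the subgradient inequalities at $\mu$ and at $\mu_*$, which is monotonicity of the $\kappa$-convex subdifferential, and then apply Cauchy--Schwarz. This gives $\kappa\cW_2(\mu,\mu_*)\le\|\xi\|_{L^2(\mu)}$ directly, with no metric slope and no appeal to Lemma~2.4.13. In metric language it is the bound $|\partial\cF|(\mu)\ge\kappa\cW_2(\mu,\mu_*)$, obtained by testing the convexity slope formula at $\mu_*$.

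\textbf{Comparison.}
\begin{itemize}
\item Your route is shorter and more self-contained.
\item You correctly make explicit that $\cF(\mu)<\infty$, via finite Fisher information plus a finite second moment. Both arguments need this, and the paper records it only in the appendix.
\item The paper's route yields, as by-products, the Talagrand and log-Sobolev-type bounds $\frac{\kappa}{2}\cW_2^2(\mu,\mu_*)\le\cF(\mu)-\cF(\mu_*)\le\frac{2}{\kappa}\mathfrak i(\mu)$. Your argument does not produce these.
\end{itemize}

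Your first attempt, which used only the minimality of $\mu_*$, is indeed lossy by a factor of $4$ and can simply be dropped.
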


\begin{proof}
The assertion is immediate if $\mathfrak i(\mu)=\infty$, so suppose that
$\mathfrak i(\mu)<\infty$. Then
$\mu(dx)=\rho(x)\,dx\in\cP_2(\R^d)$ and
$G_\mu
  \doteq
  \nabla\log\rho+\nabla V+\nabla W*\mu
  \in L^2(\mu;\R^d)$,
with
$4\mathfrak i(\mu)
  =
  \|G_\mu\|_{L^2(\mu)}^2$.
Note that $\cF$ is lower semicontinuous on $(\cP_2(\R^d),\cW_2)$. Indeed, for the
entropy term this follows from
the classical lower semicontinuity of relative entropy with respect to
a Gaussian reference measure, together with convergence of second
moments under $\cW_2$-convergence; see also
\cite[Remark~9.3.8]{AGS}. The potential and interaction terms are
continuous under $\cW_2$-convergence by their at-most-quadratic growth,
which follows from the global Lipschitz continuity of $\nabla V$ and
$\nabla W$.

Although, the hypotheses of \cite[Theorem~10.4.13]{AGS}, which impose separate
convexity conditions on the potential and interaction terms, do not
apply directly here, a minor modification of its argument
can be used for the current setting. Indeed, \cite[Theorem~10.4.6]{AGS} identifies
$\nabla\log\rho$ as the Wasserstein subgradient of the entropy, while
Taylor expansion along optimal transport maps, using the global
Lipschitz continuity of $\nabla V$ and $\nabla W$, shows that the
potential and interaction terms have Wasserstein gradients
$\nabla V$ and $\nabla W*\mu$, respectively, with an
$O(\cW_2^2)$ remainder. It follows that
$G_\mu\in\partial\cF(\mu)$. Consequently, from the definition
of the metric slope it follows that,
$|\partial\cF|(\mu)
  \le
  \|G_\mu\|_{L^2(\mu)}$.
Applying the same argument at $\mu_*$ and using \eqref{eq:17} gives
$0\in\partial\cF(\mu_*)$. By
Lemma~\ref{lem:24}, $\cF$ is
$\kappa$-geodesically convex with $\kappa>0$, so the subgradient
inequality \cite[(10.1.7)]{AGS} yields
\[
  \cF(\nu)-\cF(\mu_*)
  \ge
  \frac{\kappa}{2}\cW_2^2(\nu,\mu_*),
  \qquad \nu\in D(\cF).
\]
Hence $\mu_*$ is the unique minimizer of $\cF$.

As shown in \cite[(2.4.5) and the discussion preceding
Assumption~2.4.5]{AGS}, $\kappa$-geodesic convexity implies
Assumption~2.4.5 therein with $\lambda=\kappa$. Together with the
lower semicontinuity established above, $\kappa>0$, and the unique
minimizer property of $\mu_*$, this allows us to apply
\cite[Lemma~2.4.13, (2.4.19)]{AGS}, giving
\[
  \frac{\kappa}{2}
  \cW_2^2(\mu,\mu_*)
  \le
  \cF(\mu)-\cF(\mu_*)
  \le
  \frac{1}{2\kappa}
  |\partial\cF|^2(\mu) \le \frac{1}{2\kappa}\|G_\mu\|_{L^2(\mu)}^2 = \frac{2}{\kappa}\mathfrak i(\mu),
\]
and the result follows.
\end{proof}

We next prove the lower semicontinuity of the static cost for the
topology of weak convergence.

\begin{lemma}\label{lem:26}
The map
$\mathfrak i:\cP(\R^d) \to [0,\infty]$
is lower semicontinuous for the topology of weak convergence.
\end{lemma}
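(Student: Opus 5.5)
By Lemma~\ref{lem:8}, $4\mathfrak i=\cD$ on $\cP(\R^d)$. We therefore prove lower semicontinuity of $\cD$, mimicking the proof of Lemma~\ref{lem:7}: we write $\cD$ as a supremum of functionals $\cH_\varphi$ that are continuous along the relevant sequences. Let $\gamma_n\Rightarrow\gamma$ and assume, after passing to a subsequence, that $\liminf_n\cD(\gamma_n)=\lim_n\cD(\gamma_n)<\infty$. For $\varphi\in C_c^\infty(\R^d;\R^d)$ and $\nu\in\cP_2(\R^d)$ put
\[
\cH_\varphi(\nu)\doteq -2\int\nabla\cdot\varphi\,d\nu+2\int\varphi\cdot b_\nu\,d\nu-\int|\varphi|^2\,d\nu ,
\]
so that $\cD(\nu)=\sup_\varphi\cH_\varphi(\nu)$ by \eqref{eq:83}. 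It suffices to show $\gamma\in\cP_2(\R^d)$ and $\cH_\varphi(\gamma_n)\to\cH_\varphi(\gamma)$ for each fixed $\varphi$. Then $\cH_\varphi(\gamma)=\lim_n\cH_\varphi(\gamma_n)\le\liminf_n\cD(\gamma_n)$, and taking the supremum over $\varphi$ gives the claim.

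\textbf{Step 1: uniform second moments.} We use Lemma~\ref{lem:25}: $\cW_2^2(\gamma_n,\mu_*)\le\frac{4}{\kappa^2}\mathfrak i(\gamma_n)$, which is bounded. Hence $\sup_n\int|x|^2\,d\gamma_n<\infty$, and by Fatou also $\gamma\in\cP_2(\R^d)$. This is exactly where Assumption~\ref{ass:3} enters, through the coercivity estimate. It replaces the more indirect moment argument of Lemma~\ref{lem:7}; that argument is unavailable here because, without mollification, $\int|\nabla\log\rho|^2d\mu$ has no a priori bound.

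\textbf{Step 2: continuity of $\cH_\varphi$.} The first and third terms converge by weak convergence, since $\nabla\cdot\varphi$ and $|\varphi|^2$ are bounded and continuous. For the middle term, write $\int\varphi\cdot b_{\gamma_n}\,d\gamma_n=\int\varphi\cdot\nabla V\,d\gamma_n+\iint\varphi(x)\cdot\nabla W(x-y)\,\gamma_n(dx)\gamma_n(dy)$.
\begin{itemize}
\item The first piece converges because $\varphi\cdot\nabla V$ is bounded and continuous, as $\varphi$ has compact support.
\item In the second piece the integrand is continuous in $(x,y)$ but grows linearly in $y$. It converges by weak convergence $\gamma_n\otimes\gamma_n\Rightarrow\gamma\otimes\gamma$, combined with uniform integrability, which follows from the uniform second moment bound of Step~1. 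Concretely, we truncate with a cutoff in $y$ at radius $R$, bound the tail by $C\|\varphi\|_\infty R^{-1}\sup_n\int|y|^2d\gamma_n$, and let $n\to\infty$ and then $R\to\infty$.
\end{itemize}

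The main point needing care is Step~1. Without a uniform moment bound the interaction term $\nabla W*\gamma_n$ need not converge, and the limit $\gamma$ need not even lie in $\cP_2(\R^d)$, where $\cD$ would otherwise be $+\infty$ by definition. The coercivity of Lemma~\ref{lem:25} settles both issues at once. The remaining steps are routine truncation arguments.
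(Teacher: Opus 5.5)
Your proposal is correct and follows essentially the same route as the paper. Both use the dual formula of Lemma~\ref{lem:8}, the coercivity bound of Lemma~\ref{lem:25} for uniform second moments and $\gamma\in\cP_2(\R^d)$, continuity of each fixed-$\varphi$ functional along the sequence (uniform integrability handling the $\nabla W$ term), and a supremum over $\varphi$. Your explicit truncation only fills in the step the paper states in one line. One side remark is overstated: a Lemma~\ref{lem:7}-style moment argument is not actually unavailable, since integrating $\nabla\log\rho\cdot b_\gamma$ by parts against the bounded $\nabla\cdot b_\gamma$ bounds $\int|\nabla\log\rho|^2\,d\gamma$ by $4\mathfrak i(\gamma)+C$, but your proof does not depend on that remark.
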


\begin{proof}
Let $\mu_n\Rightarrow\mu$ in $\cP(\R^d)$. It suffices to consider the
case
$\liminf_{n\to\infty}\mathfrak i(\mu_n)<\infty$.
Passing to a subsequence, we may assume that
$\sup_n\mathfrak i(\mu_n)<\infty$. By
Lemma~\ref{lem:25},
$\sup_n\cW_2(\mu_n,\mu_*)<\infty$, and hence
$\sup_n\int |x|^2\,\mu_n(dx)<\infty$. In particular,
$\mu\in\cP_2(\R^d)$.

By Lemma~\ref{lem:8}, for 
$\nu \in \cP_2(\R^d)$,
\[
  4\mathfrak i(\nu)
  =
  \sup_{\varphi\in C_c^\infty(\R^d;\R^d)}
  \left\{
    -2\int\nabla\cdot\varphi\,d\nu
    +2\int\varphi\cdot b_\nu\,d\nu
    -\int|\varphi|^2\,d\nu
  \right\}.
\]
For each fixed $\varphi\in C_c^\infty(\R^d;\R^d)$, the first and third
terms in braces are continuous under weak convergence. The same is true
of the middle term using the linear growth of $\nabla W$ and uniform integrability
property following from the uniform second-moment bound above.

Thus, for every fixed $\varphi\in C_c^\infty(\R^d;\R^d)$, the
functional inside the supremum is continuous at $\mu$ along
$\{\mu_n\}$. Taking the supremum over $\varphi$ therefore gives
$\mathfrak i(\mu)
  \le
  \liminf_{n\to\infty}\mathfrak i(\mu_n)$,
as needed.
\end{proof}

We can now prove the rate-function property.

\begin{proposition}\label{prop:8}
The functional $\cI$ in \eqref{eq:15}
is a rate function for the topology of weak convergence on
$\cP(\cP(\R^d))$.
\end{proposition}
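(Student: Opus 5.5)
To show that $\cI$ is a rate function we must check two things: that $\cI$ is lower semicontinuous on $\clx=\cP(\cP(\R^d))$, and that each sublevel set $\{\Gamma:\cI(\Gamma)\le M\}$ is relatively compact. Compactness of the sublevel sets then follows from these two facts together.

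\emph{Lower semicontinuity.} By Lemma~\ref{lem:26}, $\mathfrak i:\cP(\R^d)\to[0,\infty]$ is lower semicontinuous and nonnegative. A nonnegative lower semicontinuous function on a Polish (metric) space is the increasing pointwise limit of bounded continuous functions, for instance the inf-convolutions
\[
\mathfrak i_k(\mu)\doteq\inf_{\nu}\{\min(\mathfrak i(\nu),k)+k\,d_{\mathrm{BL}}(\mu,\nu)\}.
\]
Hence $\Gamma\mapsto\int \mathfrak i_k\,d\Gamma$ is continuous for the weak topology on $\clx$. By monotone convergence,
\[
\cI(\Gamma)=\sup_k\int\mathfrak i_k\,d\Gamma,
\]
and a supremum of continuous functions is lower semicontinuous.

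\emph{Relative compactness.} Fix $M<\infty$ and suppose $\cI(\Gamma)\le M$. By Lemma~\ref{lem:25},
\[
\int \cW_2^2(\mu,\mu_*)\,\Gamma(d\mu)\le \frac{4M}{\kappa^2}.
\]
Since $\int|x|^2\,d\mu\le 2\cW_2^2(\mu,\mu_*)+2\int|x|^2\,d\mu_*$, we get
\[
\int m_2(\mu)\,\Gamma(d\mu)\le C_M,\qquad m_2(\mu)\doteq\int|x|^2\,\mu(dx),
\]
uniformly over the sublevel set. The sets $K_R\doteq\{\mu:m_2(\mu)\le R\}$ are compact in $\cP(\R^d)$: they are tight by Markov's inequality, and closed by lower semicontinuity of $m_2$ under weak convergence. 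Markov's inequality then gives $\Gamma(K_R^c)\le C_M/R$ uniformly over the sublevel set. So the sublevel set is tight in $\clx$, and by Prokhorov's theorem it is relatively compact. Combined with closedness from the first step, each sublevel set is compact.

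There is no serious obstacle. The only point needing care is the lower semicontinuity of $\Gamma\mapsto\int\mathfrak i\,d\Gamma$ for an unbounded integrand, and the monotone approximation by bounded Lipschitz functions handles it.
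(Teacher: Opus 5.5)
Your proof is correct and follows essentially the same route as the paper: tightness of the sublevel set comes from Lemma~\ref{lem:25} plus Markov's inequality on compact second-moment (in the paper, $\cW_2^2(\cdot,\mu_*)$) sublevel sets of $\cP(\R^d)$, and closedness comes from the lower semicontinuity of $\mathfrak i$ in Lemma~\ref{lem:26}. The only difference is that you spell out the lower semicontinuity of $\Gamma\mapsto\int\mathfrak i\,d\Gamma$ by monotone approximation of $\mathfrak i$ with bounded Lipschitz inf-convolutions, which is the step the paper compresses into an appeal to Fatou's lemma.
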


\begin{proof}
Fix $M<\infty$ and let
$K_M
  \doteq
  \left\{
    \Gamma\in\cP(\cP(\R^d)):
    \cI(\Gamma)\le M
  \right\}$.
We show that $K_M$ is relatively compact and closed.
Define $\Psi(\mu)
  \doteq
  \cW_2^2(\mu,\mu_*)$,
 $\mu\in\cP(\R^d)$,
with the convention that $\Psi(\mu)=\infty$ when
$\mu\notin\cP_2(\R^d)$. The function $\Psi$ is lower
semicontinuous for the topology of weak convergence. Moreover, its
level sets are compact in $\cP(\R^d)$. Indeed, if
$\Psi(\mu)\le R$, then
\[
\begin{aligned}
  \int_{\R^d}|x|^2\,\mu(dx)
  &\le
  2\cW_2^2(\mu,\mu_*)
  +
  2\int_{\R^d}|y|^2\,\mu_*(dy)
  \le
  2R+
  2\int_{\R^d}|y|^2\,\mu_*(dy).
\end{aligned}
\]
Thus the level set $\{\Psi\le R\}$ is tight, and it is closed by the
lower semicontinuity of $\Psi$.

By Lemma~\ref{lem:25}, for every
$\Gamma\in K_M$,
\begin{equation}\label{eq:157}
\begin{aligned}
  \int_{\cP(\R^d)}
  \Psi(\mu)\,\Gamma(d\mu)
  &=
  \int_{\cP(\R^d)}
  \cW_2^2(\mu,\mu_*)\,\Gamma(d\mu)
  \le
  \frac{4}{\kappa^2}
  \int_{\cP(\R^d)}
  \mathfrak i(\mu)\,\Gamma(d\mu)
  \le
  \frac{4M}{\kappa^2}.
\end{aligned}
\end{equation}
Consequently, for every $R>0$,
\[
\begin{aligned}
  \sup_{\Gamma\in K_M}
  \Gamma\left(
    \left\{
      \mu:\Psi(\mu)>R
    \right\}
  \right)
  &\le
  \frac1R
  \sup_{\Gamma\in K_M}
  \int_{\cP(\R^d)}
  \Psi(\mu)\,\Gamma(d\mu)
  \le
  \frac{4M}{\kappa^2R}.
\end{aligned}
\]
Since $\{\Psi\le R\}$ is compact in $\cP(\R^d)$, this proves that
$K_M$ is tight as a family of probability measures on
$\cP(\R^d)$. 
It remains to prove that $K_M$ is closed. Let
$\Gamma_n\in K_M$ and suppose that
$\Gamma_n\Rightarrow\Gamma$ in
$\cP(\cP(\R^d))$. By
Lemma~\ref{lem:26}, the map
$\mathfrak i$ is nonnegative and lower semicontinuous on
$\cP(\R^d)$. Fatou's lemma therefore gives
\[
\begin{aligned}
  \cI(\Gamma)
  &=
  \int_{\cP(\R^d)}
  \mathfrak i(\mu)\,\Gamma(d\mu)
  \le
  \liminf_{n\to\infty}
  \int_{\cP(\R^d)}
  \mathfrak i(\mu)\,\Gamma_n(d\mu)
  \le M.
\end{aligned}
\]
Hence $\Gamma\in K_M$, and thus $K_M$ is closed. The result follows.
\end{proof}

\section{An explicit quadratic mean-field model}
\label{sec:15}

In this section we illustrate the rate function of
Theorem~\ref{thm:1} in a setting where several natural constrained
variational problems can be solved explicitly.  
Throughout this section, let
\begin{equation}\label{eq:158}
  V(x)=\frac a2|x|^2,
  \qquad
  W(x)=\frac b2|x|^2,
  \qquad
  a>0,\quad b>-\frac a2,
\end{equation}
and write
  $c\doteq a+b$.
We note that this example satisfies
Assumption \ref{ass:3}. Indeed,
one can take $\lambda=\frac c2$ and
$C=\frac{b^2}{2c}$. Then 
$\lambda-C =
  \frac{a(a+2b)}{2(a+b)}
  >0$.

For $\mu\in\cP_2(\R^d)$, let
$m_\mu\doteq\int_{\R^d}x\,\mu(dx)$ and
  $\mu^\circ\doteq\tau_{-m_\mu}\#\mu$, where $\tau_{-m_\mu}(x) \doteq x-m_\mu$ for $x \in \R^d$.
  When $\mu^\circ(dx)=\rho(x)\,dx$
and $\rho$ has a weak logarithmic gradient, write, for $c>0$,
\[
  I(\mu^\circ)
  \doteq
  \int_{\R^d}
  |\nabla\log\rho(x)|^2\,\mu^\circ(dx), \; \mathsf{FI}_c(\mu^\circ)
  \doteq
  \int_{\R^d}
  |\nabla\log\rho(x)+cx|^2\,\mu^\circ(dx).
\]

If $\nu(dx)=\rho(x)\,dx \in \cP_2(\R^d)$ is centered (i.e. it has mean $0$), and $\rho$ has a weak logarithmic gradient, then an elementary cutoff argument in the defining integration-by-parts identity gives
\begin{equation}\label{eq:159}
  \int_{\R^d}\nabla\log\rho(x)\,\nu(dx)=0,
  \qquad
  \int_{\R^d}
  x\otimes\nabla\log\rho(x)\,\nu(dx)
  =
  -I_d.
\end{equation}
Since $\nu$ has a density, its covariance matrix
$\Sigma_\nu\doteq\int_{\R^d}xx^{\mathsf T}\,\nu(dx)$ is positive definite. Moreover,
\[
  J(\nu)-\Sigma_\nu^{-1}
  =
  \int_{\R^d}
  \bigl(\nabla\log\rho(x)+\Sigma_\nu^{-1}x\bigr)
  \bigl(\nabla\log\rho(x)+\Sigma_\nu^{-1}x\bigr)^{\mathsf T}
  \,\nu(dx)\succeq0,
\]
where we used \eqref{eq:159}. Thus
\begin{equation}\label{eq:161}
  I(\nu)
  =
  \Tr J(\nu)
  \geq
  \Tr(\Sigma_\nu^{-1}).
\end{equation}
In the above, equality holds if and only if
$\nabla\log\rho(x)=-\Sigma_\nu^{-1}x$ $\nu$-a.e., which is equivalent to
$\nu = \mathcal N(0,\Sigma_\nu)$.

It follows from \eqref{eq:159} and \eqref{eq:161} that, for 
$\nu$ as above
\begin{align}
  \mathsf{FI}_c(\nu)
  &=
  \int_{\R^d}
  \left|
    \nabla\log\rho(x)+cx
  \right|^2
  \nu(dx)
  =
  I(\nu)-2cd+c^2\Tr(\Sigma_\nu) \geq
  \Tr(\Sigma_\nu^{-1})
  -2cd
  +c^2\Tr(\Sigma_\nu).
\label{eq:162}
\end{align}
with equality if and only if
$\nu=\mathcal N(0,\Sigma_\nu)$.

\begin{proposition}
\label{prop:9}
For every $\mu\in\cP_2(\R^d)$ for which
$\mathfrak i(\mu)<\infty$,
\begin{equation}\label{eq:163}
  \mathfrak i(\mu)
  =
  \frac{a^2}{4}|m_\mu|^2
  +
  \frac14\mathsf{FI}_c(\mu^\circ).
\end{equation}
\end{proposition}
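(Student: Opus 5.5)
We compute the integrand in \eqref{eq:5} explicitly and then change variables to the centered measure. With $V(x)=\frac a2|x|^2$ and $W(x)=\frac b2|x|^2$ we have $\nabla V(x)=ax$. Also
\[
\nabla W*\mu(x)=\int b(x-y)\,\mu(dy)=b(x-m_\mu),
\]
since $\mu\in\cP_2(\R^d)$. Hence
\[
b_\mu(x)=ax+b(x-m_\mu)=c(x-m_\mu)+am_\mu .
\]
Since $\mathfrak i(\mu)<\infty$, $\mu(dx)=\rho(x)\,dx$ and the weak logarithmic gradient $\nabla\log\rho$ exists in $L^2(\mu)$. The centered measure $\mu^\circ$ has density $\rho^\circ(x)=\rho(x+m_\mu)$. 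Its weak logarithmic gradient is $\nabla\log\rho^\circ(x)=\nabla\log\rho(x+m_\mu)$, obtained by translating test functions in \eqref{eq:14}.

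Substituting $x=y+m_\mu$ gives
\[
4\mathfrak i(\mu)=\int_{\R^d}\bigl|\nabla\log\rho^\circ(y)+cy+am_\mu\bigr|^2\,\mu^\circ(dy).
\]
Expanding the square yields three terms: $\mathsf{FI}_c(\mu^\circ)$, the constant $a^2|m_\mu|^2$, and the cross term
\[
2am_\mu\cdot\int\bigl(\nabla\log\rho^\circ(y)+cy\bigr)\,\mu^\circ(dy).
\]
The cross term vanishes. The piece $\int y\,\mu^\circ(dy)$ is zero because $\mu^\circ$ is centered. The piece $\int\nabla\log\rho^\circ\,d\mu^\circ$ is zero by the first identity in \eqref{eq:159}, which applies because $\mu^\circ$ is centered, lies in $\cP_2$, and has a weak logarithmic gradient. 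Dividing by $4$ gives \eqref{eq:163}.

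The only delicate point is the justification of \eqref{eq:159}, namely that $\int\nabla\log\rho\,d\nu=0$. I would take it as given, since the paper states it just above. If it needed checking, one applies \eqref{eq:14} with $\varphi=\chi(\cdot/R)$ for a smooth cutoff $\chi$. Then $|\partial_i\varphi|\le C/R$, so the left side of \eqref{eq:14} tends to $0$. The right side converges to $-\int\xi_i\,d\nu$ by dominated convergence, using $\xi\in L^2(\nu)\subset L^1(\nu)$.

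All integrals involved are finite because $\nabla\log\rho^\circ\in L^2(\mu^\circ)$ and $\mu^\circ\in\cP_2$, so the expansion of the square is legitimate.
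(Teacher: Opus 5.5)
Your proof is correct and follows essentially the same route as the paper: you write $b_\mu(x)=am_\mu+c(x-m_\mu)$, translate to the centered measure $\mu^\circ$, expand the square, and kill the cross term using that $\mu^\circ$ is centered together with the first identity in \eqref{eq:159}. Your cutoff argument for $\int\nabla\log\rho\,d\nu=0$ is the elementary justification the paper only alludes to.
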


\begin{proof}
With $y= x-m_\mu$,
$\nabla V(x)+(\nabla W*\mu)(x)
  =
  a m_\mu+cy$ and if 
$\rho^\circ$ denotes the density of $\mu^\circ$, then 
$\nabla\log\rho(x)
  =
  \nabla\log\rho^\circ(y)$
Hence, using the representation of $\mathfrak i$ in
\eqref{eq:13},
\begin{align*}
  4\mathfrak i(\mu)
  &=
  \int_{\R^d}
  \left|
    \nabla\log\rho^\circ(y)
    +cy
    +a m_\mu
  \right|^2
  \mu^\circ(dy) =
  a^2|m_\mu|^2
  +
  \int_{\R^d}
  \left|
    \nabla\log\rho^\circ(y)+cy
  \right|^2
  \mu^\circ(dy),
\end{align*}
where the second equality follows on using
\eqref{eq:159}.
The result follows.
\end{proof}

The decomposition of $\mathfrak i(\mu)$ in the proposition separates the translational mode of the free-energy
gradient from its centered component. Since the interaction energy is
translation invariant, its force has zero mean and does not contribute to
the center-of-mass term. In the present quadratic setting, the affine form
of the confinement force makes this separation exact, namely, the 
cost for translating the mean depends only on the confinement strength $a$, while the cost associated with shaping the centered cloud
involves both confinement and interaction through $a+b$. This is related
to, but more explicit than, the barycentric correction in the HWBI
inequalities of \cite{AguehGhoussoubKang2004}, where generalized relative
Fisher information is coupled with an explicit barycentric term, and to
the separate treatment of fixed and moving centers of mass in
\cite{carmccvil}. Those works treat substantially more general free
energies and potentials, but establish comparison and convergence
inequalities rather than the exact additive identity above.

As an immediate corollary we have the following.

\begin{corollary}\label{cor:2}
For $\Gamma \in \cP(\cP(\R^d))$,
$\cI(\Gamma) = \frac{1}{4}\int \left(a^2|m_\mu|^2
	  +
	  \mathsf{FI}_c(\mu^\circ)\right) \Gamma(d\mu)$
	  where the integrand is taken to be $\infty$ if $\mu \notin \cP_2(\R^d)$ or $\mu$ does not have a density admitting a weak logarithmic gradient.
\end{corollary}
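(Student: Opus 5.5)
The corollary follows from integrating the pointwise identity of Proposition~\ref{prop:9} against $\Gamma$. The only work is to check that the conventions on the two sides agree wherever $\mathfrak i(\mu)=\infty$.

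\textbf{Step 1.} Take $\mu\in\cP_2(\R^d)$ with a density $\rho$ admitting a weak logarithmic gradient. The change of variables $y=x-m_\mu$ shows that $\mu^\circ$ has density $\rho^\circ(y)=\rho(y+m_\mu)$. Its weak logarithmic gradient exists and is $\nabla\log\rho^\circ(y)=\nabla\log\rho(y+m_\mu)$; this is verified by translating test functions in \eqref{eq:14}. The quantity $\mathfrak i(\mu)$ is then finite, because $\nabla V$ and $\nabla W$ grow linearly and $\mu\in\cP_2(\R^d)$, as noted below \eqref{eq:14}. Proposition~\ref{prop:9} therefore applies and gives
\[
\mathfrak i(\mu)=\tfrac14\bigl(a^2|m_\mu|^2+\mathsf{FI}_c(\mu^\circ)\bigr),
\]
with both sides finite.

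\textbf{Step 2.} In every other case both sides are $+\infty$. If $\mu\notin\cP_2(\R^d)$, or $\mu$ has no density with a weak logarithmic gradient, then $\mathfrak i(\mu)=\infty$ by definition, and the integrand is $\infty$ by the stated convention. The two functions of $\mu$ therefore coincide everywhere on $\cP(\R^d)$ as $[0,\infty]$-valued functions.

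\textbf{Step 3.} The integrand $\mu\mapsto \tfrac14\bigl(a^2|m_\mu|^2+\mathsf{FI}_c(\mu^\circ)\bigr)$ equals $\mathfrak i$, which is lower semicontinuous by Lemma~\ref{lem:26}; here Assumption~\ref{ass:3} holds for this example, as shown above. It is therefore Borel measurable and nonnegative. Integrating the identity against $\Gamma$ and using the definition \eqref{eq:15} of $\cI$ gives the claim, including the case where both sides equal $+\infty$.

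No step is a real obstacle. The only points needing care are the translation invariance of the weak logarithmic gradient in Step~1 and the identification of the domains in Step~2; both are routine.
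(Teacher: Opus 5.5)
Your proposal is correct and takes the same approach as the paper, which states the corollary as an immediate consequence of Proposition~\ref{prop:9} integrated against $\Gamma$, with the stated convention making the integrand equal $\mathfrak i$ off its domain as well. Appealing to Lemma~\ref{lem:26} for measurability is harmless but not needed, since the integrand coincides pointwise with $\mathfrak i$, whose measurability is already presupposed in the definition \eqref{eq:15} of $\cI$.
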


We now note some other elementary consequences of Proposition \ref{prop:9}.
\begin{corollary}
\label{cor:4}
Let $m\in\R^d$ and let $\Sigma$ be a symmetric positive definite
$d\times d$ matrix. Then
\begin{align}
 &\inf_{\mu\in\cP_2(\R^d):
                  m_\mu=m,\;\Sigma_\mu=\Sigma}
 \mathfrak i(\mu)
 =
 \frac14
 \left[
   a^2|m|^2
   +
   \Tr(\Sigma^{-1})
   -2cd
   +c^2\Tr(\Sigma)
 \right].
\label{eq:164}
\end{align}
The unique minimizer is
$\mathcal N(m,\Sigma)$.
\end{corollary}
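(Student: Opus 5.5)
The constrained problem separates into a mean part and a centered part via Proposition~\ref{prop:9}, and the centered part is controlled by the lower bound \eqref{eq:162}. Throughout, $\Sigma_\mu$ denotes the covariance of $\mu$, so $\Sigma_\mu=\Sigma_{\mu^\circ}$.

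\emph{Lower bound.} Fix $\mu\in\cP_2(\R^d)$ with $m_\mu=m$ and $\Sigma_\mu=\Sigma$. If $\mathfrak i(\mu)=\infty$ there is nothing to prove, so assume $\mathfrak i(\mu)<\infty$. Then $\mu$ has a density with a weak logarithmic gradient. Its centered version $\mu^\circ=\tau_{-m}\#\mu$ is a translate, so it also has a density whose weak logarithmic gradient is the translate of the original one; moreover $\mu^\circ$ is centered and has covariance $\Sigma$. Proposition~\ref{prop:9} gives
\[
4\mathfrak i(\mu)=a^2|m|^2+\mathsf{FI}_c(\mu^\circ).
\]
Applying \eqref{eq:162} with $\nu=\mu^\circ$ yields
\[
\mathsf{FI}_c(\mu^\circ)\ge \Tr(\Sigma^{-1})-2cd+c^2\Tr(\Sigma).
\]
Hence $\mathfrak i(\mu)$ is at least the right-hand side of \eqref{eq:164}.

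\emph{Attainment.} Take $\mu=\mathcal N(m,\Sigma)$. Then $\mu^\circ=\mathcal N(0,\Sigma)$ and $\nabla\log\rho^\circ(y)=-\Sigma^{-1}y$, which lies in $L^2$, so $\mathfrak i(\mu)<\infty$ by the linear growth of $\nabla V$ and $\nabla W$. Equality holds in \eqref{eq:161}, and therefore in \eqref{eq:162}. One can also check this directly: $\int|-\Sigma^{-1}y+cy|^2\,d\mathcal N(0,\Sigma)=\Tr(\Sigma^{-1})-2cd+c^2\Tr\Sigma$. So the infimum equals the stated value.

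\emph{Uniqueness.} Suppose a minimizer $\mu$ attains the value. Then $\mathfrak i(\mu)<\infty$ and equality must hold in \eqref{eq:162} for $\nu=\mu^\circ$. By the equality characterization following \eqref{eq:161}, this forces $\mu^\circ=\mathcal N(0,\Sigma)$, hence $\mu=\mathcal N(m,\Sigma)$.

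The only point needing care is that \eqref{eq:159}--\eqref{eq:162} are stated for centered measures with a weak logarithmic gradient. We have to confirm that $\mu^\circ$ qualifies, which follows from translation invariance of the defining identity \eqref{eq:14}. We also use that the equality case of \eqref{eq:161} is exactly the Gaussian one, i.e.\ $\nabla\log\rho=-\Sigma^{-1}x$ a.e.\ forces a Gaussian density. This holds because $\rho\in W^{1,1}_{\rm loc}$ with that log-gradient makes $\rho e^{x^{\mathsf T}\Sigma^{-1}x/2}$ have zero weak gradient, so it is constant. Everything else is a direct substitution.
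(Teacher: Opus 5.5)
Your proposal is correct and follows the paper's own argument. Proposition~\ref{prop:9} reduces the problem to the centered part, \eqref{eq:162} gives the lower bound, and its Gaussian equality case yields both attainment and uniqueness. Your extra checks make explicit what the paper leaves implicit: that $\mu^\circ$ inherits a weak logarithmic gradient by translation, and that $\nabla\log\rho=-\Sigma^{-1}x$ forces a Gaussian density because $\rho e^{x^{\mathsf T}\Sigma^{-1}x/2}$ has zero weak gradient.
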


\begin{proof}
If $m_\mu=m$ and $\Sigma_\mu=\Sigma$, then $\mu^\circ$ is centered
with covariance $\Sigma$. Proposition~\ref{prop:9}
and \eqref{eq:162}
 therefore give
\[
  \mathfrak i(\mu)
  \geq
  \frac14
  \left[
    a^2|m|^2
    +
    \Tr(\Sigma^{-1})
    -2cd
    +c^2\Tr(\Sigma)
  \right],
\]
with
equality if and only if
$\mu^\circ=\mathcal N(0,\Sigma)$, i.e.
$\mu=\mathcal N(m,\Sigma)$.
\end{proof}

\begin{corollary}
\label{cor:5}
Let $m\in\R^d$ and $q>0$. Then
\begin{equation}\label{eq:166}
  \inf_{\mu\in\cP_2(\R^d):
        m_\mu=m,\; \Tr(\Sigma_{\mu})=q
        }
  \mathfrak i(\mu)
  =
  \frac{a^2}{4}|m|^2
  +
  \frac{(cq-d)^2}{4q}.
\end{equation}
The unique minimizer is
$\mathcal N\left(m,\frac qd I_d\right)$.
\end{corollary}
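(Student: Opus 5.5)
The plan is to reduce Corollary~\ref{cor:5} to Corollary~\ref{cor:4} by first fixing the full covariance and then optimizing over all covariances with the prescribed trace.

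Fix $\mu$ with $m_\mu=m$ and $\Tr(\Sigma_\mu)=q$, and write $\Sigma=\Sigma_\mu$. If $\mathfrak i(\mu)=\infty$ there is nothing to prove. Otherwise $\mu$ has a density, so $\Sigma$ is positive definite, and Corollary~\ref{cor:4} gives
\[
  \mathfrak i(\mu)\ge \frac14\Bigl[a^2|m|^2+\Tr(\Sigma^{-1})-2cd+c^2q\Bigr],
\]
with equality if and only if $\mu=\mathcal N(m,\Sigma)$. It therefore remains to minimize $\Tr(\Sigma^{-1})$ over symmetric positive definite $\Sigma$ with $\Tr\Sigma=q$.

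Let $\lambda_1,\dots,\lambda_d>0$ be the eigenvalues of $\Sigma$. By Cauchy--Schwarz (equivalently, convexity of $t\mapsto 1/t$),
\[
  \sum_i\lambda_i^{-1}\ge \frac{d^2}{\sum_i\lambda_i}=\frac{d^2}{q},
\]
with equality if and only if all $\lambda_i=q/d$, that is, $\Sigma=\frac qd I_d$. Substituting this bound, the bracket becomes
\[
  a^2|m|^2+\frac{d^2}{q}-2cd+c^2q = a^2|m|^2+\frac{(cq-d)^2}{q},
\]
which gives the lower bound in \eqref{eq:166}. For attainment, take $\mu=\mathcal N(m,\frac qd I_d)$; it gives equality in both steps.

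For uniqueness, suppose equality holds in \eqref{eq:166}. Then equality must hold in each inequality. Equality in the eigenvalue bound forces $\Sigma=\frac qd I_d$, and equality in Corollary~\ref{cor:4} then forces $\mu=\mathcal N(m,\frac qd I_d)$.

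No step is a real obstacle. The only care needed is to note that finiteness of $\mathfrak i(\mu)$ guarantees a density, hence $\Sigma\succ0$, so Corollary~\ref{cor:4} applies.
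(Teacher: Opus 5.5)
Your proposal is correct and follows the paper's proof: you reduce to Corollary~\ref{cor:4} and then minimize $\Tr(\Sigma^{-1})$ subject to $\Tr\Sigma=q$, which is attained uniquely at $\Sigma=\frac qd I_d$. Your Cauchy--Schwarz eigenvalue bound and your tracking of the equality cases (including attainment by $\mathcal N(m,\frac qd I_d)$) spell out the steps the paper states without detail.
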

\begin{proof}
Let $\Sigma=\Sigma_\mu$.  
Note that $\Tr(\Sigma^{-1})$ over positive definite $\Sigma$ satisfying
$\Tr(\Sigma)=q$ is minimized uniquely at
$\Sigma=\frac qd I_d$.
Substitution into
\eqref{eq:164} gives the desired result.
\end{proof}

As a consequence of Corollary \ref{cor:4} we have the following result. Let $\mathbb S_{++}^d$ denote the space of symmetric positive definite
$d\times d$ matrices and define
\[
  G:\R^d\times\mathbb S_{++}^d \to \cP(\R^d),
  \qquad
  G(m,\Sigma)\doteq\mathcal N(m,\Sigma).
\]
The map $G$ is continuous when $\cP(\R^d)$ is equipped with its weak
topology.

\begin{corollary}
\label{cor:7}
Let
$\Pi\in\cP(\R^d\times\mathbb S_{++}^d)$
and define
$
  \Gamma_\Pi
  \doteq
  G_\#\Pi
  =
  \int_{\R^d\times\mathbb S_{++}^d}
  \delta_{\mathcal N(m,\Sigma)}
  \,\Pi(dm\,d\Sigma)$.
 Then
\begin{align}
  \cI(\Gamma_\Pi)
  &=
  \frac14
  \int_{\R^d\times\mathbb S_{++}^d}
  \left[
    a^2|m|^2
    +\Tr(\Sigma^{-1})
    -2cd
    +c^2\Tr(\Sigma)
  \right]
  \Pi(dm\,d\Sigma)
\label{eq:170}
\end{align}
\end{corollary}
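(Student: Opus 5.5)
The identity follows from the definition $\cI(\Gamma)=\int\mathfrak i\,d\Gamma$ combined with the explicit value of $\mathfrak i$ at Gaussian measures. Since $\Gamma_\Pi=G_\#\Pi$, the change of variables formula for pushforwards gives
\[
  \cI(\Gamma_\Pi)
  =
  \int_{\cP(\R^d)}\mathfrak i(\mu)\,\Gamma_\Pi(d\mu)
  =
  \int_{\R^d\times\mathbb S_{++}^d}\mathfrak i\bigl(\mathcal N(m,\Sigma)\bigr)\,\Pi(dm\,d\Sigma).
\]
This is valid for the nonnegative integrand $\mathfrak i$ with values in $[0,\infty]$. Measurability is not an issue: $G$ is continuous, and $\mathfrak i$ is lower semicontinuous by Lemma~\ref{lem:26}, hence Borel. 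It therefore suffices to establish the pointwise identity
\[
  4\mathfrak i\bigl(\mathcal N(m,\Sigma)\bigr)
  =
  a^2|m|^2+\Tr(\Sigma^{-1})-2cd+c^2\Tr(\Sigma).
\]

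For the pointwise identity, first note that $\mu=\mathcal N(m,\Sigma)$ has a smooth positive density, all moments, and score $\nabla\log\rho(x)=-\Sigma^{-1}(x-m)$, which lies in $L^2(\mu)$. Hence $\mathfrak i(\mu)<\infty$, and Proposition~\ref{prop:9} applies, giving
\[
  4\mathfrak i(\mu)=a^2|m|^2+\mathsf{FI}_c(\mu^\circ),
  \qquad
  \mu^\circ=\mathcal N(0,\Sigma).
\]
The equality case of \eqref{eq:162}, applied to the centered Gaussian $\nu=\mathcal N(0,\Sigma)$, yields
\[
  \mathsf{FI}_c(\mu^\circ)=\Tr(\Sigma^{-1})-2cd+c^2\Tr(\Sigma).
\]
This can also be checked directly by expanding $\int|(cI-\Sigma^{-1})y|^2\,\mathcal N(0,\Sigma)(dy)=\Tr\bigl((cI-\Sigma^{-1})^2\Sigma\bigr)$. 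The pointwise identity is equivalently the equality case in Corollary~\ref{cor:4}, since the Gaussian is the minimizer there.

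Finally, integrate the pointwise identity against $\Pi$. The integrand is bounded below by $-2cd$ (indeed $\mathfrak i\ge 0$), so the integral is well defined in $(-\infty,\infty]$ and both sides may be infinite simultaneously. No step is a genuine obstacle. The only point needing care is to justify the pushforward formula for an extended-valued integrand, which follows from monotone convergence applied to the nonnegative function $\mathfrak i$.
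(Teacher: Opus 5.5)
Your proposal is correct and follows essentially the same route as the paper. The paper simply notes that the identity is immediate from Corollary~\ref{cor:4}, whose equality case at $\mathcal N(m,\Sigma)$ is the pointwise value you derive through Proposition~\ref{prop:9} and \eqref{eq:162}; this value is then integrated against $\Pi$ via the pushforward $\Gamma_\Pi=G_\#\Pi$, and your measurability and extended-value remarks just make explicit what the paper leaves implicit.
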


\begin{proof}
This is immediate from Corollary \ref{cor:4}.
\end{proof}

 Informally, the event
$\Gamma^{N,T_N}\approx\Gamma_\Pi$
means that, over the long time interval, the empirical distribution of
the particle system visits Gaussian profiles in such a way that the
empirical distribution of their means and covariances is approximately
$\Pi$. The above corollary says that the cost for such behavior is simply the $\Pi$-average of the cost of maintaining
each individual Gaussian profile, which is given by the explicit formula in \eqref{eq:164}.

This formula will also clarify the two competing mechanisms in the
variance-constrained problem below which illustrates that excess dispersion may be produced
either by enlarging the covariances of the individual Gaussian profiles
or by varying their centers over time, and these two mechanisms are optimal in different regimes.

\subsection{Atypical sample means and variances}
\label{rem:3}
We now analyze large deviation mechanisms that lead to atypical values of the 
overall sample means $M^{N,T_N}$ and sample variances $R^{N,T_N}$, defined as
\[
 M^{N,T_N}
  \doteq
  \frac1{T_N}\int_0^{T_N}
  m_{\mu_t^N}\,dt, \;\;
  R^{N,T_N}
  \doteq
  \frac1{T_N}\int_0^{T_N}
  \int_{\R^d}
  |x- M^{N,T_N}|^2
  \,\mu_t^N(dx)\,dt.
\]
We assume that $b>0$.
For $\Gamma\in\clx$, define its average center and overall dispersion by
\[
   m(\Gamma)
  \doteq
  \int_{\cP(\R^d)}m_\mu\,\Gamma(d\mu),
  \qquad
  \mathcal R(\Gamma)
  \doteq
  \int_{\cP(\R^d)}
  \int_{\R^d}
  |x- m(\Gamma)|^2
  \,\mu(dx)\,\Gamma(d\mu),
\]
whenever the integrals are finite.

Fix $m\in\R^d$ and $r>0$, and consider 
\begin{equation}\label{eq:172}
  J(m,r)
  \doteq
  \inf\left\{
    \cI(\Gamma):
     m(\Gamma)=m,\;
    \mathcal R(\Gamma)=r
  \right\}.
\end{equation}
Then a formal application of the contraction principle says that
$$\PP\left((M^{N,T_N}, R^{N,T_N}) \approx (m,r)\right) = \exp\{-N T_N(J(m,r)+o(1))\}.$$
We note that since the maps
$\Gamma\mapsto m(\Gamma)$ and 
$\Gamma\mapsto\mathcal R(\Gamma)$
are not continuous on
$\cP(\cP(\R^d))$ equipped with the weak topology, a rigorous derivation
of the above asymptotic statement requires additional work and estimates. We do not pursue
this here but rather merely present some
variational calculations that illustrate some interesting qualitative features of
the quadratic granular-media rate function.
Specifically, we will show that, with
\begin{equation}\label{eq:173}
  q_*
  \doteq
  \frac{d}{\sqrt{c^2-a^2}}
  =
  \frac{d}{\sqrt{b(2a+b)}},
\end{equation}
\begin{equation}\label{eq:174}
  J(m,r)
  =
  \begin{cases}
  \displaystyle
  \frac{a^2}{4}|m|^2
  +
  \frac{(cr-d)^2}{4r},
  & 0<r\leq q_*,
  \\[3mm]
  \displaystyle
  \frac{a^2}{4}|m|^2
  +
  \frac14
  \left[
    a^2r-2cd
    +2d\sqrt{c^2-a^2}
  \right],
  & r\geq q_*.
  \end{cases}
\end{equation}
Furthermore, the optimizer $\Gamma_{m,r}$ for the variational problem in 
\eqref{eq:172}, if $r \le q_*$ is given as
$
  \Gamma_{m,r}
  =
  \delta_{\mathcal N(m,(r/d)I_d)}$ whereas when $r>q_*$, the form of the optimizer changes and every measure of the form
\begin{equation}\label{eq:175}
  \Gamma
  =
  \int_{\R^d}
  \delta_{\mathcal N(z,(q_*/d)I_d)}
  \,\nu(dz),
\end{equation}
where
\[
  \int_{\R^d}z\,\nu(dz)=m,
  \qquad
  \int_{\R^d}|z-m|^2\,\nu(dz)=r-q_*,
\]
is a minimizer. In particular, when $r> q_*$, the minimizing occupation measure is no longer
a Dirac measure. For example, if $e\in\R^d$ is any unit vector, one may take
\[
  \nu
  =
  \frac12\delta_{m+\sqrt{r-q_*}\,e}
  +
  \frac12\delta_{m-\sqrt{r-q_*}\,e}.
\]
This corresponds to spending asymptotically equal fractions of time near
the two Gaussian profiles
\[
  \mathcal N\left(
    m+\sqrt{r-q_*}\,e,
    \frac{q_*}{d}I_d
  \right)
  \quad\text{and}\quad
  \mathcal N\left(
    m-\sqrt{r-q_*}\,e,
    \frac{q_*}{d}I_d
  \right).
\]
We also note that as $N\to \infty$, $(M^{N,T_N}, R^{N,T_N}) \to (0, d/c)$ in probability and $J(0, d/c)=0$. In particular, since $q_*> d/c$, the change in the form of the minimizer occurs beyond the equilibrium variance value $d/c$.

We now prove \eqref{eq:174}.
For $\mu\in\cP_2(\R^d)$, let
$q_\mu
  \doteq
  \int_{\R^d}|x-m_\mu|^2\,\mu(dx)$ and $h(q)
  \doteq
  \frac{(cq-d)^2}{4q}$.
Suppose now that $m(\Gamma)=m$. Set
\[
  s
  \doteq
  \int_{\cP(\R^d)}
  |m_\mu-m|^2\,\Gamma(d\mu), \mbox{ and }
  \overline q
  \doteq
  \int_{\cP(\R^d)}
  q_\mu\,\Gamma(d\mu).
\]
Then,
$
  \mathcal R(\Gamma)
  =
  s+\overline q$.
Moreover,
$\int_{\cP(\R^d)}|m_\mu|^2\,\Gamma(d\mu)
  =
  |m|^2+s$.
Using Corollary~\ref{cor:5} and the strict
convexity of $h$,
\begin{align}
  \cI(\Gamma)
  &\geq
  \frac{a^2}{4}\bigl(|m|^2+s\bigr)
  +
  \int_{\cP(\R^d)}h(q_\mu)\,\Gamma(d\mu)
  \geq
  \frac{a^2}{4}\bigl(|m|^2+s\bigr)
  +
  h(\overline q).
\label{eq:178}
\end{align}
Under the constraint $\mathcal R(\Gamma)=r$, we have
$s=r-\overline q$,
$0<\overline q\leq r$.
Consequently,
\begin{equation}\label{eq:179}
  J(m,r)
  \geq
  \frac{a^2}{4}|m|^2
  +
  \inf_{0<q\leq r}
  \left\{
    \frac{a^2}{4}(r-q)
    +
    \frac{(cq-d)^2}{4q}
  \right\}.
\end{equation}
To see that the lower bound is achieved and the inequality is in fact an equality, fix $q\in(0,r]$ and let
$\nu\in\cP(\R^d)$ satisfy
$\int_{\R^d}z\,\nu(dz)=m$ and
$\int_{\R^d}|z-m|^2\,\nu(dz)=r-q$.
Define
$\Gamma_{\nu,q}
  \doteq
  \int_{\R^d}
  \delta_{\mathcal N(z,(q/d)I_d)}
  \,\nu(dz)$.
Then
$m(\Gamma_{\nu,q})=m$,
 and
  $\mathcal R(\Gamma_{\nu,q})=r$,
and Corollary~\ref{cor:5} shows that
\[
  \cI(\Gamma_{\nu,q})
  =
  \frac{a^2}{4}
  \bigl(|m|^2+r-q\bigr)
  +
  \frac{(cq-d)^2}{4q}.
\]
Hence, combining with \eqref{eq:179}, we have
\begin{equation}\label{eq:180}
  J(m,r)
  =
  \frac{a^2}{4}|m|^2
  +
  \inf_{0<q\leq r}
  \left\{
    \frac{a^2}{4}(r-q)
    +
    \frac{(cq-d)^2}{4q}
  \right\}.
\end{equation}
A straightforward calculation shows that
the optimal $q_{\mathrm{opt}}$ in the above minimization is 
$q_{\mathrm{opt}} = \min\{r, q_*\}$. The statement in \eqref{eq:174} follows.

\appendix

\section{Proof of Lemma
\ref{lem:8}.}
For $\gamma\in\cP_2(\R^d)$, the linear-growth assumptions imply
$b_\gamma\in L^2(\gamma;\R^d)$. Set
\[
  \Lambda_\gamma(\varphi)
  =
  -\int\nabla\cdot\varphi\,d\gamma
  +\int\varphi\cdot b_\gamma\,d\gamma,
  \qquad
  \varphi\in C_c^\infty(\R^d;\R^d).
\]
Recall that
$\sup_{\varphi}
  \{2\Lambda_\gamma(\varphi)-\|\varphi\|_{L^2(\gamma)}^2\}
  <\infty$
if and only if $\Lambda_\gamma$ extends to a bounded linear functional
on $L^2(\gamma;\R^d)$, in which case the supremum equals the squared
$L^2(\gamma)$-norm of its Riesz representer.

Since $b_\gamma\in L^2(\gamma)$, this is equivalent to the statement that
$\varphi\mapsto-\int\nabla\cdot\varphi\,d\gamma$ is a bounded linear functional on
$L^2(\gamma)$.
By the standard distributional characterization of finite Fisher
information, see \cite[Lemma~3.5]{HaurayMischler2014},
this holds if and only if $\gamma(dx)=\rho(x)\,dx$ and $\rho$ admits a
weak logarithmic gradient
$\nabla\log\rho\in L^2(\gamma;\R^d)$; in that case the Riesz
representer of the latter functional is $\nabla\log\rho$.
Consequently the Riesz representer of $\Lambda_\gamma$ is
$\nabla\log\rho+b_\gamma$, and therefore
$\cD(\gamma)
  =
  \|\nabla\log\rho+b_\gamma\|_{L^2(\gamma)}^2$.
This also proves $\cD(\gamma)=4\mathfrak i(\gamma)$. Moreover, finite
Fisher information and $\gamma\in\cP_2(\R^d)$ imply
$\gamma\in\operatorname{Dom}(\cF)$. Adapting the first-variation
argument in the proof of \cite[Theorem~10.4.13]{AGS}, with its separate
convexity hypotheses replaced by
Lemma~\ref{lem:24}, identifies
$\nabla\log\rho+b_\gamma$ as the minimal Wasserstein subgradient of
$\cF$ at $\gamma$. This proves the remaining equality in
\eqref{eq:84}.
\hfill \qed

\section*{Acknowledgments}
This research was partially supported by NSF DMS-2506010 and NSF RTG grant DMS-2134107. 

\section*{Declaration on the use of generative AI}

During the preparation of this manuscript, the authors used generative
AI tools to assist with editing, exposition, references, and mathematical
checks.
The authors take full responsibility for the correctness of all
mathematical arguments and conclusions.

\bibliographystyle{abbrv}

\bibliography{main}

\end{document}